\newcommand{\cmark}{\ding{51}}
\newcommand{\xmark}{\ding{55}}
\newtheorem{theorem}{Theorem}
\newtheorem{lemma}{Lemma} 
\newtheorem{proposition}{Proposition}
\newtheorem{definition}{Definition}
\newcommand{\E}{\mathbb{E}}
\newcommand{\R}{\mathbb{R}}
\newcommand{\Var}{\mathrm{Var}}
\newcommand{\Cov}{\mathrm{Cov}}
\newcommand{\dd}{\mathrm{d}}
\newcommand{\bX}{\mathbf{X}}
\newcommand{\bY}{\mathbf{Y}}
\newcommand{\bZ}{\mathbf{Z}}
\renewcommand{\L}{\mathcal{L}}
\newcommand{\interior}{\operatorname{int}}
\newcommand{\cent}[1]{\accentset{\circ}{#1}}
\begin{document}

\begin{frontmatter}
\title{Critical point processes obtained from a Gaussian random field with a view toward statistics}
\runtitle{Critical point processes}

\begin{aug}
\author[A]{\fnms{Julien}~\snm{Chevallier} \ead[label=e1]{Julien.Chevallier1@univ-grenoble-alpes.fr}},
\author[A]{\fnms{Jean-François}~\snm{Coeurjolly} \ead[label=e2]{Jean-Francois.Coeurjolly@univ-grenoble-alpes.fr}
\thanks{[\textbf{Corresponding author}]}}
\and
\author[B]{\fnms{Rasmus}~\snm{Waagepetersen}\ead[label=e3]{rw@math.aau.dk}}

\address[A]{Univ. Grenoble Alpes, CNRS, LJK, 38000 Grenoble, France \printead[presep={,\ }]{e1,e2}}
\address[B]{Department of Mathematics, Aalborg University, Denmark \printead[presep={,\ }]{e3}}

\end{aug}

\begin{abstract}
This paper establishes the theoretical foundation for
statistical applications of an intriguing new type of spatial point
processes called critical point processes. These point processes,
residing in Euclidean space,  consist of the critical points of 
latent smooth Gaussian random fields or of subsets of critical points like minima, saddle
points etc.  Despite of the simplicity of their definition, the
mathematical analysis of critical point processes is non-trivial
involving for example deep results on the geometry of random fields,
Sobolev space theory, chaos expansions, and multiple Wiener-It{\^ o}
integrals.  We provide explicit expressions for fundamental moment
characteristics used in spatial point process statistics like the intensity
parameter, the pair correlation function, and higher order intensity
functions. The crucial dependence structure
(attraction or repulsiveness) of
a critical point process is discussed in depth and is in particular
related to the dimension of the points and the type of critical points
(extrema, saddle points, or all of the critical points). We propose
simulation strategies based on spectral methods or
smoothing of grid-based simulations and
show that resulting approximate critical point process simulations
asymptotically converge to  the exact critical point process
distribution. Finally, under the increasing domain framework, we
obtain asymptotic results for linear and bilinear statistics of a  critical point process. In particular, we obtain a multivariate central limit theorem for the intensity parameter estimate and a modified version of Ripley's $K$-function.
\end{abstract}

\begin{keyword}[class=MSC]
\kwd[Primary ]{60Gxx}
\kwd{Gaussian random fields; point processes}
\kwd[; secondary ]{62Mxx}
\kwd{simulation; spatial statistics}
\end{keyword}

\end{frontmatter}

\tableofcontents


\section{Introduction}

 Point patterns emerge in a huge variety of applications, e.g.\ in cognitive sciences to model eye movement data~\cite{barthelme2013modeling},
in economics to model locations of industries~\cite{sweeney2016localization}, in
forestry to model locations of
trees~\cite{rajala2018detecting,choiruddin2020regularized} or in
environmental sciences to model impacts of lightning
strikes~\cite{coeurjolly2024spatio}. Such datasets are modelled by
point processes, covered from theoretical and practical points of
view by~\cite{daley2006introductionI,daley2007introductionII,moller2003statistical,illian2008statistical,baddeley2016spatial}.

During the last forty years, diverse types of spatial point process
models have been developed to model dependence between points. These
include (a) log Gaussian and shot noise Cox processes defined in terms
of  a random intensity function and typically used for clustered point
patterns \cite{moller1998log,moller2003statistical}; (b) Gibbs point
processes \cite[see e.g.][]{dereudre2019introduction} defined via a
density with respect to a Poisson point process; (c) Determinantal
point processes \cite{lavancier2015determinantal}, a  class
of repulsive models with intensity functions defined as determinants of a
positive definite kernel function; and (d) Perturbed lattice point processes
\cite{dereudre2024non} that can exhibit a certain hyperuniformity
property.

The aim of this paper is to investigate probabilistic properties of a
new type of point process with points given as critical points of a
smooth, stationary and isotropic latent Gaussian random field. Although log
Gaussian Cox processes and critical point processes are both defined
in terms of a latent Gaussian field, the point processes are very
different. Given the Gaussian random field, the
critical point process is deterministic, while the log Gaussian Cox process is
specified as an inhomogeneous Poisson point process with log intensity
function given by the Gaussian field.
The full distribution of a spatial point process (including critical
point processes and the other aforementioned models) is often intractable
and statistical methods for spatial point processes therefore often
focus on moment properties summarized by the intensity, the pair
correlation function and higher order intensity functions, or
cumulative summary statistics like Ripley's $K$-function~\cite{ripley1979tests}.

The study of critical points and level sets of Gaussian processes and
random fields has a long history beginning with the pioneering works
of Kac \cite{kac1943average} and Rice
\cite{rice1944mathematical}. Since then it has generated a huge
literature in probability theory
\cite{adler2007random,azais2009level} which exploits the celebrated
Kac-Rice formula providing (essentially) expected values for number of critical points or areas of level sets. We refer to \cite{berzin2022kac} for a contemporary review of the Kac-Rice formula.
Computing the expected number and correlation structure of critical
points as well as heights of peaks has also received a lot of attention in many domains such as astronomy \cite{bardeen1986statistics,larson2004hot,jow2019taller}, oceanography \cite[e.g.][]{lindgren1982wave}, neuroimaging \cite{worsley1996searching,taylor2007detecting}, etc.


The intensity parameter of a critical point process is expressed in
terms of the correlation function of the Gaussian field in
\cite{cheng2018expected} and \cite{azais2022mean}. Next,
\cite{kratz2006second} and \cite{estrade2016number} give conditions ensuring
the finiteness of second order moments while
\cite{armentano2023finiteness} and \cite{gass2024number} consider higher order
moments. Dimension-dependent repulsion and clustering properties of
critical point processes are studied by
\cite{beliaev2020no,azais2022mean}, and \cite{ladgham2023local} via the
asymptotic characterization of the pair correlation function $g(r)$ as
$r\to 0$. Finally,
\cite{estrade2016central,nicolaescu2017clt}, and \cite{azais2024multivariate}
prove central limit theorems, in any dimension, for the number of
critical points and related quantities. Most assumptions of the
aforementioned papers are related to regularity and non-degeneracy of
the Gaussian field.

The first main contribution of the current paper is a complete
characterization of the moment characteristics of any order for a
critical point process. In particular we provide an explicit
expression of the pair correlation function at any distance and
discuss how a Monte Carlo approximation can be efficiently
implemented. This enables us to discuss the overall shape of the pair
correlation function for: i) different dimensions; ii) different
examples of Gaussian random fields, namely the Gaussian Matérn random
field, the Bargmann-Fock model and the Gaussian random wave models
\cite[see e.g.\ ][]{berry1977regular,minasny2005matern}; and iii) different
types of critical points (local minima, maxima, extrema, saddlepoints,
etc). We also illustrate the asymptotic behavior of the pair correlation
at small distances as established by \cite{beliaev2020no,ladgham2023local}, and \cite{azais2022mean}.

Next, we consider the important question of simulating a critical
point process on a bounded domain. We believe that only
approximate simulation is possible for this problem. First
an approximate simulation of the Gaussian field is generated over
the entire simulation domain. In the second step, which we do not
discuss, critical points are found by
applying an appropriate standard root-finding procedure. Our
Theorem~\ref{thm:convcrit} shows that if a sequence of approximate simulations of the
Gaussian field converges in distribution to the original Gaussian field, then
the sequence of critical points of the approximate random field simulations converges to the
critical points of the original Gaussian field. In turn, Theorem~\ref{thm:Xab} applies this
general result to two procedures for generating approximate random
field simulations: i) smoothing of  an exact simulation on a lattice;
ii) averaging of continuous random fields generated from the spectral
approach~\cite[e.g.\ ][]{lantuejoul2013geostatistical}.

Our last contribution deals with asymptotic results for functionals of
critical point processes observed in a sequence of increasing
observation domains. More precisely we consider general linear and
bilinear statistics. The linear statistics include as a special case the standard estimate of
the intensity which is closely related to the statistics considered
 by~\cite{nicolaescu2017clt,estrade2016number}, and \cite{azais2024multivariate}. The
 previous literature does not cover more complex linear statistics and
 does not consider any type of bilinear statistic. A particular
 choice of bilinear statistic yields a non-parametric estimate of a
 slightly modified version of Ripley's $K$-function. As a step towards
 the asymptotic results, Proposition~\ref{prop:chaos} provides
 Hermite expansions of the linear and bilinear statistics. Next,
 Theorem~\ref{thm:variance} shows that the variance and covariances of the appropriately
 normalized statistics tend to explicit constants. Finally,
 Theorem~\ref{thm:clt} establishes a multivariate central limit
 theorem using a multiple Wiener-Itô integral representation of the
 normalized and centered statistics and a general result by
 \cite{peccati2011wiener}. These general results are applied to
 obtain the joint asymptotic Gaussian distribution of the intensity
 parameter estimate and the (slightly modified) $K$-function evaluated
 at a finite number of distances in space.

The rest of the paper is organized
as follows. Section~\ref{sec:background} provides a background on
Gaussian random fields and spatial point processes and provides the
definition of a critical point process. Section~\ref{sec:intensity} investigates
intensity functions and discusses the dependence  structure of a
critical point process. In Section~\ref{sec:simulation} we consider
simulation of a critical point process using spectral or grid based
methods to obtain approximate simulations of the Gaussian random
field. Asymptotic results for linear and bilinear functionals of
critical point processes under the increasing domain framework are
derived and discussed in detail in Section~\ref{sec:statistics}. This
paper is a foundation for future statistical applications of critical
point processes and interesting further theoretical and practical
research questions are discussed in Section~\ref{sec:conclusion}.


\section{Background on random fields and point processes}
\label{sec:background}

\subsection{Gaussian random fields}

In this paper, we use the order $[\{(\dots)\}]$ for multiple brackets.
Let $\bX=\{X(t),{t\in \R^d}\}$ denote a real-valued Gaussian random
field on $\R^d$ ($d \ge 1$). We assume throughout the paper that $\bX$
is centered (i.e.\ $\E X(t)=0$, $t \in \R^d$), stationary and
isotropic, and we denote by $c:\R^d\times \R^d \to \R$ its translation
and rotation invariant covariance function. With an abuse of notation we write $c(h)=c(t,t+h)$, $s, h \in \R^d$, and define functions $c_1$ and $c_2$ on $\R$ by the following equalities,
\begin{equation}
\label{eq:cov}
c_2(\|s-t\|^2) = c_1( \|s-t \|) = c(s-t) = c(s,t) = \E\left\{X(s)X(t)\right\}, \quad s,t \in \R^d
\end{equation}
where $\|\cdot\|$ is Euclidean norm. For instance, in case of the
Gaussian covariance function $c(s,t)=\exp(- \|s-t\|^2/\varphi^2)$ (for
some parameter $\varphi>0$), $c_1(r)=\exp(-r^2/\varphi^2)$ and
$c_2(r)=\exp(-r/\varphi^2)$. The notation $c_2$ may appear useless or
unusual but some  results, in particular from~\cite{azais2022mean},
are better expressed in terms of $c_2$ than $c_1$. We are only
interested in the locations of the critical points of $\bX$. We
therefore assume without loss of generality that $\bX$ has unit variance. Hence, $c,c_1$ and $c_2$ are actually correlation functions. 

We let $F$ denote the spectral (probability) measure  of $\bX$. Then
by Bochner's theorem \cite{rudin2017fourier}, $c$ and $c_1$ are
characteristic functions, namely $c(t) = \int_{\R^d} \exp(\mathrm{i}
\omega^\top t)  F(\dd \omega)= \E\left\{\exp\left(\mathrm{i} V^\top t\right)\right\}$ where $V$ is a random vector with distribution $F$. Moreover, $c_1(r)=c(re_1) = \E\left\{\exp(\mathrm ir V^\top e_1 )\right\}$ where, by isotropy, $e_1$ is any unit $d$-dimensional vector, e.g. $e_1 = (1,0,\dots,0)^\top$. Given the spectral representation~\cite{adler2007random}, we define (when it exists) the $k$th spectral moment by
\begin{equation}
\label{eq:lambda2p}
\lambda_{k} = \int_{\R^d} (\omega^\top e_1)^{k} \; F(\dd \omega).
\end{equation}
For any random field, $\lambda_k = 0$ whenever $k$ is odd. When $k=2p$ is even and $\lambda_{k}<\infty$, we can reexpress the spectral moment in terms of the covariance function \cite{azais2022mean},
$$\lambda_{2p} = \Var \left\{ \frac{\partial^p X(0)}{\partial t_{1}^{p}} \right\} = (-1)^p\frac{(2p)!}{p!}c_{2}^{(p)}(0),
$$
provided $\bX$ and $c_2$ are $p$ times differentiable at $0$, where
$f^{(p)}$ denotes the $p$ times derivative for a $p$ times differentiable function $f$.

Continuity and differentiability for random fields is generally stated in the mean square  or the almost sure sense and has been studied extensively, especially for Gaussian random fields. The existence of derivatives $\bX^\prime$, $\bX^{\prime\prime}$ or more general regularity properties of $\bX$ are intrinsically related to the regularity of the covariance function and the existence of spectral moments, see \cite{adler2007random,azais2009level,riedi2015strong,ladgham2023local,da2023sample}. The diagram in Appendix~\ref{app:regularity} illustrates the links between these concepts.

For $E\subseteq \R^d$, $E^\prime\subseteq \R^{d^\prime}$ (with $d,d^\prime\ge 1$), and $n >0$, we denote by $C^{n}(E,E^\prime)$ the set of functions $f:E\to E^\prime$ which are component-wise $n$ times continuously differentiable on $E$. When $E^\prime=\R$ we write for brevity $C^n(E)$ instead of $C^n(E,\R)$. 
Using the standard multi-index notation, we define for $\alpha\in \mathbb N^d$, the operator $\partial^\alpha= \partial_1^{\alpha_1}\dots\partial_d^{\alpha_d}$ that acts on functions (or random fields) which are at least $|\alpha|=\sum_i \alpha_i$ continuously differentiable on $\R^d$.
For non-integer exponents $\nu>0$, we denote by $C^{\nu}(\R^d)$ the set of functions $f\in C^{\lfloor \nu \rfloor}(\R^d)$ satisfying the local Hölder condition: for all compact $\Delta \subset \R^d$, there exists $\kappa>0$ such that
\begin{equation*}
    | \partial^\alpha f(x) - \partial^\alpha f(y) | \leq \kappa \| x-y \|^{\nu - \lfloor \nu \rfloor}
\end{equation*}
for all $\alpha \in \mathbb{N}^d$ such that $|\alpha| =\lfloor \nu
\rfloor$ and $x,y \in \Delta$. Similarly, we let $C^\nu_{L^2}(\R^d)$
and $C^\nu_{\rm a.s.}(\R^d)$ denote the spaces of $\R^d$-indexed and
$\R$-valued random fields satisfying the local H{\" o}lder condition
with derivatives in the mean square  ($L_2$) or the almost sure (a.s.) sense.

\subsection{Gradient and higher derivatives of random fields}

Considering a random field $\bX$ on $\R^d$,  we denote by $\bX^\prime = \{ X^\prime(t), t\in \R^d\}$ the vector valued gradient random field and by $\bX^{\prime\prime}=\{{X^{\prime\prime}(t), t\in\R^d}\}$ the $d\times d$ matrix valued Hessian random field (assuming existence of gradients and Hessians in an appropriate sense). We abuse notation and also denote when it is not ambiguous by  $X^{\prime\prime}(t)$ the $d(d+1)/2$ half-vectorized version of the Hessian matrix.  
Furthermore, we say that $\bX$ has a critical point at $t\in \R^d$ if
$X^\prime(t)=0$. This critical point is said to be with index $\ell$
for $\ell=0,\dots,d$, if $\iota\{X^{\prime\prime}(t)\}=\ell$ where
$\iota(M)$ denotes the number of negative eigenvalues of any squared
matrix $M$. Thus, a critical point corresponds to a local minimum
(resp.\ maximum) if $\ell=0$ (resp.\ $d$). For any $\L \subseteq \{0,1,\dots,d\}$, we let $\iota_\L\{X^{\prime\prime}(t)\}= \mathbf 1[\iota\{X^{\prime\prime}(t)\} \in \L]$. 

\subsection{Main examples}

\subsubsection{Gaussian Mat{\'e}rn random field}

As a first example we consider a random field $\bX$ with the Matérn correlation function \cite[e.g.][]{stein2012interpolation,minasny2005matern} parameterized by a scale parameter $\varphi>0$, a regularity parameter $\nu>0$, and given for any $r>0$ by
\begin{equation}
    \label{eq:matern}
    c_1(r) = c_2(r^2) = \frac{2^{1-\nu}}{\Gamma(\nu)} \, \left( \frac{r\sqrt{2\nu}}{ \varphi}\right)^\nu \, K_\nu\left( \frac{r\sqrt{2\nu}}{\varphi} \right)
\end{equation}
where $K_\nu$ is the modified Bessel function of the second kind. The
parameter $\nu$ is indeed related to the regularity of $\bX$: it can
be shown that $c_2 \in C^{\nu-\varepsilon}(\R^+)$ \cite[proof of
Proposition 10]{da2023sample} for any $\varepsilon>0$ whereby we
deduce using Appendix~\ref{app:regularity} that $\bX$ is at least
$\lceil\nu\rceil-1$ times continuously differentiable in the mean
square and almost sure sense. For instance,  the condition $\nu>2$ ensures that the Matérn random field is $C^2(\R^d)$
(or $C^{2+\varepsilon}(\R^d)$ for some $\varepsilon>0$) almost surely. In addition, for any integer
$p<\nu$, using recurrence properties of the Bessel function (see~\eqref{eq:derivatives:behaviour:matern} for details), the spectral moment $\lambda_{2p}$  is finite and given by
\begin{equation}
   \label{eq:l2pMatern} \lambda_{2p}= \frac{(2p)!}{2^p\, p!} \; \frac1{\varphi^{2p}} \; \prod_{q=1}^p\frac{\nu}{\nu-q}.
\end{equation}
When $\nu=\infty$, the Matérn correlation function formally
corresponds to the Gaussian correlation function, which leads to a
Gaussian random field also known as the Bargmann-Fock random field
\cite[e.g.\ ][]{beffara2017percolation}. In this case, $\bX$ is obviously infinitely continuously differentiable in the mean square and almost sure sense with spectral moments  given by 
\[
   \lambda_{2p} = \frac{(2p)!}{2^p\, p!} \, \frac1{\varphi^{2p}}.
\]

\subsubsection{Gaussian Random Wave Model}

Our second example is the {Gaussian Random Wave Model} (RWM),
see~\cite{berry2002statistics} in dimension $d=2$ or~e.g.\
\cite{canzani2019topology} and \cite{rivera:hal-03320870} in higher
dimensions. This is the unique stationary Gaussian field (up to a
multiplicative constant) satisfying the Helmotz equation. The unit
variance RWM with scale parameter $\varphi/\sqrt{d}$ has spectral
density function defined as the uniform distribution on the centered
$(d-1)$-dimensional sphere with radius $d\varphi^{-1}$ (the sphere is the discrete set $\{-\varphi^{-1},\varphi^{-1}\}$ when $d=1$).  For $d \ge 1$, its covariance/correlation function is 
\begin{equation}
\label{eq:RWM}
c_1(r) = c_2(r^2) = \Gamma\left(\frac d2\right)  \left( \frac{r\sqrt{d}}{2\varphi}\right)^{-(d/2-1)}  J_{d/2-1}\left( \frac{r \sqrt{d}}\varphi\right)
\end{equation}
where $J_\nu$ is the Bessel function of order $\nu$. When $d=1$, the process is also referred to as the sine-cosine process \cite{azais2009level} with covariance function $c_1(r)=\cos(r/\varphi)$.
The RWM is  almost surely infinitely continuously differentiable on
$\R^d$ since $c_2^{(p)}(r)$ is continuous at 0 for any $p\ge 0$
(see~\eqref{eq:derivatives:behaviour:RWM} for details). Its spectral
moments are given for any $p\ge 1$ by
\begin{equation}\label{eq:l2pRWM}
\lambda_{2p} = \frac{(2p)!}{2^p p!}  \; \frac{1}{\varphi^{2p}} \; \prod_{q=0}^{p-1} \frac d{d+2q}
\end{equation}
which is in particular equal to $1/\varphi^{2p}$ when $d=1$. The spectral moments of the RWM are very close to the ones of a Bargmann-Fock random field when $d$ is large. This justifies our parameterization of the RWM with a scale parameter $\varphi/\sqrt{d}$ instead of~$\varphi$.

\subsection{Point processes} \label{sec:pp}

We consider spatial point processes in $\R^d$ and view a point process as a random locally finite subset $\bY$ of a Borel set $S \subseteq\R^d$, $d\geq 1$. The reader interested in measure theoretical details is referred to e.g.\ 
\cite{moller2003statistical} or \cite{daley2007introductionII}. This setting implies that the point process is simple, i.e. two points cannot occur at the same location. 

For $B\subseteq \R^d$, $\bY\cap B$ is the restriction of $\bY$ to $B$ and we let $|B|$ denote the volume of any bounded $B\subset \R^d$. Local finiteness of $\bY$ means that the number of points $N(B):=\#(\bY \cap B)$ of $\bY \cap B$ is finite a.s., whenever $B$ is bounded. A point process is said to be stationary (resp.\ isotropic) if its distribution is invariant under translations (resp.\ rotations). 
The distribution of $\bY$ can be characterized by the
finite-dimensional distributions of counting variables, by void
probabilities, or by the moment generating functional
\cite{moller2003statistical}. However, in applications of point
processes it is common to only analyze moment properties expressed in
terms of first and higher order intensity functions. We follow
\cite{moller2003statistical} and \cite{daley2007introductionII} and assume that the $k$th ($k\ge 1$) factorial moment measure is absolutely continuous with respect to the Lebesgue measure on $(\mathbb{R}^d)^k$. Its Radon-Nikodym derivative is the $k$th order intensity function $\rho^{(k)}$ characterized by the Campbell theorem: for any nonnegative function $\phi :(\R^d)^k \to \R^+$,
\begin{equation}
\label{eq:campbell} 
\E \sum_{t_1,\dots,t_k  \in \bY}^{\neq} \phi(t_1,\dots,t_k) = \int_{\R^d} \dots \int_{\R^d} \phi(t_1,\dots,t_k)\rho^{(k)}(t_1,\dots,t_k)\dd t_1 \dots \dd t_k
\end{equation}
where the sign $\neq$ means that the sum is defined for any pairwise distinct points $t_1,\dots,t_k$. Roughly speaking, $\rho^{(1)}(t)\dd t$ is the mean number of points in the vicinity of $t$ and more generally, $\rho^{(k)}(t_1,\dots,t_k)\dd t_1\dots \dd t_k$ can be interpreted as the probability of observing simultaneously a point in each of $k$ infinitesimal neighbourhoods around $t_1,\dots, t_k$. When a point process is stationary (resp.\ stationary and isotropic),  $\rho^{(1)}(s)=\rho$ is constant and $\rho^{(2)}(s,t)$ depends only on $t-s$ (resp.\ on $\|t-s\|$). The pair correlation function is defined by $g(s,t)= \rho^{(2)}(s,t)/\{\rho^{(1)}(s)\rho^{(1)}(t)\}$ and in the stationary and isotropic case we abuse notation and write
\[
    g(\|s-t\|) =  \frac{\rho^{(2)}(\|s-t\|)}{\rho^2}.
\]
The pair correlation function measures deviations from the Poisson point
process which is the reference model without any interactions between
points. In particular, the $k$th order intensity of a Poisson point
process exists and equals $\prod_{i=1}^k \rho(t_i)$ which leads to $g
\equiv 1$. Hence, when $g(r)<1$ (resp.\ $g(r) >1$) two points at
distance $r$ are less likely (resp.\ more likely) to appear jointly
than under a Poisson point process. A point process is said to be
repulsive or clustered (resp.\ purely repulsive or purely clustered)
if $g(0)<1$ or $g(0)>1$ (resp.\ $g(r)<1$ or $g(r)>1$ for any
$r$). Finally, we introduce a popular function often used in point
pattern analysis:  Ripley's $K$-function. This is defined in the
stationary case as $1/\rho$ times the expected number of extra events
in $B(0,r)$ (the Euclidean ball centered at 0 with radius $r$) given
that 0 is a point of $\bY$. Ripley's $K$-function is a cumulative version of the pair correlation function since \cite{moller2003statistical}
\begin{equation*}
K(r)  = \frac1\rho \E \left[ N\{B(0,r)\} \mid 0 \in \bY\right] = \int_{B(0,r)}g(\|t\|)\dd t. 
\end{equation*}
In particular $K(r)=v_d r^d$ for Poisson point processes where
$v_d=|B(0,1)|$. Thus, $K(r)<v_d r^d$ for instance typically means that
the point process exhibits repulsion at distances smaller than
$r$. Other intensity functions such as Palm or
Papangelou conditional intensities as well as more complex summary
statistics based on distances are discussed in \cite{moller2003statistical,illian2008statistical,coeurjolly2019understanding}. 

\subsection{Critical point process}

The critical point processes studied in this paper are defined as follows.

\begin{definition}[Critical point process]\label{def:critical}
Let $\bX$ be a centered, stationary and isotropic Gaussian random field on $\R^d$ ($d\ge 1$), with unit variance and at least twice continuously differentiable. For any $\ell=0,\dots,d$, or set of indices $\L \subseteq \{0,\dots,d\}$, we define
\begin{equation}
\label{eq:YL}
\bY_\ell = \left\{t \in \R^d: X^\prime(t)=0 , \iota\{X^{\prime\prime}(t)\}=\ell \right\} 
\quad \text{ and } \quad
    \bY_\L = \cup_{\ell \in \L} \bY_\ell
\end{equation}
to be the point processes consisting of critical points of $\bX$ with index $\ell$ or critical points of $\bX$ with index in $\L$.
By definition, the point processes $\bY_0=\bY_{\{0\}}$ and $\bY_d=\bY_{\{d\}}$ correspond to the random sets of local minima and local maxima of $\bX$ while the set of all critical points $\bY_{\{0,\ldots,d\}}$ will be denoted $\bY_{0:d}$.
\end{definition}

The stationarity and isotropy of $\bX$ implies the stationarity and
isotropy of the point process $\bY_\L$. The assumption that $\bX$ is centered with unit variance is not restrictive since the set of
critical points $\bY_\L$ is identical for $\bX$ and for $\mu +
\sigma \bX$ for any $\mu \in \R, \sigma>0$. Moreover, for a strictly
monotone function $f$, $\bX$ and $f(\bX)=\{f \{ X(t)\}, t\in \R^d\}$
have the same critical points of any index. Hence, this paper is not entirely restricted to Gaussian processes.

For a random field as specified in Definition~\ref{def:critical} we
denote by $N_\L(B)$ the number of critical points with index in $\L$
that fall within  a bounded domain $B \subset \R^d$. When they exist
(see the next section), we denote by $\rho_\L$  the intensity
parameter of $\bY_\L$ and by $\rho^{(k)}_\L$ the $k$th order intensity
function of $\bY_\L$. Moreover, we let $g_\L$ and $K_\L$ denote
respectively the pair correlation function and Ripley's $K$-function for $\bY_\L$. Figure~\ref{fig:example} shows simulations of critical point processes for $d=2$ and $\ell \in \L=\{0,2\}$ (Section~\ref{sec:simulation} discusses numerical and theoretical aspects related to the simulation of critical point processes).

For $\L, \L^\prime \subseteq \R^d$ we define cross second-order intensities by
\begin{equation}
\label{eq:crosscampbell} 
\E \sum_{s \in \bY_{\L},t \in \bY_{\L'}}^{\neq} \phi(s,t) = \int_{\R^d}  \int_{\R^d} \phi(s,t)\rho^{(2)}_{\L,\L'}(\|s-t\|)\dd s \dd t,
\end{equation}
for nonnegative $\phi:(\R^d)^2 \rightarrow \R^+$, and the cross pair correlation function by
\begin{equation}
    \label{eq:def:gLLprime}
    g_{\L,\L^\prime}(\|t-s\|) =   \frac{\rho^{(2)}_{\L,\L^\prime}(\|t-s\|)}{\rho_\L \rho_{\L^\prime}}.
\end{equation}
These definitions extend the previous ones. For instance $g_{\L,\L^\prime}=g_\L$ when $\L=\L^\prime$.

\begin{figure}[htbp]
\begin{center}
\includegraphics[width=.45\textwidth]{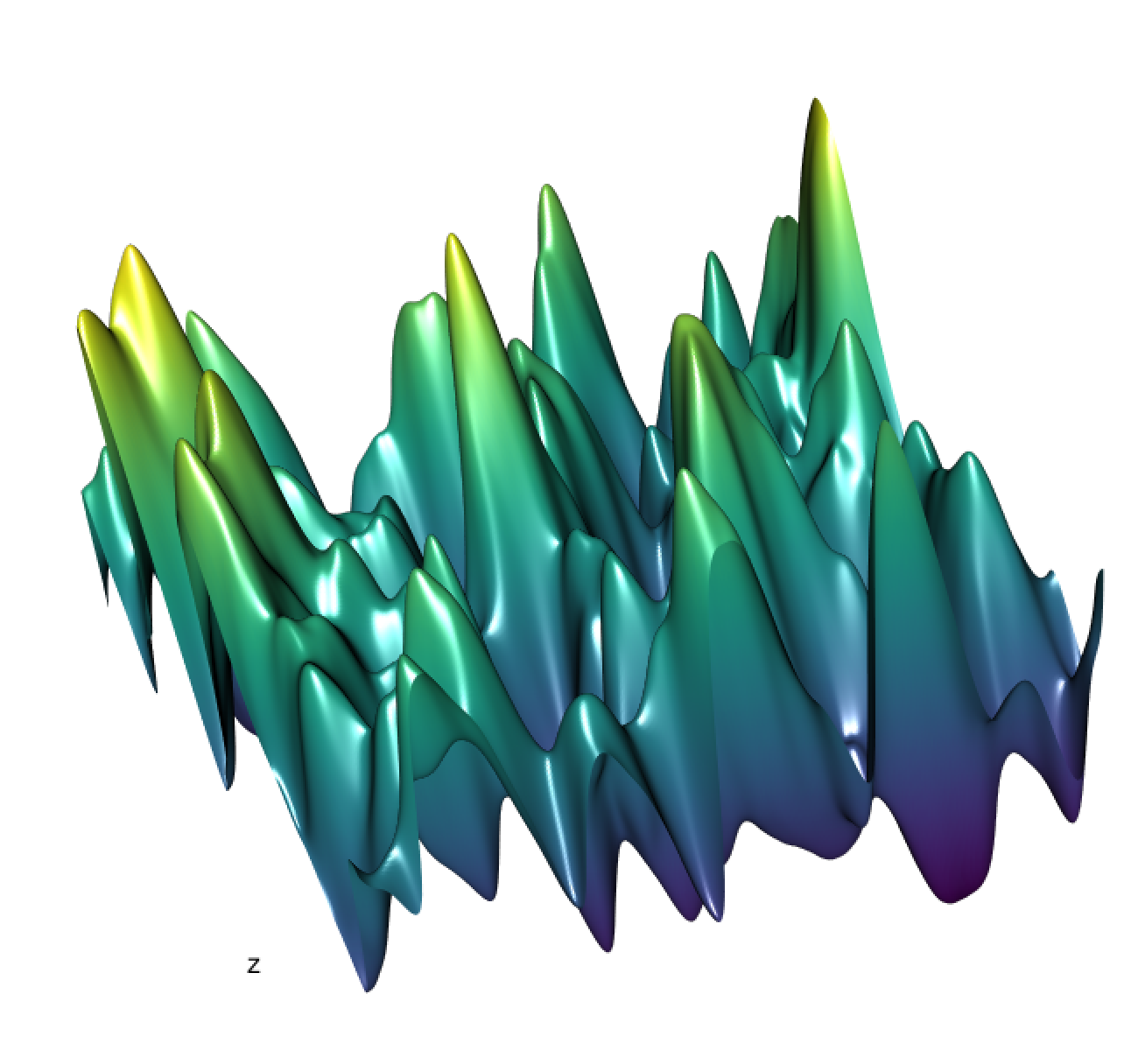} \includegraphics[width=.5\textwidth]{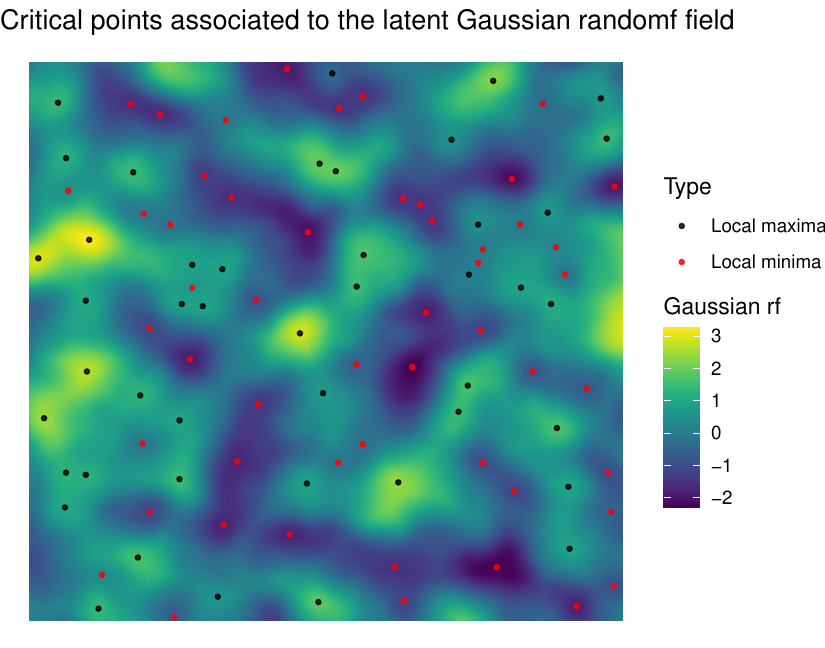}
\caption{(Left) Simulation of a Gaussian random field $\bX$ in $[0,1]^d$ with $d=2$ and with Gaussian correlation function (which formally corresponds to a Matérn random field with $\nu=\infty$); (Right) The set of black (resp.\ red points) is a realization of $\bY_\L$ with $\L=\{d\}$ (resp.\ with $\L=\{0\}$), that is the set of local maxima (resp.\ local minima) of the Gaussian random field. Points are superimposed on the image of the latent random field simulation corresponding to the one on the left panel.}
\label{fig:example}
\end{center}     
\end{figure}


\section{Intensity and pair correlation function for critical point processes} \label{sec:intensity}

\subsection{Assumptions and discussion}

As detailed in the introduction, the problem of evaluating the mean number of critical points in a domain and the dependence between two critical points has been extensively studied \cite{azais2022mean,ladgham2023local,beliaev2020no}. The technique is mainly related to applications of the Kac-Rice formula \cite{azais2009level,berzin2022kac}.
Before going further, we present a set of general conditions required in this section.
\begin{enumerate}[($\mathcal C$.1)]
\item\!\!\!$[\nu]$ $\bX$ is a centered, stationary, and isotropic Gaussian random field with unit variance and $\bX\in  C_{\rm a.s.}^{\nu}(\R^d)$. \label{C:general}
\item\!\!\!$[p]$ The Gaussian random vector $\{\partial^\alpha X(t)
  \}_{1\le |\alpha|\le p}$ is non-degenerate (has full rank covariance
  matrix) for any $t\in \R^d$. 
\label{C:nondegeneracy}
\item \!\!\!$[r]$ The Gaussian random vector $V(r)= \{X^\prime(0)^\top,X^\prime(re_1)^\top\}^\top$ is non-degenerate, where $e_1$ is any unit $d$-dimensional vector. \label{C:nondegeneracy:gradient}
\end{enumerate}

According to Appendix~\ref{app:regularity}, the assumption
that $c \in C^{2(p+\varepsilon^+)}(\R^d)$ or $c_1 \in
C^{2(p+\varepsilon^+)}(\R^+)$ or $c_2 \in  C^{p+\varepsilon^+}(\R^+)$
for some $\varepsilon^+ > \varepsilon>0$ together with the first part
of \ref{C:general}$[p+\varepsilon]$ implies the last part of
\ref{C:general}$[p+\varepsilon]$. Moreover, \ref{C:general}$[p]$
implies  $\lambda_{2q}<\infty$, $q=1,\dots,p$.

Obviously, condition \ref{C:nondegeneracy}$[p]$ only makes sense when
at least the last part of \ref{C:general}$[p]$ holds. When
$p=1$, this is, see\ Appendix~\ref{app:regularity}, equivalent to require that $\lambda_2>0$, which in turn implies that $\lambda_{2q} > 0$ for any $q$ when it exists. Condition~\ref{C:nondegeneracy}$[2]$ implies that the
probability to get $X^\prime(t)=0$ and $\det\{X^{\prime\prime}(t)\}=0$ simultaneously
is null and in turn that the sample paths of $\bX$ are almost surely
Morse, i.e.\ do not have degenerate critical points (see
\cite[Proposition 18]{azais2022mean} for instance). Condition \ref{C:nondegeneracy}$[p]$ follows from \ref{C:general}$[p]$ as soon as the spectral measure admits a density (which is the case for  Matérn random fields for instance). More precisely, one can get the following generalization of \cite[Exercise 3.5]{azais2009level}: let $p>0$, assume condition \ref{C:general}$[p]$ and that the support of the spectral measure has non empty interior, then $\bX$ satisfies \ref{C:nondegeneracy}[$p$]. This necessary condition is not fulfilled by RWM for which the spectral measure is defined as the uniform law on a $(d-1)$-dimensional sphere and hence has support with empty interior. Actually, by definition of the RWM which satisfies a differential equation involving $X(t), X^\prime(t)$ and $X^{\prime\prime}(t)$, \ref{C:nondegeneracy}$[p]$ does not hold for the RWM for any $p> 2$. 
Finally, condition~\ref{C:nondegeneracy:gradient}$[r]$ is used in Theorem~\ref{thm:pcf} to ensure that $g_{\L,\L^\prime}(r)$ is well-defined.

\subsection{Intensity parameter}

The following result appears in \cite{cheng2018expected}. We recall that a $q \times q$ matrix $M$ is said to have the GOE (Gaussian Orthogonal Ensemble) distribution if it is symmetric and all its elements are independent centered Gaussian variables satisfying $E(M_{ii}^2) = 1$ and $E(M_{ij}^2) = 1/2$. for any $i,j=1,\dots,q$, $i\neq j$.

\begin{theorem}{\cite[Proposition 3.9]{cheng2018expected}} \label{thm:intensity}
Consider conditions~\ref{C:general}$[2]$ and \ref{C:nondegeneracy}$[1]$. Then for any $\ell= 0,\dots,d$,
\begin{align}
    \rho_\ell &= \frac1{(2\pi \lambda_2)^{d/2}} \; \E 
    \left[|\det \; X^{\prime\prime}(0)| \; \iota_\L\{X^{\prime\prime}(0)\} \right]
    \nonumber \\
  &= \frac{1}{(d+1)\pi^{(d+1)/2}} \frac{\kappa_d}{\kappa_{d+1}} 
    \left( \frac{\lambda_4}{3\lambda_2}\right)^{d/2} 
   \E \left\{    \exp\left(-\frac{ \mu_{\ell+1}^2}{2} \right)\right\} \label{eq:rhoL}
\end{align}
where $\kappa_m=(2\pi)^{-m/2}\Gamma(3/2)^m \left\{\prod_{q=1}^m \Gamma(1+q/2)\right\}^{-1}$ and where $0<\mu_1<\dots<\mu_{d+1}$ are the ordered random eigenvalues of a $(d+1)\times(d+1)$ GOE matrix. Obviously, for any $\L\subseteq \{0,1,\dots,d\}$, we have $\rho_\L=\sum_{\ell \in \L} \rho_\ell$.
\end{theorem}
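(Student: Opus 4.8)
The plan is to recognize $\bY_\ell$ as the set of zeros of the gradient field $\bX' = \{X'(t)\}$ at which the Hessian has index $\ell$, and to compute its intensity by the Kac--Rice formula. Under \ref{C:general}$[2]$ and \ref{C:nondegeneracy}$[1]$ the field is $C^2$ with $\lambda_2>0$ and $\lambda_4<\infty$, the gradient is non-degenerate, and a.s.\ no critical point is degenerate; these are exactly the hypotheses under which the Kac--Rice metatheorem applies to $\bX'$. It yields, for the density of the first-order factorial moment measure of $\bY_\ell$,
\[
\rho_\ell = p_{X'(0)}(0)\,\E\!\left[\,|\det X''(0)|\,\mathbf 1[\iota\{X''(0)\}=\ell]\ \middle|\ X'(0)=0\,\right],
\]
where $p_{X'(0)}$ is the density of the Gaussian gradient vector. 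Two facts remove the technical clutter. First, by isotropy $\Var\{X'(0)\}=\lambda_2 I_d$, so $p_{X'(0)}(0)=(2\pi\lambda_2)^{-d/2}$. Second, $X'(0)$ and $X''(0)$ are independent: their cross-covariances are third-order derivatives of $c$ at $0$, i.e.\ odd spectral moments, which vanish because the spectral measure of a real field is symmetric. Hence the conditioning drops, and summing over $\ell\in\L$ (the $\bY_\ell$ are disjoint) gives the first displayed equality of the statement.

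The second equality is a statement about the law of the Gaussian symmetric random matrix $H:=X''(0)$. Using isotropy one computes $\E[\partial_{ij}X(0)\,\partial_{kl}X(0)]=\tfrac{\lambda_4}{3}(\delta_{ij}\delta_{kl}+\delta_{ik}\delta_{jl}+\delta_{il}\delta_{jk})$, so that the diagonal entries have variance $\lambda_4$ and are mutually correlated with covariance $\lambda_4/3$, while the off-diagonal entries are independent with variance $\lambda_4/3$. I would then diagonalize this covariance structure via the decomposition $H \overset{d}{=}\sqrt{\lambda_4/3}\,(\sqrt 2\,G+\xi I_d)$, where $G$ is a $d\times d$ GOE matrix and $\xi\sim N(0,1)$ is independent; a direct entrywise variance/covariance check confirms the identity. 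Substituting, $|\det H|=(\lambda_4/3)^{d/2}|\det(\sqrt 2\,G+\xi I_d)|$ and, the index being invariant under multiplication by a positive scalar, $\iota\{H\}=\iota\{\sqrt 2\,G+\xi I_d\}$. Combining with the prefactor $(2\pi\lambda_2)^{-d/2}$ produces the factor $(2\pi)^{-d/2}(\lambda_4/3\lambda_2)^{d/2}$ and reduces everything to the pure random-matrix expectation $\E\,[\,|\det(\sqrt 2\,G+\xi I_d)|\,\mathbf 1[\iota\{\sqrt 2 G+\xi I_d\}=\ell]\,]$.

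The main obstacle is this last random-matrix quantity, which I would evaluate by a dimension-shift argument of Fyodorov type. Writing the computation in terms of the eigenvalues $\mu_1<\dots<\mu_d$ of $G$ with the GOE joint density (a Gaussian weight times the Vandermonde $\prod_{i<j}|\mu_i-\mu_j|$) and of the independent scalar $\xi$, I would change variables to $\nu_i=\sqrt 2\,\mu_i+\xi$. The factor $|\det|=\prod_i|\nu_i|=\prod_i|\nu_i-0|$ then supplies the Vandermonde interactions between the $\nu_i$ and a ``ghost'' eigenvalue pinned at $0$, while the Gaussian weights in the $\nu_i$ and in $\xi$ recombine into the $(d+1)\times(d+1)$ GOE eigenvalue density conditioned to have an eigenvalue at $0$. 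The condition $\iota=\ell$ forces this ``ghost'' eigenvalue to be the $(\ell+1)$-th smallest among the $d+1$ eigenvalues, so the surviving Gaussian factor at $0$ becomes $\exp(-\mu_{\ell+1}^2/2)$ in the notation of the statement, and careful bookkeeping of the normalizing constants of the two GOE densities yields the combinatorial prefactor involving $\kappa_d/\kappa_{d+1}$. This dimension-shift identity, rather than the Kac--Rice step, is where the real work lies; the remainder is normalization. Since the result is exactly \cite[Proposition 3.9]{cheng2018expected}, one may alternatively cite it and only reproduce the adaptation to the index-$\ell$ restriction via the indicator $\mathbf 1[\iota\{\cdot\}=\ell]$.
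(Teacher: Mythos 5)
The paper itself offers no proof of this theorem: it is quoted directly from \cite[Proposition 3.9]{cheng2018expected}, as the citation in the theorem header indicates, so the right benchmark for your proposal is the proof in that reference. Your sketch is in fact a faithful reconstruction of that proof (the ``GOE technique''): Kac--Rice applied to the gradient field, independence of $X^\prime(0)$ and $X^{\prime\prime}(0)$ obtained from the vanishing of odd derivatives of $c$ at $0$, the isotropic Hessian covariance tensor, the representation of the Hessian as $\sqrt{\lambda_4/3}\,(\sqrt{2}\,G+\xi I_d)$ with $G$ a GOE matrix and $\xi\sim\mathcal{N}(0,1)$ independent, and finally Fyodorov's dimension-shift identity producing the $(d+1)\times(d+1)$ GOE expectation. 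The steps that can be checked directly are all correct: with the paper's GOE normalization ($\E (M_{ii}^2)=1$, $\E (M_{ij}^2)=1/2$), the entrywise variances and covariances of $\sqrt{\lambda_4/3}\,(\sqrt{2}\,G+\xi I_d)$ match those of $X^{\prime\prime}(0)$, and the prefactor bookkeeping $(2\pi\lambda_2)^{-d/2}(\lambda_4/3)^{d/2}=(2\pi)^{-d/2}\{\lambda_4/(3\lambda_2)\}^{d/2}$ is right. Two remarks. First, the dimension-shift step --- which you correctly identify as where the real work lies --- remains a strategy outline rather than a proof; that identity, including the emergence of $\E\{\exp(-\mu_{\ell+1}^2/2)\}$ and of the constant $\kappa_d/\kappa_{d+1}$, is exactly the content of the cited proposition, so your fallback of simply citing it is legitimate and is precisely what the paper does. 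Second, a small inaccuracy: you assert that under the stated hypotheses ``a.s.\ no critical point is degenerate'', but the paper derives the a.s.\ Morse property from \ref{C:nondegeneracy}$[2]$, whereas the theorem only assumes \ref{C:nondegeneracy}$[1]$; to run Kac--Rice at the level $u=0$ under \ref{C:nondegeneracy}$[1]$ alone, one should invoke the Gaussian version of the first-moment formula that requires only non-degeneracy of $X^\prime(t)$ rather than hypothesis (iv) of Theorem~\ref{thm:rice1}, which is available for Gaussian fields and is what makes the weak hypotheses of the cited result possible. This is a repairable technical gloss, not a flaw in the overall strategy.
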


\cite{azais2022mean} go further and obtain the probability density
function of $\mu_k$. This fine result allows them to obtain an
explicit expression of~\eqref{eq:rhoL} in terms of the ratio
$\lambda_4/3\lambda_2$ for $d=1,\dots,4$. These expressions, very
useful to tune a simulation study, are summarized in
Table~\ref{tab:rhoL}. It is worth pointing out, for example, the
proportion of local maxima among all critical points: 50\% when $d=1$,
then $25\%, 12.3\%, 6\%$ when $d=2,3,4$.

As a consequence of~\eqref{eq:rhoL} and Table~\ref{tab:rhoL}, the intensity parameter $\rho_\L$ is parameterized through $\lambda_4/3\lambda_2$. For example, for the Matérn correlation model and RWM,  we have using~\eqref{eq:l2pMatern} and~\eqref{eq:l2pRWM},
\[
    \frac{\lambda_4}{3\lambda_2} = \frac{1}{\varphi^2} \frac{\nu}{\nu-2} \quad \text{(Matérn)}
    \qquad  \text{ and }\qquad 
    \frac{\lambda_4}{3\lambda_2} = \frac{1}{\varphi^2} \; \frac d{d+2} \quad \text{(RWM)}.
\]
This ratio is a decreasing function of the scale parameter $\varphi$ and of the regularity parameter $\nu$ for Matérn random fields. It is clear that: (i) the larger $\varphi$, the more two close observations of the random field are positively correlated, (ii) the larger $\nu$, the more regular the latent Gaussian random field. Both comments lead to the conclusion that the latent Gaussian random field  exhibits less critical points. A similar comment regarding $\varphi$ holds for the RWM.

\begin{table}[h]
\begin{center}
\begin{tabular}{p{2cm}p{11cm}}
\hline
Dimension & Intensity parameter $\rho_\L$ \\ &\\
\hline \hline
$d=1$ & $\rho_0=\rho_1=\frac12\rho_{0:1}$ and  $\rho_{0:1}= \frac{\sqrt{3}}\pi \left( \frac{\lambda_4}{3\lambda_2}\right)^{1/2}$   \\ &\\
$d=2$ & $\rho_0=\rho_2=\frac{1}4 \rho_{0:2}$, $\rho_1=\frac12 \rho_{0:2}$ and $\rho_{0:2}=\frac{2}{\pi\sqrt{3}} \left(\frac{\lambda_4}{3\lambda_2}\right) $ \\ &\\
$d=3$ & $\rho_0=\rho_3= \frac{29-6\sqrt 6}{116} \rho_{0:3} \approx 0.123 \rho_{0:3}$, \newline$\rho_1=\rho_2= \frac{29+6\sqrt 6}{116} \rho_{0:3} \approx 0.377 \rho_{0:3}$ and $\rho_{0:3} = \frac{29}{6 \pi^2 \sqrt{3}} \left( \frac{\lambda_4}{3\lambda_2}\right)^{3/2}$\\ &\\
$d=4$ & $\rho_0=\rho_4 \approx 0.06 \rho_{0:4}$, $\rho_1=\rho_3 
= \frac14 \rho_{0:4}$, 
$\rho_2
\approx 0.38 \rho_{0:4}$ and \newline 
$\rho_{0:4}= \frac{25}{6\pi^2\sqrt{3}} \left( \frac{\lambda_4}{3\lambda_2}\right)^{2}$\\ 
\hline
\end{tabular}
\caption{Intensity parameters $\rho_\L$ for $\L=\{\ell\}$ ($\ell=0,\dots,d$) and $\L=\{0,\dots,d\}$ for $d=1,\dots,4$.} 
\label{tab:rhoL} 
\end{center}     
\end{table}

\subsection{Pair correlation function}

The next result provides second-order properties for critical point processes.

\begin{theorem} \label{thm:pcf} Consider
  conditions~\ref{C:general}$[2]$ and \ref{C:nondegeneracy}$[1]$. Then, we have the following results.
\begin{enumerate}[(i)]
    \item For any $r>0$, condition~~\ref{C:nondegeneracy:gradient}$[r]$ is satisfied if and only if
    \begin{equation}\label{eq:conditionVr}
        c_2^\prime(0)^2- c_2^\prime(r^2)^2 > 0
        \quad \text{ and } \quad 
        c_1^{\prime\prime}(0)^2 - c_1^{\prime\prime}(r)^2 > 0.
    \end{equation}
In that case, the  density of $V(r)= \{X^\prime(0)^\top,X^\prime(re_1)^\top\}^\top$ at  $0\in \mathbb{R}^{2d}$ is given by
    \begin{equation}\label{eq:density:Vr}
        f_{V(r)}(0,0)=(2\pi)^{-d/2} 2^{1-d}\left\{c_2^\prime(0)^2 -c_2^\prime(r^2)^2\right\}^{\frac{1-d}{2}} \;\left\{c_1^{\prime\prime}(0)^2 - c_1^{\prime\prime}(r)^2\right\}^{-\frac{1}{2}}.
    \end{equation}
    
    \item For any $r>0$ such that~\ref{C:nondegeneracy:gradient}$[r]$ is satisfied and for all $\L,\L^\prime \subseteq \{0,1,\dots,d\}$,
    \begin{align}
    g_{\L,\L^\prime} (r) =& \frac{1}{\rho_\L \, \rho_{\L^\prime}}\; f_{V(r)}(0,0) \E 
    \Big(
    |\det\{X^{\prime\prime}(0)\}| \times |\det\{X^{\prime\prime}(re_1)\}| \; \times \nonumber  \\
    & \qquad 
    \iota_\L\{X^{\prime\prime}(0)\} \;  \iota_{\L^\prime}\{X^{\prime\prime}(r e_1)\}  
    \, \mid \,  X^\prime(0)=X^\prime(re_1)=0 \Big). \label{eq:gLLprime}        
    \end{align}    
    Furthermore, $g_{\L,\L^\prime}$ is continuous at $r$.
\end{enumerate}

\end{theorem}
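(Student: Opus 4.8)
The plan is to handle the two parts by different tools: part~(i) reduces to an explicit covariance computation for the $2d$-dimensional centered Gaussian vector $V(r)$, while part~(ii) is an application of the two-point Kac--Rice formula to the zero set of the gradient field $\bX^\prime$.

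For part~(i), I would use that a centered Gaussian vector on $\R^{2d}$ with covariance $\Sigma_r=\Cov\{V(r)\}$ has value density $(2\pi)^{-d}(\det\Sigma_r)^{-1/2}$ at the origin and is non-degenerate precisely when $\det\Sigma_r>0$. The main task is to compute $\Sigma_r$ from the standard identity $\Cov\{\partial_i X(s),\partial_j X(t)\}=-\partial_i\partial_j c(s-t)$. Writing $c(h)=c_2(\|h\|^2)$ and differentiating gives $\partial_i\partial_j c(h)=4h_ih_j\,c_2^{\prime\prime}(\|h\|^2)+2\delta_{ij}\,c_2^\prime(\|h\|^2)$. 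Evaluating at $h=0$ and $h=re_1$, and using $c_1^{\prime\prime}(r)=2c_2^\prime(r^2)+4r^2c_2^{\prime\prime}(r^2)$, the crucial structural fact emerges: since the off-axis mixed second derivatives vanish along $e_1$, the matrix $\Sigma_r$ decouples (after reordering coordinates) into $d$ independent $2\times 2$ blocks, one per coordinate direction. The longitudinal block has determinant $c_1^{\prime\prime}(0)^2-c_1^{\prime\prime}(r)^2$, while each of the $d-1$ transverse blocks has determinant $4\{c_2^\prime(0)^2-c_2^\prime(r^2)^2\}$. Since the diagonal entries of every block equal $\lambda_2>0$, positive definiteness of each block is equivalent to the two inequalities in~\eqref{eq:conditionVr}; multiplying the block determinants and substituting into the Gaussian density then yields~\eqref{eq:density:Vr}.

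For part~(ii), I would invoke the Kac--Rice formula for the second factorial moment of the zeros of the vector field $\bX^\prime:\R^d\to\R^d$, whose Jacobian is $X^{\prime\prime}$. Conditions~\ref{C:general}$[2]$ and~\ref{C:nondegeneracy}$[1]$ make $\bX^\prime$ continuously differentiable and ensure that critical points are almost surely non-degenerate (Morse), while~\ref{C:nondegeneracy:gradient}$[r]$ guarantees existence of the joint density $f_{V(r)}$ at the origin; these are exactly the hypotheses needed. The formula expresses the second-order intensity of $\bY_{0:d}$ as $f_{V(r)}(0,0)$ times the conditional expectation of $|\det X^{\prime\prime}(0)|\,|\det X^{\prime\prime}(re_1)|$ given $X^\prime(0)=X^\prime(re_1)=0$. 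To restrict to prescribed indices, I would insert the weights $\iota_\L\{X^{\prime\prime}(0)\}$ and $\iota_{\L^\prime}\{X^{\prime\prime}(re_1)\}$ into the expectation; decomposing the cross-Campbell identity~\eqref{eq:crosscampbell} over index classes and applying Kac--Rice class by class produces $\rho^{(2)}_{\L,\L^\prime}(r)$, and dividing by $\rho_\L\rho_{\L^\prime}$ gives~\eqref{eq:gLLprime} through the definition~\eqref{eq:def:gLLprime}.

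Finally, continuity of $g_{\L,\L^\prime}$ at $r$ would follow by treating the two factors of~\eqref{eq:gLLprime} separately. The density $f_{V(r)}(0,0)$ is continuous in $r$ directly from~\eqref{eq:density:Vr}, since $c_1^{\prime\prime}$ and $c_2^\prime$ are continuous and~\eqref{eq:conditionVr} is an open condition. For the conditional expectation, the joint law of $\{X^\prime(0),X^\prime(re_1),X^{\prime\prime}(0),X^{\prime\prime}(re_1)\}$ is Gaussian with covariance depending continuously on $r$, so the conditional law of $\{X^{\prime\prime}(0),X^{\prime\prime}(re_1)\}$ given $\{X^\prime(0),X^\prime(re_1)\}=0$ is Gaussian with continuously varying mean and covariance; since the integrand $|\det X^{\prime\prime}(0)|\,|\det X^{\prime\prime}(re_1)|$ is polynomial and hence integrable with a bound uniform over $r$ in a compact neighborhood, dominated convergence applies. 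The main obstacle I anticipate is the discontinuity of the index indicators $\iota_\L,\iota_{\L^\prime}$ on matrices with a zero eigenvalue; this is resolved by noting that under the non-degenerate conditional Gaussian law the events $\det X^{\prime\prime}(0)=0$ and $\det X^{\prime\prime}(re_1)=0$ have probability zero, so the integrand is almost everywhere continuous and, combined with the continuous variation of the conditioning law, the conditional expectation---and therefore $g_{\L,\L^\prime}$---is continuous at $r$.
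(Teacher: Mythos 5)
Your proposal is correct and follows essentially the same route as the paper's proof: part (i) is the same explicit covariance computation with block-diagonal structure (one longitudinal block of determinant $c_1''(0)^2-c_1''(r)^2$ and $d-1$ transverse blocks of determinant $4\{c_2'(0)^2-c_2'(r^2)^2\}$), and part (ii) is the same weighted Kac--Rice argument on the gradient field over a product of disjoint small balls, identified with the cross-Campbell formula, with continuity obtained from the continuous dependence of the conditional Gaussian law on $r$. The only step you gloss over is that inserting the discontinuous index indicators $\iota_\L,\iota_{\L^\prime}$ into the Kac--Rice formula itself needs a justification (the paper approximates $J$ by an increasing sequence of continuous functions $J_m$ and uses monotone convergence), but the null-set observation you make in your continuity argument is exactly the ingredient that makes this approximation step work.
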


The proof of Theorem \ref{thm:pcf} is given in Appendix
\ref{sec:proofPcf}. The conditional expectation in~\eqref{eq:gLLprime}
seems hard to deal with. However, \cite[Lemma 8]{azais2022mean} shows
that the conditional distribution of $\{X^{\prime\prime}(0)^\top
,X^{\prime\prime}(re_1)^\top \}^\top $ given $X^\prime(0)=X^\prime(re_1)=0$ is zero-mean Gaussian with
$d(d+1)\times d(d+1)$  covariance matrix whose elements only depend on
$c_2^\prime(u),c_2^{\prime\prime}(u)$ for $u=0,r^2$. Therefore, an
approximation of~\eqref{eq:gLLprime} can be obtained using Monte Carlo
simulations of this conditional distribution. The simulations can be
generated efficiently so that high Monte Carlo accuracy can be achieved
at a moderate computational cost.

Figure~\ref{fig:plotc1c2} in Appendix~\ref{app:localpcf} illustrates
that~\eqref{eq:conditionVr} is satisfied for Matérn random fields and
RWM when $d>1$. For the sine-cosine process (i.e. for the RWM with
$d=1$), it is easily seen that~\eqref{eq:conditionVr}, or equivalently
\ref{C:nondegeneracy:gradient}$[r]$, is not satisfied for $r/\varphi
\in \pi \mathbb Z$. One might consider other
values of $r$, but since the sine-cosine process can be written
\cite[see Exercise 4.1]{azais2009level} as
$X(t)=Z\cos(t/\varphi + \theta)$  with $Z^2\sim \chi^2_2$ and
$\theta\sim \mathcal U(0,2\pi)$, the critical points are located in
the randomly translated grid $\{t: t/\varphi + \theta \in \pi \mathbb
Z\}$ whereby the second order factorial
  moment measure is not absolute continuous. Hence, the second
  order intensity and the pair correlation function do not exist which leads us to disregard the RWM when $d=1$ in the rest of this paper.

When $r$ is close to zero (and only in this setting)
Theorem~\ref{thm:pcf} essentially corresponds to \cite[Proposition
1.5]{azais2022mean} (see also \cite{ladgham2023local}) when $\L=\{\ell\}$ and $\L=\{0,\dots,d\}$. 
Our contribution is to extend the result for any subsets
$\L,\L^\prime$  and for any $r>0$. The latter extension is of great importance in order first to depict the shapes of different pair correlation functions and Ripley's $K$-functions and second to estimate the parameters of a parametric critical point process using minimum contrast estimation \cite[e.g.][]{waagepetersen2009two}. 

The papers \cite{azais2022mean,ladgham2023local}, and \cite{beliaev2020no} focus
on the behaviour of the pair correlation function and cross pair
correlation functions as $r\to 0$, essentially to characterize a
clustering or repulsion effect at small scales. These three papers are
summarized in Table~\ref{tab:expansion_pcf}. The table shows that the
pair correlation function of all critical points tends to 0 when
$d<1$, to a constant when $d=2$ and to $\infty$ when $d>2$ (which
reveals an extreme clustering effect at small scales). The pair
correlation function of maxima or minima further tends to 0 when $d\le
4$, and at small scales  (i.e.\ when $r\to0$), minima and maxima tend
to repel each other more and more when the dimension is increased. The explicit
expression of $g_\L$ obtained in Theorem~\ref{thm:pcf}(ii) allows us,
using Monte Carlo simulations, to visualize  some of the results in Table~\ref{tab:expansion_pcf} (see Appendix~\ref{app:localpcf}).

\begin{table}
\begin{center}
\begin{tabular}{p{.25\textwidth}p{.69\textwidth}}
\hline
&\\
Critical point indices & Asymptotic result \\
&\\\hline\hline
&\\
$\L=\L^\prime=\{0,\dots,d\}$  &$\bullet$ $g_{0:d}(r) \sim c r^{2-d}$ with $c=\frac{\gamma_{d-1}}{2^3 3^{(d-1)/2}\pi^{d}} (\frac{\lambda_4}{\lambda_2})^{d/2} \frac{\lambda_2\lambda_6-\lambda_4^{2}}{\lambda_2\lambda_4} \frac{1}{\rho_{0:d}^2}$\\
&\quad AD[Theorem 13, under~{\ref{C:general}[3]}]\\
&$\bullet$ $g_{0:d} \to c$ for $d=2$\\
&\quad BCW[Theorem 1.2, under~{\ref{C:general}$[3+\varepsilon]$}]\\ 
&$\bullet$ $g_{0:d} \to c$ for $d=2$ with $c\ge \frac1{8\sqrt 3}\frac1{\rho_{0:2}^2}$ reached for the RWM\\
&\quad LLR[Theorem 4.1, under~{\ref{C:general}$[4+\varepsilon]$}] \\ 
&\\
\hline
&\\
$\L=\L^\prime=\{0,d\}$
& $\bullet$ $g_{\{0,d\}}(r) = O\{r^{(2d-1)\wedge (5-d)-\varepsilon} \}$\\
&\quad AD[Theorems 16-17, under~{\ref{C:general}[4]}]$^\ddag$\\
&$\bullet$ $g_{ \{0,2\} }(r) = O\{ r^3\log(1/r) \}$ ($d=2$)\\
&\quad BCW[Theorem 1.5, under~{\ref{C:general}$[3+\varepsilon]$}]\\ 
& $\bullet$ $g_{ \{0,2\} }(r) = O( r^3 )$ ($d=2$)\\
&\quad LLR[Theorem 4.1, under~{\ref{C:general}$[4+\varepsilon]$}] \\
&\\
\hline
&\\
$\L=\L^\prime=\{\ell\}$  &$\bullet$ $g_{\ell}(r) = O( r^{5-d-\varepsilon})$, for $\ell=0,d$ \\
&\quad AD[Theorem 16, under~{\ref{C:general}[4]}]\\ 
&$\bullet$ $g_{\ell}(r) = O\{ r^3 \log(1/r) \}$ for $\ell=0,\dots,d$ and $d=2$\\
&\quad BCW[Theorem 1.2, under~{\ref{C:general}$[3+\varepsilon]$}] \\ 
&{\quad LLR[Theorem 4.1, under~{\ref{C:general}$[4+\varepsilon]$}]}$^\ddag$ \\
&\\
\hline
&\\
$\L=\{\ell\},\L^\prime=\{\ell+1\}$, & $\bullet$ $g_{\ell,\ell+1}(r) \sim c r^{2-d}$ with $c=\frac{\gamma_{d-1}^\ell}{2^4 3^{(d-1)/2}\pi^{d}} (\frac{\lambda_4}{\lambda_2})^{d/2} \frac{\lambda_2\lambda_6-\lambda_4^{2}}{\lambda_2\lambda_4} \frac1{\rho_{\ell}\rho_{\ell+1}}$\\ 
$\ell=0,\dots,d-1$
&\quad AD[Theorem 14, under~{\ref{C:general}[3]}]\\
&\quad BCW[Theorem 1.2, $d=2$ and under~{\ref{C:general}$[3+\varepsilon]$}] \\ 
&\\
\hline
&\\
$\L=\{0\}, \L^\prime=\{d\}$& $\bullet$ $g_{0,d}(r) = O( r^{2d-1-\varepsilon})$ \\
&\quad AD[Theorem 17, under~{\ref{C:general}[3]}]\\ 
&$\bullet$ $g_{0, 2}(r) = O( r^3 )$ ($d=2$),  $=O( r^7 )$ for the RWM\\
&\quad BCW[Theorem 1.5, $d=2$ and under~{\ref{C:general}$[3+\varepsilon]$}] \\
&$\bullet$ $g_{0, 2}(r) = O( r^3 )$ ($d=2$)\\
&{\quad LLR[Theorem 4.1, $d=2$ and under~{\ref{C:general}$[4+\varepsilon]$}]}$^\ddag$\\ 
&\\\hline
\end{tabular}
\caption{Asymptotic results for (cross) pair correlation functions as
  $r\to 0$ obtained by~AD=\cite{azais2022mean},
  BCW=\cite{beliaev2020no} and LLR=\cite{ladgham2023local}. In
  particular $\gamma_{d-1}= \E\{ \det(G_{d-1}-A I_{d-1})^2\}$ with
  $G_{d-1}$ a $(d-1)\times(d-1)$-GOE matrix and $A\sim \mathcal{N}(0, 1/3)$;
  $\gamma_{d-1}^\ell= \E[ \det(G_{d-1}-A I_{d-1})^2
  \mathbf{1}\{\iota(G_{d-1}-A I_{d-1})=\ell\} ] $. The O() behaviors
  are valid for any $\varepsilon>0$ and as pointed out by
  \cite[p. 434]{azais2022mean}, $\varepsilon=0$ when $d=1$. The
  notation ${}^\ddag$ for a given reference means that the result is
  actually deduced from other results of the reference. }
\label{tab:expansion_pcf}
\end{center}
\end{table}

\subsection{Attraction and repulsion for a critical point process}

There is a vast literature on the notion of regularity, rigidity,
repulsion or hyperuniformity
\cite[e.g.][]{coste2021order,hawat2023estimating,moller2003statistical}. Based
on this literature, we believe that exploring the dependence only at
small scales is not enough. For example, \cite{moller2003statistical}
qualify a point process as purely repulsive if the pair correlation
function is smaller than 1 at all scales, a property which is for
instance achieved for determinantal point processes
\cite{lavancier2015determinantal}. Our Theorem~\ref{thm:pcf}(ii) is
therefore helpful to get an overall picture. This is exploited in
Figure~\ref{fig:p.pcf} which shows Monte Carlo estimates of pair
correlation functions $g_\L$ (using $10^7$ replications). We consider
$d=1,2,3$, Matérn and RWM random fields with $\L=\{0,\dots,d\}$ (all
critical points) and $\L=\{d\}$ (local maxima). For Matérn random
fields, we examine $\nu=2.5, 3.5, 4.5, \infty$ which implies that
$\bX$ fulfills (at least) \ref{C:general}[2], \ref{C:general}[3],
\ref{C:general}[4], and \ref{C:general}$[\infty]$, respectively. Using
Table~\ref{tab:rhoL}, the scale parameter $\varphi$ is, for each
situation, set so that $\rho_\L=100$. The following general comments
also apply for other values of $\rho_\L$. 

First, we relate the behavior of $g_\L(r)$ when $r\to
0$ observed in Figure~\ref{fig:p.pcf} to Table~\ref{tab:expansion_pcf}. 
For example, when $d=1$, $\L=\{0,1\},\{1\}$ and for the
Mat{\'e}rn random field, $g_\L$ tends to 0 at 0 when
\ref{C:general}[3] is true, that is when 
$\nu>3$. The same comment applies for $d=2$ and $\L=\{2\}$. For $d=3$ and $\L=\{3\}$,
the convergence to zero requires $\nu>4$. For the RWM model, the
convergence to zero happens except for $\L=\{0,\ldots,3\}$.
For Matérn random
fields, increasing smoothness (increasing $\nu$) implies increasing
repulsion and overall, the RWM seems to be the most repulsive at
small scales (as proved by~\cite{ladgham2023local}). Conversely,
Mat{\'e}rn models may produce clustering when $d=2$ or $d=3$ and 
with stronger clustering for smaller smoothness.
Figure~\ref{fig:p.pcf} also shows that the pcf
of RWM oscillates a lot and tends to 1 quite slowly for increasing $r$. This can be
indicative of a near lattice behaviour of the RWM critical point
process. In contrast, for any $d,\L$ and $\nu$, $g_\L$ tends quickly to 1 as $r\to\infty$
for Matérn random fields (see condition~\ref{C:integrability} and its
discussion in Section~\ref{sec:statistics}). In
Appendix~\ref{app:localpcf}, we continue this discussion and compare
pcfs using also a global measure of repulsiveness.

\begin{figure}[htbp]
\centering
\includegraphics[width=\textwidth]{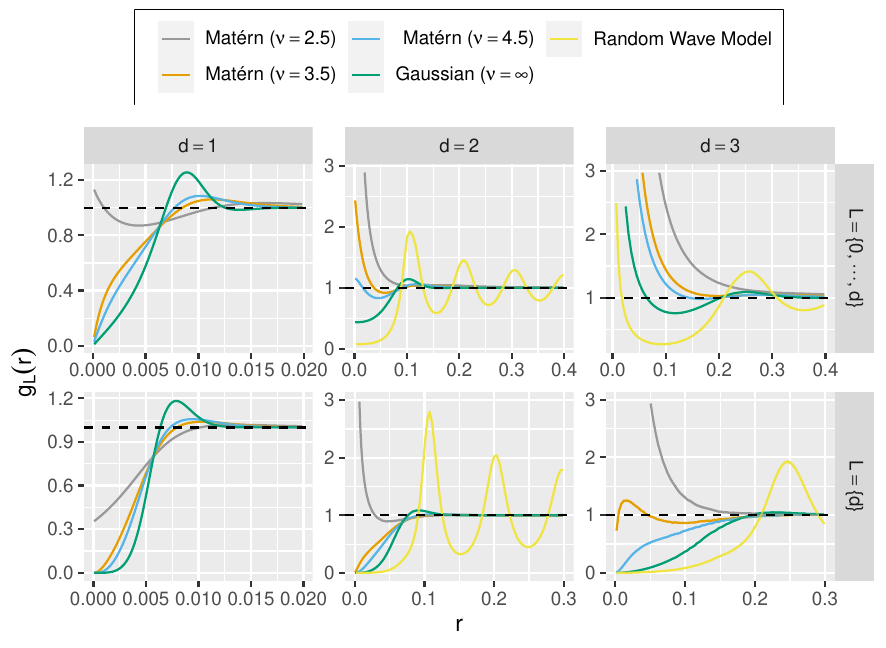}
\caption{For $d=1,2,3$ Monte Carlo estimation of pair correlation
  function for critical points with indices $\L=\{0,\dots,d\}$ and
  $\L=\{d\}$, $d=1,2,3$, based on Matérn random fields with
  regularity parameter $\nu=2.5,3.5,4.5,\infty$ and on the Random Wave
  Model (for $d>1$). The scale parameter $\varphi$ is set such that
  $\rho_\L=100$. Monte Carlo estimates, evaluated pointwise on a
  regular grid of 200 values, are obtained with $10^7$ simulations
  of the distribution of $\{X^{\prime\prime}(0)^\top
  ,X^{\prime\prime}(r)^\top \}^\top $ given
  $X^\prime(0)=X^\prime(re_1)=0$. Envelopes corresponding to 95\%
  pointwise confidence intervals for the Monte Carlo estimates are represented but invisible.}
  \label{fig:p.pcf}
\end{figure}

We end this paragraph with an important consequence of Table~\ref{tab:expansion_pcf}. For any $\L,\L^\prime \subseteq \{0,1,\dots,d\}$ considered in Table~\ref{tab:expansion_pcf}, $r^{d-1} g_{\L,\L^\prime}(r)$ is integrable at 0. In other words, the (cross-)Ripley's $K$-function, $K_{\L,\L^\prime}$ (see~\eqref{eq:derivatives:behaviour:matern} for details) is well-defined for any $r>0$.

\subsection{Higher-order intensity functions}

In this section, we consider an extension of Theorem~\ref{thm:pcf} which provides an explicit expression for the $k$th order intensity of~$\bY_\L$.
\begin{theorem} \label{thm:rhok}
Consider
conditions~\ref{C:general}$[2]$ and \ref{C:nondegeneracy}$[1]$. Let $k\ge
1$ and $t_1,\dots,t_k$ be pairwise distinct points in an open domain $U$ of $\R^d$. Assume that the distribution of \linebreak$V=\{X^\prime(t_1)^\top,\dots,X^\prime(t_k)^\top\}^\top$ is non-degenerate. Then for any $\L\subseteq\{0,\dots,d\}$, the $k$th order intensity function at $(t_1,\dots,t_k)$ of $\bY_\L$ is given by
\begin{align}
\rho^{(k)}_\L(t_1,\dots,t_k) =& f_V(0) \; 
\E 
\Big(
|\det \{X^{\prime\prime}(t_1)\} | \times\dots\times |\det \{X^{\prime\prime}(t_k)\} | \; \times \nonumber \\
& \;
\iota_\L\{X^{\prime\prime}(t_1)\} \times \dots \times \iota_\L\{X^{\prime\prime}(t_k)\}
\mid X^\prime(t_1)=\dots=X^\prime(t_k)=0 
\Big). \label{eq:res_gL_higherdim}        
\end{align}    
\end{theorem}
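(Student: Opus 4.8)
The plan is to recognise the set of all critical points $\bY_{0:d}$ as the zero set of the $\R^d$-valued Gaussian field $\bX'=\{X'(t),\,t\in\R^d\}$ and to apply the Kac--Rice formula for the $k$th factorial moment measure of this zero set, pushing the index constraints into a bounded weight. Since $\bY_\L$ is obtained from $\bY_{0:d}$ by keeping only those $t$ with $\iota\{X''(t)\}\in\L$, for any nonnegative $\phi:(\R^d)^k\to\R^+$ we have $\sum_{t_1,\dots,t_k\in\bY_\L}^{\neq}\phi=\sum_{t_1,\dots,t_k\in\bY_{0:d}}^{\neq}\phi\,\prod_{i=1}^k\iota_\L\{X''(t_i)\}$, so that the index restriction enters only as the bounded Borel weight $\prod_{i=1}^k\iota_\L\{X''(t_i)\}$ of the Hessians at the zeros. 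The identity I would establish is
\begin{align*}
\E\sum_{t_1,\dots,t_k\in\bY_\L}^{\neq}\phi(t_1,\dots,t_k)
&=\int_{(\R^d)^k}\phi(t_1,\dots,t_k)\,f_V(0)\\
&\quad\times\E\Big(\prod_{i=1}^k|\det\{X''(t_i)\}|\,\iota_\L\{X''(t_i)\}\;\Big|\;V=0\Big)\,\dd t_1\cdots\dd t_k,
\end{align*}
after which \eqref{eq:res_gL_higherdim} follows by comparison with the Campbell formula~\eqref{eq:campbell} and uniqueness of the Radon--Nikodym derivative.

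First I would verify the hypotheses of the higher-order Kac--Rice formula \cite{azais2009level,adler2007random}. Condition~\ref{C:general}$[2]$ makes $\bX$ almost surely $C^2$, hence $\bX'$ almost surely $C^1$ with continuous Hessian $X''$, which is the smoothness required for a Kac--Rice formula for the zeros of $\bX'$. Condition~\ref{C:nondegeneracy}$[1]$ gives $\lambda_2>0$, i.e.\ $X'(t)$ is non-degenerate at every $t$, while the standing assumption that $V=\{X'(t_1)^\top,\dots,X'(t_k)^\top\}^\top$ is non-degenerate at the pairwise distinct points $t_1,\dots,t_k$ ensures that $V$ admits a bounded continuous density with $0<f_V(0)<\infty$; note that only the conditional law of the Hessians given $V=0$ is needed, so no joint non-degeneracy of $(X',X'')$ is required. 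These are exactly the regularity and non-degeneracy conditions under which the factorial moment measure of order $k$ is absolutely continuous with the stated density, and the scheme is identical to the proof of Theorem~\ref{thm:pcf} (the case $k=2$) given in Appendix~\ref{sec:proofPcf}, now carried out at $k$ points instead of two.

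If a self-contained derivation is preferred to invoking a general metatheorem, I would regularise: replace the Dirac mass at $0$ by an approximate identity $\delta_\varepsilon$ on $\R^d$, express the approximate factorial sum as the $k$-fold integral of $\phi\,\prod_{i=1}^k\delta_\varepsilon(X'(t_i))\,|\det\{X''(t_i)\}|\,\iota_\L\{X''(t_i)\}$ via the area formula near the zeros, take expectations, apply Fubini, condition on $V$, and let $\varepsilon\to0$. The hard part will be justifying this passage to the limit, which requires (i) an $\varepsilon$-uniform integrable dominating function, amounting to the finiteness of the $k$th factorial moment and resting on bounds for the Gaussian conditional moments of $\prod_{i=1}^k|\det\{X''(t_i)\}|$ supplied by \ref{C:general}$[2]$ and the non-degeneracy of $V$, and (ii) the continuity at $v=0$ of the map $v\mapsto f_V(v)\,\E(\prod_{i=1}^k|\det\{X''(t_i)\}|\,\iota_\L\{X''(t_i)\}\mid V=v)$, so that the concentrating kernels select the value at $V=0$. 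The boundedness of $\iota_\L$ renders the index restriction harmless throughout, so this integrability-and-continuity step is the only genuinely delicate point; everything else reduces to the Gaussian conditional law of $\{X''(t_1)^\top,\dots,X''(t_k)^\top\}^\top$ given $V=0$, exactly as in Theorem~\ref{thm:pcf}.
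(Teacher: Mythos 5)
Your proposal is correct and follows essentially the same route as the paper: Appendix~\ref{app:proof:rhok} likewise applies the weighted Kac--Rice formula (Theorem~\ref{thm:rice2}) to the field $V(s)=\{X^\prime(s_1)^\top,\dots,X^\prime(s_k)^\top\}^\top$ on a product of disjoint balls around the $t_i$'s, with the index restrictions entering through the weight $J(s,f)=\prod_{i=1}^k \iota_\L\{f_i(0)\}$, and concludes by identification with the $k$th order Campbell theorem. The two details you leave implicit --- localizing to disjoint neighbourhoods where non-degeneracy of $V(s)$ persists by continuity of $c_1^{\prime\prime}$, and replacing the discontinuous indicator weight by an increasing sequence of continuous functions together with monotone convergence --- are precisely the steps the paper imports from the proof of Theorem~\ref{thm:pcf}, which your scheme also invokes.
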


The proof of Theorem~\ref{thm:rhok} is given in Appendix~\ref{app:proof:rhok}. Comparing with Theorem~\ref{thm:pcf}, it is much harder to give explicit conditions ensuring the non-degeneracy of the vector $V$ when $k \ge 3$.
The integrability of $\rho_\L^{(k)}$ over $W^k$ for
  $k=1,\dots,p$ and hence the existence of the $p$th moment of
  $N_{\L}(W)$ is difficult to deduce from Theorem~\ref{thm:rhok}. 
When $p=2$ this question has a long
history and is related to Geman's condition, see \cite{estrade2016central} and Section~\ref{sec:statistics} for details. \cite{gass2024number} establish the following result for any $p$ under a stronger assumption.

\begin{theorem}[{\cite[Theorem~1.2]{gass2024number}}] \label{thm:gass}
  Assume conditions~\ref{C:general}$[p+1]$ and
  \ref{C:nondegeneracy}$[p]$ for some $p\ge 1$. Then $\E \left\{ N_\L(\Delta)^p \right\}<\infty$ for any compact $\Delta\subseteq W$.
\end{theorem}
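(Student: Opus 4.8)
The plan is to reduce the claim to the integrability over $\Delta^p$ of the $p$th order intensity $\rho^{(p)}_\L$, and then to control the singularity of this integrand near the diagonal. First I would pass from ordinary to factorial moments: for any $\mathbb N$-valued random variable $N$ one has $N^p=\sum_{j=1}^p S(p,j)\,N^{(j)}$, where $N^{(j)}=N(N-1)\cdots(N-j+1)$ and $S(p,j)$ are Stirling numbers, and moreover $N^{(j)}\le N^{(p)}+c_p$ for a constant $c_p$ depending only on $p$ (since $N^{(p)}\ge N^{(j)}$ as soon as $N\ge p$, while both are bounded for $N<p$). Hence it suffices to show $\E\{N_\L(\Delta)^{(p)}\}<\infty$. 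By Campbell's identity~\eqref{eq:campbell} applied to $\phi=\mathbf 1_{\Delta^p}$ together with the Kac--Rice formula of Theorem~\ref{thm:rhok}, this factorial moment equals $\int_{\Delta^p}\rho^{(p)}_\L(t_1,\dots,t_p)\,\dd t_1\cdots\dd t_p$, understood in $[0,\infty]$. Since $\iota_\L\{\cdot\}\le 1$, formula~\eqref{eq:res_gL_higherdim} yields the pointwise bound $\rho^{(p)}_\L\le \rho^{(p)}_{0:d}$, so I may take $\L=\{0,\dots,d\}$ and work with
\[
\rho^{(p)}_{0:d}(t_1,\dots,t_p)=f_V(0)\,\E\Big(\textstyle\prod_{i=1}^p|\det X^{\prime\prime}(t_i)|\,\Big|\,X^\prime(t_1)=\dots=X^\prime(t_p)=0\Big).
\]

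Next I would split $\Delta^p$ according to the minimal interpoint distance. On the well-separated region $\{\min_{i\ne j}\|t_i-t_j\|\ge\delta\}$ the integrand is bounded: there $V=\{X^\prime(t_1)^\top,\dots,X^\prime(t_p)^\top\}^\top$ is non-degenerate, and both $f_V(0)$ and the conditional expectation depend continuously on $(t_1,\dots,t_p)$, hence are bounded on this compact set, so this part of the integral is finite. The entire difficulty is the near-diagonal region, where the covariance of $V$ degenerates as points coalesce and $f_V(0)$ blows up. To analyse it I would group the $t_i$ into clusters of mutually close points and, inside each cluster with base point $t_*$, replace the conditioning events $\{X^\prime(t_i)=0\}$ by an equivalent family of scaled \emph{divided differences} of $X^\prime$. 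Using the a.s.\ $C^{p+1}$ regularity from~\ref{C:general}$[p+1]$, a Taylor expansion at $t_*$ identifies these divided differences, in the coalescing limit, with the jet $\{\partial^\alpha X(t_*)\}_{1\le|\alpha|\le p}$ up to a controlled remainder; condition~\ref{C:nondegeneracy}$[p]$ makes this jet non-degenerate, so the reparametrized Gaussian vector has a uniformly bounded density at $0$. The linear change of variables produces negative powers of the interpoint distances in $f_V(0)$, which are matched by positive powers coming from the conditional moment of $\prod_i|\det X^{\prime\prime}(t_i)|$: conditioning on the vanishing of $X^\prime$ throughout a cluster forces, via the same Taylor expansion, each $X^{\prime\prime}(t_i)$ to have a near-kernel direction, so each $|\det X^{\prime\prime}(t_i)|$ is small of the appropriate order.

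The prototypical case $p=2$ already displays the mechanism: as $r=\|t_1-t_2\|\to0$ one has $f_V(0)\sim C\,r^{-d}$, while conditioning on $X^\prime(t_1)=X^\prime(t_2)=0$ forces $X^{\prime\prime}(t_1)(t_2-t_1)=O(r)$, hence each $|\det X^{\prime\prime}(t_i)|=O(r)$, giving $\rho^{(2)}_{0:d}(r)\sim c\,r^{2-d}$ and $r^{d-1}\rho^{(2)}_{0:d}\sim c\,r$, which is integrable at $0$ (consistent with Table~\ref{tab:expansion_pcf}). The hard part for general $p$ is exactly this bookkeeping of exponents, uniformly over the shape of each cluster: one must verify that, within every cluster, the negative powers of the interpoint distances arising from the degenerating density are dominated by the positive powers arising from the vanishing conditional determinants, so that the net integrand retains $r^{d-1}$-type integrability. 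This is where the two hypotheses enter jointly --- the extra derivative in~\ref{C:general}$[p+1]$ provides quantitative Taylor control of the remainders, and~\ref{C:nondegeneracy}$[p]$ keeps the reparametrized jet non-degenerate. Summing the resulting finite bounds over the finitely many partitions of $\{1,\dots,p\}$ into clusters gives $\int_{\Delta^p}\rho^{(p)}_{0:d}<\infty$, and hence $\E\{N_\L(\Delta)^p\}<\infty$. The technical core of this near-diagonal estimate is the content of~\cite{gass2024number}.
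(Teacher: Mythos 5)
First, a point of comparison: the paper does not prove this statement at all. Theorem~\ref{thm:gass} is imported verbatim from \cite{gass2024number}, the only original content being the remark that an inspection of that proof shows the weaker jet condition \ref{C:nondegeneracy}$[p]$ (which omits $X(t)$ itself) suffices. Your sketch also defers ``the technical core'' to the same reference, so it is at best a plausibility argument rather than a proof; but the more serious problem is that the route you chose cannot be completed under the stated hypotheses.

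The gap is in your first reduction and in the treatment of the well-separated region. You rely on (a) the identity $\E\{N_\L(\Delta)^{(p)}\}=\int_{\Delta^p}\rho^{(p)}_\L$, and (b) the claim that on $\{\min_{i\neq j}\|t_i-t_j\|\geq\delta\}$ the vector $V=\{X^\prime(t_1)^\top,\dots,X^\prime(t_p)^\top\}^\top$ is non-degenerate and the Kac--Rice integrand bounded. Both require joint non-degeneracy of gradients at \emph{distinct} points, which does not follow from \ref{C:general}$[p+1]$ and \ref{C:nondegeneracy}$[p]$: these conditions constrain only the jet of $\bX$ at a single point. The sine-cosine process ($d=1$, $c_1(r)=\cos(r/\varphi)$, i.e.\ $X(t)=\xi_1\cos(t/\varphi)+\xi_2\sin(t/\varphi)$ with $\xi_1,\xi_2$ i.i.d.\ standard Gaussian) is a counterexample with $p=2$: it is analytic, so \ref{C:general}$[3]$ holds, and $\{X^\prime(t),X^{\prime\prime}(t)\}$ has covariance $\mathrm{diag}(\varphi^{-2},\varphi^{-4})$, so \ref{C:nondegeneracy}$[2]$ holds; yet $V(r)$ is degenerate for every $r\in\pi\varphi\mathbb{Z}$, the critical points form a randomly translated grid, and, as the paper itself notes in Section~3, the second factorial moment measure is \emph{not} absolutely continuous, so $\rho^{(2)}_\L$ does not exist. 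Worse, at any pair of points where $V$ \emph{is} non-degenerate, conditioning on $X^\prime(t_1)=X^\prime(t_2)=0$ forces $(\xi_1,\xi_2)=(0,0)$ and hence $X^{\prime\prime}\equiv 0$, so the Kac--Rice integrand vanishes almost everywhere: the right-hand side of (a) equals $0$ while $\E\{N_\L(\Delta)^{(2)}\}>0$. Thus the equivalence ``finite $p$th moment $\Leftrightarrow$ integrable Kac--Rice $p$th intensity'' is simply false under the theorem's assumptions; for this field the conclusion holds because the point process is rigid, not because $\rho^{(2)}_\L$ is integrable. This is precisely why the argument of \cite{gass2024number} is built differently (divided differences and jet non-degeneracy only, with no presupposed absolute continuity of factorial moment measures and no off-diagonal non-degeneracy), and why the paper imports the result rather than reproving it.
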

\cite{gass2024number} actually use a different version
of~\ref{C:nondegeneracy}$[p]$, since they include $X(t)$ in the
vector. However, an inspection of their proof shows that the current
condition \ref{C:nondegeneracy}$[p]$ is sufficient. 
To illustrate Theorem~\ref{thm:gass} for a Matérn random field, if one assumes that the regularity parameter is such that $\nu>5$, \ref{C:general}$[5]$ and\ref{C:nondegeneracy}$[4]$ holds which leads to $\E\{N_\L(W)^4\}<\infty$.


\section{Simulation of a critical point process} \label{sec:simulation}

Let $\bX$ be a Gaussian random field defined on $\R^d$ with covariance function $c$. In this section, we address the problem of simulating its corresponding critical point process $\bY_\L$ in a bounded domain, say $W=[0,1]^d$. 
We are unable to derive the full distribution of $\bY_\L$ and we
believe that there is no other way of simulating $\bY_\L$ on $W$ than
using the following two steps: (i) simulate a continuous approximate  realization of the Gaussian random field in $W$; (ii) find the critical points of this realization. Step~(ii) is essentially an optimization step. Several approaches could be considered, e.g. methods based on arithmetic intervals \cite{hansen2003global,juliaintervals}. We do not discuss this in the present manuscript and focus on step~(i). 

\subsection{Approximate simulation of smooth Gaussian random fields}

Many methods are available to simulate a stationary and isotropic random field in $W=[0,1]^d$ \cite{lantuejoul2013geostatistical} and the \texttt{R} package \texttt{RandomFields} \cite{schlather2015analysis}. Given the fact that we need a smooth realization to find the critical points in step (ii), we may classify all simulation methods into two categories: (a) Simulate $\bX$ on a lattice and smoothly interpolate it on the continuous space; (b) Approximate a continuous realization of $\bX$  as the ``average'' of continuous realizations. 

Methods of type (a) consist in generating exactly a Gaussian random vector, e.g.\ the Cholesky method or the circulant-matrix based method \cite{wood1994simulation}. To formalize this approach, let $k:\mathbb{R}^d \to \mathbb{R}$ be a smooth bounded density function, compactly supported in $[-1,1]^d$ (an example of it being the so called bump function) and, for all $\xi >0$, let us denote $k_\xi(x) = \xi^{-d} k(\xi^{-1}x)$ its scaled version. 
Also, let $(\xi_n)_n$ be a sequence of real numbers and $(L_n)_n$ be a sequence of lattices on $\mathbb{R}^d$. In particular, we consider the sequence $L_n=\{i/n, i\in (\mathbb Z + 1/2)^d\}$ so that  $W=\cup_{x \in L_n\cap W} \,\mathcal C_{n,x}$ where $\mathcal C_{n,x}$ are disjoint hypercubes centered at $x \in L_n\cap W$ with side-length $1/n$. Note that $L_n\cap W$ densifies in $W$ as $n\to \infty$, that $\#(L_n\cap W)=n^d$ and $|\mathcal C_{n,x}|=n^{-d}$. Thus, we generate a Gaussian random vector $\{X(x),x\in L_n\cap W\}$ and define a smoothed version $\bX_n$ by
\begin{equation} \label{eq:Xa}
  X_n (t) = \sum_{x\in L_n \cap W} n^{-d} k_{\xi_n}(t-x) X(x)
\end{equation}
for all $t \in W$. Theorem~\ref{thm:Xab}(i) details conditions on $\xi_n$ required to prove the convergence of $\bX_n$ to $\bX$ on~$W$.

To illustrate methods of type~(b), we present the spectral approach \cite{lantuejoul2013geostatistical}: let $U_i$, $V_i$ and $W_i$ for $i=1,\dots,n$ ($n\ge 1$) be realizations of independent random variables with $U_i\sim \mathcal{U}([0,2\pi])$, $V_i \sim F$ (where $F$ is the spectral measure of $\bX$)  and $W_i\sim \mathcal U([0,1])$. Then we define for any $t\in [0,1]^d$,
\begin{equation} \label{eq:Zib}
    Z_i(t) = \sqrt{-2\log(W_i)} \cos\left( U_i + t^\top V_i\right).
  \end{equation}
  For isotropic correlation functions, the simulation of the $d$-dimensional random variables $V_i$ can be reduced to generate one-dimensional random variables using the inverse transform method.
The random fields $\bZ_i=\{Z_i(t), t\in W\}$ are obviously infinitely
continuously differentiable on $W$ and can be shown to be centered with correlation function $c$. The factor $\sqrt{-2\log(W_i)}$ makes the marginal distribution of $Z_i(t)$ Gaussian for any $t\in W$ but $\bZ_i$ is not a Gaussian random field. By appeal to the central limit theorem, the fields $\bZ_i$ are therefore averaged in the final step of the spectral method. In other words, a continuous approximate realization $\bX_n$ of $\bX$ is defined for any $t\in W$ by
\begin{equation} \label{eq:Xb}
   X_n(t)= \frac{1}{\sqrt{n}} \sum_{i=1}^{n} Z_i(t).
\end{equation}

The advantage of type (a) methods is that $\bX_n$ is exact on the
finite set $L_n\cap W$ and constitutes a Gaussian random field on
$W$. However, the covariance function of $\bX_n$ differs from $c$. The
advantage of the spectral method is that we directly obtain a
continuous random field with the same correlation function as
$\bX$. But in this case $\bX_n$ only converges to a Gaussian random
field and is not a Gaussian random field itself. 

\subsection{Convergence of approximate simulations}

Following the previous section, an important feature is to understand how critical points of $\bX_n$ converge to the ones of $\bX$. We first consider this problem and then apply general conditions on random fields  $\bX_n$ given by~\eqref{eq:Xa} or~\eqref{eq:Xb}.

Let $S=\{x_1,\dots,x_p\}$ and, for any $n \ge 1$, $S^n=\{x^n_1,\dots,x^n_{p_n}\}$ be finite subsets of
$\mathbb{R}^d$. We say that $S^n$ converges to $S$, $S^n\to S$, if and
only if $\lim_{n \rightarrow \infty} p_n=p$ and 
\begin{equation} \label{eq:convcrit}
   \exists\, \phi_n:\{1,\dots,p\}\to
    \{1,\dots,p_n\}, \, n \ge 1,  \text{ such that } \forall k=1,\dots,p, \, x^n_{\phi_n(k)} \to x_k.
\end{equation}
For any set $E \subset \R^d$ and any $\ell\ge 1$, we consider the space $C^\ell(E)$ equipped with the norm 
\begin{equation}\label{eq:extendedsupnorm}
    \|f\|_{C^\ell(E)}  = \sum_{|\alpha|\leq \ell} \sup_{x\in E} |\partial^{\alpha} f(x)|
\end{equation}
for any $f\in C^\ell(E)$. In the following, the convergence  $f_n \to
f$ in $C^\ell(E)$ as $n\to \infty$ means that
$\|f_n-f\|_{C^\ell(E)}\to0$, and the almost sure convergence of random
fields in $C^\ell(E)$ follows accordingly. Moreover, a sequence of
random fields $(\bX_n)_n$ is said to converge in distribution to
$\bX$ in $C^\ell(E)$, and we write $\bX_n \xrightarrow{\mathcal{D}} \bX$ in
$C^\ell(E)$, if $\E\{ f(\bX_n) \} \to \E\{ f(\bX) \}$ for any  continuous function from ${C^\ell(E)}$ to $\R$.
Recall that $f\in C^2(W)$ is a \emph{Morse function} if it has no
degenerate critical points, i.e.\ $f^\prime(t) = 0$ implies
$\det\{f^{\prime\prime}(t)\} \neq 0$ for all $t \in W$. We further
call $f$ an \emph{MB function} if it is a Morse function and has no
critical point on the boundary of $W$. The next result provides a sufficient condition for the convergence of the set of critical points of any specified index.

\begin{theorem} \label{thm:convcrit}
Let $\bX$ (resp.\ $(\bX_n)_n$) be a random field (resp.\ sequence of
random fields) on $W$ with sample paths in $C^2(W)$. Assume that $\bX$ is a.s.\ an MB function, and that
$\bX_n \xrightarrow{\bullet} \bX$ in $C^2(W)$ as $n\to \infty$. Then, for all $\ell = 0,\dots,d$, 
    $\bY_{n,\ell} 
    \xrightarrow{\bullet}
    \bY_\ell$
where $\bY_\ell$ and $\bY_{n,\ell}$ are defined according to Definition~\ref{def:critical} but ``$t\in \mathbb{R}^d$'' is replaced by ``$t\in W$'' (and $\bX$ is replaced by $\bX_n$ for the definition of $\bY_{n,\ell}$) and where the convergence mode $\bullet$ is $\mathcal D$ or a.s.
\end{theorem}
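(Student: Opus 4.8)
The plan is to first reduce the statement to a purely \emph{deterministic} continuity property of the critical-point map, and then handle the two convergence modes separately. Concretely, I would establish the following claim: if $f\in C^2(W)$ is an MB function and $(f_n)_n\subset C^2(W)$ satisfies $\|f_n-f\|_{C^2(W)}\to0$, then for every $\ell$ the index-$\ell$ critical set of $f_n$ converges, in the sense of~\eqref{eq:convcrit}, to the index-$\ell$ critical set of $f$. Granting this, the almost sure case is immediate: on the almost sure event $\{\bX \text{ is MB}\}\cap\{\bX_n\to\bX \text{ in } C^2(W)\}$ the hypotheses of the claim hold pathwise, so $\bY_{n,\ell}\to\bY_\ell$ a.s.

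For the deterministic claim I would argue as follows. Since $f$ is MB it has finitely many critical points $t_1,\dots,t_p$, all in $\interior(W)$ and all non-degenerate, so each $f''(t_k)$ is invertible. Around each $t_k$ I choose a closed ball $B_k\subset\interior(W)$, the $B_k$ pairwise disjoint and small enough that $\sup_{t\in B_k}\|f''(t)-f''(t_k)\|$ lies below a threshold fixed by $\|f''(t_k)^{-1}\|$. \emph{No stray critical points:} on the compact set $K=W\setminus\bigcup_k\interior(B_k)$ one has $\inf_K|f'|=:\delta>0$, so $f_n'\to f'$ uniformly forces $|f_n'|>\delta/2$ on $K$ for large $n$; hence $f_n$ has no critical point outside $\bigcup_k B_k$, and since $B_k\subset\interior(W)$, none on $\partial W$. \emph{Existence:} on $\partial B_k$ one has $|f'|\ge m_k>0$, so for large $n$ the homotopy $(1-s)f'+s f_n'$ is non-vanishing on $\partial B_k$, whence $\deg(f_n',B_k,0)=\deg(f',B_k,0)=\operatorname{sign}\det f''(t_k)\neq0$ and $f_n$ has at least one critical point $t_k^n\in B_k$. \emph{Uniqueness:} the choice of $B_k$ together with $f_n''\to f''$ uniformly gives, for large $n$, a mean-value estimate $\|f_n'(t)-f_n'(s)-f''(t_k)(t-s)\|\le \tfrac12\|f''(t_k)^{-1}\|^{-1}|t-s|$ on the convex set $B_k$, which makes $f_n'$ injective there and so yields at most one critical point per ball. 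Combining the three points, for large $n$ the map $f_n$ has exactly $p$ critical points in $W$, one $t_k^n$ per ball, so $p_n=p$ and the matching $\phi_n(k)=k$ satisfies~\eqref{eq:convcrit} once one notes that letting $r_k\to0$ forces $t_k^n\to t_k$. Finally $f_n''(t_k^n)\to f''(t_k)$ by uniform convergence and continuity, and since $f''(t_k)$ has no zero eigenvalue, continuity of the eigenvalues of symmetric matrices preserves the number of negative ones; thus $t_k^n$ has index $\ell$ whenever $t_k$ does, giving the index-wise convergence.

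For the convergence-in-distribution case I would pass through the Skorokhod representation theorem, which applies since $C^2(W)$ is a separable Banach space. From $\bX_n\xrightarrow{\mathcal D}\bX$ in $C^2(W)$ I obtain, on a common probability space, copies $\tilde\bX_n\stackrel{d}{=}\bX_n$ and $\tilde\bX\stackrel{d}{=}\bX$ with $\tilde\bX_n\to\tilde\bX$ a.s.\ in $C^2(W)$. Because $\{f: f\text{ is MB}\}$ is a measurable (indeed open) subset of $C^2(W)$---the maps $f\mapsto\min_{\partial W}|f'|$ and $f\mapsto\min_{t\in W}\max(|f'(t)|,|\det f''(t)|)$ are continuous, and MB-ness is their joint positivity---the copy $\tilde\bX$ is a.s.\ MB, so the deterministic claim applies pathwise and gives $\tilde\bY_{n,\ell}\to\tilde\bY_\ell$ a.s. As~\eqref{eq:convcrit} is precisely the mode of convergence defining the topology on point configurations (equivalently, vague convergence of the associated counting measures), this yields $\tilde\bY_{n,\ell}\xrightarrow{\mathcal D}\tilde\bY_\ell$. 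Since the critical-point map $f\mapsto\bY_\ell$ is measurable and $\tilde\bX_n,\tilde\bX$ are distributional copies of $\bX_n,\bX$, the point processes $\tilde\bY_{n,\ell},\tilde\bY_\ell$ are distributional copies of $\bY_{n,\ell},\bY_\ell$, and we conclude $\bY_{n,\ell}\xrightarrow{\mathcal D}\bY_\ell$.

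The main obstacle is the deterministic lemma, and within it the simultaneous control of \emph{existence} and \emph{uniqueness} of the perturbed critical point in each ball, together with the exclusion of spurious critical points near the boundary: degree theory alone only counts zeros algebraically, so it must be paired with the quantitative injectivity estimate to pin down that exactly one critical point survives in each $B_k$. By contrast, the reduction of the distributional statement to the a.s.\ statement via Skorokhod, and the attendant measurability checks, are routine.
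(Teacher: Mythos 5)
Your proof is correct, and it follows the same two-step architecture as the paper (a deterministic continuity statement for the critical-point map, then a probabilistic lifting), but the technical content of both steps is genuinely different. For the deterministic core, the paper proves a local lemma in the spirit of the inverse function theorem (following Tao): it builds $g(x)=f(x)-f'(x_0)(x-x_0)$, derives a contraction-type bound on its directional derivatives, and from that single estimate extracts \emph{both} injectivity of $f_n'$ near each root \emph{and} surjectivity onto a small ball around $0$, which gives existence and uniqueness of the perturbed root simultaneously; it then patches the local statements together exactly as you do (isolated roots, a uniform lower bound on $|f'|$ off the balls, index preservation). You instead split the work: Brouwer degree with the straight-line homotopy gives existence, and a separate mean-value injectivity estimate gives uniqueness. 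Both are sound; the paper's route is more elementary (no degree theory), while yours makes the existence step arguably more robust and transparent. For the probabilistic step, the paper invokes the continuous mapping theorem directly, using that the critical-point map is continuous at every point of the (measurable) set $M$ of MB-type functions and that $\bX'\in M$ a.s.; you route through the Skorokhod representation theorem on the separable space $C^2(W)$ and then deduce convergence in distribution from pathwise convergence of the coupled copies. These are essentially interchangeable; your version costs an extra appeal to separability and an explicit measurability check, but your observation that the MB set is \emph{open} in $C^2(W)$ (via continuity of $f\mapsto\min_{\partial W}|f'|$ and $f\mapsto\min_{t\in W}\max\{|f'(t)|,|\det f''(t)|\}$) is a nice detail that the paper leaves implicit.
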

A proof is given in Appendix~\ref{app:convergence:critical:points}. We now apply this result.

\begin{theorem} \label{thm:Xab} 
The following results hold. \\
\noindent(i) Assume \ref{C:general}$[2+\varepsilon]$ and let
$(\bX_n)_n$ be a sequence of random fields given by~\eqref{eq:Xa}. Assume that the kernel density $k \in C^3(\R^d)$ and that as $n \to \infty$, $\xi_n\to 0$ and $n^{-1} \xi_n^{-d-3} \to 0$. Then, as $n\to \infty$, $\bX_n \xrightarrow{a.s.} \mathbf{X}$
in $C^2(W)$ and thus,  $\bY_{n,\ell} \xrightarrow{a.s.} \bY_\ell$.
\\ 
\noindent(ii) Assume \ref{C:general}$[3+\lfloor d/2 \rfloor]$  and let $(\bX_n)_n$ be a sequence of random fields given by~\eqref{eq:Xb}. Then, as $n\to \infty$, $\bX_n \xrightarrow{\mathcal{D}} \mathbf{X}$
in $C^2(W)$  and thus,  $\bY_{n,\ell} \xrightarrow{\mathcal D} \bY_\ell$. 
\end{theorem}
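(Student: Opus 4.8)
\textbf{Plan for the proof of Theorem~\ref{thm:Xab}.} The central strategy is to reduce both parts to the abstract convergence result in Theorem~\ref{thm:convcrit}, so the task splits into two independent pieces: (1) verify that $\bX$ is almost surely an MB function on $W$, and (2) establish the appropriate mode of convergence $\bX_n \xrightarrow{\bullet} \bX$ in $C^2(W)$. Piece~(1) is shared by both parts. Under \ref{C:general}$[2+\varepsilon]$ the field is twice continuously differentiable, and by the discussion following the conditions (the Morse property deduced from non-degeneracy, cf.\ \cite[Proposition 18]{azais2022mean}), the sample paths are a.s.\ Morse; I would additionally argue that the critical points avoid the (measure-zero) boundary $\partial W$ almost surely, using that the gradient field has a bounded density so that $\{X'(t)=0\}$ charges no lower-dimensional set. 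This gives the MB hypothesis of Theorem~\ref{thm:convcrit}, and then the conclusion $\bY_{n,\ell}\xrightarrow{\bullet}\bY_\ell$ follows automatically once the $C^2(W)$-convergence of the fields is in hand.

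\textbf{Part (i): almost sure convergence.} For the smoothed lattice field~\eqref{eq:Xa}, the plan is to show $\|\bX_n-\bX\|_{C^2(W)}\to 0$ a.s.\ by controlling, for each multi-index $\alpha$ with $|\alpha|\le 2$, the uniform error $\sup_{t\in W}|\partial^\alpha X_n(t)-\partial^\alpha X(t)|$. Since $X_n$ is a kernel-weighted sum, $\partial^\alpha X_n(t)=\sum_{x} n^{-d}(\partial^\alpha k_{\xi_n})(t-x)X(x)$, and I would decompose the error into a \emph{deterministic smoothing bias} (the difference between the convolution-type average and $\partial^\alpha X$) and a \emph{discretization term}. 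The bias is handled by the usual mollifier estimate: because $k$ is a compactly supported density and $\bX\in C^{2+\varepsilon}_{\rm a.s.}$, convolving the (Hölder-$\varepsilon$) second derivatives with $k_{\xi_n}$ loses at most an $O(\xi_n^{\varepsilon})$ factor, which vanishes as $\xi_n\to 0$. The discretization term, replacing the integral against $k_{\xi_n}$ by the Riemann-type lattice sum over $L_n\cap W$ with spacing $1/n$, is where the scaling condition $n^{-1}\xi_n^{-d-3}\to 0$ enters: differentiating $k_{\xi_n}$ up to order two and using the $C^3$ regularity of $k$ produces derivatives of size $O(\xi_n^{-d-|\alpha|-1})$ with $|\alpha|\le 2$, so the quadrature error per unit volume scales like $n^{-1}\xi_n^{-d-3}$. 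To upgrade pointwise/in-probability control to a.s.\ uniform control I would invoke a Gaussian maximal-inequality / metric-entropy bound (e.g.\ via the Borell--TIS inequality together with the Hölder modulus from \ref{C:general}$[2+\varepsilon]$), giving summable tail bounds along the sequence and hence a.s.\ convergence by Borel--Cantelli.

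\textbf{Part (ii): convergence in distribution.} For the spectral field~\eqref{eq:Xb}, $\bX_n=n^{-1/2}\sum_{i=1}^n \bZ_i$ is a normalized sum of i.i.d.\ random fields $\bZ_i$ with the correct covariance $c$, so the natural route is a functional central limit theorem in the Banach space $C^2(W)$. The plan has two standard ingredients: convergence of finite-dimensional distributions, which follows from the classical multivariate CLT applied to the (mean-zero, finite-variance) vectors $\{(\partial^\alpha Z_i(t_j))_{|\alpha|\le 2, j}\}$ and identifies the limit's covariance as that of the Gaussian field $\bX$ and its derivatives; and \emph{tightness} in $C^2(W)$, which I expect to be the main obstacle. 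Tightness requires a uniform modulus-of-continuity control on the third-order increments (to embed $C^2$ into a slightly smoother space compactly) and this is precisely why the hypothesis sharpens to \ref{C:general}$[3+\lfloor d/2\rfloor]$: the extra $\lfloor d/2\rfloor$ derivatives are needed to obtain, via a Kolmogorov--Chentsov type chaining argument in dimension $d$, moment bounds on increments of $\partial^\alpha X_n$ strong enough to force relative compactness of the laws on $C^2(W)$. Concretely I would bound $\E\,\|\partial^\alpha Z_i(t)-\partial^\alpha Z_i(s)\|^{2m}$ for a sufficiently large integer $m$ depending on $d$ (using that $Z_i$ is an explicit cosine with spectral-measure frequencies and that $\lambda_{2q}<\infty$ up to the required order by \ref{C:general}$[3+\lfloor d/2\rfloor]$), then transfer the increment moment bound to the normalized sum $\bX_n$ uniformly in $n$, and conclude tightness. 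Combining finite-dimensional convergence with tightness yields $\bX_n\xrightarrow{\mathcal D}\bX$ in $C^2(W)$, and Theorem~\ref{thm:convcrit} then delivers $\bY_{n,\ell}\xrightarrow{\mathcal D}\bY_\ell$.
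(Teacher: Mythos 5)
Your reduction to Theorem~\ref{thm:convcrit} and your two-term decomposition for part~(i) (mollification bias $O(\xi_n^{\varepsilon})$ plus discretization error $O(n^{-1}\xi_n^{-d-3})$) match the paper's proof in substance. However, your final step for part~(i) introduces a gap where none is needed: you propose to pass from ``pointwise/in-probability'' control to a.s.\ convergence via Borell--TIS concentration and Borel--Cantelli, but Borel--Cantelli requires \emph{summable} tail bounds, and the hypothesis only gives $n^{-1}\xi_n^{-d-3}\to 0$ with no rate, so summability is not available in general. The point you miss is that no stochastic argument is required at all: once a realization of $\bX$ is fixed, the quantities $\|\bX\|_{C^0(W)}$, $\|\bX\|_{C^1(W)}$, $\|\bX\|_{C^2\{(W)_{\oplus K}\}}$ and the local H\"older constant of the second derivatives are finite (this is exactly what $\bX\in C^{2+\varepsilon}_{\rm a.s.}$ gives), and every term in the decomposition is bounded \emph{uniformly in $t$, path by path}, by these random constants times deterministic rates. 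The paper's proof is purely deterministic given the sample path, and a.s.\ convergence in $C^2(W)$ is immediate; your detour through Gaussian concentration is both unnecessary and, as formulated, incomplete.

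For part~(ii) the gap is more serious. You propose the classical route of finite-dimensional convergence plus tightness in $C^2(W)$ via Kolmogorov--Chentsov/Rosenthal moment bounds on increments of $\partial^\alpha X_n$, $|\alpha|\le 2$, and you assert that \ref{C:general}$[3+\lfloor d/2\rfloor]$ is ``precisely'' what such a chaining argument needs. It is not. A chaining bound in dimension $d$ on second-derivative increments requires $\E\,|\partial^\alpha Z(t)-\partial^\alpha Z(s)|^{p}\lesssim \|t-s\|^{d+\beta}$ with $p>d$; since $|\partial^\alpha Z_i(t)-\partial^\alpha Z_i(s)|\lesssim \sqrt{-2\log W_i}\,\|V_i\|^{3}\|t-s\|$, the Rosenthal correction term $n^{1-p/2}\,\E\,|\partial^\alpha Z(t)-\partial^\alpha Z(s)|^{p}$ forces moments of order roughly $\E\|V\|^{3p}$ with $p>d$, i.e.\ $\E\|V\|^{3d+}$, whereas the hypothesis only provides spectral moments up to order $6+2\lfloor d/2\rfloor$; these are compatible only for $d\le 3$, so your argument breaks down (or at least needs nontrivial truncation work you do not describe) in higher dimension. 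The exponent $\lfloor d/2\rfloor$ in the hypothesis is in fact calibrated to a completely different mechanism, which is the paper's route: embed the fields in the Hilbert Sobolev space $\mathcal W=H^{2+m}$ with $m=\lfloor d/2\rfloor+1$, where (a) the Sobolev embedding $\mathcal W\hookrightarrow C^2(W)$ is continuous, and (b) the CLT for i.i.d.\ centered random variables with finite second moment holds in any separable Hilbert space (Theorem~10.5 of \cite{ledoux2013probability}), so \emph{no tightness argument is needed at all}. The second-moment condition in $\mathcal W$ amounts exactly to $\E\|V\|^{4+2m}=\E\|V\|^{6+2\lfloor d/2\rfloor}<\infty$, which is what \ref{C:general}$[3+\lfloor d/2\rfloor]$ supplies; one then identifies the Gaussian limit with $\bX$ through finite-dimensional distributions, using that the evaluation functionals are continuous on $\mathcal W\subset C^2$. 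The paper explicitly notes that $C^2$ is not a convenient Banach space for the CLT; your plan runs head-on into that difficulty instead of circumventing it.
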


\noindent A proof is given in Appendix~\ref{app:convergence:simulation}.  To rephrase this result, if $(\bX_n)_n$ is given by~\eqref{eq:Xa}, we have a slightly stronger condition on $\bX$, namely \ref{C:general}$[2+\varepsilon]$, than the one used in Theorem~\ref{thm:pcf} (for which $\varepsilon=0$). Also, we need the bandwidth to satisfy some specific convergence rates. Starting with a random field $(\bX_n)_n$  given by~\eqref{eq:Xb} seems easier (no bandwidth tuning is required). However, we have a stronger condition on the regularity of $\bX$ (at least $C^{5}_{\rm a.s.}(W)$ in dimension 2 for instance).


\section{Asymptotics for functionals of critical point processes} \label{sec:statistics}

Considering $\bY_\L$ as a statistical model, it is relevant to investigate asymptotic
results for functionals of $\bY_\L$. We consider increasing domain
asymptotics and assume, to ease the presentation, that the critical
point process is observed in the sequence of bounded domains
$W_n=[-n/2,n/2]^d$ (with volume $n^d$), so that $W_n\to \R^d$ as $n\to
\infty$. \cite{estrade2016central,nicolaescu2017clt}, and
\cite{azais2024multivariate} considered consistency and
asymptotic normality for normalized versions of $N_\L(W_n)$, the number of critical
points above a level $u$, and linear combinations of these numbers for different sets $\L$. In Sections~\ref{sec:integralandhermite} and \ref{sec:asymptoticresults} we significantly extend the results in the previous papers by considering general linear and bilinear statistics described in Section~\ref{sec:linearstatistics}. Our results require a few additional assumptions described and discussed in Section~\ref{sec:assumptions}. 

\subsection{Linear and bilinear statistics}\label{sec:linearstatistics}

Let $\L \subseteq\{0,\dots,d\}$ and let $\{\phi_{1,n}\}_{n
  \ge 1}$ and $\{\phi_{2,n}\}_{n \ge 1}$  be two sequences of 
measurable (test) functions with $\phi_{1,n}:\R^d \to \R$ and
$\phi_{2,n}:\R^d \times \R^d \to \R$. We  denote
generic points in $\R^d \times \R^d$ as $\mathbf{t} = (t^{(1)},
t^{(2)})$ or $\mathbf{s} = (s^{(1)}, s^{(2)})$, and consider the functionals $\Phi_{1,n}$ and $\Phi_{2,n}$ defined by
\begin{align}
\Phi_{1,n} &= \sum_{t \in \bY_\L \cap W_n} \phi_{1,n}(t) \label{eq:Phi1} \\
\Phi_{2,n} &= \sum_{\mathbf t\in (\bY_\L \cap W_n)^2}^{\neq} \phi_{2,n}(\mathbf{t}). \label{eq:Phi2}
\end{align} 
We restrict attention to functions $\phi_{2,n}$ with support included in the diagonal cylinder
\begin{equation}\label{eq:D:eta:R}
    D_{\eta, R} := \left\{ \mathbf{t} \in (\mathbb{R}^d)^2 : \eta\leq \|t^{(1)} - t^{(2)} \| \leq R
    \right\},
\end{equation}
for some $0 < \eta < R$ fixed in the rest of the paper. The use of
$\eta>0$ renders  the sign $\neq$ in \eqref{eq:Phi2} superfluous. The parameter $\eta$
will be motivated below.

Statistics of the form~\eqref{eq:Phi1} (resp.\ \eqref{eq:Phi2}) are called linear statistics (resp.\ bilinear statistics). We list a few examples. If $\phi_{1,n}(t)=\mathbf 1(t\in \Delta_n)$ for any bounded domain $\Delta_n \subseteq W_n$, $\Phi_{1,n}=N_\L(\Delta_n)$ is the number of critical points with indices $\L$ in $\Delta_n$. 
Thus, $\phi_{1,n}(t)=\mathbf 1(t\in W_n)/|W_n|$, yields
$\Phi_{1,n}=N_\L(W_n)/|W_n|= \hat \rho_\L$, i.e.\ the natural unbiased
estimator of the intensity parameter $\rho_\L$
\cite[e.g.][]{moller2003statistical}. Another example is
$\phi_{1,n}(t) = \rho_\L^{-1} n^{-d} \phi_1(t/n)$ where $\phi_1$ is an
integrable function on $W_1=[-1/2,1/2]^d$. Then, $\Phi_{1,n}$ is an
unbiased estimate of $I=\int_{[-1/2,1/2]^d} \phi_1(t)\dd t$ since
$\E(\Phi_{1,n}) = I$ by the Campbell theorem. Using point processes as
quadrature points has recently been considered
by~\cite{bardenet2020monte} and \cite{coeurjolly2021monte}. 

Regarding $\Phi_{2,n}$ consider for some $r \in [\eta, R]$, the two-dimensional functional 
\begin{equation}\label{eq:exPhi2}
    \phi_{2,n}(\mathbf{t}) = \frac{\mathbf{1}\{\mathbf{t} \in D_{\eta, r}\}}{\rho_\L^2 |W_n \cap (W_n)_{t^{(1)} - t^{(2)}}|},
\end{equation}
where $\Delta_{t} = \{s + t, s\in \Delta\}$ for any domain $\Delta$
and $t \in \mathbb{R}^d$. By the second-order Campbell Theorem,
$\E(\Phi_{2,n}) = K_{\eta,\L}(r)$ with
\begin{equation}
    K_{\eta,\L }(r) = \int_{B(0,\eta,r)} g_\L(\|t\|) \dd t
\end{equation}
where $B(x,\eta,r)=B(x,r)\setminus B(x,\eta)$ is the shell between the hyperspheres of radius $\eta$ and $r$.
Obviously, $K_{0,\L} = K_{\L}$ is the standard Ripley's $K$-function. However, the modification with $\eta>0$ is  considered
mainly for two reasons: (a) to upper bound the Hermite coefficient
$d_{\mathbf{a}}(r)$, defined below in Proposition~\ref{prop:chaos},
uniformly in $r\in[\eta,R]$; (b) to control the quantity
$\gamma_{\mathbf{a},\mathbf{b}}(\mathbf{t},\mathbf{s})$, defined below
in Lemma~\ref{lem:gamma}, when $\mathbf{t}$ or $\mathbf{s}\in
\mathbb{R}^d\times \mathbb{R}^d$ away from the diagonal. To extend
the results of the present paper to the standard Ripley's $K$-function ($\eta=0$)
requires further challenging derivations that we leave for future
work.

To study the asymptotic behavior of the translation edge corrected estimator of
the modifed Ripley's $K$-function estimator, given by 
\begin{equation}\label{eq:def:K:hat}
\hat K_{\eta, \L}(r) = \Phi_{2,n}\; \; \frac {\rho_\L^2}{\hat \rho_\L^2},
\end{equation}
the first step is clearly to derive results for $\hat \rho_\L=\Phi_{1,n}$ (for $\phi_{1,n}=\mathbf{1}( \cdot \in W_n)/|W_n|$) and 
for $\Phi_{2,n}$ given by~\eqref{eq:exPhi2}. 
The second step is to investigate the joint distribution of $(\Phi_{1,n},\Phi_{2,n})$. Both steps are tackled in Theorem~\ref{thm:clt}.

Other edge corrected estimates using e.g.\ the border or isotropic
edge correction \cite[see eg][]{moller2003statistical} can also be
rewritten as two-dimensional functionals similar
to~\eqref{eq:exPhi2}. Furthermore, \eqref{eq:exPhi2} could be extended
to bilinear functionals where $t^{(1)} \in \bY_\L$ and $t^{(2)} \in
\bY_{\L^\prime}$ for different sets $\L,\L^\prime\subseteq
\{0,1,\dots,d\}$. Such functionals would lead to estimates of modified
cross-Ripley's $K$-functions between $\bY_\L$ and
$\bY_{\L^\prime}$. We do not detail the asymptotics in this case but
outline the required changes in 
Theorems~\ref{thm:variance}-\ref{thm:clt} in a remark after the
presentation of these results.

\subsection{Additional assumptions}\label{sec:assumptions}

Our main results require additional assumptions that we now present. 
\begin{enumerate}[($\mathcal C$.1)]
\setcounter{enumi}{3}
\item There exists a decreasing function $\Xi$ on $\R^+$, such that $
  \max_{p=1,\ldots,4}\{ r^{p} |c_2^{(p)}(r^2)| \} \le \Xi(r)$ and such
  that $\int_0^\infty r^{d-1} \Xi(r) \dd r<\infty$.\label{C:integrability} 
\item The Gaussian random vector  $\{X^\prime(0)^\top,X^{\prime\prime}(0)^\top,X^\prime(re_1)^\top,X^{\prime\prime}(re_1)^\top\}^\top$ is \linebreak non-degenerate for any $r \in [\eta,R]$. \label{C:nondegeneracy:hessian}
\end{enumerate}

Assume conditions~\ref{C:general}$[p+1]$ and
\ref{C:nondegeneracy}$[p]$. By Theorem~\ref{thm:gass}, $\E \{
N_{0:d}(\Delta)^p\}<\infty$ for any compact subset $\Delta$ of
$\R^d$. Following \cite[Remark 1]{gass2024number}, the two conditions
also ensure that the function $v \mapsto \E \{ N_{0:d}(\Delta,v)^p\}$
is continuous, where $N_{0:d}(\Delta,v)=\#\{X^\prime(t)=v, t\in
\Delta\}$ (note $N_{0:d}(\Delta)=N_{0:d}(\Delta,0)$). Therefore, we can apply the same argument as in
\cite[Proof of Proposition 1.1, p. 3872]{estrade2016central} to prove
that as $\varepsilon\to 0$, 
\begin{equation} \label{eq:NXprimeLp}
    N_{0:d,\varepsilon}(\Delta) := \int_\Delta |\det \{X^{\prime\prime}(t)\}| \delta_\varepsilon \{X^\prime(t)\}\dd t \to N_{0:d}(\Delta) \text{ a.s. and in } L^p
\end{equation} 
where $\delta_\varepsilon(\cdot) = \mathbf 1(\| \cdot \|\le
\varepsilon)/|B(0,\varepsilon)|$ is the uniform density on $B(0,\varepsilon)$. The $L^p$ convergence is used in Proposition~\ref{prop:representation}(ii). 

When $p=2$, the continuity of $ v \mapsto\E \{ N_{0:d}(\Delta,v)^p\}$
and~\eqref{eq:NXprimeLp} is also known as Geman's condition. Conditions~\ref{C:general}$[3]$ and \ref{C:nondegeneracy}$[2]$ can be weakened to obtain this result. Indeed,~\cite[Proposition 2.4.]{azais2024multivariate} require only \ref{C:general}[2] and the following assumption:
\begin{equation}\label{eq:c4at0}
\exists \delta>0 \; \text{ such that }
\int_{\|t\|\le \delta} \frac{\|c^{(4)}(t)-c^{(4)}(0)\|}{\|t\|^d} \dd t<\infty.
\end{equation}
In Lemma~\ref{lem:isotropic}, we prove that~\eqref{eq:c4at0} is fulfilled if there exists $\delta>0$ such that 
\begin{equation} \label{eq:Xi1}
\int_0^\delta r^{d-1}\widetilde{\Xi}(r)\dd r<\infty \text{ where }
\widetilde{\Xi}(r) = \max \left\{ 
\frac{|c_2^{\prime\prime}(r^2)-c_2^{\prime\prime}(0)|}{r^d}  , 
\frac{|c_2^{\prime\prime\prime}(r^2)|}{r^d-2} , 
\frac{| c_2^{(4)}(r^2)|}{r^{d-4}}
\right\}.    
\end{equation}
Lemma~\ref{lem:gemanL2} investigates~\eqref{eq:Xi1} for the Matérn and RWM. 
We can easily show that~\ref{C:general}[2] and \eqref{eq:Xi1} (hence Geman's condition) are always valid for RWM and valid for $\nu\ge 5/2$ for Matérn random fields.
Our main results on bilinear functionals (integral representation,
chaos expansions and asymptotic results) require~\eqref{eq:NXprimeLp}
for $p=4$, which is why we do not insist too much
on~\eqref{eq:Xi1}. It is an open question to obtain for $p>2$ an
assumption similar to~\eqref{eq:Xi1} that would be a less strict
alternative to condition \ref{C:general}$[p+1]$.

Condition~\ref{C:integrability} is essential for controlling
covariances involving Hermite polynomials and to ensure finiteness of limiting variances of $\Phi_{1,n}$ and $\Phi_{2,n}$. Lemma~\ref{lem:isotropic} in Appendix~\ref{sec:link:to:anisotropy} proves that condition~\ref{C:integrability} is the isotropic version of Assumption~(A3) of~\cite{azais2024multivariate} known as Arcones' condition,
\begin{equation}\label{eq:arcones}
    \max\{ |\partial^\alpha c(t)|, \alpha \in \mathbb N^d, 0\le |\alpha|\le 4\} \in L^1(\R^d).
\end{equation}
Condition~\ref{C:integrability} for $|\alpha|=0$ just means that $c
\in L^1(\R^d)$ and hence by Bochner's theorem ensures that the
Gaussian random field $\bX$ admits a spectral density function,
denoted by $f$ in the following. Existence of $f$ is the fundamental
ingredient of the proof of the central limit theorem, Theorem~\ref{thm:clt},  based on rewriting  functionals $\Phi_{1,n}$ and $\Phi_{2,n}$ as multiple
Wiener-Itô integrals. 

We inspect condition~\ref{C:integrability} for the Matérn correlation function and the RWM. For the Matérn case, using recurrence and asymptotic properties for the modified Bessel function \cite{abramowitz1968handbook} (see Appendix \ref{sec:bessel:derivatives} for details), we have for any integer $p\ge 1$ and any $r>0$ and $z=r\sqrt{2\nu}/\varphi$,
\begin{align}
c_2^{(p)}(r^2 ) &= \frac{(-1)^p}{2^p \varphi^{2p}} \, \frac{2 \nu^p}{\Gamma(\nu)} \; \left(\frac z2\right)^{\nu-p} K_{\nu-p}(z) \sim_{z\to \infty} \kappa \, z^{\nu-p-1/2} \exp(-z)  \label{eq:derivatives:c2:matern} 
\end{align}
for some constant $\kappa$. The exponential decay at infinity ensures condition~\ref{C:integrability} for any $\nu>0$. 
For the RWM, using again Appendix~\ref{sec:bessel:derivatives}, we have for any $p\ge 1$, $r>0$ and $z=r\sqrt{d}/\varphi$,
\begin{align}
    c_2^{(p)}(r^2 ) &=  \frac{(-1)^p}{2^p \varphi^{2p}} \,
    \left(\frac d2\right)^p \Gamma(d/2) \;
    \left(\frac{z}2\right)^{-(d/2-1+p)} J_{d/2-1+p}(z)\label{eq:derivatives:c2:RWM}\\
    &=    \kappa \frac{1}{r^{d/2-1+p+1/2}} \, 
        \left[\cos
        \left\{ \frac{r\sqrt{d}  }{\varphi}  - \left( \frac{d}2-1+p \right) \frac\pi 2 -\frac \pi4
        \right\} + O(r^{-1})   \right] \nonumber
\end{align}
as $r\to \infty$. The decay at infinity is definitely too slow and it
is easily seen that \ref{C:integrability} does not hold for any $d\ge 1$.  

Finally, condition~\ref{C:nondegeneracy:hessian} is essential for
Hermite expansions and asymptotic results for bilinear statistics. Of
course, \ref{C:nondegeneracy:hessian} extends
\ref{C:nondegeneracy:gradient}$[r]$ for any $r \in [\eta,R]$. A
summary of the validity of the conditions required in this paper
applied to the Matérn model and the RWM is provided in
Table~\ref{tab:maternAndRWM}. Our asymptotic results
Theorems~\ref{thm:variance}-\ref{thm:clt} do not cover the RWM for which
\ref{C:integrability} is not valid.

\begin{table}[htbp]
\begin{center}
\begin{tabular}{r|c|c}
& Matérn model & RWM \\
\hline
&&\\
\ref{C:general}$[p+\varepsilon]$& \cmark \; for any $\nu>p$ and $\varepsilon<\lceil \nu \rceil-1-p$ & \cmark \;  for any $p,\varepsilon\ge 0$ \\
&&\\
\ref{C:nondegeneracy}$[p]$ & \cmark \;  if \ref{C:general}$[p]$ is satisfied & \cmark \;  for $p=1$, \xmark \;  for any $p\ge 2$ \\ 
&&\\
\ref{C:nondegeneracy:gradient}$[r]$ & Numerically checked & Numerically checked \\
&&\\
\ref{C:integrability} & \cmark \; for any $\nu>0$ & \xmark\\
&&\\
\ref{C:nondegeneracy:hessian} & Numerically checked & Numerically checked \\
\hline\end{tabular}
\caption{A summary of validity of assumptions considered in this paper for the Matérn random field and random wave models.}\label{tab:maternAndRWM}
\end{center}
\end{table}

\subsection{Integral representations and Hermite expansions}\label{sec:integralandhermite}

The following result extends \cite[Proposition 1.1]{estrade2016central} (see also \cite[Lemma~3.1]{azais2024multivariate}) and provides an integral representation for functionals of the form~\eqref{eq:Phi1}-\eqref{eq:Phi2}.

\begin{proposition} \label{prop:representation}
We have the following limits in the a.s.\ and  $L^2$-sense.\\
(i) Assume \ref{C:general}$[3]$, \ref{C:nondegeneracy}[2],
and let $\phi_{1,n}:W_n\to \R$ be a bounded and piecewise continuous
function. Then the functional $\Phi_{1,n}$ given by~\eqref{eq:Phi1} has the following representation for any $n\ge 1$,
\begin{equation}\label{eq:representationH1}
\Phi_{1,n} =   \lim_{\varepsilon \to 0} \Phi_{1,n,\varepsilon} 
\quad \text{ with  }\quad
\Phi_{1,n,\varepsilon}  = \int_{W_n} \phi_{1,n}(t) |\det \{X^{\prime\prime}(t)\}|
\, \iota_\L \{X^{\prime\prime}(t)\} \delta_\varepsilon\{X^\prime(t)\} \, \dd t.
\end{equation}
\noindent(ii) Assume \ref{C:general}[5], \ref{C:nondegeneracy}[4],
and let $\phi_{2,n}:W_n\times W_n\to \R$ be a bounded and piecewise
continuous function. Then the functional $\Phi_{2,n}$ given by~\eqref{eq:Phi2} has the following representation for any $n\ge 1$,
\begin{align}
\Phi_{2,n} =&\lim_{\varepsilon\to 0} \Phi_{2,n,\varepsilon}
\quad \text{ with } \\
\Phi_{2,n,\varepsilon}&=
\int_{(W_n)^2} \phi_{2,n}(\mathbf{t}) 
|\det \{X^{\prime\prime}(t^{(1)})\}| \times |\det \{X^{\prime\prime}(t^{(2)})\}| \times\nonumber\\
& \hspace*{2cm}  \iota_\L\{X^{\prime\prime}(t^{(1)})\}  \iota_{\L} \{X^{\prime\prime}(t^{(2)})\}
\delta_{\varepsilon}\{X^\prime(t^{(1)}) \}  \delta_{\varepsilon}\{X^\prime(t^{(2)}) \}
 \dd \mathbf t. 
\label{eq:representationPhi2}
\end{align}
\end{proposition}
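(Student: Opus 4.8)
The plan is to prove both representations by the same two-step scheme, built directly on the convergence \eqref{eq:NXprimeLp}, which already handles the index-free, unweighted integral $N_{0:d,\varepsilon}$. For part (i) I would first establish the almost sure convergence $\Phi_{1,n,\varepsilon}\to\Phi_{1,n}$, and then upgrade it to $L^2$ convergence by a domination/uniform integrability argument; part (ii) follows the identical pattern with the roles of $p=2$ and $p=4$ adjusted. In effect the proof is an adaptation of the argument behind \eqref{eq:NXprimeLp}, with the test function $\phi$ and the index indicator $\iota_\L$ inserted.

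For the almost sure statement in (i), I would localize around the critical points of $\bX$ in $W_n$. Under \ref{C:nondegeneracy}$[2]$, $\bX$ is a.s.\ a Morse function, so on the compact set $W_n$ it has finitely many critical points, all nondegenerate, and $\iota\{X^{\prime\prime}(t)\}$ is locally constant near each of them. Fix a realization with these properties. For $\varepsilon$ small enough $\delta_\varepsilon\{X^\prime(t)\}$ vanishes outside disjoint neighborhoods of the critical points, since $\|X^\prime\|$ is bounded away from $0$ on the complement of such neighborhoods; and on a neighborhood of a critical point $t_0$ the map $t\mapsto X^\prime(t)$ is a diffeomorphism. The change of variables $u=X^\prime(t)$, whose Jacobian is exactly $|\det X^{\prime\prime}(t)|$, together with the fact that $\delta_\varepsilon$ is an approximate identity at $0$, shows that the contribution of this neighborhood converges to $\phi_{1,n}(t_0)\,\iota_\L\{X^{\prime\prime}(t_0)\}$. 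Summing over the finitely many critical points — those of index not in $\L$ contributing $0$ through the indicator — yields $\sum_{t_0\in\bY_\L\cap W_n}\phi_{1,n}(t_0)=\Phi_{1,n}$. Piecewise continuity of $\phi_{1,n}$ is harmless, because the critical points have an absolutely continuous law (Theorem~\ref{thm:intensity}) and thus a.s.\ avoid the measure-zero discontinuity set. For (ii) the integrand factorizes over $t^{(1)},t^{(2)}$, so the same localization applies pairwise; the only extra point is the diagonal, where both variables sit near one critical point $t_0$, contributing $\sum_{t_0}\phi_{2,n}(t_0,t_0)$, which is precisely the term excluded by the constraint $\neq$ in \eqref{eq:Phi2} and is $0$ when $\phi_{2,n}$ is supported in $D_{\eta,R}$ with $\eta>0$.

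To upgrade to $L^2$ I would dominate: since $0\le\iota_\L\le 1$ and $\phi_{1,n}$ is bounded, $|\Phi_{1,n,\varepsilon}|\le\|\phi_{1,n}\|_\infty\,N_{0:d,\varepsilon}(W_n)$, and likewise $|\Phi_{2,n,\varepsilon}|\le\|\phi_{2,n}\|_\infty\,N_{0:d,\varepsilon}(W_n)^2$. By \eqref{eq:NXprimeLp}, under \ref{C:general}$[3]$, \ref{C:nondegeneracy}$[2]$ the family $N_{0:d,\varepsilon}(W_n)$ converges in $L^2$, hence is $L^2$-bounded and $\{N_{0:d,\varepsilon}(W_n)^2\}_\varepsilon$ is uniformly integrable; under \ref{C:general}$[5]$, \ref{C:nondegeneracy}$[4]$ it converges in $L^4$, so $\{N_{0:d,\varepsilon}(W_n)^4\}_\varepsilon$ is uniformly integrable. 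Consequently $\{\Phi_{1,n,\varepsilon}^2\}_\varepsilon$ (resp.\ $\{\Phi_{2,n,\varepsilon}^2\}_\varepsilon$) is uniformly integrable, and combined with the almost sure convergence above, Vitali's theorem delivers the $L^2$ limits. This also explains the gap between the hypotheses of (i) and (ii): controlling the square of a product of two $N_{0:d,\varepsilon}$ factors requires an $L^4$ bound, which is exactly what \ref{C:general}$[5]$ and \ref{C:nondegeneracy}$[4]$ secure through Theorem~\ref{thm:gass}.

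The main obstacle, and where the argument is most delicate, is the interchange of the $\varepsilon\to0$ limit with expectation in the $L^2$ statement: the pointwise coarea limit is elementary once the Morse property is in hand, but the $L^2$ control is not, because $\Phi_{2,n,\varepsilon}$ is a highly singular double integral whose second moment could a priori blow up as $\varepsilon\to0$. The whole weight of the proof therefore rests on \eqref{eq:NXprimeLp} — the nontrivial Estrade--León/Geman-type input — which provides the requisite uniform integrability; everything else is bookkeeping of the test function and the locally constant index indicator.
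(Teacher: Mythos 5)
Your proof is correct, and it rests on the same decisive ingredient as the paper's for the $L^2$ statement --- domination by $N_{0:d,\varepsilon}(W_n)$, resp.\ $N_{0:d,\varepsilon}(W_n)^2$, combined with the $L^2$, resp.\ $L^4$, convergence in \eqref{eq:NXprimeLp} --- but your route to the almost sure limit is genuinely different. The paper (which details only part (ii), deferring (i) to a citation) first treats indicator test functions $\mathbf 1(\cdot \in T)$, getting the a.s.\ limit from \cite[Theorem 11.2.3]{adler2007random}, and then reaches general bounded piecewise continuous $\phi_{2,n}$ by uniform approximation with step functions and a three-term decomposition whose error terms are bounded by $\|\phi_{2,n}^m-\phi_{2,n}\|_\infty N_{0:d,\varepsilon}(W_n)^2$ and $\|\phi_{2,n}^m-\phi_{2,n}\|_\infty N_{0:d}(W_n)^2$ (controlled via Theorem~\ref{thm:gass}); this yields the a.s.\ and $L^2$ limits simultaneously. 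You instead prove the a.s.\ limit directly for a general test function by Morse localization and the change of variables $u=X^\prime(t)$ --- in effect reproving a weighted version of the Adler--Taylor counting theorem --- and then upgrade by uniform integrability and Vitali, which is interchangeable with the paper's generalized dominated convergence step. Your version is more self-contained and is explicit about two points the paper passes over quickly: (a) critical points a.s.\ avoid the Lebesgue-null discontinuity set of the test function (by the same absolute-continuity argument you should also note they avoid $\partial W_n$, since a boundary critical point would spoil the localization); (b) in (ii) the mollified double integral also captures the diagonal contribution $\sum_{t_0\in\bY_\L\cap W_n}\phi_{2,n}(t_0,t_0)$, which the $\neq$ in \eqref{eq:Phi2} excludes, so the stated identity genuinely requires the standing restriction of Section~\ref{sec:linearstatistics} that $\phi_{2,n}$ be supported in $D_{\eta,R}$ --- a hypothesis the proposition does not restate but which you correctly invoke. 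The paper's route buys economy, since the hard pointwise analysis is done only for indicators and outsourced to a citation; yours buys transparency about exactly where the Morse property, the support restriction, and the moment bounds enter.
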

The proof of Proposition~\ref{prop:representation} is given in Appendix~\ref{sec:representation}.
The consequence of Proposition~\ref{prop:representation} is that both
$\Phi_{1,n}$ and $\Phi_{2,n}$ can be seen as functionals of
$X^\prime(t)$ and $X^{\prime\prime}(t)$ for $t\in W_n$. Thereby we can
obtain (Proposition~\ref{prop:chaos} below) a chaos expansion using
Hermite expansion, following ideas from~\cite{kratz2006second} and \cite{estrade2016central}. 

Before turning to the chaos expansion some more notation is
needed. Let $D = d+d(d+1)/2$. For all $n\in \mathbb{N}$, denote $H_n$ the $n$th Hermite polynomial. For any $p\ge 1$ and
multi-index $a\in \mathbb N^p$, we remind that $|a| = \sum_{i=1}^{p} a_i$ and let $a! = \prod_{i=1}^p a_i!$. Denote $H_{\otimes a}$ the multivariate polynomial defined by $H_{\otimes a}(y) = \prod_{i=1}^p H_{a_i}(y_i)$ for all $y\in \R^p$. These polynomials form an orthonormal basis of squared integrable functions in $\R^p$ with respect to the $p$-dimensional standard Gaussian density.
For any $t\in \R^d$ let $\check X(t)=\{X^\prime(t)^\top,X^{\prime\prime}(t)^\top\}^\top\in \R^D$ and, for any $\mathbf{t}\in D_{\eta,R}$, $\check X(\mathbf{t})=[\check X\{t^{(1)}\}^\top,\check X\{t^{(2)}\}^\top]^\top \in \R^{2D}$ (here and in the following we abuse notation by distinguishing functions by the notation for their arguments). Let $\Sigma(t) = \Var\{\check X(t)\}$ and $\Sigma(\mathbf t) = \Var\{\check X(\mathbf{t})\}$. By assumption \ref{C:nondegeneracy}[2] and \ref{C:nondegeneracy:hessian} respectively, $\Sigma(t)$ and $\Sigma(\bf t)$ are non-degenerate.
Thus, $\check Y(t):=\Sigma(t)^{-1/2} \check X(t)$ and $\check Y(\mathbf{t}):=\Sigma(\mathbf{t})^{-1/2} \check X(\mathbf{t})$ are respectively $D$- and $2D$-dimensional standard normal Gaussian random vectors, where $M^{-1/2}$ denotes any inverse square root matrix of a definite positive matrix $M$ such that $M^{-1/2}M^{-1/2}=M^{-1}$. By stationarity, isotropy and independence of $X^\prime(t)$ and $X^{\prime\prime}(t)$ for any $t$ \cite[see e.g.][]{adler2007random}, 
\begin{equation*}
\Sigma(t)=\Sigma(0)=\mathrm{diag}[ \Var\{ X^\prime(0)\},\Var\{ X^{\prime\prime}(0)\}] =: \mathrm{diag} \{\lambda_2 I_d , \Sigma_2(0)\}    
\end{equation*}
and $\Sigma(\mathbf{t})=\overline{\Sigma}(\|t^{(1)} - t^{(2)}\|) $ is the block matrix depending on $\|t^{(1)}-t^{(2)}\|$ given for any $r>0$ by
\begin{equation*}
\bar \Sigma(r) =
\begin{pmatrix}
 \Sigma(0) & \tilde \Sigma(r) \\ \tilde \Sigma(r) & \Sigma(0)  
\end{pmatrix} \quad \text{ with } \tilde\Sigma(r)= \Cov \left\{ \check X(0), \check X(re_1)\right\}.
\end{equation*}
We define $\check X_0(t)= \{0^\top,X^{\prime\prime}(t)^\top\}^\top$ and $\check X_0(\mathbf{t})= [0^\top,X^{\prime\prime}\{t^{(1)}\}^\top,0^\top,X^{\prime\prime}\{t^{(2)}\}^\top]^\top$. To distinguish Hermite expansions of linear and bilinear statistics, we use the notation $a\in \mathbb N^D$ and $\mathbf a \in \mathbb N^{2D}$. 
Finally, we define a set $\L\subseteq \{0,\dots,d\}$ as symmetric if $i\in \L \Rightarrow d-i \in \L$ for $i=0,\dots,d$. An example of such set is the set of all critical points $\L=\{0,\dots,d\}$ or the set of local extrema $\L=\{0,d\}$. For $\L\subseteq \{0,\dots,d\}$, we let $\mathbb N_{\L}^*$ denote the set $2 \mathbb N \setminus \{0\}$ when $\L$ is symmetric and $\mathbb N\setminus \{0\}$ otherwise.

\begin{proposition} \label{prop:chaos}
The two following statements hold.\\
(i) Assume \ref{C:general}$[3]$ and \ref{C:nondegeneracy}[2],
and let $\phi_{1,n}:W_n\to \R$ be a bounded and piecewise continuous function for any $n\ge 1$. Then the centered functional $\cent{\Phi}_{1,n}=\Phi_{1,n}-\E(\Phi_{1,n})$ can be represented as
\begin{align}
\cent{\Phi}_{1,n} =  \sum_{q\in \mathbb{N}_\L^*} 
 \sum_{\substack{a\in \mathbb N^D, \\ |a|=q }} 
 d_a \int_{W_n} \phi_{1,n}(t) H_{\otimes a}\{\check Y(t)\} \dd t\label{eq:chaosh1}
\end{align}
where for any $a=(\underbar{a},\bar{a})\in \mathbb N^D$ with $\underbar a\in \mathbb N^d$ and $\bar a\in \mathbb N^{D-d}$, the Hermite coefficient $d_a$, which depends on  $\L$, is given by
\begin{align}
d_a &=  \frac{1}{a!}
f_{X^\prime(0)}(0)  \times  
\E 
\left[
H_{\otimes a} \{\Sigma(0)^{-1/2} \check X_0(0) \} 
\,|\det\{X^{\prime\prime}(0)\}| \,
\iota_\L \{X^{\prime\prime}(0)\}  \; \mid \; X^\prime(0)=0 
\right] \nonumber \\
&= \frac{1}{a!}
\frac{H_{\otimes \underbar a}(0)}{(2\pi \lambda_2)^{d/2}}  \times 
\E
\left[
H_{\otimes \bar a} \{\Sigma_2(0)^{-1/2} X^{\prime\prime}(0) \} 
\,|\det\{X^{\prime\prime}(0)\}| \,
\iota_\L \{X^{\prime\prime}(0)\}
\right] \label{eq:da}
\end{align} 
where $a!=(\underbar a!)(\bar a!) = \prod_{i=1}^D a_i!$.

\noindent(ii) Assume \ref{C:general}[5], \ref{C:nondegeneracy}[4],
\ref{C:nondegeneracy:hessian} and let  $\phi_{2,n}:W_n\times W_n\to \R$ be a bounded and piecewise continuous function with support in $D_{\eta,R}$ for any $n\ge 1$. Then the centered functional $\cent{\Phi}_{2,n}=\Phi_{2,n} - \E(\Phi_{2,n})$, where $\Phi_{2,n}$ is given by~\eqref{eq:Phi2}, can be represented as
\begin{align}
\cent{\Phi}_{2,n}
&=   \sum_{q \in \mathbb N_{\L}^*} \sum_{\substack{ \mathbf a\in \mathbb N^{2D}, \\ |\mathbf a|=q }} 
\int_{(W_n)^2} 
\phi_{2,n}(\mathbf{t}) d_{\mathbf a}(\mathbf{t}) H_{\otimes \mathbf a} \left\{ \check Y(\mathbf{t})\right\}
\dd \mathbf{t}. \label{eq:chaosh2}
\end{align}
The Hermite coefficient $d_{\mathbf a}(\mathbf{t})=d_{\mathbf a}\{\|t^{(1)} - t^{(2)}\|\}$, which depends on  $\L$ and $\|t^{(1)} - t^{(2)}\|$, is given for 
$\mathbf a \in \mathbb N^{2D}$ and $r= \|t^{(1)} - t^{(2)}\| \in [\eta,R]$ by
\begin{align}
 d_{\mathbf a}  (r) = &\frac{1}{\mathbf a!} \; f_{V(r)}(0,0) \; \times\;
 \E \Bigg(
H_{\otimes \mathbf a} \left\{ \bar \Sigma(r)^{-1/2} \check X_0(0,re_1)\right\} \times  
\,|\det\{X^{\prime\prime}(0)\}| \,|\det\{X^{\prime\prime}(re_1)\}| \,\nonumber\\
& \hspace*{3cm}  \iota_\L \{X^{\prime\prime}(0)\} \iota_{\L} \{X^{\prime\prime}(re_1)\} 
\mid \; X^{\prime}(0)=X^{\prime}(re_1)=0
\Bigg). \label{eq:dast}
\end{align}
Furthermore, $d_{\mathbf a}(\cdot)$ is continuous on $[\eta,R]$ and there exists $\kappa$ independent of $\mathbf a$ such that 
\begin{equation} \label{ineq:da}
\sup_{\eta \leq r \leq R} \, d_{\mathbf a}(r) \le \kappa\frac{ (4D)^{2|\mathbf a |}}{\sqrt{\mathbf a!}}.
\end{equation}
\end{proposition}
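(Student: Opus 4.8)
The plan is to obtain both expansions from the integral representations of Proposition~\ref{prop:representation} by expanding the integrand, at each fixed argument, in the Hermite basis of the standardised vector $\check Y(\cdot)$, and then passing the expansion through the $L^2$-limit $\varepsilon\to0$. For part (i) I would start from $\Phi_{1,n,\varepsilon}$ in \eqref{eq:representationH1}. Writing $\check X(t)=\Sigma(0)^{1/2}\check Y(t)$, the integrand equals $g_\varepsilon\{\check Y(t)\}$ with $g_\varepsilon(z)=|\det M(z)|\,\iota_\L\{M(z)\}\,\delta_\varepsilon\{L(z)\}$, where $L(z)$ and $M(z)$ are the gradient vector and Hessian matrix read off from $\Sigma(0)^{1/2}z$. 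Since $g_\varepsilon$ lies in $L^2$ of the standard Gaussian, expand $g_\varepsilon=\sum_a d_{a,\varepsilon}H_{\otimes a}$ with $d_{a,\varepsilon}=\frac1{a!}\E[g_\varepsilon\{\check Y(0)\}H_{\otimes a}\{\check Y(0)\}]$, so that $\Phi_{1,n,\varepsilon}=\sum_a d_{a,\varepsilon}\int_{W_n}\phi_{1,n}H_{\otimes a}\{\check Y(t)\}\dd t$. Each inner integral lies in the $|a|$-th Wiener chaos; letting $J_q$ be the projection onto the $q$-th chaos and using that $J_q$ is an $L^2$-contraction together with $\Phi_{1,n,\varepsilon}\to\Phi_{1,n}$ in $L^2$ (Proposition~\ref{prop:representation}(i)) identifies $J_q\Phi_{1,n}=\sum_{|a|=q}(\lim_\varepsilon d_{a,\varepsilon})\int_{W_n}\phi_{1,n}H_{\otimes a}\{\check Y(t)\}\dd t$.

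It remains to compute $\lim_\varepsilon d_{a,\varepsilon}=d_a$. This is the Kac--Rice-type limit of the delta approximation: because $H_{\otimes a}\{\check Y(0)\}$ is polynomial in $(X'(0),X''(0))$ and $x\mapsto\E[\,\cdot\mid X'(0)=x]$ is continuous, the factor $\delta_\varepsilon\{X'(0)\}$ produces $f_{X'(0)}(0)$ together with the conditioning $X'(0)=0$, yielding the first line of \eqref{eq:da}; the independence of $X'(0)$ and $X''(0)$ then splits off $H_{\otimes\underline a}(0)/(2\pi\lambda_2)^{d/2}$ and gives the second line. Summing $\Phi_{1,n}=\sum_q J_q\Phi_{1,n}$ and subtracting the $q=0$ term (which equals $\E\Phi_{1,n}$ by Campbell) produces \eqref{eq:chaosh1}. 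To see that the index $q$ runs over $\mathbb N_\L^*$, I would use the symmetry $X\mapsto-X$: it preserves the law of $\check Y(0)$, sends $H_{\otimes a}\mapsto(-1)^{|a|}H_{\otimes a}$, leaves $|\det X''(0)|$ unchanged, and, when $\L$ is symmetric, leaves $\iota_\L\{X''(0)\}$ unchanged since $\iota(-M)=d-\iota(M)$ for non-singular $M$. Hence $d_a=(-1)^{|a|}d_a$, forcing $d_a=0$ for odd $|a|$ precisely in the symmetric case.

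Part (ii) follows the same scheme with $\check Y(\mathbf t)$ in place of $\check Y(t)$, starting from \eqref{eq:representationPhi2}; by stationarity and isotropy the coefficient depends on $\mathbf t$ only through $r=\|t^{(1)}-t^{(2)}\|$, and its $\varepsilon\to0$ limit is \eqref{eq:dast}. A useful simplification is that, conditionally on $X'(0)=X'(re_1)=0$, one has $\check X_0(0,re_1)=\check X(0,re_1)$, so $\bar\Sigma(r)^{-1/2}\check X_0(0,re_1)=\check Y(\mathbf t)$. Continuity of $d_{\mathbf a}(\cdot)$ on $[\eta,R]$ follows since $f_{V(r)}(0,0)$ is continuous there by \eqref{eq:density:Vr} and \ref{C:nondegeneracy:gradient}$[r]$, while the conditional law of $(X''(0),X''(re_1))$ given the gradients is Gaussian with covariance depending continuously on $r$ through $c_2,c_2',c_2''$ at $0$ and $r^2$ (\cite[Lemma 8]{azais2022mean}) and non-degenerate on the compact $[\eta,R]$ under \ref{C:nondegeneracy:hessian}; dominated convergence then gives continuity of the conditional expectation.

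The main obstacle is the uniform bound \eqref{ineq:da}. I would bound $|d_{\mathbf a}(r)|$ via \eqref{eq:dast} by Cauchy--Schwarz, separating the Hermite factor from the determinant factor. Both $f_{V(r)}(0,0)$ and the conditional fourth moment $\E[(\det X''(0))^2(\det X''(re_1))^2\mid X'(0)=X'(re_1)=0]$ are continuous on $[\eta,R]$, hence bounded by constants $C_1,C_2$ independent of $\mathbf a$. The crux is the remaining factor $\E[H_{\otimes\mathbf a}\{\check Y(\mathbf t)\}^2\mid X'(0)=X'(re_1)=0]$: here I use that conditioning the standard vector $\check Y(\mathbf t)$ on the $2d$ linear constraints $X'(0)=X'(re_1)=0$ gives a centred Gaussian whose covariance $\Gamma(r)$ is the orthogonal projection onto the complementary subspace, so $\Gamma(r)\preceq I_{2D}$ uniformly in $r$. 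For a centred Gaussian $Z$ with covariance $\Gamma\preceq I_m$ one has, from the Mehler identity $\sum_{\mathbf a,\mathbf b}\frac{s^{\mathbf a}t^{\mathbf b}}{\mathbf a!\,\mathbf b!}\E[H_{\otimes\mathbf a}(Z)H_{\otimes\mathbf b}(Z)]=\exp\{s^\top\Gamma t-\tfrac12 s^\top(I-\Gamma)s-\tfrac12 t^\top(I-\Gamma)t\}$ and a coefficient estimate using $|\Gamma_{ij}|\le1$, a bound of the form $\E[H_{\otimes\mathbf a}(Z)^2]\le(\mathrm{const}\cdot m)^{2|\mathbf a|}\,\mathbf a!$ with $m=2D$; feeding this into the Cauchy--Schwarz bound and absorbing $C_1\sqrt{C_2}$ into $\kappa$ yields $\sup_{r\in[\eta,R]}d_{\mathbf a}(r)\le\kappa(4D)^{2|\mathbf a|}/\sqrt{\mathbf a!}$. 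The delicate point is retaining the factorial $\mathbf a!$ rather than the much larger $|\mathbf a|!$: replacing every coefficient of the quadratic form by its absolute value is too lossy, so the estimate must exploit the cross-term $s^\top\Gamma t$, which is exactly what reproduces $\mathbf a!$ in the standard case $\Gamma=I$.
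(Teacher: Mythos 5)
Your overall architecture is the same as the paper's: expand the mollified representations of Proposition~\ref{prop:representation} in the Hermite basis of $\check Y$, pass the coefficients to the limit $\varepsilon\to0$, identify the chaos components, get the parity restriction by a sign-flip argument, and prove \eqref{ineq:da} by Cauchy--Schwarz after observing that the conditional covariance of $\check Y(\mathbf t)$ given vanishing gradients is an orthogonal projection (this last observation is exactly the paper's Lemma~\ref{lem:proj}). Two of your local variants are clean and legitimate: identifying $J_q\Phi_{1,n}$ via the $L^2$-contractivity of chaos projections replaces the paper's three-term $\pi^Q/\pi_Q$ decomposition plus Fatou, and flipping the whole field $X\mapsto-X$ (the paper flips only $X^{\prime\prime}(0)$) gives $d_a=(-1)^{|a|}d_a$ for symmetric $\L$ just as well. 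Note, though, that the limit $d_{a,\varepsilon}\to d_a$ is not a one-line continuity statement: one needs a domination uniform in $\varepsilon$ (the paper's Lemma~\ref{lem:coeffhermite}, via a $\sqrt{f}$ Mahalanobis trick) because $H_{\otimes a}$ has polynomial growth; your sketch states the conclusion rather than proving it, but the idea is right.

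The genuine gap is the closing estimate $\E\{H_{\otimes\mathbf a}(Z)^2\}\le(\mathrm{const}\cdot m)^{2|\mathbf a|}\,\mathbf a!$ for $Z\sim\mathcal N(0,\Gamma)$: you assert it, and your guidance on how to obtain it is wrong in both directions. First, the ``too lossy'' estimate you dismiss actually suffices. Majorizing every coefficient of the quadratic form in your generating-function identity by $1$ bounds it coefficientwise by $\exp\{(\sum_i s_i+\sum_i t_i)^2\}$, and extracting the coefficient of $s^{\mathbf a}t^{\mathbf a}$ gives $\E\{H_{\otimes\mathbf a}(Z)^2\}\le(2|\mathbf a|)!/|\mathbf a|!\le 4^{|\mathbf a|}\,|\mathbf a|!$; since the dimension $m=2D$ is fixed, the multinomial inequality $|\mathbf a|!\le(2D)^{|\mathbf a|}\,\mathbf a!$ (the multinomial coefficient is at most $(2D)^{|\mathbf a|}$) turns this into $(8D)^{|\mathbf a|}\,\mathbf a!$, which fed into your Cauchy--Schwarz step yields \eqref{ineq:da} with room to spare. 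Your worry that $|\mathbf a|!$ is ``much larger'' than $\mathbf a!$ overlooks that their ratio is only exponential in $|\mathbf a|$ with base the fixed dimension, exactly the kind of factor \eqref{ineq:da} tolerates. Second, the refined estimate you appeal to --- exploiting the cross-term $s^\top\Gamma t$ using only $|\Gamma_{ij}|\le1$ --- cannot exist: for the all-ones matrix $\Gamma$ (which satisfies $|\Gamma_{ij}|\le1$ but is not a contraction) and $\mathbf a=(1,\dots,1)$ one has $Z=Y_1\mathbf 1$ and $\E\{H_{\otimes\mathbf a}(Z)^2\}=\E(Y_1^{2m})=(2m-1)!!\ge|\mathbf a|!$, so entrywise information alone genuinely produces $|\mathbf a|!$-type growth; the fix is the multinomial step, not a cleverer coefficient bound. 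For comparison, the paper proves the needed inequality by a different mechanism (Lemma~\ref{lem:hermite}): H\"older's inequality reduces to the one-dimensional marginals $(\Gamma Y)_i\sim\mathcal N(0,\Gamma_{ii})$, $\Gamma_{ii}\le1$, and $L^{2p}$ moments of scaled Hermite polynomials $H_n(\sigma Y)$ are then controlled via the scaled-Hermite expansion and Larsson--Cohn's norm asymptotics; this coordinatewise factorization produces $\mathbf a!$ directly. Either repair (your generating function plus the multinomial inequality, or the paper's H\"older route) closes the proof; as written, the decisive inequality is unproven.
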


The proof of Proposition~\ref{prop:chaos} is given in Appendix~\ref{app:proof:prop:chaos}. The Hermite expansions~\eqref{eq:chaosh1} and~\eqref{eq:chaosh2} are very useful as they separate almost all contributions (test functions $\phi_{1,n}$ or $\phi_{2,n}$, $\L$ and $\bX$) to the functionals $\cent{\Phi}_{1,n}$ and $\cent{\Phi}_{2,n}$ respectively:  the Hermite polynomial factors $H_{\otimes a}\{\check Y(t)\}$ (resp.\ $H_{\otimes \mathbf{a}}\{\check Y(\mathbf{t})\}$) do not depend on the test functions and the subset $\L$ while the Hermite coefficients $d_a$ (resp.\ $d_{\mathbf a}  (\mathbf{t})$) do not depend on the test function.

\subsection{Limiting variance and asymptotic normality}\label{sec:asymptoticresults}

Using Proposition~\ref{prop:chaos}, we follow the approach proposed by~\cite{estrade2016central} (see also \cite{azais2024multivariate}) and derive in Theorem~\ref{thm:variance} below the asymptotic variance of centered functionals  $\cent{\Phi}_{1,n}$ and $\cent{\Phi}_{2,n}$. Before turning to Theorem~\ref{thm:variance} we need a preliminary lemma and some notation for this.

For any $t\in \mathbb{R}^d$ and $\mathbf{t}, \mathbf{s} \in \mathbb{R}^{2d}$, we denote by
$t \ominus \mathbf{s} = \{t - s^{(1)}, t - s^{(2)}, s^{(1)} - s^{(2)}\}$ and $\mathbf{t} \ominus \mathbf{s} = \{t^{(1)} - t^{(2)}, t^{(1)} - s^{(1)}, t^{(1)} - s^{(2)}, t^{(2)} - s^{(1)}, t^{(2)} - s^{(2)}, s^{(1)} - s^{(2)}\}$, the triplet and 6-tuple of all the differences between two elements of  $\{t,s^{(1)}, s^{(2)}\}$ and 
$\{t^{(1)}, t^{(2)}, s^{(1)}, s^{(2)}\}$. We abuse notation and denote
by $\|t\ominus \bf s\|$ and $\|\bf t\ominus \bf s\|$ the vectors of
norms of the differences.  Furthermore, let $\|M\|_{\max}:= \ell
\times\max_{i,j} |M_{ij}|$ denote the max-norm of an $\ell \times \ell$
square matrix $M$ modified to become sub-multiplicative (i.e.\
$\|AB\|_{\max}\le\|A\|_{\max}\|B\|_{\max}$ for $\ell \times \ell$
matrices $A$ and $B$). The following useful lemma is proved in Appendix~\ref{app:var}.

\begin{lemma}\label{lem:gamma}
  Assume \ref{C:general}$[2]$ and \ref{C:nondegeneracy:hessian}. Let $a,b\in \mathbb N^D$, $\mathbf a,\mathbf b\in \mathbb N^{2D}$, $t,s \in \R^d$, $\mathbf t, \mathbf s \in \R^{2d}$. We define three covariance functions by
\begin{align*}
\gamma_{a,b} (t,s )  = \gamma_{a,b} (t-s)&:= \E  
\left[ 
H_{\otimes a} \{\check Y(t)\}H_{\otimes b} \{\check Y(s)\} 
\right] \\
\gamma_{\mathbf a,\mathbf b} (\mathbf t,\mathbf s ) = \gamma_{\mathbf a,\mathbf b} (\mathbf t\ominus \mathbf s) &:= \E  
\left[ 
H_{\otimes \mathbf a} \{\check Y(\mathbf t)\}H_{\otimes \mathbf b} \{\check Y(\mathbf s)\} 
\right]  \\
\gamma_{a,\mathbf b} (t,\mathbf s ) = \gamma_{a,\mathbf b} (t\ominus \mathbf s) &:= \E  
\left[ 
H_{\otimes a} \{\check Y(t)\}H_{\otimes \mathbf b} \{\check Y(\mathbf s)\} 
\right] . 
\end{align*}  
Let $q\ge 1$ and $a,b,\mathbf a ,\mathbf b$ be such that
$|a|=|b|=|\mathbf a|=|\mathbf b|=q$. Under \ref{C:integrability}, we
have the following upper bounds for the covariance functions for some $\kappa>0$.
\begin{align}
|\gamma_{a,b} (t,s )| &\le \kappa \, \|\Sigma(0)^{-1/2}\|_{\max}^{2q}  \; \Xi(\|t-s\|)^q\label{eq:gammaab1} \\
|\gamma_{\mathbf a,\mathbf b} (\mathbf t,\mathbf s )| &\leq \kappa \,
\|\bar\Sigma(\|t^{(1)}-t^{(2)}\|)^{-1/2}\|_{\max}^{q} 
\|\bar\Sigma(\|s^{(1)}-s^{(2)}\|)^{-1/2}\|_{\max}^{q} \;
\Xi\{\delta(\mathbf{t}, \mathbf{s})\}^q \nonumber\\
\text{ with } & \delta(\mathbf{t}, \mathbf{s})=\min\left\{
\|t^{(1)}-s^{(1)}\|, \|t^{(1)} - s^{(2)}\|, \|t^{(2)} - s^{(1)}\|,\| t^{(2)} - s^{(2)}\|
\right\} \label{eq:gammaab2}\\
|\gamma_{a,\mathbf b} (t,\mathbf s )| 
&\le \kappa \,
\|\Sigma(0)^{-1/2}\|_{\max}^{q} 
\|\bar\Sigma(\|s^{(1)}-s^{(2)}\|)^{-1/2}\|_{\max}^{q} \;
\Xi\{\delta(t, \mathbf{s})\}^q \nonumber\\
\text{ with } & \delta(t, \mathbf{s})=\min\left\{
\|t-s^{(1)}\|, \|t - s^{(2)}\|
 \right\}. \label{eq:gammaab3}
\end{align} 
\end{lemma}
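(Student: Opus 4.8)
The plan is to reduce all three estimates to a single mechanism: the diagram formula for products of Hermite polynomials, fed by an entrywise bound on the cross-covariances of $\check X$ expressed through $c_2$. First I would record that, since $\check Y(t)=\Sigma(0)^{-1/2}\check X(t)$ and $\check Y(\mathbf t)=\bar\Sigma(\|t^{(1)}-t^{(2)}\|)^{-1/2}\check X(\mathbf t)$ have orthonormal Gaussian components, the relevant cross-correlation matrices are $R(t,s)=\Sigma(0)^{-1/2}\,C(t-s)\,\Sigma(0)^{-1/2}$, $R(\mathbf t,\mathbf s)=\bar\Sigma(r_{\mathbf t})^{-1/2}\,C(\mathbf t,\mathbf s)\,\bar\Sigma(r_{\mathbf s})^{-1/2}$ and $R(t,\mathbf s)=\Sigma(0)^{-1/2}\,C(t,\mathbf s)\,\bar\Sigma(r_{\mathbf s})^{-1/2}$, where $r_{\mathbf t}=\|t^{(1)}-t^{(2)}\|$, $C(h)=\Cov\{\check X(0),\check X(h)\}$ collects covariances of derivatives of $\bX$ up to order four, and $C(\mathbf t,\mathbf s)$, $C(t,\mathbf s)$ are the block matrices whose blocks are the $C(t^{(i)}-s^{(j)})$ (resp.\ $C(t-s^{(j)})$). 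Existence of the inverse square roots, hence of $\check Y$, is guaranteed by \ref{C:nondegeneracy:hessian}, which also forces $\Sigma(0)$ to be non-degenerate.

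Because the components of each standardized vector are orthonormal, in the diagram expansion of $\gamma_{a,b}(t,s)=\E[H_{\otimes a}\{\check Y(t)\}H_{\otimes b}\{\check Y(s)\}]$ every edge joining two legs attached to the same block vanishes: off-diagonal within-block correlations are zero and self-loops are forbidden. Hence only complete bipartite matchings between the $a$-legs and the $b$-legs survive; such matchings exist only when $|a|=|b|=q$, and each contributes a product of exactly $q$ entries of $R(t,s)$. Summing over the (at most $q$-dependent number of) matchings yields $|\gamma_{a,b}(t,s)|\le\kappa\,\{\max_{i,j}|R(t,s)_{ij}|\}^q$. Since the modified max-norm is sub-multiplicative and dominates the plain maximal entry, $\max_{i,j}|R(t,s)_{ij}|\le\|\Sigma(0)^{-1/2}\|_{\max}^2\,\|C(t-s)\|_{\max}$, and the analogous factorizations for $R(\mathbf t,\mathbf s)$ and $R(t,\mathbf s)$ peel off precisely the prefactors $\|\Sigma(0)^{-1/2}\|_{\max}^{2q}$, $\|\bar\Sigma(r_{\mathbf t})^{-1/2}\|_{\max}^{q}\|\bar\Sigma(r_{\mathbf s})^{-1/2}\|_{\max}^{q}$ and $\|\Sigma(0)^{-1/2}\|_{\max}^{q}\|\bar\Sigma(r_{\mathbf s})^{-1/2}\|_{\max}^{q}$ appearing in the three claimed inequalities. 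Stationarity and isotropy make these quantities depend only on the stated differences, justifying the notations $\gamma_{a,b}(t-s)$, $\gamma_{\mathbf a,\mathbf b}(\mathbf t\ominus\mathbf s)$ and $\gamma_{a,\mathbf b}(t\ominus\mathbf s)$.

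It then remains to prove $\|C(h)\|_{\max}\le\kappa'\,\Xi(\|h\|)$. I would expand each entry $\pm\partial^{\alpha+\beta}c(h)$ ($1\le|\alpha|,|\beta|\le 2$) by applying the chain rule to $c(h)=c_2(\|h\|^2)$: a derivative of total order $k=|\alpha+\beta|\in\{2,3,4\}$ is a finite sum of terms $c_2^{(p)}(\|h\|^2)\,m(h)$ with $1\le p\le k$ and $m$ a monomial of degree $2p-k$, so that $|c_2^{(p)}(r^2)\,m(h)|\le r^{2p-k}|c_2^{(p)}(r^2)|=r^{p-k}\,(r^{p}|c_2^{(p)}(r^2)|)\le r^{p-k}\,\Xi(r)$ by \ref{C:integrability}. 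For $r\ge 1$, since $p\le k$ this is $\le\Xi(r)$; for $r\le 1$ the finitely many derivatives $c_2^{(p)}$, $p\le 4$, are bounded by regularity, while the decreasing function $\Xi$ satisfies $\Xi(r)\ge\Xi(1)>0$, so each term is again $\le\kappa'\Xi(r)$ after enlarging the constant. For the two-block statements, each block of $C(\mathbf t,\mathbf s)$ (resp.\ $C(t,\mathbf s)$) is bounded by $\Xi(\|t^{(i)}-s^{(j)}\|)$ (resp.\ $\Xi(\|t-s^{(j)}\|)$), and monotonicity of $\Xi$ replaces the four (resp.\ two) cross-distances by their minimum $\delta(\mathbf t,\mathbf s)$ (resp.\ $\delta(t,\mathbf s)$), producing the factors $\Xi\{\delta(\mathbf t,\mathbf s)\}^q$ and $\Xi\{\delta(t,\mathbf s)\}^q$.

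The diagram step is standard once the orthonormality of the components of $\check Y$ is exploited to kill intra-block edges; the genuinely delicate point is the entrywise estimate above, where \ref{C:integrability} is tailored so that each $c_2^{(p)}$ term carries exactly enough powers of $r$, and where the behaviour near $r=0$ must be treated separately using boundedness of the low-order derivatives together with the fact that a decreasing $\Xi$ stays bounded away from zero on compacts. Verifying that the monomial degree $2p-k$ accompanying each $c_2^{(p)}$ never forces $p>k$ is the main thing to get right; the only extra care in the $2D$-dimensional cases is that the decay is governed by the minimal cross-distance, so that the within-pair distances $r_{\mathbf t},r_{\mathbf s}$ are absorbed entirely into the standardization norms $\|\bar\Sigma(\cdot)^{-1/2}\|_{\max}$ rather than into $\Xi$.
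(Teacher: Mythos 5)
Your proposal follows essentially the same route as the paper's proof: what you call the diagram formula is Mehler's formula, which the paper invokes (citing Estrade--León) to reduce each $\gamma$ to a sum over bipartite matchings, each contributing a product of $q$ cross-correlation entries; the peeling-off of the prefactors $\|\Sigma(0)^{-1/2}\|_{\max}$ and $\|\bar\Sigma(\cdot)^{-1/2}\|_{\max}$ via the sub-multiplicative max-norm, and the replacement of the four (resp.\ two) cross-distances by their minimum $\delta$ using monotonicity of $\Xi$, are exactly the paper's steps. Your entrywise bound $\|C(h)\|_{\max}\le\kappa\,\Xi(\|h\|)$ is precisely the content of the paper's Lemma~\ref{lem:isotropic}, which the paper simply cites rather than re-deriving; your inline chain-rule derivation is equivalent.

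One sub-step is, however, incorrectly justified, even though its conclusion is salvageable. You claim that for $r\le 1$ the derivatives $c_2^{(p)}$, $p\le 4$, are bounded ``by regularity''. Under the standing assumptions this is false: \ref{C:general}$[2]$ only controls $c_2$ up to order $2$ near the origin, and for instance the Matérn model with $2<\nu<3$ (which is covered by the lemma, since \ref{C:integrability} holds for every $\nu>0$) satisfies $c_2^{(3)}(r^2)\asymp r^{2(\nu-3)}\to\infty$ as $r\to 0$ by \eqref{eq:derivatives:behaviour:matern}. The repair is to bound not the $c_2^{(p)}$ individually but exactly the products $m(h)\,c_2^{(p)}(\|h\|^2)$ that occur, i.e.\ the entries $\partial^{\alpha}c(h)$, $2\le|\alpha|\le 4$, of $C(h)$ itself: the existence of $\Sigma(0)=\Var\{\check X(0)\}$ forces $\lambda_4<\infty$, hence $c\in C^4(\R^d)$, so these entries are continuous and therefore bounded on $\{\|h\|\le 1\}$; combined with $\Xi(r)\ge\Xi(1)$ for $r\le 1$ (with $\Xi$ replaced by $\Xi+e^{-\cdot}$ if needed to guarantee positivity, which preserves monotonicity and integrability), this yields the claimed bound near $0$, and your large-$r$ argument handles the rest. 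With this correction your proof is complete and coincides with the paper's.
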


Lemma~\ref{lem:gamma} is fundamental for the proofs of Theorems~\ref{thm:variance}-\ref{thm:clt}. For $t,s \in \mathbb R^d$ or $\mathbf t,\mathbf s \in \mathbb R^{2d}$, we observe that the upper bounds are composed of a variance factor  ($\Sigma(0)$ and $\bar\Sigma(\|u^{(1)}-u^{(2)}\|))$ (for $u=t,s$) and a factor involving covariances at $t$ and $s$ (resp.\ at $\mathbf t$ and $\mathbf t$, at $t$ and $\mathbf s$).

We denote for any measurable function $\phi\in L^1(\R^d)$ by $\hat \phi$ the Fourier transform of $\phi$ defined by $\widehat \phi(\omega)=\int_{\R^d} \phi(t) \exp(-\mathrm i t^\top \omega)\dd \omega$. Asymptotic variances are considered in the following theorem.
\begin{theorem} \label{thm:variance}
The two following statements hold.\\
(i) Assume \ref{C:general}$[3]$, \ref{C:nondegeneracy}[2],
\ref{C:integrability}, and 
let $\phi_{1,n}$ be a sequence of bounded piecewise continuous functions. Assume there exists a bounded measurable function $\phi_1\in L^1(\R^d)$ such that as $n\to \infty$
\begin{equation}\label{eq:assphi1hat}
\;\widehat{\phi}_{1,n,W_n} \left( \frac{\omega}n\right) \to \hat \phi_1(\omega), \ \text{ or equivalently, }\ 
n^d \phi_{1,n,W_n}(nt) \to \phi_1(t)
\end{equation}
for any $\omega,t\in\R^d$ where $\phi_{1,n,W_n}(\cdot) = \phi_{1,n}(\cdot)\, \mathbf{1}(\cdot \in W_n)$.
Then, $\phi_1$ has compact support in $W_1$ and as $n\to \infty$,
\begin{equation} \label{eq:varPhi1}
n^{d} \; \Var\left( \cent{\Phi}_{1,n} \right) \rightarrow \mathcal V(\phi_1)= \|\phi_1\|^2_{L^2(\R^d)}  \times   \sum_{q \in  \mathbb N_\L^*} \sum_{\substack{ a,b\in \mathbb N^{D}, \\ |a|=q,|b|=q }} d_{a} d_{b} \widehat{\gamma}_{a,b}(0).
\end{equation}

\noindent(ii) Assume \ref{C:general}[5], \ref{C:nondegeneracy}[4], 
\ref{C:integrability}, \ref{C:nondegeneracy:hessian}, and let $\phi_{2,n}$ be the sequence of functions given by
\begin{equation} \label{eq:assphi2n}
\phi_{2,n}(\mathbf{t})  = \frac{\phi_2\{\|t^{(1)} - t^{(2)}\|\}}{|W_n \cap (W_n)_{t^{(1)} - t^{(2)}}|}
\end{equation}
where $\phi_2$ is a bounded and piecewise continuous function, compactly supported on $[\eta,R]$. First for any $\mathbf a,\mathbf b \in \mathbb{N}^{2D}$ with $|\mathbf a|=|\mathbf b|=q \geq 1$,
\begin{equation}\label{eq:integK2}
w \mapsto \sup_{u,v \in B(0,\eta,R)} 
\gamma_{\mathbf a, \mathbf b}
\{(w+v,w+v-u)\ominus (v,0)\}
\in L^1(\R^d).
\end{equation}
Second,  $d_{\mathbf a}(\cdot)$ is continuous (and so bounded) on $[\eta,R]$ and
\begin{align}
n^{d} \; \Var\left( \cent{\Phi}_{2,n} \right) &\rightarrow  \mathcal V(\phi_2)\quad \text{ with } \nonumber\\
\mathcal V(\phi_2)=&    \sum_{q \in \mathbb N_{\L}^*} \sum_{\substack{ \mathbf a,\mathbf b\in \mathbb N^{2D}, \\ |\mathbf a|=q,|\mathbf b|=q }} 
\int_{\R^d} \int_{B(0,\eta,R)} \int_{B(0,\eta,R)} \!\!\! \phi_2(\|u\|)d_{\mathbf a}(\|u\|) \phi_2(\|v\|) d_{\mathbf b}(\|v\|) \times \nonumber \\
&\hspace*{1cm} 
\gamma_{\mathbf a,\mathbf b}
\{(w+v,w+v-u)\ominus (v,0)\}
\dd u \dd v \dd w.\label{eq:varPhi2}
\end{align}
(iii) Consider the assumptions made in~(i)-(ii). First, for any $a\in \mathbb N^D$ and $\mathbf b \in \mathbb N^{2D}$ with $|a|=|\mathbf b|=q\ge 1$,
\begin{equation} \label{eq:integCov}
u \mapsto \sup_{v \in B(0,\eta,R)} \, \gamma_{a,\mathbf b} 
\left\{
(u+v) \ominus (v,0)
\right\} \in L^1(\R^d).
\end{equation}
Second, as $n \to \infty$
\begin{align}
n^d &\, \Cov(\cent{\Phi}_{1,n}, \cent{\Phi}_{2,n}) \to \mathcal C(\phi_1,\phi_2)  \qquad \text{ with }\nonumber\\
\mathcal C(\phi_1,\phi_2) = \mathcal{C}(\phi_2,\phi_1) &= \sum_{q\ge 1} \sum_{\substack{a\in \mathbb N^D, \mathbf b \in \mathbb N^{2D} \\ |a|=|\mathbf b|=q }} d_a \phi_1(0) \nonumber\\
& \  \times \int_{\R^d} \int_{B(0,\eta,R)}\phi_2(\|v\|) d_{\mathbf b}(\|v\|) \gamma_{a,\mathbf b} 
\left\{ (u+v) \ominus (v,0)\right\} \dd v \dd u. \label{eq:covPhi1Phi2}
\end{align}
(iv) Consider the assumptions made in~(i)-(ii). Let
$\phi_{1,n}^1,\dots,\phi_{1,n}^{m_1}$ (resp.\
$\phi_{2,n}^1,\dots,\phi_{2,n}^{m_2}$) be functions
satisfying~\eqref{eq:assphi1hat} (resp.\
\eqref{eq:assphi2n}). Consider centered linear and bilinear statistics given by these functions, denoted by $\cent{\Phi}_{1,n}^i$ and $\cent{\Phi}_{2,n}^j$ for $i=1,\dots,m_1$ and $j=1,\dots,m_2$. Let $\beta =(\beta_1^\top,\beta_2^\top)^\top \in \R^{m}$ with $m=m_1+m_2$ and 
\begin{align*}
\phi&= (\phi_1^\top, \phi_2^\top)^\top = \left(
\phi_{1}^1, \dots,\phi_{1}^{m_1}, 
\phi_{2}^1, \dots,\phi_{2}^{m_2}
\right)^\top \\
\cent{\Phi}_n&= \left\{
\cent{\Phi}_{1,n}^1, \dots,\cent{\Phi}_{1,n}^{m_1}, 
\cent{\Phi}_{2,n}^1, \dots,\cent{\Phi}_{2,n}^{m_2}
\right\}^\top.
\end{align*}
Then, as $n\to \infty$
\begin{equation} \label{eq:linearcombination}
n^d \Var \left( \beta^\top \cent{\Phi}_n \right) \to 
\mathcal{V}(\beta_1^\top \phi_1) + 
2 \, \mathcal{C}(\beta_1^\top \phi_1, \beta_2^\top \phi_2) +
\mathcal{V}(\beta_2^\top \phi_2).
\end{equation}
\end{theorem}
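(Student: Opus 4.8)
The plan is to derive all four statements from the Hermite (Wiener chaos) expansions of Proposition~\ref{prop:chaos} together with the covariance bounds of Lemma~\ref{lem:gamma}. The organising principle is that $H_{\otimes a}\{\check Y(t)\}$ belongs to the $|a|$-th Wiener chaos, so that $\E[H_{\otimes a}\{\check Y(t)\}H_{\otimes b}\{\check Y(s)\}]=0$ whenever $|a|\neq|b|$; consequently, inserting \eqref{eq:chaosh1}--\eqref{eq:chaosh2} into the (co)variances and expanding leaves only the ``diagonal'' terms with $|a|=|b|$ (resp.\ $|\mathbf a|=|\mathbf b|$), organised by the chaos order $q$. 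Each such term is an integral of a product of slowly varying test functions against one of the rapidly decaying covariance kernels $\gamma_{a,b}$, $\gamma_{\mathbf a,\mathbf b}$ or $\gamma_{a,\mathbf b}$ of Lemma~\ref{lem:gamma}. Thus, for every fixed $q$ I would first compute the limit of the corresponding integral, and then justify summing these limits over $q$ by dominated convergence.

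For part (i) I would pass to the Fourier domain. By stationarity and Parseval, $n^d\int_{W_n}\int_{W_n}\phi_{1,n}(t)\phi_{1,n}(s)\gamma_{a,b}(t-s)\,\dd t\,\dd s$ equals, up to $(2\pi)^{-d}$, the integral $\int_{\R^d}|\widehat\phi_{1,n,W_n}(\zeta/n)|^2\,\widehat\gamma_{a,b}(\zeta/n)\,\dd\zeta$ after the substitution $\omega=\zeta/n$. Since $\gamma_{a,b}\in L^1(\R^d)$ by \eqref{eq:gammaab1} and condition~\ref{C:integrability}, $\widehat\gamma_{a,b}$ is continuous with $\widehat\gamma_{a,b}(\zeta/n)\to\widehat\gamma_{a,b}(0)$; using \eqref{eq:assphi1hat} and Plancherel the integral converges to $\|\phi_1\|^2_{L^2(\R^d)}\,\widehat\gamma_{a,b}(0)$, which is exactly the $(a,b)$ contribution in \eqref{eq:varPhi1}. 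Compact support of $\phi_1$ in $W_1$ follows because $\phi_{1,n,W_n}$ is supported in $W_n$.

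For parts (ii) and (iii) I would work in physical space and change variables to ``relative'' coordinates. In the bilinear variance I set $u=t^{(1)}-t^{(2)}$, $v=s^{(1)}-s^{(2)}$, an inter-cluster offset $w$, and a global position, rewriting $\gamma_{\mathbf a,\mathbf b}(\mathbf t\ominus\mathbf s)$ in the form appearing in \eqref{eq:varPhi2}; integrating out the global position over $W_n$ produces a volume that, together with the two edge-correction denominators $|W_n\cap(W_n)_u|,|W_n\cap(W_n)_v|\sim n^d$ of \eqref{eq:assphi2n} and the overall factor $n^d$, tends to $1$, leaving precisely the $\dd u\,\dd v\,\dd w$ integral of \eqref{eq:varPhi2}. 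Integrability in $w$ of the limiting integrand is guaranteed by \eqref{eq:integK2}, which I would deduce from \eqref{eq:gammaab2} and condition~\ref{C:integrability}. The cross term in (iii) is analogous, with $\gamma_{a,\mathbf b}$ in place of $\gamma_{\mathbf a,\mathbf b}$: the cross-kernel forces the single point $t$ to lie within the correlation length of the pair $\mathbf s$, the global-position integral again cancels one edge-correction factor against $n^d$, and the slowly varying linear weight is asymptotically constant over the correlation length and factors out through its zero-frequency value $\widehat\phi_1(0)$, giving the prefactor in \eqref{eq:covPhi1Phi2}; integrability in the remaining offset is ensured by \eqref{eq:integCov}.

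The main obstacle is to justify exchanging the limit $n\to\infty$ with the infinite sum over the chaos order $q$ (and over the multi-indices). I would build a majorant, uniform in $n$ and summable in $q$, by combining: the covariance bounds \eqref{eq:gammaab1}--\eqref{eq:gammaab3}, whose $q$-dependence is through $\|\Sigma(0)^{-1/2}\|^{q}_{\max}$, $\|\bar\Sigma(\cdot)^{-1/2}\|^{q}_{\max}$ and $\Xi(\cdot)^q$; the coefficient bound \eqref{ineq:da}, together with its evident linear analogue for $d_a$ read off from \eqref{eq:da}; and the polynomial count of multi-indices with $|a|=q$. The integrability $\int_0^\infty r^{d-1}\Xi(r)\,\dd r<\infty$ of condition~\ref{C:integrability} supplies the convergence factor making $\sum_q M_q<\infty$, after which dominated convergence applies term by term. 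A secondary point, controlled by the same $\Xi$-integrability, is the edge-effect estimate showing the global-position volume equals $|W_n|(1+o(1))$ uniformly over the relevant bounded relative coordinates. Finally, part (iv) is a corollary: $\sum_i\beta_{1,i}\cent{\Phi}^{i}_{1,n}$ is again a linear statistic with test function $\beta_1^\top\phi_{1,n}$ (still satisfying \eqref{eq:assphi1hat}) and $\sum_j\beta_{2,j}\cent{\Phi}^{j}_{2,n}$ a bilinear statistic with test function $\beta_2^\top\phi_{2,n}$ (still of the form \eqref{eq:assphi2n}), so that $\beta^\top\cent{\Phi}_n$ is a single linear plus a single bilinear statistic; expanding $\Var(\beta^\top\cent{\Phi}_n)$ into two variances and a covariance, applying (i)--(iii), and using that $\mathcal V$ is quadratic and $\mathcal C$ bilinear in their arguments yields \eqref{eq:linearcombination}.
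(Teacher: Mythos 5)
Your fixed-$q$ computations (Parseval in (i); the change of variables $u,v,w$ together with the edge-volume asymptotics in (ii)--(iii)) and your reduction in part (iv) coincide with the paper's proof. The genuine gap is in the step you yourself identify as the main obstacle: exchanging $\lim_n$ with $\sum_q$ via a majorant ``uniform in $n$ and summable in $q$''. No such majorant exists under the stated hypotheses. First, the constants $\kappa$ in Lemma~\ref{lem:gamma} are not uniform in the multi-indices: they come from Mehler's formula and, e.g.\ for $a=b=(q,0,\dots,0)$, the bound is of size $q!\,\rho^q$, not $\rho^q$. Even after multiplying by the coefficient bound \eqref{ineq:da} (and its linear analogue), whose factor $1/\sqrt{\mathbf a!\,\mathbf b!}$ at best cancels this factorial, the sharpest termwise bound you can assemble for $d_a d_b \gamma_{a,b}$ (or $d_{\mathbf a}d_{\mathbf b}\gamma_{\mathbf a,\mathbf b}$) is of the form $C^q\,\Xi(\cdot)^q$ with a fixed constant $C=C(D,\Sigma)>1$. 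This is summable in $q$ only where $\Xi<1/C$, i.e.\ at separations beyond some fixed $r_0>0$. In the region where the two integration points (or point pairs) are within distance $r_0$ of each other --- which the cylinder parameter $\eta$ does \emph{not} exclude, since $\eta$ only bounds within-pair distances --- the series $\sum_q C^q\Xi^q$ diverges, so there is no integrable dominating function and dominated convergence cannot be applied chaos-by-chaos. This is not presentational: the chaos expansion converges by $L^2$-orthogonality, not absolutely, and any argument taking absolute values inside $\sum_q$ fails near the diagonal unless one additionally assumes $\sup\Xi$ small, a weak-dependence condition the theorem does not make.

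The paper closes this step by a different and genuinely necessary mechanism. For (i) it does not dominate termwise but transfers the tail bound $\sup_n\sum_{q>Q}n^{-d}\Var\{s_q(\phi_{1,n})\}\to 0$ from the known result of \cite{estrade2016central} for the indicator functional $N_\L(W_n)$. For (ii) it partitions $(W_n)_{\ominus R}$ into unit blocks and splits the variance of the tail projection $\pi_Q$ into nearby and distant block pairs: nearby pairs are handled by Cauchy--Schwarz together with $\E\{\pi_Q(\cent{\psi}_{2,0})^2\}\to 0$ as $Q \to \infty$, which requires the per-block bilinear functional to be in $L^2$ --- this is exactly where \ref{C:general}$[5]$, \ref{C:nondegeneracy}$[4]$ and Theorem~\ref{thm:gass} (fourth moments of counts) enter, assumptions your proposal never invokes, which is a symptom of the missing step; distant pairs are handled by the Arcones--Soulier moment inequality \cite{soulier2001moment}, whose geometric factor $(2D\rho^\star)^q$ is summable only because the block separation $\ell$ is chosen so large that $\rho^\star\propto\Xi(\ell)$ drops below $1/(4D)$. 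In other words, the smallness your uniform majorant would need is manufactured there by restricting to far-apart blocks, while the near-diagonal contribution is controlled by orthogonality rather than domination; the tail in (iii) then follows by Cauchy--Schwarz. A final remark on (iii): your argument produces the prefactor $\widehat\phi_1(0)=\int_{\R^d}\phi_1(t)\,\dd t$, and that is indeed what integrating over the free position $w$ yields; the display \eqref{eq:covPhi1Phi2}, however, states $\phi_1(0)$ (the two coincide for $\phi_1=\mathbf 1_{W_1}$, the only case used in Theorem~\ref{thm:clt}(ii)), so you should flag this discrepancy rather than silently identify your value with the displayed one.
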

A proof of Theorem~\ref{thm:variance} is provided in Appendix~\ref{app:var}. Note that the function $\phi_{1,n}(t)=n^{-d}\phi_1(t/n)$ with $\phi_1$ piecewise continuous and compactly supported on $W_1=[-1/2,1/2]^d$ trivially satisfies the assumption~\eqref{eq:assphi1hat}. Note also that~\eqref{eq:varPhi1} means that as $n\to \infty$,
\begin{equation}\label{eq:ratiovar}
    \Var( \cent{\Phi}_{1,n} )/\Var\{N_\L(W_n)\} \to \|\phi_1\|^2_{L^2(\R^d)}.
\end{equation} 
This result is actually true for any stationary spatial point process with pair correlation function $g$ such that $g-1 \in L^1(\R^d)$, see Appendix~\ref{app:var}. 
Regarding (ii), the sequence $\{\phi_{2,n}\}_{n\ge 1}$ corresponding to the modified translation-corrected (normalized) estimator of  Ripley's $K$-function at distance  $r>0$, defined by~\eqref{eq:exPhi2}, satisfies the assumption~\eqref{eq:assphi2n} with $\phi_2(x)=\mathbf 1(\eta \le x \leq r)$.

We now present our last result which in particular provides the asymptotic distribution of $\hat \rho_\L$ and  finite-dimensional distributions of the modified Ripley's $K$-function estimator $\hat K_{\eta,\L}$.

\begin{theorem} \label{thm:clt} 
Assume \ref{C:general}[5], \ref{C:nondegeneracy}[4], 
\ref{C:integrability}, and \ref{C:nondegeneracy:hessian}. The two following statements hold. \\
(i) Let $\phi_{1,n}$ and $\phi_{2,n}$ be two sequences of bounded
piecewise continuous functions  satisfying respectively \eqref{eq:assphi1hat} and \eqref{eq:assphi2n}. Denoting $\cent{\Phi}_n=(\cent{\Phi}_{1,n},\cent{\Phi}_{2,n})^\top$, we have $n^{d/2} \cent{\Phi}_n \stackrel{\mathcal{D}}{\rightarrow} \mathcal{N}\{ 0, \Sigma(\phi_1,\phi_2) \}$ as $n \to \infty$, where
\begin{equation} \label{eq:tclPhi}
    \Sigma(\phi_1,\phi_2) = 
\begin{pmatrix}
\mathcal V(\phi_1) & \mathcal C(\phi_1,\phi_2) \\
\mathcal C(\phi_1,\phi_2) & \mathcal V(\phi_2)
\end{pmatrix}
\end{equation}
with $\mathcal V(\phi_1), \mathcal V(\phi_2), \mathcal
C(\phi_1,\phi_2)$  given respectively by~\eqref{eq:varPhi1},~\eqref{eq:varPhi2}, and~\eqref{eq:covPhi1Phi2}.\\
(ii) Let $m\ge 1$ and $\eta \leq r_1<\dots<r_m\le R$. Define
\begin{align*}
\zeta_\L:=&    \left\{\hat \rho_\L -\rho_\L , 
\hat K_{\eta,\L}(r_1) - K_{\eta,\L}(r_1),\dots,
\hat K_{\eta,\L}(r_m) - K_{\eta,\L}(r_m)
\right\}^\top  
\end{align*}
where the $\hat K_{\eta,\L}(r_i)$'s are defined according to \eqref{eq:def:K:hat}.
Then, there exist a symmetric nonnegative definite matrix $\Sigma(\rho_\L,K_{\eta,\L})$ (see Appendix~\ref{app:CLTii} for details) such that as $n \to \infty$,
\begin{equation} \label{eq:TCLrhoK}
n^{d/2} \zeta_\L \stackrel{\mathcal{D}}{\longrightarrow} \mathcal N \left\{0, \Sigma(\rho_\L,K_{\eta,\L})  \right\}.
\end{equation}
\end{theorem}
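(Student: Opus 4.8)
The plan is to prove (i) by combining the chaos decompositions of Proposition~\ref{prop:chaos} with a multiple Wiener--It\^o integral representation and the multivariate fourth-moment theorem of \cite{peccati2011wiener}, and then to deduce (ii) from a vector-valued version of (i) together with a delta-method linearization of the ratio defining $\hat K_{\eta,\L}$.

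For (i), I would start from the Hermite expansions~\eqref{eq:chaosh1} and~\eqref{eq:chaosh2}. Condition~\ref{C:integrability} guarantees $c\in L^1(\R^d)$ and hence, by Bochner's theorem, the existence of a spectral density $f$. Through the standard isometry relating Hermite polynomials of a jointly Gaussian vector to multiple Wiener--It\^o integrals against the complex Gaussian spectral random measure, each fixed-rank term of $n^{d/2}\cent{\Phi}_{1,n}$ and $n^{d/2}\cent{\Phi}_{2,n}$ can be rewritten as a multiple integral $I_q(\cdot)$ of order $q$ with an explicit symmetric kernel $F_{q,n}$ built from the coefficients $d_a$ (resp.\ $d_{\mathbf a}$) and the test functions. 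This yields $n^{d/2}\cent{\Phi}_n=\sum_{q\in\mathbb N_\L^*} I_q(F_{q,n})$ componentwise. I would then truncate at a fixed order $Q$: the tail $\sum_{q>Q}$ contributes to $n^{d}\Var(\cent{\Phi}_{i,n})$ a quantity bounded, uniformly in $n$, by a convergent series remainder, using the coefficient bound~\eqref{ineq:da} and the covariance estimates of Lemma~\ref{lem:gamma}; hence the tail is negligible in $L^2$ as $Q\to\infty$, uniformly in $n$. For each fixed $q$, the multivariate fourth-moment theorem requires two inputs: convergence of the covariance matrix of the vector of order-$q$ integrals, which is exactly supplied by Theorem~\ref{thm:variance}(i)--(iv); and asymptotic normality of each marginal, equivalent to the vanishing of all nontrivial contraction norms $\|F_{q,n}\otimes_r F_{q,n}\|$ for $r=1,\dots,q-1$.

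The main obstacle is verifying this contraction condition. I would control $\|F_{q,n}\otimes_r F_{q,n}\|$ through the covariance bounds of Lemma~\ref{lem:gamma}, whose integrable envelope $\Xi$ from~\ref{C:integrability} ensures that the $n^{-d}$-normalized multiple integrals over $(W_n)^k$ converge, while each contraction couples two factors at a shared argument and thereby removes one ``free'' difference variable. The resulting gain of an extra factor $n^{-d}$, against the bounded remaining integral, forces the contractions to zero; the estimates run along the same lines as the variance asymptotics established in Theorem~\ref{thm:variance}. Granting the fixed-$q$ convergence, the truncated sum $\sum_{q\le Q}I_q(F_{q,n})$ converges to $\mathcal N\{0,\Sigma^{(Q)}\}$ with $\Sigma^{(Q)}\to\Sigma(\phi_1,\phi_2)$ as $Q\to\infty$, and the uniform $L^2$-smallness of the tail together with a standard approximation argument (convergence of sequences approximated by convergent ones) yields $n^{d/2}\cent{\Phi}_n\xrightarrow{\mathcal D}\mathcal N\{0,\Sigma(\phi_1,\phi_2)\}$.

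For (ii), I would first apply the vector version of (i) to $(\cent{\Phi}_{1,n},\cent{\Phi}_{2,n}^{1},\dots,\cent{\Phi}_{2,n}^{m})$, taking $\phi_{1,n}=\mathbf 1(\cdot\in W_n)/|W_n|$ and the translation-corrected functions $\phi_{2,n}^{i}$ associated with $\phi_2^{i}=\mathbf 1(\eta\le\cdot\le r_i)$, and using Theorem~\ref{thm:variance}(iv) for the joint limiting covariances. Next I linearize the ratio: writing $\hat\rho_\L=\rho_\L+\cent{\Phi}_{1,n}$ and noting $\cent{\Phi}_{1,n}=O_p(n^{-d/2})$, one has $\rho_\L^2/\hat\rho_\L^2=1-2\cent{\Phi}_{1,n}/\rho_\L+O_p(n^{-d})$, so that with $\Phi_{2,n}^{i}=K_{\eta,\L}(r_i)+\cent{\Phi}_{2,n}^{i}$,
\[
n^{d/2}\{\hat K_{\eta,\L}(r_i)-K_{\eta,\L}(r_i)\}=n^{d/2}\big\{\cent{\Phi}_{2,n}^{i}-2\rho_\L^{-1}K_{\eta,\L}(r_i)\,\cent{\Phi}_{1,n}\big\}+o_p(1).
\]
Since also $\hat\rho_\L-\rho_\L=\cent{\Phi}_{1,n}$, the vector $n^{d/2}\zeta_\L$ equals a fixed $(m+1)\times(m+1)$ linear map $A$ applied to $n^{d/2}(\cent{\Phi}_{1,n},\cent{\Phi}_{2,n}^{1},\dots,\cent{\Phi}_{2,n}^{m})^\top$, up to an $o_p(1)$ term. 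Slutsky's theorem together with the multivariate CLT then gives~\eqref{eq:TCLrhoK} with $\Sigma(\rho_\L,K_{\eta,\L})=A\,\Sigma\,A^\top$, which is symmetric nonnegative definite as the image of a covariance matrix under a linear map.
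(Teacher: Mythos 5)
Your proposal is correct and follows essentially the same route as the paper's proof: Hermite/chaos expansion, rewriting the fixed-order components as multiple Wiener--It\^o integrals via the spectral density, the Peccati--Taqqu fourth-moment/contraction criterion with the contraction norms killed by the integrable envelope $\Xi$ from~\ref{C:integrability} (gaining the extra $n^{-d}$ exactly as in the variance computation), uniform-in-$n$ tail control inherited from Theorem~\ref{thm:variance}, and for (ii) a Slutsky/delta-method linearization of $\rho_\L^2/\hat\rho_\L^2$ yielding a fixed linear map applied to the jointly normal vector. The only cosmetic difference is that the paper first applies Cram\'er--Wold and works with the scalar combination $\beta_1\cent{\Phi}_{1,n}+\beta_2\cent{\Phi}_{2,n}$ (splitting the contraction bounds into linear, bilinear, and mixed terms), whereas you invoke the multivariate form of the same theorem directly; these are equivalent.
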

A proof of this result is provided in Appendix~\ref{app:CLT}. We
emphasize that in (i), if we are only interested in the asymptotic
normality of $n^{d/2}\cent{\Phi}_{1,n}$ then it suffices to assume~\ref{C:general}$[3]$, \ref{C:nondegeneracy}[2], 
and \ref{C:integrability}.

For ease of exposition, we have considered bilinear functionals of the
form~\eqref{eq:Phi2} where $t^{(1)}$ and $t^{(2)}$ are elements of
$\bY_\L$. However, results similar to 
Theorems~\ref{thm:variance}-\ref{thm:clt} could be proved for bilinear
statistics depending on different subsets of indices, i.e. when
$t^{(1)}\in \L$ and $t^{(2)}\in \L^\prime$ for two different subsets
of indices of $\{0,1,\dots,d\}$. This would require much additional
notation. In particular, in Theorem~\ref{thm:variance}(iv),
$\cent{\Phi}_{1,n}^{j}$ (resp.\ $\cent{\Phi}_{2,n}^{j}$) should also
depend on a set of indices $\L_1^{j}$ (resp.\ on two sets of indices
$\L_2^{j,1}$, $\L_2^{j,2}$). The main result, however,  remains true, and in particular for any $\L,\L^\prime\subseteq \{0,1,\dots,d\}$, the random vector defined by
$$\zeta_{\L,\L^\prime}:=   \left\{
\hat \rho_\L -\rho_\L , 
\hat \rho_{\L^\prime} -\rho_{\L^\prime} , 
\hat K_{\eta,\L,\L^\prime}(r_1) - K_{\eta,\L,\L^\prime}(r_1),\dots,
\hat K_{\eta,\L,\L^\prime}(r_m) - K_{\eta,\L,\L^\prime}(r_m)
\right\}^\top$$
satisfies as $n\to \infty$, $n^{d/2} \zeta_{\L,\L^\prime} \stackrel{\mathcal{D}}{\longrightarrow} \mathcal N \left\{0, \Sigma(\rho_\L,\rho_\L^\prime,K_{\eta,\L,\L^\prime}) \right\}$ for some nonnegative definite matrix $\Sigma(\rho_\L,\rho_\L^\prime,K_{\eta,\L,\L^\prime})$.

\section{Conclusion and perspectives} 
\label{sec:conclusion}

This paper lays out a foundation for the use of critical point
processes in statistical applications. However, some interesting
problems remain for further research.  First, the simulation problem
for a critical point process is not completely solved since we need to
understand better how to identify the critical points of a simulated
random field and to assess how far an approximate simulation differs
in distribution from the target critical point process. Second, the
parameter $\eta>0$ introduced to study bilinear functionals
$\Phi_{2,n}$ is a technical and difficult nuisance.

Third, considering Theorems~\ref{thm:variance}-\ref{thm:clt}, it
remains to identify conditions ensuring that the limiting variances are
positive or positive definite. According to~\eqref{eq:ratiovar},
positivity of the limiting variance of a linear statistic is
equivalent to positivity of the asymptotic variance of $n^{-d}
\Var\{N_\L(W_n)\}$. To the best of our knowledge, the work by
\cite{nicolaescu2017clt} is the only one showing that this variance
is positive when $\L=\{0,1,\dots,d\}$ and for any dimension $d\ge
1$. Positivity of the asymptotic variance for bilinear statistics is a
completely open question.

Fourth,  parameter estimation is a generic statistical problem. In
other words, can we estimate for example the parameter vector
$\theta=(\nu,\phi)$ for a Matérn random field, based only on an
observation of $\bY_\L$ within a bounded window. For given $\theta$, from
Section~\ref{sec:intensity}, we have expressions of the intensity parameter $\rho_\L(\theta)$
and, up to a Monte-Carlo approximation, of the modified $K$-function $K_{\eta,\L}(\,\cdot\, ;
\theta)$. Therefore, minimum contrast estimation methods
\cite[see e.g.][]{waagepetersen2009two} could be applied for
parameter estimation, e.g.\ by minimizing
    \[
\Big\{ \hat \rho_\L - \rho_\L(\theta)\Big\}^2  +
\int_{r_1}^{r_2} \Big\{ \hat K_{\eta,\L}(r) - K_{\eta,\L}(r;\theta)\Big\}^2 \dd r
\]
where $r_1,r \ge \eta$ are hyperparameters and  $\hat \rho_\L$ and $\hat{K}_{\eta,\L}$ are non-parametric estimates of $\rho_\L$ and $K_{\eta,\L}$. Asymptotic
results for the minimum contrast estimates could be obtained using
Theorems~\ref{thm:clt} in combination with a standard central limit
theorem for the Monte Carlo approximation of $K_{\eta,\L}(\cdot;\theta)$.

Fifth, this paper opens the door to investigate  several other
probabilistic properties. For example, deriving an expression of the
void probabilities or the moment generating functional and characterizing  dependence in terms of mixing properties are open (and probably difficult) questions.

Sixth and final, this paper concerns critical point processes obtained
from stationary and isotropic Gaussian
random fields. It would be of great practical interest to introduce
inhomogeneity, for example using spatial covariates such as precipitation or
wind speed if the aim is to model lightning strike impacts
as a critical point process. It is not obvious how to include such
covariates in the model. For instance, including the covariates in a
mean function for the Gaussian random field may not suffice.
  We leave this important question for future research.

\section*{Acknowledgements} 

The research of J.\ Chevallier and J.-F.\ Coeurjolly is supported by
Labex PERSYVAL-lab ANR-11-LABX-0025. The research of J.C. is supported
by ANR-19-CE40-0024 (CHAllenges in MAthematical NEuroscience). R.\
Waagepetersen was supported by grants VIL57389, Villum Fonden, and
NNF23OC0084252, Novo Nordisk Foundation. This research has been conducted while J.\ Chevallier was in Statify team at Centre Inria de l'Université Grenoble Alpes. The authors would like to thank Anne-Lise Porté who initiated this work.

\appendix


\section{Regularity of Gaussian random fields} \label{app:regularity}

We have the following diagram, where $n$ is any natural integer and $\varepsilon \ge 0$s.
$$
\begin{tikzcd}
    &
    c \in C^{2n+2\varepsilon}(\R^d) \arrow[Leftrightarrow]{d} & 
    \\
    \lambda_{2n} < \infty \arrow[Leftarrow]{r}\arrow[Rightarrow, bend right=10, "\varepsilon=0" below]{r} &
    c_{1} =c_2(\cdot^2)\in C^{2n+2\varepsilon}(\R^+) \arrow[Leftrightarrow]{d} &
  \,   \bX \in C_{L^{2}}^{n+\varepsilon}(\R^d) 
    \arrow[Leftrightarrow]{l} \arrow[Rightarrow, "\varepsilon > \varepsilon^- >0"]{d}\\
    &
    c_{2} \in C^{n+\varepsilon}(\R^+) &
\bX \in C_{\rm a.s.}^{n+\varepsilon^-}(\R^d)
\end{tikzcd}
$$
The regularity in terms of covariance function $c$ (respectively $c_1$ and $c_2$) can be restricted to local regularity around $0\in \mathbb{R}^d$ (respectively $0\in \mathbb{R}$) since this local regularity automatically extends to bounded domains~\cite[Remark 8]{da2023sample}.

The equivalence of the statements regarding $c$ and $c_1$ is a direct consequence of isotropy. The equivalence regarding $c_1$ and $c_2$ is related to Tauberian theory and the paper by~\cite{riedi2015strong}. The equivalence regarding $c_1$ and the regularity of $\bX$ in quadratic mean follows from the definition of this regularity. The implication towards almost sure regularity corresponds to~\cite{da2023sample}[Theorem~7].


\section{Additional figure, behavior of the pair correlation function at 0 and measure of repulsion} \label{app:localpcf}

\begin{figure}[htbp]
\centering
\includegraphics[width=.98\textwidth]{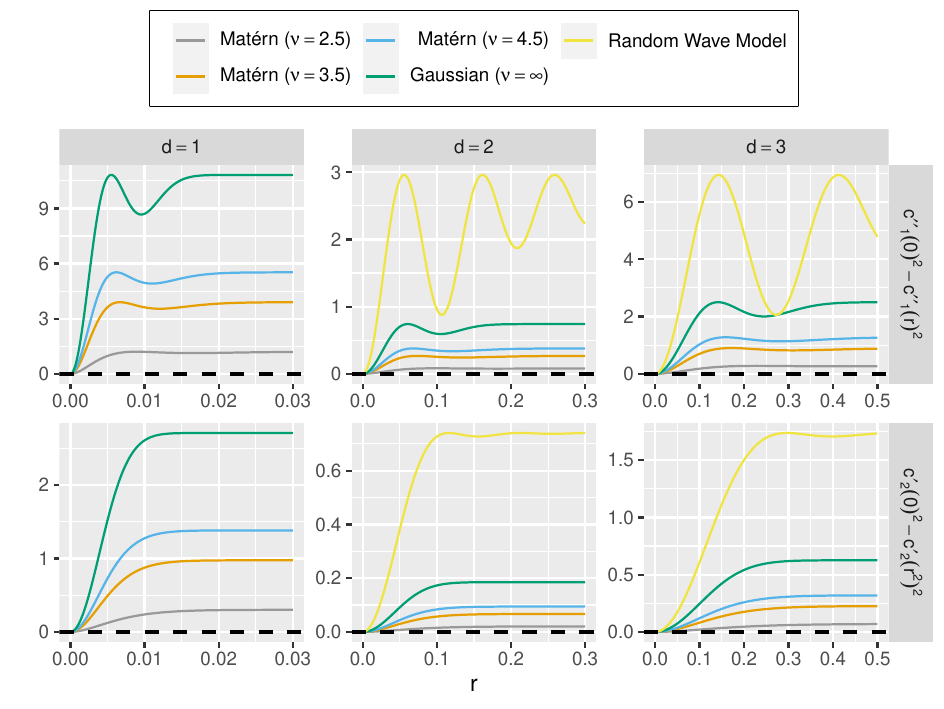}
\caption{For $d=1,2,3$, plots of $c_1^{\prime\prime}(0)^2 - c_1^{\prime\prime}(r)^2$ and $c_2^\prime(0)^2- c_2^\prime(r^2)^2$ in terms of $r$ for Matérn correlation functions with $\nu=2.5,3.5,4.5,\infty$ and for the Random Wave Model (for $d>1$). The scale parameter $\varphi$ is set such that $\rho_\L=100$ (for $\L=\{0,\dots,d\}$). Values have been multiplied by $0.01,10,1000$ for $d=1,2,3$ for clarity of the $y$-axis.
The curves are above $y=0$ except at $r=0$ which shows that $V(r)= \{X^\prime(0)^\top,X^\prime(re_1)^\top\}^\top$ is non-degenerate for any $r>0$. Similar behaviors are observed for different values of $\rho_\L$.} \label{fig:plotc1c2}   
\end{figure}

In this section we investigate the behavior of $g_\L(r)$ as $r\to 0$ given in Table~\ref{tab:expansion_pcf} in the particular cases $\L=\{0,\dots,d\}$ (all critical points) and $\L=\{d\}$ (local maxima). Recall that for $r \rightarrow 0$, $g_\L(r)\sim c r^{2-d}$ when $\L=\{0,1,\dots,d\}$ under \ref{C:general}[3]
and $g_\L(r)=O(r^{5-d-\varepsilon})$ when $\L=\{d\}$ under \ref{C:general}[4].
We depict $\log g_\L(r)$ in terms of $\log(r)$ for small values of $r$ and estimate the slope via a simple linear regression. Figure~\ref{fig:logAll} illustrates this analysis for Matérn random fields for  $\rho_\L=100$ and for $\nu=2.5,3.5,4.5,\infty$ (the latter corresponding to the Gaussian correlation). Other values of $\rho_\L$ yield similar results. According to Table~\ref{tab:expansion_pcf}, we choose $\nu=2.5$ (resp $\nu=3.5$) to break assumption \ref{C:general}[3] (resp.\ \ref{C:general}[4]).

Considering the Mat{\'e}rn model and both cases of $\L$, the main message from Figure 4 is that the corresponding critical point process becomes more repulsive for increasing $\nu$ and decreasing $d$. It can, however, be clustered if $\nu$ is small enough or $d$ is large enough. The fitted slopes agree well with the asymptotic slopes $2-d$ or $5-d$ for large $\nu$. For RWM, smaller $d$ also gives more repulsion and the fitted slopes fit quite well with $2-d$ or $5-d$.

Another measure of repulsion, also partly proposed by~\cite{biscio2016quantifying}, compares the pair correlation function with the nominal value 1. We suggest to define the repulsion function index $I_\L(r)$ for $r>0$ by
\begin{equation}\label{eq:IL}
I_\L(r) = 1+ \rho_\L^{-1}\int_{B(0,r)} \{g_\L(\|t\|)-1\}\dd t = 1+ \frac{2\pi^{d/2}}{\rho_\L\Gamma(d/2)}\int_0^r z^{d-1}\{ g_\L(z)-1\}\dd z.
\end{equation} 
To rephrase this functional index, a value of $I_\L(r)$ smaller than 1, means that $g_\L$ is less than 1 for a wide range of distances $0 \le z \le r$, so the related point process is quite repulsive at distances between 0 and $r$. The functional index $I_\L(r)$ is also related to the asymptotic variance of the number of critical points. Indeed,  it can be shown (see Section~\ref{sec:statistics}) that as $n\to \infty$,
$n^d \Var\{N_\L(W_n)\} \to \mathcal V_\L(\phi_1)$ where 
\[
    \mathcal V_\L(\phi_1) = \rho_\L - \rho_\L^2 \int_{\R^d} \{g_\L(\|t\|)-1\}\dd t = \rho_\L \times  \lim_{r\to \infty} I_\L(r).
\]
Finally, $I_\L(r)$ is also closely related to Ripley's $K$-function for $\bY_\L$, since 
\[
  I_\L(r)= 1+ \rho_\L^{-1} \left\{ |B(0,r)| - \int_{B(0,r)} g_\L(\|t\|) \dd t \right\} =   
  1+ \rho_\L^{-1} \left\{ \frac{\pi^{d/2} r^d}{\Gamma(d/2+1)} - K_\L(r) \right\}.
\]
Empirical results are depicted in Figure~\ref{fig:plotI} for $\L=\{0,1,\dots,d\}$ and $\L=\{d\}$  obtained from Matérn correlations with $\nu=2.5,3.5,4.5,\infty$ and from RWM, $\rho_\L=100$ and $d=1,2,3$. The conclusion is quite similar to the one obtained from Figure~\ref{fig:logAll}. In case of the Mat{\'e}rn model, $I_\L(r)$ is smaller (more repulsion) for larger values of $\nu$ and smaller values of $d$. For RWM, $|g_{\L}-1|$ is not integrable over $\R^d$ (this is related to the violation of condition~\ref{C:integrability}). This is reflected by the oscillatory behaviour of $I_\L(r)$ as a function of $r$.

\begin{figure}[!htbp]
\centering
\begin{tabular}{c}
\includegraphics[width=.9\textwidth]{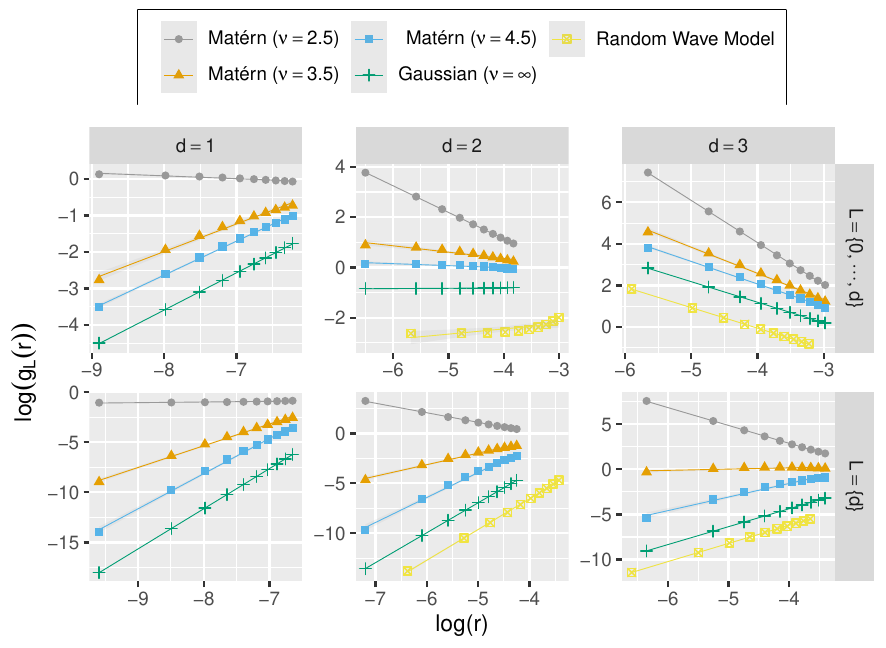}\\[-20pt]
\includegraphics[width=.7\textwidth]{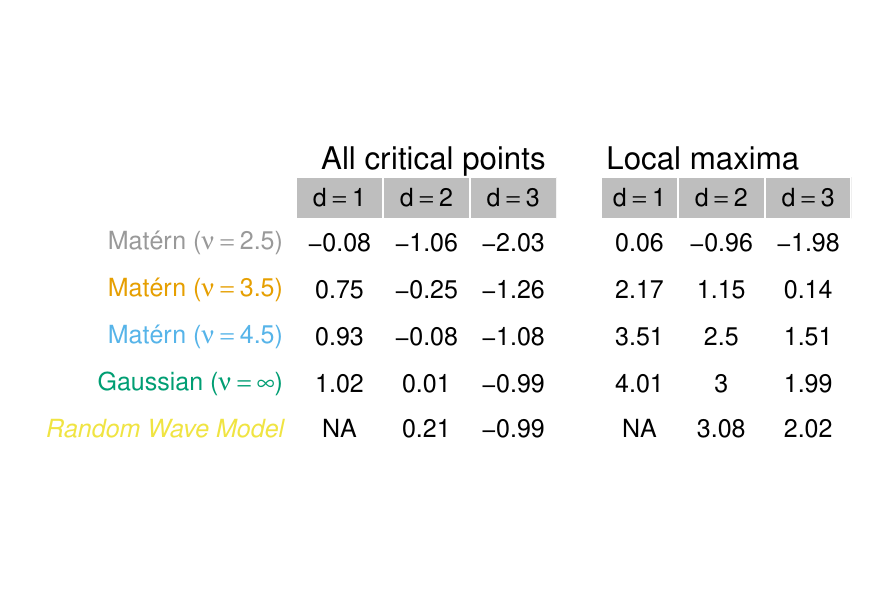} 
\end{tabular}
\caption{For $d=1,2,3$, plots of $\log g_\L(r)$ in terms of $\log(r)$ for small values of $r$ for Matérn correlation functions with regularity parameter $\nu=2.5,3.5,4.5,\infty$ and Random Wave Model (for $d>1$). The scale parameter $\varphi$ is set such that $\rho_\L=100$. The upper plot (resp.\ lower plot) focuses on all critical points (resp.\ local maxima). In addition, we present the fitted linear regressions. The slopes values are summarized in the bottom table. The pair correlation function $g_\L(r)$ is obtained using Monte Carlo estimation (with $B=10^7$ replications, see Section~\ref{sec:intensity} for more details).} \label{fig:logAll}
\end{figure}

\begin{figure}[!htbp]
\centering
\includegraphics[width=\textwidth]{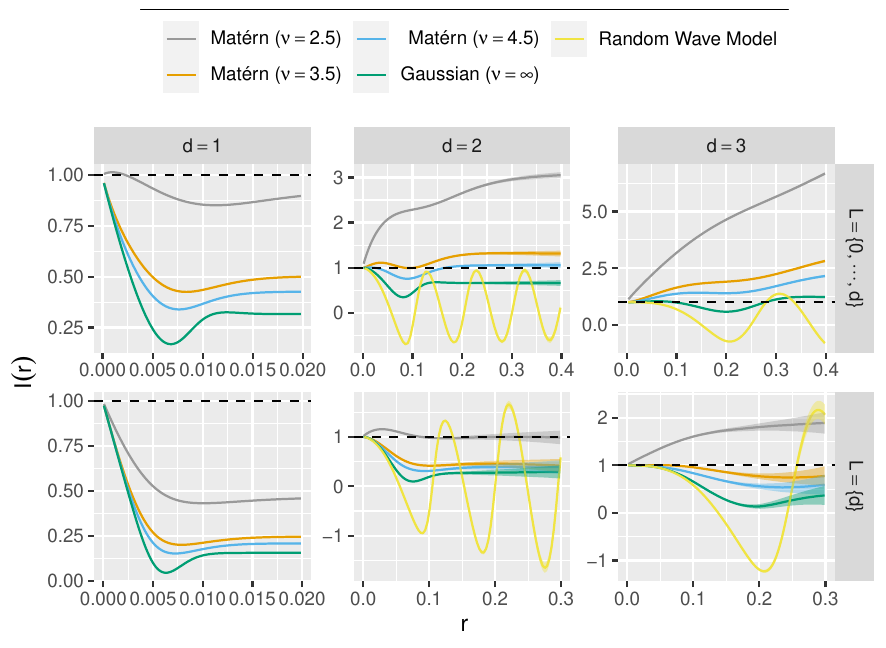}
\caption{For $d=1,2,3$, plots of Monte Carlo estimates of the function $I_\L(r)=1+\rho_\L^{-1}\int_{B(0,r)} \{g_\L(\|t\|)-1\}\dd t$ for $\L=\{0,1,\dots,d\}$ and $\L=\{d\}$, obtained from Matérn correlation functions with regularity parameter $\nu=2.5,3.5,4.5,\infty$ and Random Wave Model (for $d>1$). The scale parameter $\varphi$ is set such that $\rho_\L=100$.} \label{fig:plotI}
\end{figure}


\section{Kac-Rice formulae and Proofs of Theorems~\ref{thm:pcf}-\ref{thm:rhok}}

For sake of completeness we present Kac-Rice formulae, well-established in the literature ({\cite[Theorem~6.2]{azais2009level},\cite[Theorem~6.4]{azais2009level}), and apply them to obtain  explicit expressions of pair correlation functions $g_\L$, cross-pair correlation functions $g_{\L,\L^\prime}$ and $k$th order indensity functions of $\bY_\L$. 

\subsection{Kac-Rice formulae}

The two following results are classical Kac-Rice formulae \cite[see e.g.\ ][]{azais2009level} stated here for the sake of self-containedness of the present manuscript. 

\begin{theorem}[{\cite[Theorem~6.2]{azais2009level}}]\label{thm:rice1}
Let $Z : U \rightarrow \mathbb{R}^{d}$ be a random field defined on $U$ an open set of $\R^d$ and let $u\in \R^d$. Assume  (i) $Z$ is Gaussian; (ii) the map $t \mapsto Z(t)$ is almost surely $C^{1}(U,\R^d)$; (iii) for any $t\in U$, the distribution of $Z(t)$ is non-degenerate; (iv) $\mathrm P\left[  \exists t\in U: Z(t)=0, \det\{Z^\prime(t)\}=0 \right]=0$. Then, for any Borel $B \subseteq U$
\begin{equation}
\E\{N_{u}(Z,B)\} = \int_{B} \E\left[ \vert \det \{Z'(t)\} \, \vert \big\vert Z(t) = u \right] f_{Z(t)}(u) \dd t
\end{equation}
where $N_u(Z,B)= \#\{t\in B: Z(t)=u\}$ and where $f_{Z(t)}(u)$ is the density of $Z(t)$ at $u$.
\end{theorem}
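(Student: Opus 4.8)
The plan is to prove this classical Kac--Rice formula by approximating the Dirac mass at $u$ and reducing the random counting problem to a deterministic change-of-variables identity holding almost surely, before taking expectations and passing to the limit. Throughout I would work with the smoothing kernel $\delta_\varepsilon(\cdot) = \mathbf{1}(\|\cdot\|\le\varepsilon)/|B(0,\varepsilon)|$ already used elsewhere in the paper.

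First I would establish a purely pathwise identity. Fix a realization $f=Z(\omega)\in C^1(U,\R^d)$ having no root of $f=u$ at which $\det\{f^\prime(t)\}$ vanishes; by assumption (iv) this holds for almost every $\omega$. Under this condition the inverse function theorem shows that every solution of $f(t)=u$ is isolated, so $N_u(f,B)$ is finite for bounded $B$, and that $f$ is a local $C^1$-diffeomorphism near each such solution. A change of variables on a small ball around each root then yields
\[
N_u(f,B) = \lim_{\varepsilon\to 0}\int_B \delta_\varepsilon\{f(t)-u\}\,|\det\{f^\prime(t)\}|\,\dd t
\]
for any $B$ whose boundary carries no root (a negligible restriction removed at the end by inner/outer approximation of $B$). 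Applying this to $Z$ gives the same identity almost surely.

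Second I would take expectations of both sides. The right-hand integrand is nonnegative, so by Tonelli the expectation and the $t$-integral exchange freely, and for each fixed $t$ the Gaussian conditioning rule (using (i) and the non-degeneracy (iii), which guarantee the density $f_{Z(t)}$ and a regular conditional law) gives
\[
\E\bigl[\delta_\varepsilon\{Z(t)-u\}\,|\det\{Z^\prime(t)\}|\bigr]=\frac{1}{|B(0,\varepsilon)|}\int_{B(u,\varepsilon)}\E\bigl[\,|\det\{Z^\prime(t)\}|\mid Z(t)=v\bigr]\,f_{Z(t)}(v)\,\dd v.
\]
Since $v\mapsto \E[\,|\det\{Z^\prime(t)\}|\mid Z(t)=v]\,f_{Z(t)}(v)$ is continuous for a non-degenerate Gaussian field, the inner average converges as $\varepsilon\to 0$ to $\E[\,|\det\{Z^\prime(t)\}|\mid Z(t)=u]\,f_{Z(t)}(u)$, which is exactly the desired integrand.

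The main obstacle is the interchange of $\lim_{\varepsilon\to0}$ and $\E$. Fatou's lemma immediately delivers one of the two inequalities between $\E\{N_u(Z,B)\}$ and $\lim_{\varepsilon\to0}\E\int_B \delta_\varepsilon\{Z(t)-u\}|\det\{Z^\prime(t)\}|\,\dd t$, but the matching bound requires a uniform-integrability (dominated-convergence) argument: one must control the family $\int_B \delta_\varepsilon\{Z(t)-u\}|\det\{Z^\prime(t)\}|\,\dd t$ uniformly in $\varepsilon$, typically by producing an $\varepsilon$-independent integrable majorant for the fixed-$t$ expectations together with their continuity in $t$. This is where the Gaussianity and the non-degeneracy hypotheses do the real work, and it is the step I expect to require the most care; the remaining manipulations are routine.
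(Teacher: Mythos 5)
First, a point of comparison: the paper itself does not prove this statement. Theorem~\ref{thm:rice1} is quoted verbatim from \cite[Theorem~6.2]{azais2009level} ``for the sake of self-containedness'', so your attempt has to be measured against the proof in that reference, which does \emph{not} follow the delta-approximation route you propose, but goes through the area formula: for test functions $g$, the deterministic identity $\int_{\R^d} g(u)N_u(Z,B)\,\dd u=\int_B g\{Z(t)\}\,|\det\{Z'(t)\}|\,\dd t$ holds a.s., whence (Tonelli plus Gaussian regression) the Kac--Rice identity holds for \emph{almost every} level $u$; the delicate part is then upgrading ``a.e.\ $u$'' to the given $u$ via continuity/stability-of-roots arguments that use Gaussianity, non-degeneracy (iii) and condition (iv).

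Your sketch has a genuine gap precisely at the step you defer, and the fix you suggest cannot work as stated. The pathwise identity and the computation of $\lim_{\varepsilon\to 0}\E(A_\varepsilon)$, where $A_\varepsilon:=\int_B \delta_\varepsilon\{Z(t)-u\}|\det\{Z'(t)\}|\,\dd t$, are fine (granting that (iv) is read at the level $u$; as printed in the paper it concerns the level $0$). Fatou then yields only $\E\{N_u(Z,B)\}\le \int_B \E\left[|\det\{Z'(t)\}|\mid Z(t)=u\right]f_{Z(t)}(u)\,\dd t$; the reverse inequality \emph{is} the theorem. Your plan to get it from uniform integrability of $\{A_\varepsilon\}_\varepsilon$ ``by producing an $\varepsilon$-independent integrable majorant'' is essentially circular: since $A_\varepsilon\ge 0$ and $A_\varepsilon\to N_u(Z,B)$ a.s., uniform integrability of the family is (by Scheff\'e/Vitali) \emph{equivalent} to $\E(A_\varepsilon)\to\E\{N_u(Z,B)\}$, i.e.\ to the conclusion itself, and no natural majorant is available under (i)--(iv) alone --- bounds of the type $\sup_\varepsilon\E(A_\varepsilon^{1+\delta})<\infty$ are second-moment (Geman-type) conditions that do not follow from the stated hypotheses. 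The paper itself illustrates that this convergence is not routine: the analogous statement \eqref{eq:NXprimeLp}, a.s.\ and $L^p$ convergence of the mollified counts, is only asserted under the strictly stronger assumptions \ref{C:general}$[p+1]$ and \ref{C:nondegeneracy}$[p]$, borrowing nontrivial results from \cite{gass2024number}. To complete your argument you would have to replace the uniform-integrability step by an argument of the area-formula type (a.e.\ level plus continuity in the level variable), at which point you would have reproduced the cited proof rather than given an alternative one.
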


\begin{theorem}[{\cite[Theorem~6.4]{azais2009level}}]\label{thm:rice2}
Let $Z$ be a random field satisfying the assumptions of Theorem~\ref{thm:rice1}. Assume for any $t\in U$, there exists another random field $Y^{t}:D \rightarrow \mathbb{R}^{d_0}$ with $D$ any topological space such that: (a) $(\omega,t,w) \rightarrow \{Y^{t}(\omega)\}(w)$ is  measurable and almost surely $(t,w) \mapsto Y^{t}(w)$ is continuous; (b) for any $t \in U$, the random process $(s,w) \mapsto \{Z(s), Y^{t}(w)\}$ defined on $U \times D$ is Gaussian. If in addition $J : U \times C^0(D,\mathbb{R}^{d_0}) \rightarrow \mathbb{R}$ is a bounded and continuous function  for the topology of the uniform convergence on every compact on $C^0(D,\mathbb{R}^{d_0})$, then for every compact subset $I\subset U$ and $u \in \R^d$ we have
\begin{equation}
\E \left\{ \sum_{t \in I,\\ Z(t)=u} J(t, Y^{t}) \right\} = 
\int_{I} \E\left[
\vert \det\{Z'(t)\} \vert \, J(t, Y^{t})  \; \big\vert Z(t)=u
\right]
f_{Z(t)}(u) \dd t.
\end{equation}
\end{theorem}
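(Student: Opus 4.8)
The plan is to prove Theorem~\ref{thm:rice2} by regularizing the Dirac mass at the level $u$ and thereby reducing the \emph{weighted} count to the unweighted Kac--Rice formula of Theorem~\ref{thm:rice1}, which I am free to assume. Writing $\delta_\varepsilon(\cdot)=\mathbf 1(\|\cdot\|\le \varepsilon)/|B(0,\varepsilon)|$ for the uniform density on $B(0,\varepsilon)\subset \R^d$, I would introduce the approximating random variables
\begin{equation*}
S_\varepsilon := \int_I J(t,Y^t)\,|\det\{Z'(t)\}|\,\delta_\varepsilon\{Z(t)-u\}\,\dd t ,
\end{equation*}
and show that $S_\varepsilon$ converges, as $\varepsilon\to 0$, both almost surely and in $L^1$ to $\sum_{t\in I,\,Z(t)=u} J(t,Y^t)$, after which taking expectations and conditioning on $Z(t)=v$ inside the integral yields the stated formula.

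First I would establish the almost sure convergence. By hypotheses (ii)--(iv), almost every sample path $Z$ is $C^1$ on $U$, has a non-degenerate differential at each of its finitely many zeros of $Z-u$ in $I$, and (since the expected number of level-$u$ points in the Lebesgue-null set $\partial I$ vanishes by Theorem~\ref{thm:rice1}) has no zero of $Z-u$ on $\partial I$. On this event the inverse function theorem makes $Z$ a local $C^1$-diffeomorphism near each zero $t_j$, so for small $\varepsilon$ the set $\{t\in I:\|Z(t)-u\|\le \varepsilon\}$ splits into disjoint neighbourhoods of the $t_j$; a change of variables $v=Z(t)$ on each piece, in which $|\det\{Z'(t)\}|$ is precisely the Jacobian factor, gives $S_\varepsilon\to \sum_j J(t_j,Y^{t_j})$. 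Here the continuity of $t\mapsto J(t,Y^t)$, which follows from assumption (a) and the continuity of $J$ in its second argument, is what lets the localized integrals converge to the pointwise weights; this is the localization already underlying Theorem~\ref{thm:rice1}, now carried out with the continuous weight $J(t,Y^t)$.

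Next I would pass to the expectation. Ordinary dominated convergence does not apply directly, because the natural dominating random variable changes with $\varepsilon$; instead I would invoke the generalized dominated convergence theorem (Pratt's lemma) with the bound $|S_\varepsilon|\le \|J\|_\infty\, T_\varepsilon$, where $T_\varepsilon=\int_I |\det\{Z'(t)\}|\,\delta_\varepsilon\{Z(t)-u\}\,\dd t$ is the unweighted approximation. By (the proof of) Theorem~\ref{thm:rice1}, $T_\varepsilon\to N_u(Z,I)$ almost surely and in $L^1$, with $\E\{N_u(Z,I)\}<\infty$; this is exactly the ingredient needed to conclude $\E(S_\varepsilon)\to \E\{\sum_{t\in I,\,Z(t)=u} J(t,Y^t)\}$. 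To identify the limit of $\E(S_\varepsilon)$, I would use Fubini and condition on $Z(t)=v$,
\begin{equation*}
\E(S_\varepsilon)=\int_I\!\int_{\R^d}\!\delta_\varepsilon(v-u)\,\E\big[\,|\det\{Z'(t)\}|\,J(t,Y^t)\,\big|\,Z(t)=v\big]\,f_{Z(t)}(v)\,\dd v\,\dd t ,
\end{equation*}
where the joint Gaussianity from (b) guarantees that $v\mapsto \E[\,|\det\{Z'(t)\}|\,J(t,Y^t)\mid Z(t)=v]\,f_{Z(t)}(v)$ is continuous: conditionally on $Z(t)=v$ the pair $(Z'(t),Y^t)$ is Gaussian with mean affine in $v$ and $v$-independent covariance, while $J$ is bounded (and continuous for uniform convergence on compacts) and $|\det\{Z'(t)\}|$ has finite conditional moments. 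Letting $\varepsilon\to 0$ collapses the inner integral to its value at $v=u$, and a final dominated convergence over the compact $I$, using continuity in $t$, produces the claimed identity.

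The main obstacle is the interchange of limit and expectation in the third step: since the weighted integrand $S_\varepsilon$ is not dominated by a single fixed integrable random variable, the argument genuinely requires the $L^1$ convergence $T_\varepsilon\to N_u(Z,I)$ together with $\E\{N_u(Z,I)\}<\infty$, both of which are the analytic heart of Theorem~\ref{thm:rice1}. A secondary technical point is the continuity in $v$ of the conditional expectation, which must be checked with some care because $J$ depends on the entire auxiliary path $Y^t$; here the Gaussian conditioning of assumption (b) and the boundedness of $J$ do the work, but one must retain enough uniformity in $t\in I$ to justify the outer dominated convergence over the compact $I$.
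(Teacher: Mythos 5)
The paper does not prove Theorem~\ref{thm:rice2} at all: it is quoted, together with Theorem~\ref{thm:rice1}, ``for the sake of self-containedness'' from \cite[Theorem~6.4]{azais2009level}, so there is no internal proof to compare your attempt against; it should instead be measured against the classical argument, and it is in fact a faithful reconstruction of it. Your outline --- regularize with $\delta_\varepsilon$, obtain a.s.\ convergence of $S_\varepsilon$ by inverse-function-theorem localization (hypothesis (iv) for non-degeneracy of the roots, compactness of $I$ for their finiteness, Theorem~\ref{thm:rice1} applied to the Lebesgue-null set $\partial I$ to rule out boundary roots), pass to expectations by domination, and identify the limit via Fubini and Gaussian conditioning --- is correct. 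One point is worth making explicit: you appeal to ``the proof of'' Theorem~\ref{thm:rice1} for the $L^1$ convergence of $T_\varepsilon$, which is indeed more than the statement of that theorem provides; but no such appeal is actually needed, because your own final step already supplies the missing ingredient. Computing $\E(T_\varepsilon)$ by Fubini and conditioning (the case $J\equiv 1$ of your identity), its limit is $\E\{N_u(Z,I)\}$ by the continuity in $v$ of the Gaussian conditional expectation combined with Theorem~\ref{thm:rice1} at the single level $u$, and Scheff\'e's lemma then upgrades the a.s.\ convergence of the non-negative $T_\varepsilon$ to $L^1$ convergence, after which Pratt's lemma applies exactly as you indicate; this removes any suspicion of circularity. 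The remaining delicate step --- continuity in $v$, locally uniformly in $t\in I$, of $\E\left[\,|\det\{Z'(t)\}|\,J(t,Y^t)\mid Z(t)=v\,\right]f_{Z(t)}(v)$ --- is the place where assumptions (a), (b) and the non-degeneracy (iii) are consumed, and you resolve it correctly by Gaussian regression (conditional mean affine in $v$, conditional covariance free of $v$), boundedness and continuity of $J$, and compactness of $I$.
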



\subsection{Proof of Theorem~\ref{thm:pcf}}
\label{sec:proofPcf}

\begin{proof}
The proof follows closely the proof of \cite[Proposition 19]{azais2022mean}.

\noindent(i) From \cite[Appendix B.2.1]{azais2022mean} we have
\begin{align*}
\Var\{ X^\prime(0)\} &= \Var\{X^\prime (r e_{1})\} = \lambda_{2} I_{d} = -2c_2^\prime(0) I_d,\\
\E \{ X^\prime(0) X^\prime(r e_{1})^\top\} &= \mathrm{diag}\left\{ -2c_2^\prime(r^2) - 4r^{2} c_{2}^{\prime\prime}(r^{2}), -2c_2^\prime(r^2),\dots, -2c_2^\prime(r^2) \right\}.
\end{align*}
By rearranging the terms of $V(r)$, one obtains that $\Var\{V(r)\}$ is a block diagonal matrix given by $\mathrm{diag}(B_{1}, B_{2}, B_{2},\dots,B_{2})$ where
\begin{equation}\label{eq:B1B2}
B_{1} = \begin{pmatrix} -2c_2^\prime(0) & -2c_2^\prime(r^2) - 4r^2 c_2^{\prime\prime}(r^2) \\ -2c_2^\prime(r^2) - 4r^{2} c_{2}^{\prime\prime}(r^2) & -2c_2^\prime(0) \end{pmatrix},\; 
B_{2} = \begin{pmatrix} -2c_2^\prime(0) & -2c_2^\prime(r^2) \\ -2c_2^\prime(r^2) & -2c_2^\prime(0) \end{pmatrix}.    
\end{equation}
In particular,~\ref{C:nondegeneracy:gradient}$[r]$ is satisfied if and only if both matrices have positive determinant.
Using the fact that $c_1(r) = c_2(r^2)$, one gets $c_1^{\prime\prime}(r) = 4r^2c_2^{\prime\prime}(r^2) + 2c_2^{\prime}(r^2)$ so that~\eqref{eq:conditionVr}-\eqref{eq:density:Vr} follow easily.

\noindent (ii) Let $r>0$ be such that condition~\ref{C:nondegeneracy:gradient}$[r]$ is satisfied. In particular, \eqref{eq:conditionVr} is satisfied. By continuity of $c_{2}^{\prime}$ and $c_{1}^{\prime\prime}$, there exists $\varepsilon >0$ such that for all ${t}=(t_1,t_2)\in B(0,\varepsilon)\times B(r e_{1}, \varepsilon)$, the random vector $Z({t}) = \left\{ X^\prime(t_1)^\top,X^\prime(t_2)^\top\right\}^\top$ is non-degenerate.  Let $S_1 = B(0,\varepsilon)$ and $S_2 = B(re_{1},\varepsilon)$. In particular, $S_1 \cap S_2 = \emptyset$ {for $\varepsilon$ small enough}. 

Let $\mathcal{L},\mathcal{L}^\prime \subseteq \{0,1,\dots,d\}$. We apply Theorem~\ref{thm:rice2} to  $Z(t)$ with $U = S_1\times S_2$, $u=(0,0)$ and the trivial situation $D=\{0\}\subset \R^{d(d+1)}$. In the following, we identify $\mathbb{R}^{d(d+1)}$ as the product of the vectorized spaces $\mathbb{R}^{d(d+1)/2}\times \mathbb{R}^{d(d+1)/2}$ and, for any $f\in C^0(D, \mathbb{R}^{d(d+1)})$ we write $f=(f_1,f_2)$ for $f_1, f_2 \in C^0(D, \mathbb{R}^{d(d+1)/2})$. Further, for all ${t} \in U$, we define $Y^{{t}}(0)$ as the Gaussian vector $\{X^{\prime\prime}(t_1)^\top, X^{\prime\prime}(t_2)^\top\}^\top$ and we define $J: U \times C^0 (D, \mathbb{R}^{d(d+1)})  \rightarrow  \mathbb{R}$  by $({t},f)  \mapsto  
\iota_\L \{f_1(0)\} \times \iota_{\L^\prime} \{f_2(0)\}$.

We now check the assumptions of Theorem~\ref{thm:rice2} (which in particular include the assumptions of Theorem~\ref{thm:rice1}). First, conditions~(i)-(iii) of Theorem~\ref{thm:rice1} are satisfied by the assumptions of Theorem~\ref{thm:pcf}. Second, since $Z^\prime(t) = \mathrm{diag}\{X^{\prime\prime}(t_1), X^{\prime\prime}(t_2)\}$ condition~(iv) becomes
\begin{equation*}
    \mathbb{P}\left[ \exists {t} \in S_1\times S_2 : X^\prime(t_1) = X^\prime(t_2) = 0, \det\{X^{\prime\prime}(t_1)\} \times \det\{X^{\prime\prime}(t_2)\} = 0 \right] = 0.
\end{equation*}
Obviously, that probability is less than $\mathbb{P}\left[ \exists t_1 \in S_1 : X^\prime(t_1) = 0, \det\{X^{\prime\prime}(t_1)\}  = 0 \right]$ which is null due to the fact that the sample paths are a.s. Morse.
Third, assumptions a) and b) of Theorem~\ref{thm:rice2} are
satisfied. Since $J\in \{0,1\}$, the assumption that it is a continuous function is not
satisfied. To resolve this, we consider a sequence of increasing, bounded and continuous functions $(J_m)_{m\ge 1}$  which converges pointwise to $J$. We can, for example, take $J_m(x)=\min \{ 1,m\times d(x,F)\}$ with $F=J^{-1}(\{0\})$. Applying Theorem~\ref{thm:rice2} to $J_m$ yields that for every compact $I$
\begin{equation*}
    \E\left\{ \sum_{{t}\in I, Z({t})=0} J_m({t},Y^{{t}}) \right\} = 
    \int_{I} \E \left[ \vert \det \{Z^\prime(t)\} \vert J_m({t},Y^{{t}}) \big\vert Z({t})= (0,0) \right] f_{Z({t})}(0,0) \dd t
\end{equation*}
whereby we deduce by monotone convergence that
\begin{equation}\label{eq:F}
    \E \left\{ \sum_{{t}\in I, Z({t})=0} J({t},Y^{{t}}) \right\} = \int_{I} \E\left[ \vert \det \{Z^\prime({t})\} \vert J({t},Y^{{t}}) \big\vert Z({t})= (0,0) \right] f_{Z({t})}(0,0) \dd t.
\end{equation}
Taking $I=S_1^\prime\times S_2^\prime :=\overline{B(0,\varepsilon^\prime)}\times \overline{B(re_1,\varepsilon^\prime)}$ for some $0<\varepsilon^\prime<\varepsilon$, we have
\[
    \sum_{t\in I, Z({t})=0} J(t,Y^t) = \sum_{t_1 \in Y_\L,t_2\in Y_{\L^\prime}}^{\neq} \mathbf 1(t_1\in S_1^\prime)\,\mathbf 1(t_2\in S_2^\prime).
\]
By the Campbell theorem,
\begin{align}\label{camp}
\E \left\{ \sum_{t_1 \in Y_\L,t_2\in Y_{\L^\prime}}^{\neq} \mathbf 1(t_1\in S_1^\prime)\, \mathbf 1(t_2\in S_2^\prime) \right\} = \int_{S_1^\prime\times S_2^\prime}\rho_{\mathcal{L},\mathcal{L}^\prime}^{(2)}(t_1,t_2)   \dd t. 
\end{align}
Identifying \eqref{eq:F} and \eqref{camp} yields that
\begin{align*}
    \rho_{\mathcal{L},\mathcal{L}^\prime}^{(2)}(0,re_1) = & f_{V(r)}(0,0) \times \E 
    \Big[
    |\det\{X^{\prime\prime}(0)\}| \times |\det\{X^{\prime\prime}(re_1)\}| \; \times \nonumber \\
    & \hspace*{2cm} \iota_\L\{X^{\prime\prime}(0)\} \times \iota_{\L^\prime}\{X^{\prime\prime}(r e_1)\} 
    \mid X^\prime(0)=X^\prime(re_1)=0
    \Big]
\end{align*}
which leads to~\eqref{eq:gLLprime}. It only remains to prove the continuity of $g_{\L,\L^\prime}$. First, $u\mapsto f_{V(u)}(0,0)$ is clearly continuous at $r$ since $V(r)$ is non degenerate. Hence, it suffices to check the continuity of $u \mapsto \mathbb{E}\{ \varphi(\tilde{X}_{0}, \tilde{X}_u) \}$, where $(\tilde{X}_{0}, \tilde{X}_{u})$ is distributed according to the conditional distribution of $\{X^{\prime\prime}(0)^\top ,X^{\prime\prime}(ue_1)^\top \}^\top $ given $X^\prime(0)=X^\prime(ue_1)=0$, and
\begin{equation*}
    \varphi(x_0, x_u) = |\det(x_0)| \times |\det(x_u)| 
    \iota_\L(x_0)  \iota_{\L^\prime}(x_u).
\end{equation*}
By \cite[Lemma 8]{azais2022mean}, this conditional distribution corresponds to the one of a centered Gaussian random vector with $d(d+1) \times d(d+1)$ covariance matrix say $\Sigma(u)$  with elements depending only on $c_2^\prime(v),c_2^{\prime\prime}(v)$ for $v=0,u^2$. From \cite[Lemma 8]{azais2022mean}, one can check that $u\mapsto \Sigma(u)$ is continuous at $r$ which in turn implies the distribution of $\tilde{X}_{u}$ is continuous at $r$ with respect to the weak convergence. Finally, by continuity of $\varphi$, the continuous mapping theorem implies the continuity of $u \mapsto \mathbb{E}\{ \varphi(\tilde{X}_{0}, \tilde{X}_{u}) \}$ at $r$.
\end{proof}


\subsection{Proof of Theorem~\ref{thm:rhok}}
\label{app:proof:rhok}

\begin{proof} The proof extends the one of Theorem~\ref{thm:pcf}. Since $t_1,\dots,t_k$ are pairwise distinct, there exists $\varepsilon>0$ such that $S_i=B(t_i,\varepsilon)$ for $i=1,\dots,k$ are pairwise disjoint. Let $s\in S= \times_{i=1}^k S_i$ and $V(s)=\{X^\prime(s_1)^\top,\dots,X^\prime(s_k)^\top\}^\top$. Its covariance matrix involves terms of the form $c_1^{\prime\prime}(\|s_i-s_j\|)$. By continuity of $c_1^{\prime\prime}$, the distribution of $V(s)$ is non-degenerate for all $s\in S$.
We apply Theorem~\ref{thm:rice2} to $Z(s)=V(s)$, $D=\{0\}\subset \R^{kd(d+1)/2}$ and the functional $J$ given by
$$  
\begin{matrix} J: &
    S \times C^0 (D, \mathbb{R}^{kd(d+1)/2}) & \rightarrow & \mathbb{R}\\ &
    ({s},f) & \mapsto & \iota_\L \{f_1(0)\} \times \dots \times \iota_\L \{f_k(0)\}.
\end{matrix}
$$
Again, all assumptions of Theorem~\ref{thm:rice2} are satisfied except the continuity of $J$.
Then, we proceed in a similar way as for proof of Theorem~\ref{thm:pcf} to get~\eqref{eq:F}. Taking, $I=\times_{i=1}^k S_i^\prime$ with $S_i^\prime=\overline{B(s_i,\varepsilon^\prime)}$ with $0<\varepsilon^\prime<\varepsilon$, the conclusion follows using the $k$th order Campbell Theorem after noticing that 
\[
    \sum_{s\in I, Z({s})=0} J(s,Y^s) = \sum_{s_1,\dots,s_k \in Y_\L}^{\neq} \mathbf 1(s_1\in S_1^\prime) \times \dots \times \mathbf 1(s_k\in S_k^\prime).
\]
\end{proof}


\section{Proof of Theorem~\ref{thm:convcrit}}
\label{app:convergence:critical:points}

The proof relies on two technical results dealing with the behavior of ``generic'' roots of a $C^1$ function when one performs small $C^1$ modifications of that function. First, the case of a single root is treated in Lemma~\ref{lem:convergence:one:root} which in turn is extended to the set of roots in Proposition~\ref{prop:convergence:all:roots}. Finally, Theorem~\ref{thm:convcrit} follows from the continuous mapping theorem.

\begin{lemma}\label{lem:convergence:one:root}
Let $D\subset \mathbb{R}^d$ be a compact set and $f:D \to
\mathbb{R}^d$ be a $C^1(D,\R^d)$ function such that $f(x_0)=0$
  for a unique $x_0 \in D$. Furthermore, assume that $x_0 \in
\interior D$ and $f^\prime(x_0)$ is invertible. Let $(f_n)_n$ be a
sequence of functions such that $f_n \to f$ in $C^1(D,\R^d)$.
Then
there exists $n_0$ such that for any $n\geq n_0$, there exists a
unique $x_n \in D$ such that $f_n(x_n)=0$ and $\iota \{f_n^\prime({x_n})\} = \iota \{f^\prime({x_n})\}$. Furthermore, $x_n \to x_0$.
\end{lemma}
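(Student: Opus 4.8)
The plan is to localize the problem around $x_0$ and to treat separately the behaviour of $f_n$ near and away from $x_0$. Write $A=f^\prime(x_0)$, which is invertible by hypothesis, and set $\kappa=\|A^{-1}\|$ (operator norm). The guiding idea is that near $x_0$ the map $f$ behaves like the affine map $x\mapsto A(x-x_0)$, so a Newton-type fixed point reformulation will locate a nearby zero of each sufficiently close perturbation $f_n$; away from $x_0$, the uniqueness hypothesis together with compactness will force $f$, and hence $f_n$ for large $n$, to stay bounded away from $0$, which promotes the local existence to global uniqueness.

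For the local step I would fix a closed ball $\overline{B(x_0,\delta)}\subset\interior D$ and introduce $g_n(x)=x-A^{-1}f_n(x)$, whose fixed points are exactly the zeros of $f_n$. Since $g_n^\prime(x)=A^{-1}\{A-f_n^\prime(x)\}$, the $C^1(D,\R^d)$ convergence (which yields $\sup_{x\in D}\|f_n^\prime(x)-f^\prime(x)\|\to 0$) together with continuity of $f^\prime$ at $x_0$ lets me pick $\delta$ and $n_0$ so that $\|f_n^\prime(x)-A\|\le 1/(2\kappa)$ on $\overline{B(x_0,\delta)}$ for $n\ge n_0$, whence $\|g_n^\prime\|\le 1/2$ there, i.e.\ $g_n$ is a $1/2$-contraction. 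Because $f_n(x_0)\to f(x_0)=0$, I can also ensure $\|g_n(x_0)-x_0\|=\|A^{-1}f_n(x_0)\|\le\delta/2$, and the contraction bound then gives $g_n(\overline{B(x_0,\delta)})\subseteq\overline{B(x_0,\delta)}$. Banach's fixed point theorem produces a unique $x_n\in\overline{B(x_0,\delta)}$ with $f_n(x_n)=0$.

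To upgrade uniqueness to all of $D$, I would use that $f$ has no zero on the compact set $D\setminus B(x_0,\delta)$, so $m:=\inf_{D\setminus B(x_0,\delta)}\|f\|>0$; uniform convergence then gives $\|f_n\|\ge m/2>0$ there for $n$ large, so every zero of $f_n$ lies in $\overline{B(x_0,\delta)}$ and thus coincides with the one found above. Convergence $x_n\to x_0$ is then immediate, since the construction applies for every $\delta>0$ with a threshold $n_0(\delta)$, forcing $x_n\in B(x_0,\delta)$ eventually. For the index identity, $x_n\to x_0$ and continuity give $f^\prime(x_n)\to A$, while $\|f_n^\prime(x_n)-f^\prime(x_n)\|\to 0$ gives $f_n^\prime(x_n)\to A$ as well; since $A$ is nonsingular its eigenvalues are bounded away from $0$, so the number of negative eigenvalues is locally constant near $A$ and $\iota\{f_n^\prime(x_n)\}=\iota\{f^\prime(x_n)\}=\iota(A)$ for all large $n$.

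The step I expect to be most delicate is the index identity rather than the root-finding. Stability of the count of negative eigenvalues under small perturbation is transparent for \emph{symmetric} matrices (real eigenvalues depending continuously on the entries, none able to cross $0$ near the nonsingular $A$), but it can genuinely fail for non-symmetric matrices, where a complex conjugate pair with negative real part may split into two negative real eigenvalues. The argument must therefore lean on the symmetry of the derivatives $f^\prime=X^{\prime\prime}$ and $f_n^\prime=X_n^{\prime\prime}$ present in the intended application through Theorem~\ref{thm:convcrit}. The remaining work is purely quantitative: tracking the joint dependence of $\delta$ and $n_0$ so that the contraction, self-mapping, and away-from-$x_0$ bounds hold simultaneously.
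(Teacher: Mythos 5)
Your proof is correct and establishes the lemma by a related but technically distinct route. The paper follows Tao's proof of the inverse function theorem: it introduces the perturbation $g(x)=f(x)-f^\prime(x_0)(x-x_0)$, bounds its directional derivatives (and those of the analogues $g_n$) on a small ball, and invokes an injectivity-plus-ball-covering lemma to conclude that $f_n$ is injective on $B(x_0,r)$ with image containing a ball around $f_n(x_0)$ that contains $0$; existence of $x_n$ then comes from the covering and uniqueness from the injectivity. You instead apply the Banach fixed point theorem directly to the Newton maps $g_n(x)=x-A^{-1}f_n(x)$, which yields existence and uniqueness in a single stroke from the contraction estimate $\|g_n^\prime\|\le 1/2$; this is arguably the cleaner engine, since it avoids re-deriving the quantitative inverse-function machinery for each $f_n$. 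The exterior step (compactness of $D\setminus B(x_0,\delta)$ and uniform convergence force all zeros of $f_n$ into the ball) and the mechanism giving $x_n\to x_0$ (rerun the construction for every radius) are essentially identical in the two proofs.

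Concerning the index identity, you are in fact more careful than the paper. The paper's proof asserts that invertibility of $f^\prime(x)$ on a ball makes $\iota\{f^\prime(x)\}$ locally constant, and likewise that $\iota\{f_n^\prime(x_0)\}=\iota\{f^\prime(x_0)\}$ for large $n$; as you point out, this inference is only valid when the matrices involved have real eigenvalues (e.g., are symmetric). For non-symmetric perturbations it genuinely fails: a matrix with eigenvalues $-1\pm \mathrm{i}/n$ converges to $-I$, whose index is $2$, while having no real (hence no negative) eigenvalue at all, and no eigenvalue ever crosses zero. Since the lemma is only ever applied to $f=X^\prime$ and $f_n=X_n^\prime$, whose Jacobians are Hessians and therefore symmetric, your explicit reliance on symmetry is exactly the hypothesis under which both your argument and the paper's are sound; flagging it is an improvement in rigor over the paper's own proof rather than a gap in yours.
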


\begin{proof}
The proof is inspired by the proof of the inverse function Theorem by \cite{tao2023analysis}. 
For all $x\in \interior D$, let $\rho(x) = \min\{|\lambda|, \lambda
\text{ eigenvalue of } f^\prime(x)\}$. Note that $\rho(x_0) >0$ by
assumption. Let $g:D\to \mathbb{R}^d$ be defined by $g(x) = f(x) -
f^\prime(x_0)(x-x_0)$. By definition, $g(x_0)=0$ and
$g^\prime(x_0)=0$. Since $f\in C^1(D,\R^d)$, $D$ is compact and
$x_0\in \interior D$, there exist $r, \eta>0$ such that  for all $x\in B(x_0,r)$: 
(i) $\rho(x) > 0$; (ii) $\|f(x)\|>\eta$; (iii) the derivative  of $g$ in the direction $v$, $\partial_v g(x) = \lim_{t\to 0} \{g(x+tv)-g(x)\}/t$, satisfy
\begin{equation}\label{eq:bound:derivative:g}
        \|\partial_v\, g(x)\| < \frac{\rho(x_0)}{2d}\|v\| \quad \text{ for all } v\in \mathbb{R}^d.
\end{equation}
In particular, (i) implies that $\iota \{f^\prime(x)\}$ is constant for
$x\in B(x_0,r)$. Let us also define $\tilde{f},
\tilde{g}:B({0},r) \to \R^d$ by
$\tilde{f}(y)=f^\prime(x_0)^{-1}f(y + x_0)$ and $\tilde{g}(y) =
\tilde{f}(y) - y = f^\prime(x_0)^{-1}g(y + x_0)$. Remark that the directional derivatives of $\tilde{g}$ satisfy \eqref{eq:bound:derivative:g} for all $y\in B({0},r)$ where $\rho(x_0)$ is replaced by $1$. The fundamental theorem of calculus implies that for all $y_1,y_2\in B({0},r)$,
\begin{equation*}
\tilde{g}(y_1)-\tilde{g}(y_2) = \int_{0}^{1}  \partial_{y_1-y_2}\tilde{g}\{y_2 + t(y_1-y_2)\} \dd t.
\end{equation*}
By~\eqref{eq:bound:derivative:g}, each component of the integrand is bounded by $\|y_1-y_2\|/2d$,
which in turn implies that $\| \tilde{g}(y_1)-\tilde{g}(y_2) \| \leq
\|y_1-y_2\|/2$. By \cite[Lemma 6.6.6]{tao2023analysis}, the map $\tilde{f} = \tilde{g} + I$ is injective and $\tilde{f}\{B({x_0},r)\} \supset B({x_0},r/2)$. In turn, since $f^\prime(x_0)$ is injective and $f(x) = f^\prime(x_0)\tilde{f}(x-x_0)$, we know that $f$ is injective on $B(x_0,r)$ and $f\{B(x_0,r)\} = f^\prime(x_0)[\tilde{f}\{B(0,r)\}] \supset B\{0,\rho(x_0)r/2\}$.
    
We now define for any $n\ge 1$, $\rho_n$ and $g_n$ analogously,
replacing $f$ by $f_n$. Let $0<\varepsilon <r$. It only remains to
prove that there exists $n_0$ such that for any $n\geq n_0$, there
exists a unique $x_n \in B(x_0, \varepsilon)$ such that $f_n(x_n)=0$
and $f_n(x) \neq 0$ for all $x\notin B(x_0,\varepsilon)$ . 
Since $f_n \to f$ in $C^1(D,\R^d)$, there exists $n_0$ such that, for
all $n\geq n_0$: (1) $\iota \{f_n^\prime(x_0)\} = \iota
\{f^\prime(x_0)\}$; (2) for all $x\in B(x_0,r)$, $\rho_n(x) >
\rho(x)/2$; (3) for all $x\in B(x_0,r)$, the directional derivatives
of $g_n$ satisfy \eqref{eq:bound:derivative:g}; (4) $\|f_n -
f\|_{\infty} < \min\{\rho(x_0) \varepsilon / 4, \eta\}$. 
From now on, assume that $n\geq n_0$. In particular, $\iota \{f_n^\prime(x)\}$ is constant equal to $\iota \{f^\prime(x_0)\}$ for $x\in B(x_0,r)$.    
Using property (3) above, the same arguments as the ones we applied to
$f$ imply that $f_n$ is injective on $B(x_0,r)$ and : (a)
$f_n\{B(x_0,r)\} \supset B\{f_n(x_0),\rho_n(x_0)r/2\}$; (b) $f_n\{B(x_0,\varepsilon)\} \supset
B\{f_n(x_0),\rho_n(x_0)\varepsilon/2\}$. By property (4) above, we
know that $\|f_n(x_0)\| < \rho(x_0) \varepsilon /4 $
which implies, thanks to property (2), that $0 \in
B\{f_n(x_0),\rho_n(x_0) \varepsilon/2\} \subset
B\{f_n(x_0),\rho_n(x_0) r/2\}$. Hence, property (a) implies
that there exists a unique $x_n \in B(x_0,r)$ such that $f_n(x_n)=0$
and property (b) implies furthermore that $x_n\in B(x_0,\varepsilon)$. Finally, property
(4) implies that for all $x\notin B(x_0,r)$,
$f_n(x)\neq  0$ which ends the proof.
\end{proof}

The result below states the continuity of the non-degenerate
roots for a function $f$ having no roots 
boundary of its  domain.

\begin{proposition}\label{prop:convergence:all:roots}
Let $D\subset \mathbb{R}^d$ be a compact set and $f:D \to
\mathbb{R}^d$ be a $C^1(D,\R^d)$ function such that: for all $x\in
\partial D$, $f(x) \neq 0$ and for all $x\in D$ such that $f(x)=0$,
$f'(x)$ is invertible. Let $(f_n)_n$ be a sequence of functions such
that $f_n \to f$ in $C^1(D,\R^d)$. Then, for all $\ell=0,\dots,d$, the following convergence holds as $n\to \infty$
\begin{equation*}
\{x\in D, f_n(x)=0, \iota \{f_n^\prime(x)\}=\ell\} \to \{x\in D, f(x)=0, \iota \{f^\prime(x)\}=\ell\}.
\end{equation*}
\end{proposition}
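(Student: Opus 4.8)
The plan is to reduce the statement to finitely many applications of Lemma~\ref{lem:convergence:one:root}, one per zero of $f$, after first establishing that $f$ has only finitely many zeros, all in the interior of $D$, and that no spurious zeros of $f_n$ can appear elsewhere. Throughout, the target convergence is understood in the sense of~\eqref{eq:convcrit}.

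First I would show that the zero set $Z = \{x\in D : f(x)=0\}$ is finite and contained in $\interior D$. By hypothesis $f\neq 0$ on $\partial D$, so $Z\subset \interior D$. At each $x_0\in Z$ the derivative $f'(x_0)$ is invertible, so by the inverse function theorem $f$ is a local diffeomorphism near $x_0$ and $x_0$ is an isolated zero. Since $Z=f^{-1}(\{0\})$ is closed in the compact set $D$, it is compact, and a compact set all of whose points are isolated is finite; thus write $Z=\{x_1,\dots,x_p\}$. Because the $x_k$ are distinct and interior, I can choose pairwise disjoint closed balls $D_k=\overline{B(x_k,r_k)}\subset \interior D$ so that $x_k$ is the unique zero of $f$ in $D_k$. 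Restricting $f_n\to f$ in $C^1(D,\R^d)$ to each $D_k$, Lemma~\ref{lem:convergence:one:root} provides an integer $n_k$ such that for all $n\geq n_k$ there is a unique $x_n^{(k)}\in D_k$ with $f_n(x_n^{(k)})=0$, satisfying $\iota\{f_n'(x_n^{(k)})\}=\iota\{f'(x_k)\}$ (the index being locally constant near the non-degenerate zero $x_k$, as established in the proof of the lemma) and $x_n^{(k)}\to x_k$.

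The hard part, and where the boundary hypothesis is essential, is ruling out zeros of $f_n$ outside $\cup_k D_k$. For this I would set $K=D\setminus \cup_k B(x_k,r_k)$, a compact set on which $f$ has no zero, so that $m:=\inf_{x\in K}\|f(x)\|>0$. Uniform convergence of $f_n$ to $f$ on $K$ then gives $\|f_n(x)\|\geq m/2>0$ for all $x\in K$ once $n$ is large, so every zero of $f_n$ lies in some $B(x_k,r_k)$. Combined with the uniqueness in each $D_k$, this shows that for $n$ larger than some $n_0\geq \max_k n_k$ the zeros of $f_n$ in $D$ are exactly $\{x_n^{(1)},\dots,x_n^{(p)}\}$. (The degenerate case $p=0$ is covered by taking $K=D$, whence $f_n$ has no zero for $n$ large.)

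Finally I would fix $\ell\in\{0,\dots,d\}$ and conclude. Let $S=\{x\in D: f(x)=0,\ \iota\{f'(x)\}=\ell\}$ and put $K_\ell=\{k:\iota\{f'(x_k)\}=\ell\}$, so $S=\{x_k:k\in K_\ell\}$. For $n\geq n_0$, the index matching $\iota\{f_n'(x_n^{(k)})\}=\iota\{f'(x_k)\}$ shows that the zeros of $f_n$ of index $\ell$ are precisely $\{x_n^{(k)}:k\in K_\ell\}$, which has $\Card(K_\ell)=\Card(S)$ elements. Enumerating $S$ and its approximants accordingly and defining $\phi_n$ to send the index of $x_k$ (for $k\in K_\ell$) to the index of $x_n^{(k)}$, the convergences $x_n^{(k)}\to x_k$ yield both the count condition and the approximation condition of~\eqref{eq:convcrit}, which is exactly the asserted set convergence. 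This completes the proof.
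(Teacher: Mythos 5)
Your proof is correct and follows essentially the same route as the paper's: finitely many isolated interior zeros, pairwise disjoint balls to which Lemma~\ref{lem:convergence:one:root} is applied (yielding one convergent zero per ball with matching index), and a uniform-convergence lower bound on the complementary compact set to exclude spurious zeros of $f_n$. The only difference is that you surround \emph{all} zeros of $f$ and filter by index $\ell$ at the end, whereas the paper surrounds only the index-$\ell$ zeros and then asserts $\|f\|>\eta$ off those balls; your variant is in fact the more careful one, since that assertion as literally stated overlooks possible zeros of $f$ with indices other than $\ell$.
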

\begin{proof}
Since the roots of $f$ are non-degenerate, the points of $S:= \{x\in D, f(x)=0,
\iota \{f^\prime(x)\}=\ell\}$ are isolated which in turn implies that
it is a finite set since $D$ is compact. Let $S =
\{x_1,\dots,x_p\}$. There exist $r,\eta>0$ such that for all $k\neq
\ell$, $\overline{B(x_k,r)} \cap \overline{B(x_\ell,r)} = \emptyset$
and, since $D\setminus \{\cup_{k=1}^p B(x_k,r)\}$ is compact, for all
$x\notin \cup_{k=1}^p B(x_k,r)$, $\|f(x)\| > \eta$. Let $n_0(k)$ be
the integer appearing in Lemma~\ref{lem:convergence:one:root} when
applied to the compact set $\overline{B(x_k,r)}$. Choose $n_0'$ be
such that for all $n\geq n_0'$, $\|f_n - f\|_{\infty} < \eta$. It is
then clear that for all $n\geq \max\{n_0(1),\dots,n_0(p), n_0'\}$, the cardinality of $S_n := \{x\in D, f_n(x)=0, \iota\{f_n'(x)\}=\ell\}$ is equal to $p$ and finally that $S_n \to S$ as $n\to \infty$.
\end{proof}

\begin{proof}[Proof of Theorem~\ref{thm:convcrit}]
    By Proposition \ref{prop:convergence:all:roots}, the mapping $f\in C^1([0,1]^d) \mapsto \{x\in D, f(x)=0, \iota \{f^\prime(x)\}=\ell\}$ is continuous at every $f\in M$, where
    \begin{align*}
    M = & \Bigg\{f\in C^1([0,1]^d): \forall x\in \partial [0,1]^d, f(x) \neq 0 \text{ and } \forall x\in [0,1]^d, \\
    & \hspace*{5cm} f(x)=0 \Rightarrow \det\{f^\prime(x)\}\neq 0 \Bigg\}.
    \end{align*}
    By assumption, $\bX^\prime \in M$ almost surely and $\bX^\prime_n
    \xrightarrow{\mathcal{D}} \bX^\prime$ in
    $C^1([0,1]^d,\R^d)$.
\end{proof}


\section{Proof of Theorem~\ref{thm:Xab}}
\label{app:convergence:simulation}

The proofs of the two items are given in two separate sections.

\subsection{Proof of Theorem~\ref{thm:Xab}-(i)}

The proof relies on a decomposition of the difference $\bX_n -
  \bX$ and its derivatives. Using the regularity of the field $\bX$
and the kernel $k$, one can control the convergence rate of each term
in the decomposition.

\begin{proof}
By assumption, $\bX \in C_{a.s.}^{2+\varepsilon}(W)$ and by construction $\bX_n \in C_{a.s.}^{3}(W)$.
We intend to show that almost surely
$\|\bX_n - \bX \|_{C^2(W)} \to 0$. We let
$K = [-1,1]^d$ denote the compact support of the kernel $k$ and
consider $n$ be large enough so that $\xi_n \mathrm{diam}(K)<1$. We
further define $L_n^- = L_n \cap (W)_{\ominus \xi_n K}$,
$L_n^\prime=L_n\cap \{W\setminus (W)_{\ominus \xi_n K}\}$ and
$L_n^{\prime\prime} = L_n\cap \{(W)_{\oplus \xi_n K}\setminus
(W)_{\ominus \xi_n K} \}$. 

By definition of $k$, $L_n$ and $W$, we have for any $x\in L_n\cap W$ and as $n\to \infty$
\begin{align}
|\mathcal C_{n,x}| &= n^{-d}, \quad \operatorname{diam}(\mathcal C_{n,x})= n^{-1},\quad |L_n \cap W|=n^d, \quad  |L_n^-| = O(n^d), \label{eq:latt1}\\
|L_n^\prime|&= O\{( n \xi_n)^{d-1}\} \quad \text{ and } \quad |L_n^{\prime\prime}|= O\{( n \xi_n)^{d-1}\} . \label{eq:latt2}
\end{align}
Moreover, by the assumptions on the kernel function $k$, it holds for
any $\ell \in \{0,1,2,3\}$ that
\begin{align}
\|k_{\xi_n}\|_{C^\ell(\R^d)} &= \sum_{\alpha \in \mathbb Z^d, |\alpha| \le \ell } \sup_{y \in \R^d} |\partial^\alpha k_{\xi_n}(y)| \nonumber\\
&\le     \sum_{\alpha \in \mathbb Z^d, |\alpha| \le \ell } \sup_{y \in \R^d} \xi_n^{-d-|\alpha|} |\partial^\alpha k(y/\xi_n)| \nonumber\\
& \le \xi_n^{-d-\ell} \|k\|_{C^\ell(\R^d)} = O(\xi_n^{-d-\ell}) \label{eq:k}.
\end{align}
Let $t\in W$ and $\alpha\in \mathbb{N}^d$ such that  $|\alpha| \leq
2$. We decompose
$\partial^\alpha (X_n - X) (t) = \sum_{i=1}^5 \Delta_{\alpha,i}(t)$,
where
\begin{align}
\Delta_{\alpha,1}(t) &:= n^{-d} \sum_{x \in L_n^\prime} \partial^\alpha k_{\xi_n}(t-x) X(x)  \\
\Delta_{\alpha,2}(t) &:= \sum_{x \in L_n^-} X(x) \left\{ |\mathcal C_{n,x}| \partial^\alpha k_{\xi_n}(t-x) - \int_{\mathcal{C}_{n,x}} \partial^\alpha k_{\xi_n}(t-y) \dd y \right\} \\
\Delta_{\alpha,3}(t) &:= \sum_{x\in L_n^-} \int_{\mathcal C_{n,x}} \partial^\alpha k_{\xi_n}(t-y) \{X(x) - X(y)\} \dd y\\
-\Delta_{\alpha,4}(t) &:= \sum_{x\in L_n^{\prime\prime}} \int_{\mathcal C_{n,x}} \partial^\alpha k_{\xi_n}(t-y) X(y)\dd y \\
\Delta_{\alpha,5}(t) &:= \int_{(W)_{\oplus \xi_n K}} \partial^\alpha k_{\xi_n}(t-y) X(y) \dd y - \partial^\alpha X(t).
\end{align}
Note that $\bX_n$ is defined on $W$ only whereas $\bX$ is defined on a larger domain (we take $\mathbb{R}^d$ without loss of generality).
In the following we use extensively \eqref{eq:latt1}-\eqref{eq:k} and the fact that $\bX$ satisfies \ref{C:general}$[2+\varepsilon]$. We have
\begin{align*}
|\Delta_{\alpha,1}(t)| &\le  \|\bX\|_{C^0(W)} \, \|k\|_{C^{|\alpha|}(\R^d)} \,  n^{-d} \,\xi_n^{-d-|\alpha|} \,|{L_n^{\prime}}|   \\
&= O\{ n^{-d} \xi_n^{-d-2} (n\xi_n)^{d-1}\} = O(n^{-1} \xi_n^{-3}).    
\end{align*}
For the second term, using the mean-value theorem on $\partial^{\alpha} k_{\xi_n}$, we obtain
\begin{align*}
|\Delta_{\alpha,2}(t)| &\le  \|\bX\|_{C^0(W)} \sum_{x \in L_n^-} \int_{\mathcal{C}_{n,x}} \|k_{\xi_n}\|_{C^{|\alpha|+1}(\R^d)} \, \|y-x\| \dd y \\
&\le \|\bX\|_{C^0(W)} \|k\|_{C^3(\R^d)} \,\xi_n^{-d-3}  \,|L_n^-| \,|\mathcal C_{n,x}| \, \mathrm{diam}(\mathcal C_{n,x})\\
&= O(n^{-1} \xi_n^{-d-3}).
\end{align*}
The third term satisfies
\begin{align*}
|\Delta_{\alpha,3}(t)| &\le \|k\|_{C^{|\alpha|}(\R^d)} \,\xi_n^{-d-|\alpha|}\, \sum_{x\in L_n^-} \int_{\mathcal C_{n,x}} |X(x)-X(y)| \dd y \\
 &\le \|k\|_{C^2(\R^d)} \, \|\bX\|_{C^1(W)} \,\xi_n^{-d-2}\, |L_n^-| \,|\mathcal C_{n,x}| \,\mathrm{diam}(\mathcal C_{n,x} ) \\
&= O(n^{-1} \xi_n^{-d-2}).     
\end{align*}
For the fourth one, we have
\begin{align*}
|\Delta_{\alpha,4}(t)| & \le    \|k\|_{C^{|\alpha|}(\R^d)} \, \|\bX\|_{C^0\{ (W)_{\oplus K}\}}\, \xi_n^{-d-|\alpha|} \,|\mathcal C_{n,x}| \, |L_n^{\prime\prime}| \\
&=O(n^{-1} \xi_n^{-3}).
\end{align*}
Finally, for the fifth term, since $k$ is supported on $K$
\begin{equation*}
    \Delta_{\alpha,5}(t)   =  
    \int_{\R^d} \partial^\alpha k_{\xi_n}(t-y) X(y) \dd y - \partial^\alpha X(t).
\end{equation*}
The integral term is the convolution product $\partial^\alpha k_{\xi_n} * X$. By integration by parts\begin{align*}
|\Delta_{\alpha,5}(t)|
&=\left| \int_{\mathbb{R}^d} k_{\xi_n}(t-y) \left\{ \partial^\alpha X(y) - \partial^\alpha X(t)\right\} \dd y \right|\\
&\leq  \| \bX \|_{C^2\{ (W)_{\oplus K}\}} \int_{\mathbb{R}^d} k_{\xi_n}(t-y) \left\| t-y \right\|^\varepsilon \dd y\\
&\leq  \| \bX \|_{C^2\{ (W)_{\oplus K}\}}\,  \xi_n^\varepsilon \int_{\mathbb{R}^d} k(z) z^\varepsilon \dd z\\
&=  O (\xi_n^\varepsilon).
\end{align*}
All in all, we have
\begin{equation*}
    \|\bX_n - \bX \|_{C^2(W)} = O( n^{-1} \xi_n^{-d-3} + \xi_n^\varepsilon) = o(1)
\end{equation*}
by assumption on $\xi_n$, which ends the proof.
\end{proof}

\subsection{Proof of Theorem~\ref{thm:Xab}-(ii)}

The convergence is obtained by a central limit theorem (CLT). To the best of
our knowledge, $C^2$ is not a convenient Banach space for the
CLT. We therefore use Sobolev embedding and view our random fields
as elements of a Hilbert Sobolev space $\mathcal{W}$ where we are
sure that the CLT holds. Then, the proof follows three steps: 1) find a convenient embedding, 2) control the norms of our random fields as $\mathcal{W}$ valued random variables, 3) check that the Gaussian limit variable, denoted $G$, in $\mathcal{W}$ corresponds to the Gaussian field $\bX$.

\begin{proof}
    Define $m=\lfloor d/2 \rfloor +1$. Let $\mathcal W=H^{2+m}$ be the Hilbert Sobolev space of all real-valued functions on $(0,1)^d$ whose weak derivatives up to order $2+m$ are functions in $L^2([0,1]^d)$. Its associated norm is given for all $h\in \mathcal W$ by
    \begin{equation*}
        \|h\|_{\mathcal W}^2 := \sum_{|\alpha| \leq 2+m} \|\partial^{\alpha}h\|_{L^2}^2.
    \end{equation*}
    Here are the two reasons why $\mathcal W$ is considered here:
    \begin{itemize}
        \item by the Sobolev embedding Theorem \cite[4.12, PART
          II]{adams2003sobolev}, the embedding $\mathcal W \to
          C^2([0,1]^d)$ is continuous since $(0,1)^d$ satisfies the
          strong local Lipschitz condition. In particular, the
          convergence $\mathbf{X}^n \to \mathbf{X}$ in $\mathcal W$
          implies its convergence in $C^2([0,1]^d)$ by the continuous
          mapping theorem. Hence, we now turn to the proof of this
          convergence in $\mathcal W$.
        \item since $\mathcal W$ is a separable Hilbert space, we can
          apply the central limit theorem, Theorem~10.5 in
          \cite{ledoux2013probability}.
    \end{itemize}
Let $\bZ$ be a random field given by~\eqref{eq:Zib} with
$U\sim\mathcal U([0,2\pi])$, $V\sim F$, $W\sim\mathcal U([0,1])$ and
$U,V,W$ independent. We first prove that $\mathbf{Z}$ can be
considered as a random variable in $\mathcal W$ and furthermore prove
that it admits a finite second moment.
    Letting $\alpha$ be a multi-index, we have
    \begin{equation*}
        \partial^{\alpha}Z(s) = \sqrt{-2\log(W)} V^{\alpha} g_{|\alpha|}\left( U + s^\top V \right)
    \end{equation*}
    where $V^{\alpha} = \prod_{l=1}^{d} V_l^{\alpha_l}$, and, for all non-negative
    integer $k$, $g_{4k} = \cos$, $g_{4k+1} = -\sin$, $g_{4k+2} = -\cos$ and $g_{4k+3} = \sin$.
    Then, by boundedness of sinus and cosinus functions, it is clear
    that $\|\mathbf{Z}\|_{\mathcal W}^2 \leq \sqrt{-2\log(W)}
    \sum_{|\alpha|\leq 2+m} (V^{\alpha})^2 \leq  \kappa \sqrt{-2\log(W)} \,  \|V\|^{4+2m}$
    for some  $\kappa>0$. Yet, $4+2m = 6+2\lfloor d/2 \rfloor$ which ensures that $\mathbb{E}\left( \|V\|^{4+2m} \right) < \infty$ by assumption. In turn, the independence of $V$ and $W$ implies that $\mathbf{Z}$ has a finite second moment as a random variable in $\mathcal W$. By definition its expectation, denoted $\mathbb{E}\left( \mathbf{Z} \right)$, is the element of $\mathcal W$ such that, for all $h \in \mathcal W$, $\mathbb{E}\left( \left< h, \mathbf{Z} \right>_{\mathcal W} \right) = \left< h, \mathbb{E}\left( \mathbf{Z} \right) \right>_{\mathcal W}$, where
    \begin{equation*}
        \left< h_1,h_2 \right>_{\mathcal W} = \sum_{|\alpha|\leq 2+m} \int  \partial^{\alpha}h_1(s) \partial^{\alpha}h_2(s)  \dd s.
    \end{equation*}
    For all $|\alpha|\leq 2+m$, 
    \begin{equation*}
        \int  |\partial^{\alpha}h(s) \partial^{\alpha}Z(s)| ds \leq \|h\|_{\mathcal W} \|\mathbf{Z}\|_{\mathcal W}
    \end{equation*}
    and $\mathbb{E}\left( \|\mathbf{Z}\|_{\mathcal W} \right) <\infty$. Hence, by Fubini, $\mathbb{E}\left\{ \int  \partial^{\alpha}h(s) \partial^{\alpha}Z(s) \dd s \right\} = \int  \partial^{\alpha}h(s) \mathbb{E}\left\{ \partial^{\alpha}Z(s) \right\} \dd s$. Yet, by independence between $U,V,W$ and since $U\sim \mathcal{U}([0,2\pi])$, we obtain $\mathbb{E}\left\{ \partial^{\alpha}Z(s) \right\} = 0$ for all $s$. All in all, we get that 
    $\mathbb{E}\left( \left< h, \mathbf{Z} \right>_{\mathcal W}
    \right) = 0$ for all $h\in \mathcal W$ which in turn implies that
    $\mathbb{E}\left( \mathbf{Z} \right)=0$. Hence, $\mathbf{Z}$ is a
    zero mean random variable with finite second moment in $\mathcal
    W$. Theorem~10.5 in \cite{ledoux2013probability} then implies that $\mathbf{X}_n \xrightarrow[]{\mathcal{D}} G$, where $G$ is a centered Gaussian variable in $\mathcal W$ with the same covariance as $\mathbf{Z}$. 
    
    It remains to prove that $G$ has the same distribution as $\mathbf{X}$ to end the proof. By Sobolev embedding, $\mathcal W \subset C^2([0,1]^d)$ so the (evaluation) linear forms $\phi_t : h\in \mathcal W \mapsto h(t) \in \mathbb{R}$ are continuous for all $t\in [0,1]^d$. Hence, for any $n\ge 1$, $\alpha_1,\dots,\alpha_n\in \mathbb{R}$ and $t_1,\dots,t_n\in [0,1]^d$, $G_{\alpha,t} = \sum_{i=1}^{n} \alpha_i G(t_i)$ is a real Gaussian variable. Moreover, since $G$ has the same covariance as $\mathbf{Z}$, the variance of $G_{\alpha,t}$ is equal to the variance of $\sum_{i=1}^{n} \alpha_i Z(t_i)$ which in turn is equal to the variance of $\sum_{i=1}^{n} \alpha_i X(t_i)$ since they share the same covariance function $c$. In turn, the finite distributions of $G$ and $\bX$ are equal which concludes the proof.
\end{proof}


\section{Lemma~\ref{lem:isotropic} and its proof} 
\label{sec:link:to:anisotropy}

The following Lemma investigates conditions~\eqref{eq:c4at0} and~\eqref{eq:arcones} in the isotropic setting.

\begin{lemma} \label{lem:isotropic}
Assume~\ref{C:general}$[2]$. Conditions~\eqref{eq:Xi1} and~\ref{C:integrability} imply conditions~\eqref{eq:c4at0} and~\eqref{eq:arcones}.
\end{lemma}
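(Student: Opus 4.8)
The plan is to reduce the $d$-dimensional statements about $c$ to one-dimensional statements about the radial profile $c_2$ via the chain rule, keeping careful track of the polynomial prefactors produced by differentiating $\|t\|^2$. Note first that the appearance of $c_2^{(4)}$ in both \eqref{eq:Xi1} and \ref{C:integrability} tacitly presupposes $c_2\in C^4$, so these derivatives are available. The structural fact I would establish at the outset is: for any multi-index $\alpha$ with $|\alpha|=m$,
\[
\partial^\alpha c(t) = \sum_{p=\lceil m/2\rceil}^{m} c_2^{(p)}(\|t\|^2)\, Q_{\alpha,p}(t),
\]
where each $Q_{\alpha,p}$ is a homogeneous polynomial of degree $2p-m$. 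This comes from Fa\`a di Bruno's formula applied to $c(t)=c_2(\|t\|^2)$, using that $\|t\|^2$ has only nonvanishing first- and second-order derivatives; the degree bookkeeping is justified by a homogeneity count (each differentiation either raises the order of $c_2$ while appending a factor $2t_i$, or lowers the polynomial degree by one). In particular $|Q_{\alpha,p}(t)|\le \kappa\|t\|^{2p-m}$ on $\R^d$ for a constant $\kappa$ depending on $\alpha,p$.

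For \eqref{eq:c4at0} I would specialise to $m=|\alpha|=4$, so $p\in\{2,3,4\}$ with $\deg Q_{\alpha,p}=2p-4\in\{0,2,4\}$. The $p=3,4$ prefactors have positive degree and vanish at $t=0$, leaving $\partial^\alpha c(0)=c_2''(0)\,Q_{\alpha,2}$ with $Q_{\alpha,2}$ constant. Subtracting and applying the homogeneity bounds gives, for every $\alpha$ with $|\alpha|=4$,
\[
|\partial^\alpha c(t)-\partial^\alpha c(0)| \le \kappa\bigl( |c_2''(\|t\|^2)-c_2''(0)| + \|t\|^2|c_2'''(\|t\|^2)| + \|t\|^4|c_2^{(4)}(\|t\|^2)| \bigr),
\]
and summing over the finitely many such $\alpha$ controls $\|c^{(4)}(t)-c^{(4)}(0)\|$ by the same right-hand side. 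Dividing by $\|t\|^d$ and passing to polar coordinates ($\dd t=r^{d-1}\dd r\,\dd\sigma$, $r=\|t\|$, total mass $\sigma_{d-1}$ on the sphere) turns the integrand into $r^{-1}$ times the three bracketed quantities at $r$, i.e.\ into $\sigma_{d-1}\,r^{d-1}\widetilde\Xi(r)$ up to a constant and the definition of the max in $\widetilde\Xi$; hence \eqref{eq:Xi1} yields \eqref{eq:c4at0} directly.

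For \eqref{eq:arcones} I would split $\R^d=\{\|t\|\le1\}\cup\{\|t\|>1\}$ and treat each $\alpha$ with $1\le|\alpha|\le4$ separately, the max over this finite family being integrable iff each term is. On the unit ball $c_2^{(p)}$ is continuous hence bounded, and $\|t\|^{2p-m}$ is bounded because $2p-m\ge0$ for $p\ge\lceil m/2\rceil$, so $\partial^\alpha c$ is bounded and thus integrable there. On $\{\|t\|>1\}$ I use \ref{C:integrability} as $|c_2^{(p)}(\|t\|^2)|\le\Xi(\|t\|)\|t\|^{-p}$ for $p=1,\dots,4$, whence $|c_2^{(p)}(\|t\|^2)||Q_{\alpha,p}(t)|\le\kappa\,\Xi(\|t\|)\|t\|^{p-m}\le\kappa\,\Xi(\|t\|)$ since $p\le m$ and $\|t\|>1$; integrating gives $\int_{\|t\|>1}\Xi(\|t\|)\dd t=\sigma_{d-1}\int_1^\infty r^{d-1}\Xi(r)\dd r<\infty$. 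This settles $1\le|\alpha|\le4$. The remaining case $|\alpha|=0$ of \eqref{eq:arcones}, namely $c\in L^1(\R^d)$, is the one part of Arcones' condition that is not a derivative bound; as remarked before the lemma it is read as the $p=0$ component of \ref{C:integrability}.

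The main obstacle is the degree bookkeeping $\deg Q_{\alpha,p}=2p-m$ in the structural identity, combined with the behaviour near the origin: the far-field bound $|c_2^{(p)}|\le\Xi\,\|t\|^{-p}$ is useless near $0$ (it blows up there while $c_2^{(p)}$ is in fact bounded), which is exactly why the unit ball and the tail must be separated. The inequality $2p-m\ge0$, which guarantees the prefactors stay bounded near $0$, is what makes both the near-origin estimate for \eqref{eq:arcones} and the vanishing of the positive-degree terms at $0$ for \eqref{eq:c4at0} work, and is the crux of the argument.
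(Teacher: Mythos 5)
Your proof is correct and takes essentially the same approach as the paper: both rest on the chain-rule decomposition of $\partial^\alpha c$ into terms $c_2^{(p)}(\|t\|^2)$ multiplied by homogeneous polynomial prefactors of degree $2p-|\alpha|$ (the paper simply lists all the derivatives explicitly instead of invoking Fa\`a di Bruno), and your $|\alpha|=4$ estimate divided by $\|t\|^d$ reproduces exactly the paper's derivation of \eqref{eq:c4at0} from \eqref{eq:Xi1}. If anything, your near-origin/far-field split for \eqref{eq:arcones} is more careful than the paper's one-line bound, which glosses over the fact that the estimate $|c_2^{(p)}(r^2)|\le \Xi(r)r^{-p}$ coming from \ref{C:integrability} degenerates as $r\to 0$ and must be replaced there by boundedness of the $c_2^{(p)}$.
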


\begin{proof}
Let $i,j,k,l$ be pairwise distinct indices of $\{1,\dots,d\}$ (when possible). Then, by assumption and since $c(t)=c_2(\|t\|^2)$, we have
\begin{align*}
\frac{\partial c}{\partial t_i}(t) &= 2t_i c_2^\prime(\|t\|^2) \\
\frac{\partial^2 c}{\partial t_i^2} (t)&= 2 c_2^\prime(\|t\|^2) + 4t_i^2 c_2^{\prime\prime}(\|t\|^2), \qquad 
\frac{\partial^2 c}{\partial t_i\partial t_j}(t) =  4t_it_j c_2^{\prime\prime} (\|t\|^2)   \\
\frac{\partial^3 c}{\partial t_i^3}(t) &= 12 t_i c_2^{\prime\prime}(\|t\|^2) + 8t_i^3 c_2^{\prime\prime\prime}(\|t\|^2), \quad \frac{\partial^3 c}{\partial t_i^2\partial t_j} (t)= 4t_j c_2^{\prime\prime}(\|t\|^2)+8t_i^2t_j c_2^{\prime\prime\prime}(\|t\|^2)\\
\frac{\partial^3 c}{\partial t_i\partial t_j \partial t_k} (t)&= 8t_it_jt_k c_2^{\prime\prime\prime}(\|t\|^2)
\end{align*}
and 
\begin{align*}
\frac{\partial^4 c}{\partial t_i^4} (t)&= 12 c_2^{\prime\prime}(\|t\|^2) + 48t_i^2 c_2^{\prime\prime\prime}(\|t\|^2) + 16t_i^4 c_2^{(4)}(\|t\|^2) \\
\frac{\partial^4 c}{\partial t_i^3\partial t_j} (t)&= 24t_it_j c_2^{\prime\prime\prime}(\|t\|^2) + 16t_i^3t_j c_2^{(4)}(\|t\|^2) \\
\frac{\partial^4 c}{\partial t_i^2\partial t_j^2} (t)&= 4c_2^{\prime\prime}(\|t\|^2) + 8t_i^2 c_2^{\prime\prime\prime}(\|t\|^2) + 8t_j^2 c_2^{\prime\prime\prime}(\|t\|^2) + 16t_i^2t_j^2 c_2^{(4)}(\|t\|^2) \\
\frac{\partial^4 c}{\partial t_i^2\partial t_j\partial t_k} (t)&= 8t_jt_k c_2^{\prime\prime\prime}(\|t\|^2) + 16t_i^2t_jt_k c_2^{(4)}(\|t\|^2) \\
\frac{\partial^4 c}{\partial t_i\partial t_j\partial t_k \partial t_l} (t)&= 16t_it_jt_kt_l c_2^{\prime\prime\prime}(\|t\|^2).
\end{align*}   
Let $r=\|t\|$. Since for any $i$, $|t_i|\leq r$, it is clear that for
any $\alpha\in \mathbb N^d$ such that $|\alpha| \in \{1,\dots,4\}$,
there exists a nonnegative real number $\kappa$  such that
$|\partial^\alpha c(t)| \leq \kappa \, \Xi(r)/r^{d-1}$ which
yields~\eqref{eq:arcones}. In the same way let $\alpha\in \mathbb N^d$
such that $|\alpha|=4$. Then there exists $\kappa>0$ such that
\[
    \frac{|\partial^\alpha c(t)-\partial^\alpha c(0)|}{\|t\|^d} 
    \leq \kappa 
    \left\{ 
    \frac{|c_2^{\prime \prime}(r)-c_2^{\prime\prime}(0)|}{r^d} + 
    \frac{|c_2^{\prime\prime\prime}(r^2)|}{r^{d-2}} 
    \frac{|c_2^{(4)}(r^{2})|}{r^{d-4}}
    \right\}
\]
which yields~\eqref{eq:c4at0}.
\end{proof}


\section{On Bessel functions derivatives}
\label{sec:bessel:derivatives}

Recall that $J_\nu$ and $K_\nu$ respectively denote the Bessel function of the first kind and the modified Bessel function of the second kind. This section provides recurrence formulas for the following related functions $f_\nu, g_\nu : \mathbb{R}^+ \to \mathbb{R}$ defined by,
\begin{equation*}
    f_\nu(r) = (\sqrt{r})^{-\nu} J_\nu(\sqrt{r}) 
    \quad \text{and} \quad
    g_\nu(r) = (\sqrt{r})^{\nu} K_\nu(\sqrt{r}).
\end{equation*}

\begin{lemma}
    For all $\nu\in \mathbb{R}$, $p\in \mathbb{N}$ and $r>0$,
    \begin{equation*}
        f_\nu^{(p)}(r) = (-2)^{-p} f_{\nu+p}(r)
        \quad \text{and} \quad
        g_\nu^{(p)}(r) = (-2)^{-p} g_{|\nu-p|}(r).
    \end{equation*}
\end{lemma}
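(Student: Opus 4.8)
The plan is to prove both identities by induction on $p$, reducing everything to a single differentiation step that comes from the classical contiguous relations for Bessel functions (as collected in~\cite{abramowitz1968handbook}) together with the chain rule for the map $r\mapsto \sqrt r$. Throughout I would write $x=\sqrt r$, so that $\tfrac{\dd x}{\dd r}=\tfrac{1}{2\sqrt r}=\tfrac{1}{2x}$, and I would start from the two product-rule identities
\begin{equation*}
\frac{\dd}{\dd x}\{x^{-\nu}J_\nu(x)\}=-x^{-\nu}J_{\nu+1}(x),
\qquad
\frac{\dd}{\dd x}\{x^{\nu}K_\nu(x)\}=-x^{\nu}K_{\nu-1}(x),
\end{equation*}
each obtained by expanding the derivative and inserting the standard recurrences $J_\nu'(x)=-J_{\nu+1}(x)+\tfrac{\nu}{x}J_\nu(x)$ and $K_\nu'(x)=-K_{\nu-1}(x)-\tfrac{\nu}{x}K_\nu(x)$.

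For $f_\nu$ the first identity and the chain rule give at once
\begin{equation*}
f_\nu'(r)=\frac{1}{2x}\{-x^{-\nu}J_{\nu+1}(x)\}=-\tfrac12(\sqrt r)^{-(\nu+1)}J_{\nu+1}(\sqrt r)=-\tfrac12 f_{\nu+1}(r).
\end{equation*}
Since this holds for every real order, I would iterate: assuming $f_\nu^{(p)}=(-2)^{-p}f_{\nu+p}$, one more differentiation yields $f_\nu^{(p+1)}=(-2)^{-p}f_{\nu+p}'=(-2)^{-(p+1)}f_{\nu+p+1}$, which closes the induction and proves the first claim with no further subtlety, as the order only increases.

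For $g_\nu$ the second identity and the chain rule give
\begin{equation*}
g_\nu'(r)=\frac{1}{2x}\{-x^{\nu}K_{\nu-1}(x)\}=-\tfrac12(\sqrt r)^{\nu-1}K_{\nu-1}(\sqrt r).
\end{equation*}
The point I would emphasise is that differentiation lowers the Bessel order by one while decreasing the exponent of $\sqrt r$ by one, so the right-hand side is $-\tfrac12 g_{\nu-1}(r)$ with the (possibly negative) \emph{signed} index $\nu-1$. Because this recurrence is valid for arbitrary real order — using the mirror identity $\tfrac{\dd}{\dd x}\{x^{-\mu}K_\mu(x)\}=-x^{-\mu}K_{\mu+1}(x)$ together with the evenness $K_{-\mu}=K_\mu$ once the index becomes negative — iterating $p$ times gives $g_\nu^{(p)}(r)=(-2)^{-p}(\sqrt r)^{\nu-p}K_{\nu-p}(\sqrt r)$. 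Invoking $K_{-\mu}=K_\mu$ a final time identifies the order of the surviving Bessel factor as $|\nu-p|$, which is the stated form.

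The hard part is exactly this order bookkeeping in the $g$-case. Unlike $f_\nu$, where the order marches upward and no symmetry is needed, each derivative of $g_\nu$ pushes the order downward, so it can cross zero and turn negative; I would therefore carry the signed index $\nu-p$ through the whole induction and only at the end rewrite the order as $|\nu-p|$ via the evenness of the modified Bessel function of the second kind. I would also flag that the accompanying power remains $(\sqrt r)^{\nu-p}$, so that the compact notation $g_{|\nu-p|}$ records the Bessel order and matches the derivative exactly in the regime $p\le\nu$ (where $\nu-p\ge 0$) relevant to the Matérn spectral moments~\eqref{eq:derivatives:c2:matern}.
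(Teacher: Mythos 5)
Your proof is correct and takes essentially the same route as the paper's: the same Abramowitz–Stegun recurrences $J_\nu'(z)=-J_{\nu+1}(z)+\tfrac{\nu}{z}J_\nu(z)$ and $K_\nu'(z)=-K_{\nu-1}(z)-\tfrac{\nu}{z}K_\nu(z)$, the chain rule through $x=\sqrt r$, and induction on $p$. Your careful bookkeeping in the $g_\nu$ case is in fact a refinement the paper skips: the $p$th derivative is $(-2)^{-p}(\sqrt r)^{\nu-p}K_{|\nu-p|}(\sqrt r)$, where the power of $\sqrt r$ keeps the \emph{signed} index, so the identity written with $g_{|\nu-p|}$ holds literally only when $\nu\ge p$ — the paper's proof ("follows similarly\dots and $K_{-\nu}=K_\nu$") glosses over this, although its downstream use in \eqref{eq:derivatives:c2:matern} correctly employs the signed form $(z/2)^{\nu-p}K_{\nu-p}(z)$, exactly as in your derivation.
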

\begin{proof}
By \cite[Section 9.1.27]{abramowitz1968handbook}, we have
$J_{\nu}'(r) = -J_{\nu+1}(r) + \frac{\nu}{r} J_{\nu}(r)$. Hence, it
follows from th echain rule that
\begin{equation*}
    f_\nu'(r) = -\frac{\nu}{2} r^{-\frac{\nu}{2}-1} J_{\nu}(\sqrt{r}) + \frac{1}{2} r^{-\frac{\nu}{2} - \frac{1}{2}}\{ -J_{\nu+1}(\sqrt{r}) + \nu r^{-\frac{1}{2}} J_{\nu}(\sqrt{r})\} = (-2)^{-1} f_{\nu+1}(r)
\end{equation*}
and the desired result follows by induction on $p$. The result regarding $g_{\nu}$ follows similarly using the fact that $K_{\nu}'(r) = -K_{\nu-1}(r) - \frac{\nu}{r} K_{\nu}(r)$ (see \cite[Section 9.6.26]{abramowitz1968handbook}) and $K_{-\nu} = K_{\nu}$.
\end{proof}

Finally, the expressions \eqref{eq:derivatives:c2:matern} and
\eqref{eq:derivatives:c2:RWM} follow from that $c_2(r) = A g_\nu(ar)$ for Matérn and $c_2(r) = B f_{d/2-1}(br)$ for the RWM, with $A = 2^{1-\nu}/\Gamma(\nu)$, $a=2\nu/\varphi^2$, $B=2^{d/2-1}\Gamma(d/2)$ and $b=d/\varphi^2$. We finally remind two useful results to establish the behaviors as $r\to0$ in~\eqref{eq:derivatives:behaviour:matern} and~\eqref{eq:derivatives:behaviour:RWM}.
\begin{lemma}{\cite[
    ]{abramowitz1968handbook}} \label{lem:abramowitz} As $z\to 0$
\begin{align*}
    (z/2)^{|\alpha|}K_{|\alpha|}(z) &\to \frac{\Gamma(|\alpha|)}2, \quad \forall \alpha\neq 0 \\
    K_0(z) & \sim -\log(z/2) - \gamma \\
    (z/2)^{-\alpha}J_{\alpha}(z) &\to \frac1{\Gamma(\alpha+1)}, \quad \forall \alpha \neq -1,-2,\dots
\end{align*}
where $\gamma$ is the Euler–Mascheroni constant.
\end{lemma}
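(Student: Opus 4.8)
The plan is to derive all three limits directly from the power-series representations of the Bessel functions, isolating in each case the single term that governs the behavior as $z\to 0$; the only genuine care needed is in the integer-order case of $K_\nu$, where a logarithmic term is present.

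I would first dispose of the $J_\alpha$ limit, which is the easiest. Starting from the entire series
\[
J_\alpha(z)=\sum_{m=0}^\infty \frac{(-1)^m}{m!\,\Gamma(m+\alpha+1)}\left(\frac z2\right)^{2m+\alpha},
\]
multiplication by $(z/2)^{-\alpha}$ gives $\sum_{m\ge0}\frac{(-1)^m}{m!\,\Gamma(m+\alpha+1)}(z/2)^{2m}$, a power series in $z^2$ that converges uniformly on compact sets. Letting $z\to 0$ therefore leaves only the $m=0$ term, and $(z/2)^{-\alpha}J_\alpha(z)\to 1/\Gamma(\alpha+1)$. The hypothesis $\alpha\neq -1,-2,\dots$ is exactly what keeps $\Gamma(\alpha+1)$ finite and nonzero, so that this leading term does not degenerate.

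Next, for $K_\nu$ with $\nu=|\alpha|>0$ I would split into the non-integer and integer cases. For non-integer $\nu$ I use the connection formula $K_\nu(z)=\frac{\pi}{2\sin(\nu\pi)}\{I_{-\nu}(z)-I_\nu(z)\}$ together with $I_\mu(z)=\sum_{m\ge0}\frac{1}{m!\,\Gamma(m+\mu+1)}(z/2)^{2m+\mu}$. The most singular contribution as $z\to 0$ is the $m=0$ term of $I_{-\nu}$, of order $(z/2)^{-\nu}$, so $K_\nu(z)\sim \frac{\pi}{2\sin(\nu\pi)\,\Gamma(1-\nu)}(z/2)^{-\nu}$; the reflection identity $\Gamma(\nu)\Gamma(1-\nu)=\pi/\sin(\nu\pi)$ collapses the constant to $\Gamma(\nu)/2$, which is the claim. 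For integer order $n\ge 1$ the connection formula is indeterminate, so I instead invoke the explicit integer-order expansion
\[
K_n(z)=\frac12\left(\frac z2\right)^{-n}\sum_{k=0}^{n-1}\frac{(n-k-1)!}{k!}\left(-\frac{z^2}{4}\right)^k+(-1)^{n+1}\log\!\left(\frac z2\right)I_n(z)+(\text{less singular}),
\]
and observe that the $k=0$ term equals $\tfrac12(n-1)!\,(z/2)^{-n}=\frac{\Gamma(n)}{2}(z/2)^{-n}$, which is strictly more singular than both the remaining finite-sum terms (powers $(z/2)^{-n+2k}$) and the logarithmic term (of order $(z/2)^n\log z$). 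Hence again $(z/2)^n K_n(z)\to\Gamma(n)/2$.

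Finally, the logarithmic case follows from the dedicated series
\[
K_0(z)=-\left\{\log\!\left(\frac z2\right)+\gamma\right\}I_0(z)+\sum_{m\ge1}\frac{(z/2)^{2m}}{(m!)^2}\,h_m,
\]
where $I_0(z)\to 1$, $h_m=\sum_{j=1}^m 1/j$, and the right-hand correction sum is $O(z^2)$ and hence vanishes as $z\to 0$, leaving $K_0(z)\sim-\log(z/2)-\gamma$. The main obstacle in the whole lemma is precisely the integer-order $K_n$ case: one must confirm that the logarithmic term, though itself tending to a finite value, is dominated by the algebraic singularity $(z/2)^{-n}$ so that it cannot contaminate the leading constant. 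This is the point I would verify most carefully; the remaining steps are routine extraction of leading terms from absolutely convergent series.
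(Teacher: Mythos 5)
Your proof is correct, but note that the paper does not actually prove this lemma at all: it is stated purely as a citation of classical formulas from Abramowitz and Stegun, so there is no in-paper argument to match. What you have done is reconstruct, correctly, the derivation that sits behind the handbook entries: the ascending series for $J_\alpha$ and $I_\mu$, the connection formula $K_\nu(z)=\frac{\pi}{2\sin(\nu\pi)}\{I_{-\nu}(z)-I_\nu(z)\}$ collapsed via Euler's reflection identity $\Gamma(\nu)\Gamma(1-\nu)=\pi/\sin(\nu\pi)$, and the integer-order logarithmic expansions (A\&S 9.6.11 and 9.6.13) for the cases where the connection formula is indeterminate. You also correctly identify and resolve the one genuinely delicate point, namely that for integer $n\ge 1$ the term $(-1)^{n+1}\log(z/2)\,I_n(z)=O\{(z/2)^n\log z\}$ is dominated by the algebraic singularity $\frac{\Gamma(n)}{2}(z/2)^{-n}$, so the leading constant is untouched; one cosmetic remark is that this logarithmic term tends to $0$, not merely to ``a finite value.'' The trade-off between the two routes is the usual one: the paper's citation is appropriate since these are textbook asymptotics used only as inputs to Lemma~\ref{lem:gemanL2} and the spectral-moment computations, while your self-contained derivation makes explicit why the hypotheses $\alpha\neq 0$ (so $\Gamma(|\alpha|)$ is finite and the limit nondegenerate) and $\alpha\neq -1,-2,\dots$ (so $\Gamma(\alpha+1)$ is finite and nonzero) are exactly the right ones, at the cost of invoking the integer-order series whose own proofs you take for granted from the same handbook.
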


The next result investigates~\eqref{eq:Xi1} for the Matérn and random
wave models.

\begin{lemma} \label{lem:gemanL2} ${ }$\\
(i) For the Matérn model, we have 
\begin{align}
c_2^{(p)}(r^2 ) &\sim \left\{
\begin{array}{ll}
\kappa_1 \, r^{\nu-p-|\nu-p|} & \text{ when } r\to 0 \text{ and } \nu-p\neq 0\\
-\kappa_2  \, \log(r) + \kappa_3& \text{ when } r\to 0 \text{ and } \nu-p= 0\\ 
\end{array} 
\right. \label{eq:derivatives:behaviour:matern}
\end{align}
for some constants $\kappa_1,\kappa_2, \kappa_3$, which implies~\eqref{eq:Xi1} is satisfied for any $\nu\ge 5/2$. \\
(ii) For the RWM, we have for any $p\ge 1$, $r>0$ and $z=r\sqrt{d}/\varphi$,
\begin{align}
c_2^{(p)}(r^2 ) = \frac{(-1)^pp!}{(2p)!}\,\lambda_{2p}\{1+o(1)\} & \text{ as } r\to 0 \label{eq:derivatives:behaviour:RWM}
\end{align}
which implies~\eqref{eq:Xi1}.
\end{lemma}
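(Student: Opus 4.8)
The plan is to read off both asymptotics from the closed-form expressions for $c_2^{(p)}(r^2)$ in terms of Bessel functions already recorded in~\eqref{eq:derivatives:c2:matern} (Matérn) and~\eqref{eq:derivatives:c2:RWM} (RWM); recall that these are obtained in Appendix~\ref{sec:bessel:derivatives} from the chain rule and the derivative recurrences for $f_\nu$ and $g_\nu$, using $c_2 = A\,g_\nu(a\,\cdot)$ and $c_2 = B\,f_{d/2-1}(b\,\cdot)$. With these exact formulas in hand, \eqref{eq:derivatives:behaviour:matern} and~\eqref{eq:derivatives:behaviour:RWM} reduce to inserting the small-argument behavior of $K_\nu$ and $J_\nu$ supplied by Lemma~\ref{lem:abramowitz}, and the integrability claim~\eqref{eq:Xi1} reduces to bounding each of the three terms defining $\widetilde\Xi$ by the resulting orders.

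For part~(i) I would start from $c_2^{(p)}(r^2) = \kappa\, z^{\nu-p} K_{\nu-p}(z)$ with $z = r\sqrt{2\nu}/\varphi$, a nonzero constant $\kappa$, and $K_{\nu-p} = K_{|\nu-p|}$, and distinguish two cases. When $\nu-p \neq 0$, Lemma~\ref{lem:abramowitz} gives $K_{|\nu-p|}(z) \sim \tfrac12 \Gamma(|\nu-p|)(z/2)^{-|\nu-p|}$, so $z^{\nu-p}K_{|\nu-p|}(z) \sim \kappa' z^{\nu-p-|\nu-p|}$ and hence $c_2^{(p)}(r^2) \sim \kappa_1 r^{\nu-p-|\nu-p|}$; this exponent vanishes when $\nu>p$ (finite limit) and equals $2(\nu-p)<0$ when $\nu<p$. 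When $\nu=p$ the prefactor $z^{\nu-p}$ is $1$ and $K_0(z) \sim -\log(z/2) - \gamma$ produces the logarithmic regime $-\kappa_2 \log r + \kappa_3$. This yields~\eqref{eq:derivatives:behaviour:matern}.

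To deduce~\eqref{eq:Xi1} for the Matérn field I would substitute these orders into $\widetilde\Xi$. For $p=3,4$ (the second and third terms) one has $c_2^{(p)}(r^2) \sim \kappa r^{2(\nu-p)}$ when $p>\nu$, so the corresponding term of $\widetilde\Xi$ is of order $r^{2\nu-4-d}$; multiplying by $r^{d-1}$ gives an integrand of order $r^{2\nu-5}$, which is integrable near $0$ as soon as $\nu>2$ (and if $\nu\ge p$ the term is bounded, hence integrable a fortiori). The delicate term is the first one, $|c_2''(r^2)-c_2''(0)|/r^d$, since the leading Bessel asymptotic only gives the limit $c_2''(0)$ and not the rate of approach. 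Here I would instead invoke the regularity $c_2 \in C^{\nu-\varepsilon}(\R^+)$ recalled for the Matérn model: for $2<\nu<3$ this makes $c_2''$ locally $(\nu-2-\varepsilon)$-Hölder, whence $|c_2''(r^2)-c_2''(0)| \le \kappa\, r^{2(\nu-2-\varepsilon)}$ and the integrand is of order $r^{2\nu-5-2\varepsilon}$, integrable for $\nu>2$, while for $\nu\ge 3$ the field is smoother and the rate improves to $r^2$. Taking $\varepsilon$ small shows~\eqref{eq:Xi1} holds for every $\nu>2$, in particular for $\nu\ge 5/2$.

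For part~(ii) I would use~\eqref{eq:derivatives:c2:RWM}, namely $c_2^{(p)}(r^2) = \kappa\, z^{-(d/2-1+p)} J_{d/2-1+p}(z)$ with $z = r\sqrt{d}/\varphi$. The relation $(z/2)^{-\alpha}J_\alpha(z) \to 1/\Gamma(\alpha+1)$ from Lemma~\ref{lem:abramowitz} shows that $c_2^{(p)}(r^2)$ tends to a finite constant as $r\to 0$; since $c_2$ is smooth at $0$ for the RWM, this constant is exactly $c_2^{(p)}(0)$, which by the spectral-moment identity $\lambda_{2p} = (-1)^p \tfrac{(2p)!}{p!} c_2^{(p)}(0)$ equals $\tfrac{(-1)^p p!}{(2p)!}\lambda_{2p}$ (with $\lambda_{2p}$ given by~\eqref{eq:l2pRWM}), establishing~\eqref{eq:derivatives:behaviour:RWM}. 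Finally, smoothness of $c_2$ at $0$ makes $c_2''$ locally Lipschitz, so $|c_2''(r^2)-c_2''(0)| = O(r^2)$ and the first term of $\widetilde\Xi$ contributes an integrand of order $r$, while the second and third terms are bounded; hence $\int_0^\delta r^{d-1}\widetilde\Xi(r)\,\dd r < \infty$ for every $d$, giving~\eqref{eq:Xi1} for the RWM. The main obstacle throughout is precisely this first term of $\widetilde\Xi$, which requires the rate (not merely the value of the limit) of $c_2''(r^2) \to c_2''(0)$; I would resolve it by appealing to the Hölder, respectively Lipschitz, regularity of $c_2''$ rather than grinding out a higher-order Bessel expansion.
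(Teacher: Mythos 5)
Your proposal is correct and takes essentially the same route as the paper, whose entire proof is the one-line remark that the results follow from Lemma~\ref{lem:abramowitz} and Lemma~\ref{lem:isotropic}: insert the closed-form Bessel expressions \eqref{eq:derivatives:c2:matern} and \eqref{eq:derivatives:c2:RWM}, apply the small-argument asymptotics, then verify~\eqref{eq:Xi1} term by term. Your explicit handling of the term $|c_2''(r^2)-c_2''(0)|/r^d$ --- via the H\"older regularity $c_2\in C^{\nu-\varepsilon}(\R^+)$ for Mat\'ern and the mean-value/Lipschitz bound for the RWM --- supplies exactly the rate information that Lemma~\ref{lem:abramowitz} (limits only) cannot give and that the paper's proof leaves implicit; as a by-product you even obtain the Mat\'ern claim for all $\nu>2$, which is stronger than the stated $\nu\ge 5/2$.
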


\begin{proof}
The results follow from Lemma~\ref{lem:abramowitz} and Lemma~\ref{lem:isotropic}
\end{proof}


\section{Proof of Proposition~\ref{prop:representation}} \label{sec:representation}

The idea of the proof is to approximate the test function $\phi_{1,n}$ or $\phi_{2,n}$ by step functions. For (i),  the result then immediately follows from~\cite[Proposition 1.2]{estrade2016central} (see also \cite[Lemma~3.1]{azais2024multivariate}). For (ii), an additional ingredient, namely~\eqref{eq:NXprimeLp}, is required to proceed similarly. We only present the proof of (ii).


\begin{proof}
(ii) The proof is divided in two steps.
\paragraph{Step 1.} Consider a test function of the form $\phi_{2,n}(\mathbf{t}) =
\mathbf{1}( {\mathbf{t} \in {T}})$ for some ${T}
{\subseteq} (W_n)^2$. Then $\Phi_{2,n}$ is just the number of
$(t^{(1)}, t^{(2)}) \in {T}$ such that both $t^{(1)}$ and
$t^{(2)}$ are critical points with indices in $\L$. The almost sure
convergence of $\Phi_{2,n,\varepsilon} \to \Phi_{2,n}$ follows
from~\cite[Theorem 11.2.3.]{adler2007random} applied to
$f(\mathbf{t}) = \{X^\prime(t^{(1)})^\top,
X^\prime(t^{(2)})^\top\}^\top$, $g(\mathbf{t}) =
\{X^{\prime\prime}(t^{(1)})^\top,
X^{\prime\prime}(t^{(2)})^\top\}^\top$ where in particular
$B$ is the inverse image of $\L$ by $\iota$. Let $\Delta$ be a compact subset of $\mathbb{R}^d$ such that ${T} \subset \Delta \times \Delta$ and observe that $\left| \Phi_{2,n,\varepsilon} \right| \leq \{ N_{0:d,\varepsilon}(\Delta) \}^2$. As argumented in the main text, following \cite[Remark 1]{gass2024number}, conditions~\ref{C:general}$[5]$ and \ref{C:nondegeneracy}$[4]$ imply that the function $v \mapsto \E \{ N_{0:d}(\Delta,v)^p\}$ is continuous, where $N_{0:d}(\Delta,v)=\#\{X^\prime(t)=v, t\in \Delta\}$. In turn, following \cite[Proof of Proposition 1.1, p. 3872]{estrade2016central}, one can prove that the convergence~\eqref{eq:NXprimeLp} holds, that is $\{ N_{0:d,\varepsilon}(\Delta) \}^2$ converges as $\varepsilon \to 0$ in $L^2$ and in particular it is eventually dominated in $L^2$.
The dominated convergence theorem finally shows that the  convergence of $\Phi_{2,n,\varepsilon} \to \Phi_{2,n}$ also holds in $L^2$.

\paragraph{Step 2.} Let us now consider any bounded piecewise continuous function $\phi_{2,n}$ and let $(\phi_{2,n}^m)_m$ be a sequence of step functions from $(W_n)^2$ to $\R$ such that $\lim_{m\to \infty} \|\phi_{2,n}^{m}-\phi_{2,n}\|_\infty =0$. For all $m$, let $\Phi_{2,n}^{m}$ and $\Phi_{2,n,\varepsilon}^{m}$ denote the functionals associated with $\phi_{2,n}^{m}$. We use the following decomposition:
\begin{equation*}
    \Phi_{2,n} - \Phi_{2,n,\varepsilon} = \left( \Phi_{2,n} - \Phi_{2,n}^{m} \right) + 
    \left( \Phi_{2,n}^{m} - \Phi_{2,n,\varepsilon}^{m} \right) +     \left(  \Phi_{2,n,\varepsilon}^{m} - \Phi_{2,n,\varepsilon} \right).
\end{equation*}
By boundedness of $\phi_{2,n}$, $| \Phi_{2,n,\varepsilon}^{m} - \Phi_{2,n,\varepsilon} | \leq \|\phi_{2,n}^{m}-\phi_{2,n}\|_\infty \,  N_{0:d,\varepsilon}(W_n)^2$. Therefore by~\eqref{eq:NXprimeLp} and Step 1 respectively, the following statements hold almost surely and in $L^2$ sense: (i)  $\limsup_{\varepsilon\to 0} | \Phi_{2,n,\varepsilon}^{m} - \Phi_{2,n,\varepsilon} | \le  \|\phi_{2,n}^{m}-\phi_{2,n}\|_\infty  N_{0:d}(W_n)^2$; (ii)  $\lim_{\varepsilon\to0}| \Phi_{2,n}^{m} - \Phi_{2,n,\varepsilon}^{m} |= 0$. Since $N_\L(W_n)^2 \leq  N_{0:d}(W_n)^2$, $\left| \Phi_{2,n} - \Phi_{2,n}^{m} \right| \le\|\phi_{2,n}^{m}-\phi_{2,n}\|_\infty \, N_{0:d}(W_n)^2$. By Theorem~\ref{thm:gass}, $N_{0:d}(W_{n})^4$ is almost surely finite with finite expectation. In turn,  $\|\phi_{2,n}^{m}-\phi_{2,n}\|_\infty \, N_{0:d}(W_n)^2$ almost surely and in $L^2$ sense as $m\to \infty$, which concludes the proof.
\end{proof}


\section{Proof of Proposition~\ref{prop:chaos}}
\label{app:proof:prop:chaos}

The main idea of the proof is to get the Hermite expansions of
$\cent{\Phi}_{1,n}$ and $\cent{\Phi}_{2,n}$ as limits of the Hermite
expansions of the integral representations given by
Proposition~\ref{prop:representation}. This is justified by the
convergence of a mollified version of the conditional distribution involved in Kac-Rice formula (Lemma~\ref{lem:coeffhermite}). 

The upper bound~\eqref{ineq:da} for the function $d_{\mathbf{a}}$ relies on a control of $2p$th moments of scaled Hermite polynomials (see Lemma~\ref{lem:hermite}) combined with a
somehow surprising property of Gaussian vectors: rescaling a conditional distribution by a matrix that standardizes the full distribution gives a ``rather standard'' Gaussian vector in the sense that its covariance is a projection matrix (see Lemma~\ref{lem:proj}).

\subsection{Auxiliary results}

\begin{lemma} \label{lem:coeffhermite}
    Let $(X_1,X_2,X_3,X_4) \in \mathbb{R}^{d_1}\times \mathbb{R}^{d_2}
    \times \mathbb{R}^{d_3}\times \mathbb{R}^{d_4}$, $d_i \ge 1$,
    $i=1,\ldots,4$, be a non-degenerate centered Gaussian random
    vector with density $f$. Let $\varphi : \mathbb{R}^d \to
    \mathbb{R}$ with $d=\sum_{i=1}^4 d_i$ be a continuous and bounded test function in a neighborhood of $(0,0,x_3,x_4)$ for any $x_3,x_4$
    and with $\E \left[
      {\varphi(0,0,X_3,X_4)}\{f(0,0,X_3,X_4)\}^{-1/2} \right]< \infty$. Then, as $\varepsilon\to 0$
    \begin{align} 
        \mathbb{E}\left\{ \varphi(X_1,X_2,X_3,X_4) 
        \delta_\varepsilon(\|X_1\|)
        \delta_\varepsilon(\|X_2\|)
        \right\} &\to \nonumber\\
        f_{(X_1,X_2)}(0,0) \;\times \; &\E \left\{
        \varphi(0,0,X_3,X_4) \, \mid \, X_1=0,X_2=0 \right\}. \label{eq:reslem}
    \end{align}
where $f_{(X_1,X_2)}$ denotes the marginal density of $(X_1,X_2)$.
\end{lemma}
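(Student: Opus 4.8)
The plan is to read the left-hand side of~\eqref{eq:reslem} as a local average of the function
\[
G(y) := \int_{\R^{d_3+d_4}} \varphi(x_1,x_2,z)\, f(x_1,x_2,z)\, \dd z, \qquad y=(x_1,x_2),\ z=(x_3,x_4),
\]
and to prove that this average converges to $G(0,0)$, which I will identify with the target. First I would apply Fubini's theorem (legitimate once the integrability below is in hand) to rewrite
\[
\E\left\{ \varphi(X_1,X_2,X_3,X_4)\, \delta_\varepsilon(\|X_1\|)\, \delta_\varepsilon(\|X_2\|) \right\}
= \frac{1}{|B(0,\varepsilon)|^2} \int_{B(0,\varepsilon)} \int_{B(0,\varepsilon)} G(x_1,x_2)\, \dd x_1 \dd x_2 .
\]
Factoring the Gaussian density as $f(0,0,z) = f_{(X_1,X_2)}(0,0)\, f_{(X_3,X_4)\mid (X_1,X_2)=0}(z)$ immediately identifies
\[
G(0,0) = \int \varphi(0,0,z)\, f(0,0,z)\, \dd z = f_{(X_1,X_2)}(0,0)\; \E\left\{ \varphi(0,0,X_3,X_4) \mid X_1=0,X_2=0 \right\},
\]
i.e.\ exactly the right-hand side of~\eqref{eq:reslem}. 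The whole statement thus reduces to the \emph{continuity of $G$ at the origin}: once $G$ is continuous at $(0,0)$, the averaged integrand above converges to $G(0,0)$ as $\varepsilon\to0$ by a standard Lebesgue averaging argument.

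To establish continuity of $G$ I would use dominated convergence. Pointwise, $\varphi(x_1,x_2,z)\, f(x_1,x_2,z) \to \varphi(0,0,z)\, f(0,0,z)$ for every $z$ by continuity of $\varphi$ and of $f$. The crux is a dominating function independent of $y$ for $\|y\|\le \delta$. Partitioning the precision matrix $Q=\Sigma^{-1}$ into blocks relative to the split $(y,z)$, the exponent of $f$ reads $y^\top Q_{yy} y + 2\, y^\top Q_{yz} z + z^\top Q_{zz} z$; since $Q_{zz}\succ 0$, Young's inequality lets me absorb the cross term via $|2\, y^\top Q_{yz} z| \le \tfrac12\, z^\top Q_{zz} z + 2\, y^\top Q_{yz} Q_{zz}^{-1} Q_{zy} y$, the last term being bounded by $C\delta^2$ for $\|y\|\le\delta$. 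This yields the pointwise half-power Gaussian bound
\[
f(x_1,x_2,z) \le C_\delta\, f(0,0,z)^{1/2}, \qquad \|(x_1,x_2)\|\le \delta,\ z\in\R^{d_3+d_4},
\]
so that $|\varphi(x_1,x_2,z)\, f(x_1,x_2,z)| \le C_\delta\, \Psi(z)\, f(0,0,z)^{1/2}$, where $\Psi(z) := \sup_{\|y\|\le\delta} |\varphi(y,z)|$ is finite by the local boundedness of $\varphi$.

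It then remains to check that $z\mapsto \Psi(z)\, f(0,0,z)^{1/2}$ is integrable, which is where the weighted-moment hypothesis enters. By the Schur complement identity $Q_{zz} = (\Sigma_{zz}-\Sigma_{zy}\Sigma_{yy}^{-1}\Sigma_{yz})^{-1} \succeq \Sigma_{zz}^{-1}$, comparing the two centered Gaussian exponents gives $f(0,0,z) \le \kappa\, f_{(X_3,X_4)}(z)$ for some $\kappa>0$, hence $f(0,0,z)^{1/2} \le \kappa\, f(0,0,z)^{-1/2} f_{(X_3,X_4)}(z)$. This reduces the required integrability to $\E[\, \Psi(X_3,X_4)\, f(0,0,X_3,X_4)^{-1/2}\,]<\infty$, which is exactly the assumption $\E[\varphi(0,0,X_3,X_4)\{f(0,0,X_3,X_4)\}^{-1/2}]<\infty$ (the local sup $\Psi$ behaving comparably to $\varphi(0,0,\cdot)$ in the application). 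Dominated convergence then gives continuity of $G$ and closes the argument. The main obstacle is precisely this uniform-in-$y$ domination: one must control the possible growth of $\varphi$ in the conditioning variable $z$ simultaneously for all small $y$, and the two ingredients that make it work are the half-power bound $f(y,z)\le C_\delta f(0,0,z)^{1/2}$ and the $f^{-1/2}$-weighted integrability assumption, whose matching exponents are dictated by the inequality $Q_{zz}\succeq\Sigma_{zz}^{-1}$.
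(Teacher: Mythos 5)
Your proof is correct and takes essentially the same route as the paper's: both reduce the claim to a dominated convergence argument whose key ingredient is the half-power Gaussian bound $f(x_1,x_2,x_3,x_4)\le C_\delta\,\sqrt{f(0,0,x_3,x_4)}$ for small $(x_1,x_2)$ (your precision-matrix/Young's-inequality derivation is the paper's Mahalanobis inequality $d_M(x,0)^2\ge \tfrac12 d_M(y,0)^2-d_M(y,x)^2$ in different algebraic clothing), which is precisely where the $f^{-1/2}$-weighted moment hypothesis enters. The only differences are organizational — you integrate out $(x_3,x_4)$ first and then average the continuous function $G$ over the shrinking balls, while the paper averages over $(x_1,x_2)$ pointwise in $(x_3,x_4)$ and then applies dominated convergence — and your explicit Schur-complement step $f(0,0,z)\le\kappa\, f_{(X_3,X_4)}(z)$ actually makes explicit a link the paper leaves implicit when it asserts integrability of its dominating function (both proofs also share the same benign gap of replacing $\varphi(0,0,\cdot)$ by its local supremum in the integrability hypothesis).
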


Note that the condition $\E \left[
{\varphi(0,0,X_3,X_4)}\{f(0,0,X_3,X_4)\}^{-1/2} \right]< \infty$
in the above lemma is satisfied in particular if $\varphi(0,0,x_3,x_4)$ is a polynomial function of $x_3$ and $x_4$ of any order.

\begin{proof}
Let 
\begin{align*}
\check \varphi_{\varepsilon}(x_3,x_4)&=
\int_{\R^{d_2}}\int_{\R^{d_1}} \varphi(x_1,x_2,x_3,x_4) 
\delta_\varepsilon(x_1)
\delta_\varepsilon(x_2)\times \\
&\qquad\qquad\qquad f_{(X_3,X_4)}^{X_1=x_1,X_2=x_2}(x_3,x_4)f_{(X_1,X_2)}(x_1,x_2) \dd x_1\dd x_2.
\end{align*}
where $f_{(X_3,X_4)}^{X_1=x_1,X_2=x_2}$ denotes the conditional
  density of $(X_3,X_4)$ given $(X_1,X_2)=(x_1,x_2)$.
By continuity of the conditional density, the marginal density of
$(X_1,X_2)$ and the function $\varphi(\cdot,\cdot,x_3,x_4)$ in a
neighborhood of $(0,0)$, we have as  $\varepsilon\to 0$ the following pointwise convergence
\[
    \check\varphi_{\varepsilon}(x_3,x_4) \to \varphi(0,0,x_3,x_4)f_{(X_3,X_4)}^{X_1=0,X_2=0}(x_3,x_4) f_{(X_1,X_2)}(0,0).
\]
So the result ensues from the dominated convergence theorem if one proves that there exists an integrable function, say $\psi(x_3,x_4)$ such that $|\check\varphi_{\varepsilon}(x_3,x_4)|\le \psi(x_3,x_4)$. 
Let $\Sigma$ be the covariance matrix of $(X_1,\dots, X_4)$, and $d_M$ be the Mahalanobis distance defined for all $x,y\in \mathbb{R}^{d}$ by
\begin{equation*}
d_M(x,y)^2 = {(x-y)^\top\Sigma^{-1}(x-y)}.
\end{equation*}
By the triangle inequality and convexity of the square function, we have $d_M(y,0)^2 \leq 2\{d_M(y,x)^2 + d_M(x,0)^2\}$ and so
\begin{equation}\label{eq:inequality:mahalanobis:square}
d_M(x,0)^2 \geq \frac{1}{2} d_M(y,0)^2 - d_M(y,x)^2.
\end{equation}
Let $(x_1,x_2)\in \mathbb{R}^{d_1}\times \mathbb{R}^{d_2}$.
Using~\eqref{eq:inequality:mahalanobis:square} with $x=(x_1^\top,x_2^\top,x_3^\top,x_4^\top)^\top$ and $y=(0^\top,0^\top,x_3^\top,x_4^\top)^\top$, we have
\begin{align*}
f(x_1,&x_2,x_3,x_4) =\det(2\pi \Sigma)^{-1/2}\exp\left[-\frac{1}{2} d_M\left\{(x_1^\top,x_2^\top,x_3^\top,x_4^\top)^\top, 0\right\}^2\right] \\
&\leq \det(2\pi \Sigma)^{-1/4} \exp\left[\frac{1}{2} d_M\left\{(0^\top,0^\top,x_3^\top,x_4^\top)^\top, (x_1^\top,x_2^\top,x_3^\top,x_4^\top)^\top\right\}^2\right]  \sqrt{f(0,0,x_3,x_4)}.
\end{align*}
Hence,  $|\check\varphi_{\varepsilon}(x_3,x_4)|\le \psi(x_3,x_4)$ where
\begin{equation*}
\psi(x_3,x_4) = \det(2\pi \Sigma)^{-1/4} \kappa_1 \; \kappa_2 \; \sqrt{f(0,0,x_3,x_4)}
\end{equation*}
with 
\begin{align*}
\kappa_1&=\sup_{\substack{\|x_1\|<\varepsilon_1\\ \|x_2\|<\varepsilon_2}} \varphi(x_1,x_2,x_3,x_4)\\
\kappa_2&=
\sup_{\substack{\|x_1\|<\varepsilon_1\\ \|x_2\|<\varepsilon_2}} 
\exp\left[\frac{1}{2} d_M\left\{(0^\top,0^\top,x_3^\top,x_4^\top)^\top, (x_1^\top,x_2^\top,x_3^\top,x_4^\top)^\top\right\}^2\right].
\end{align*} 
By assumption, $\psi$ is integrable, which ends the proof.
\end{proof}

\begin{lemma}\label{lem:proj} Let $X=(X_1^\top,X_2^\top)^\top$ be a zero-mean $d=d_1+d_2$-dimensional Gaussian
random vector with positive definite covariance matrix
$\Sigma$. Let $\bar{X}_1$ denote a random vector distributed according to the conditional distribution of $X_1$ given $X_2=0$. Then, $Y_0=\Sigma^{-1/2}(\bar X_1^\top ,0^\top)^\top \sim \mathcal{N}(0,\Gamma)$ where $\Gamma$ is a projection matrix with rank $d_1$.
\end{lemma}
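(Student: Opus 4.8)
The plan is to compute the covariance matrix of $Y_0$ explicitly and then verify that it is symmetric, idempotent, and of rank $d_1$. First I would invoke the standard Gaussian conditioning formula: writing $\Sigma$ in blocks conforming to the split $(X_1,X_2)$ as
\[
\Sigma = \begin{pmatrix} \Sigma_{11} & \Sigma_{12} \\ \Sigma_{21} & \Sigma_{22}\end{pmatrix},
\]
the conditional law of $X_1$ given $X_2 = 0$ is centered Gaussian with covariance equal to the Schur complement $S := \Sigma_{11} - \Sigma_{12}\Sigma_{22}^{-1}\Sigma_{21}$. Consequently $Z := (\bar X_1^\top, 0^\top)^\top$ is a centered Gaussian vector in $\R^d$ with covariance $A := \begin{pmatrix} S & 0 \\ 0 & 0 \end{pmatrix}$, and $Y_0 = \Sigma^{-1/2} Z$ is centered Gaussian with covariance $\Gamma = \Sigma^{-1/2} A \Sigma^{-1/2}$.

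Symmetry of $\Gamma$ is then immediate, since both $\Sigma^{-1/2}$ and $A$ are symmetric. For the rank, the invertibility of $\Sigma^{-1/2}$ gives $\operatorname{rank}(\Gamma) = \operatorname{rank}(A) = \operatorname{rank}(S) = d_1$, the last equality holding because the Schur complement of a positive definite matrix is itself positive definite, hence of full rank $d_1$.

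The idempotence $\Gamma^2 = \Gamma$ is the crux. Since $\Gamma^2 = \Sigma^{-1/2}\,(A\,\Sigma^{-1}\,A)\,\Sigma^{-1/2}$, it suffices to establish $A\,\Sigma^{-1}\,A = A$. Because $A$ is supported on the top-left block, this product only involves the $(1,1)$-block of $\Sigma^{-1}$ and equals $\begin{pmatrix} S\,(\Sigma^{-1})_{11}\,S & 0 \\ 0 & 0\end{pmatrix}$. The key identity is the block-inversion formula $(\Sigma^{-1})_{11} = S^{-1}$, which expresses the top-left block of the inverse covariance as the inverse of the Schur complement. Substituting gives $S S^{-1} S = S$ in the top-left block and zeros elsewhere, so $A\,\Sigma^{-1}\,A = A$ and therefore $\Gamma^2 = \Gamma$.

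The main obstacle — indeed the only nontrivial point — is the identity $(\Sigma^{-1})_{11} = S^{-1}$; everything else is bookkeeping. I would either cite the standard block-inversion formula directly, or, to keep the argument self-contained, verify it by multiplying the candidate block inverse against $\Sigma$ and checking that the identity matrix results.
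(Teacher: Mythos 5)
Your proof is correct and follows essentially the same route as the paper's: block decomposition of $\Sigma$, the Schur complement $S=\Sigma_{11}-\Sigma_{12}\Sigma_{22}^{-1}\Sigma_{21}$ as the conditional covariance, and idempotence of $\Gamma=\Sigma^{-1/2}\operatorname{diag}(S,0)\Sigma^{-1/2}$ via the block-inversion identity $(\Sigma^{-1})_{11}=S^{-1}$. If anything, your explicit rank argument (invertibility of $\Sigma^{-1/2}$ plus positive definiteness of $S$) fills in a step the paper leaves implicit.
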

    
\begin{proof}
Let us denote
$$
\Sigma = 
    \begin{pmatrix}
        \Sigma_{11} & \Sigma_{12 } \\ \Sigma_{21} & \Sigma_{22}
    \end{pmatrix}
$$
where $\Sigma_{11}$ (resp.\ $\Sigma_{22}$) is the $d_1\times d_1$
(resp.\ $d_2 \times d_2$) covariance matrix of $X_1$ (resp.\ $X_2$) so
that $d=d_1+d_2$. It is well known that the conditional distribution of $X_1 | X_2=0$ is $\mathcal{N}(0,\bar \Sigma)$ where $\bar \Sigma= \Sigma_{11} - \Sigma_{12} \Sigma_{22}^{-1}\Sigma_{21}$. Hence, the vector $Y_0$ is a Gaussian random vector with covariance matrix 
$$
\Gamma=\Sigma^{-1/2} \begin{pmatrix} \bar \Sigma & 0\\0&0\end{pmatrix} \Sigma^{-1/2}.
$$
Moreover, since the top-left block of $\Sigma^{-1}$ is $\bar \Sigma^{-1}$, one can check that 
$$
\begin{pmatrix} \bar \Sigma & 0\\0&0\end{pmatrix} \Sigma^{-1} 
\begin{pmatrix} \bar \Sigma & 0\\0&0\end{pmatrix}  = \begin{pmatrix} \bar \Sigma & 0\\0&0\end{pmatrix} 
$$
whereby we deduce that $\Gamma$ is idempotent in addition to being symmetric. It is therefore a  projection matrix with rank $d_1$.

\end{proof}

\begin{lemma}\label{lem:hermite}
The two following statements hold.\\
(i) Let $p\geq 1$ and $Y\sim N(0,1)$. There exists $\kappa>0$ such that for all $\sigma>0$ and $n\ge 1$ 
\begin{equation}
\label{eq:EHn2p}
\frac{\E \left\{ H_n(\sigma Y)^{2p} \right\}^{1/p}}{n!}
\le \kappa\, (2p)^{4n}(1\vee \sigma)^{2n}.
\end{equation}
(ii) Let $\Gamma$ be an $\ell \times \ell$ projection matrix and $Y\sim N(0,I_\ell)$. There exists $\kappa>0$ such that for all multi-index $a \in \mathbb N^\ell$
\begin{equation}
\label{eq:Ha2}
{\E \left\{ H_{\otimes a}(\Gamma Y)^2  \right\}} \le \kappa (2\ell)^{4|a|} \, a!.
\end{equation}
\end{lemma}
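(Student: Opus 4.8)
The plan is to establish (i) by reducing the $2p$-th moment to a second moment through hypercontractivity, and then to deduce (ii) from (i) by a H\"older argument exploiting the structure of the projection $\Gamma$. Throughout I use the probabilists' normalization, for which $\E\{H_n(Y)^2\}=n!$ (this is consistent with the factors $1/a!$ appearing in Proposition~\ref{prop:chaos}). For (i), I would first observe that, viewed as a function of the single standard Gaussian $Y$, the map $y\mapsto H_n(\sigma y)$ is a polynomial of degree $n$ and hence lies in the sum of the first $n$ Wiener chaoses. The classical hypercontractivity property of the Ornstein--Uhlenbeck semigroup then yields, for any $q\ge 2$ and any $F$ in that space, $\E\{|F|^q\}^{1/q}\le (q-1)^{n/2}\,\E\{F^2\}^{1/2}$. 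Taking $q=2p$ and $F=H_n(\sigma Y)$ gives
\[
\E\{H_n(\sigma Y)^{2p}\}^{1/p}\le (2p-1)^{n}\,\E\{H_n(\sigma Y)^2\},
\]
so that everything reduces to a bound on the second moment that is uniform in $\sigma$.

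For this second moment I would exploit the generating identity $\sum_{n\ge0}(t^n/n!)H_n(x)=\exp(tx-t^2/2)$ to compute
\[
\sum_{m,n\ge0}\frac{s^mt^n}{m!\,n!}\,\E\{H_m(\sigma Y)H_n(\sigma Y)\}=\exp\!\Big\{\tfrac12(\sigma^2-1)(s^2+t^2)+\sigma^2 st\Big\},
\]
and then read off the coefficient of $s^nt^n$, which produces the exact formula
\[
\frac{\E\{H_n(\sigma Y)^2\}}{n!}=\sum_{i=0}^{\lfloor n/2\rfloor}\binom{n}{2i}\binom{2i}{i}\frac{(\sigma^2-1)^{2i}\,\sigma^{2(n-2i)}}{4^i}.
\]
Since $(\sigma^2-1)^{2i}\sigma^{2(n-2i)}\le(1\vee\sigma)^{2n}$ for every $0\le i\le n/2$ and $\binom{2i}{i}\le 4^i$, while $\sum_{i}\binom{n}{2i}\le 2^n$, this yields $\E\{H_n(\sigma Y)^2\}\le n!\,2^n(1\vee\sigma)^{2n}$. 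Inserting this into the previous display and using $(2(2p-1))^n\le(4p)^n\le(2p)^{4n}$ for $p\ge1$ establishes (i), in fact with $\kappa=1$.

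To pass to (ii), I would write $H_{\otimes a}(\Gamma Y)^2=\prod_{i\in S}H_{a_i}\{(\Gamma Y)_i\}^2$, where $S=\{i:a_i>0\}$ has cardinality $m\le\ell$ (the case $a=0$ being trivial). Because $\Gamma$ is symmetric and idempotent, $\Gamma Y$ is centred Gaussian with covariance $\Gamma\Gamma^\top=\Gamma$, so each coordinate satisfies $(\Gamma Y)_i\sim\mathcal N(0,\Gamma_{ii})$ with $0\le\Gamma_{ii}=\|\Gamma e_i\|^2\le1$; thus $(\Gamma Y)_i=\sigma_i Z_i$ for some $Z_i\sim\mathcal N(0,1)$ and $\sigma_i\le1$. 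Applying the generalized H\"older inequality with common exponent $m$ to the $m$ nonnegative factors and then invoking (i) with $p=m$ (so that $1\vee\sigma_i=1$) gives
\[
\E\{H_{\otimes a}(\Gamma Y)^2\}\le\prod_{i\in S}\E\{H_{a_i}(\sigma_i Z_i)^{2m}\}^{1/m}\le\prod_{i\in S}(2m)^{4a_i}a_i!=(2m)^{4|a|}\,a!\le(2\ell)^{4|a|}\,a!,
\]
where I used $\sum_{i\in S}a_i=|a|$ and $\prod_{i\in S}a_i!=a!$.

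The main obstacle is the second-moment estimate in (i): the dependence on $\sigma$ must be controlled precisely, since only a factor $(1\vee\sigma)^{2n}$ is affordable, and the binomial sum must be absorbed into the harmless $2^n$. The exact formula above, together with the elementary bound $\binom{2i}{i}\le4^i$, is exactly what makes this possible while keeping the constant independent of $\sigma$. Once (i) holds with a $\sigma$-free constant, the reduction of (ii) through H\"older and the elementary facts $\Gamma_{ii}\le1$ and $m\le\ell$ is routine.
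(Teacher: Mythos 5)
Your proof is correct; part (i) follows a genuinely different route from the paper, while part (ii) is essentially the paper's argument. For (i), the paper expands $H_n(\sigma x)$ in the basis $\{H_{n-2k}(x)\}$ via the connection formula for scaled Hermite polynomials, applies H\"older to the resulting sum, invokes Larsson--Cohn's asymptotics for the norms $\|H_{n}\|_{2p}$, and finishes with Stirling-type estimates on the coefficients. You instead apply hypercontractivity of the Ornstein--Uhlenbeck semigroup directly to $F=H_n(\sigma Y)$ --- legitimate, since a degree-$n$ polynomial in a single Gaussian lies in the sum of the first $n$ Wiener chaoses, on which $\E\{|F|^{2p}\}^{1/(2p)}\le(2p-1)^{n/2}\E\{F^2\}^{1/2}$ does hold --- and then compute $\E\{H_n(\sigma Y)^2\}$ exactly from the generating function; your coefficient extraction and the elementary bounds $(\sigma^2-1)^{2i}\sigma^{2(n-2i)}\le(1\vee\sigma)^{2n}$, $\binom{2i}{i}\le 4^i$, $\sum_i\binom{n}{2i}\le 2^n$ are all correct. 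This buys a shorter and more self-contained argument (one standard external tool instead of two), an explicit constant $\kappa=1$, and in fact the sharper bound $(4p)^n(1\vee\sigma)^{2n}$, showing the stated $(2p)^{4n}$ has slack; the paper's route, by contrast, avoids hypercontractivity and stays at the level of explicit Hermite-polynomial norm estimates. Your (ii) matches the paper's: projection implies coordinate variances $\Gamma_{ii}\le1$, then generalized H\"older followed by (i) with $\sigma\le1$; your only (harmless) refinement is taking the H\"older exponent to be $m=\#\{i:a_i>0\}$ rather than $\ell$, giving $(2m)^{4|a|}\le(2\ell)^{4|a|}$. One pedantic point shared by both proofs: (i) is stated for $\sigma>0$, whereas in (ii) some $\Gamma_{ii}$ may vanish; your exact second-moment formula, being polynomial in $\sigma$, extends to $\sigma=0$ at once, so nothing is lost.
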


\begin{proof}
(i) For any $n\ge 1$, $\E\{H_n(Y)^{2p}\}<\infty$ and, from \cite[Theorem 2.1(ii)]{larsson2002p}, we have as $n\to \infty$ 
\begin{equation*}
\|H_n\|_{2p}:= \E\{H_n(Y)^{2p}\}^{1/2p} = \frac{c_{2p}}{n^{1/4}} (2p-1)^{n/2}\sqrt{n!} \{1+O(1/n)\}
\end{equation*}
for some explicit constant $c_{2p}$. Therefore, there exists $\kappa>0$ such that the following rough upper bound holds for any $n\ge 0$,
\begin{equation}
\label{eq:normHn2p}
\|H_n\|_{2p} \le \kappa\,  (2p)^{n/2} \sqrt{n!}.
\end{equation}
The expansion of scaled Hermite polynomials \cite[Equation (4.6.33)]{ismail2005classical} states that for any $x\in\R$,
\begin{equation*}
H_n(\sigma x) = \sum_{k=0}^{\lfloor n/2 \rfloor} \sqrt{n!} (1-\sigma^2)^k \sigma^{n-2k} b_n(k) H_{n-2k}(x) \quad
\text{ with } \quad 
b_n(k) = \frac{(-1)^k \sqrt{n!}}{k! (n-2k)!}.
\end{equation*}
Using Hölder's inequality, we have the following (again rough) upper bound
\begin{equation*}
H_n(\sigma x)^{2p} \le (1\vee \sigma)^{2np} (n!)^p \, {(\lfloor n/2 \rfloor)^{2p-1}} \sum_{k=0}^{\lfloor n/2 \rfloor} b_n(k)^{2p} H_{n-2k}(x)^{2p}.
\end{equation*}
{Noticing that $(\lfloor n/2 \rfloor)^{2p-1} \leq n^{2p}$} and using \eqref{eq:normHn2p}, we obtain that for any $n\ge 1$,
\begin{align*}
\mathcal E:=\frac{\E \left\{ H_n(\sigma Y)^{2p} \right\}^{1/p}}{n!} &\le (1\vee \sigma)^{2n} n^2 
\left[ 
\sum_{k=0}^{\lfloor n/2 \rfloor} \left\{
b_n(k) \|H_{n-2k}\|_{2p}
\right\}^{2p}
\right]^{1/p} \\
& \le \kappa (1\vee \sigma)^{2n} n^2 
\left[ 
\sum_{k=0}^{\lfloor n/2 \rfloor} \left\{
\frac{\sqrt{n!}}{k! \sqrt{(n-2k)!}} (2p)^{\frac{n-2k}{2}}
\right\}^{2p}
\right]^{1/p} \\
& \le \kappa (1\vee \sigma)^{2n} n^2 (2p)^n 
\left[ 
\sum_{k=0}^{\lfloor n/2 \rfloor} \left\{
\frac{\sqrt{2k!}}{k!} {n \choose {2k}}^{1/2}
\right\}^{2p}
\right]^{1/p} .
\end{align*}
On the one hand, using known lower and upper bounds derived from Stirling formula for factorial numbers, we have for any $k=0,\dots,\lfloor n/2\rfloor$
\begin{align*}
\frac{\sqrt{2k!}}{k!}\le \left\{ \frac{(2k)^{2k+1}}{e^{2k-1}} \right\}^{1/2} \times \frac{e^{k-1}}{k^k} = \frac{2^{k+1/2}}{\sqrt{e}} \sqrt{k} \le 2^{n/2-1/2} \sqrt{n}.
\end{align*}
On the other hand, for any $k=0,\dots,\lfloor n/2\rfloor$ using an upper bound for the middle binomial coefficient
\begin{equation*}
{n \choose {2k}} \le {n \choose {\lfloor n/2 \rfloor}} \le \frac{4^n}{\sqrt{\pi n}}.
\end{equation*}
All in all, we have
\begin{equation*}
 \frac{\sqrt{2k!}}{k!} {{{n} \choose {k}}}^{1/2} \le 
 \frac{2^{n/2}}{\sqrt{2}} \sqrt n \frac{2^n}{(\pi n)^{1/4}} = \frac{2^{3n/2}}{\sqrt{2} \pi^{1/4}} n^{1/4}
\end{equation*}
whereby we deduce that
\begin{equation*}
\mathcal E \le \frac{\kappa}{2\sqrt{\pi}} (1\vee \sigma)^{2n}
(2p)^n 2^{3n} n^{3}
\end{equation*}
and finally the result.\\
(ii) First, the variance of the coordinate $(\Gamma Y)_i$ is $(\Gamma \Gamma^\top)_{ii}$. Since $\Gamma$ is a projection matrix, we have $(\Gamma \Gamma^\top)_{ii} = \Gamma_{ii}$ and $\Gamma_{ii} \leq 1$. Using Hölder's inequality and (i) with $\sigma \leq 1$, we have
\begin{equation*}
    \frac{1}{a!}{\E \left\{ H_{\otimes a}(\Gamma Y)^2  \right\}} \le 
    \prod_{i=1}^{\ell} 
    \frac{\E \left[
    H_{a_i} \left\{
    (\Gamma Y)_i
    \right\}^{2\ell}
    \right]^{1/\ell}}{a_i!} \leq 
    \kappa^{\ell} \; \prod_{i=1}^\ell 
    (2\ell)^{4 a_i} = \kappa^{\ell} \; (2\ell)^{4 |a|}.
\end{equation*}
\end{proof}

\subsection{Proof of Proposition~\ref{prop:chaos}}

\begin{proof}
(i) Since $t \mapsto F_\varepsilon\{\check X(t)\}=
|\det\{X^{\prime\prime}(t)\}| \,\iota_\L\{X^{\prime\prime}(t)\}
\delta_\varepsilon \{\|X^\prime(t)\|\}$ is continuous and has finite
second moment, ${\Phi}_{1,n,\varepsilon}$ has finite second
moment. So, we can consider the  Hermite expansion of
$F_\varepsilon\{\check X(t)\}$ which leads to the one of
$\cent{\Phi}_{1,n,\varepsilon}= \Phi_{1,n,\varepsilon} -
\E(\Phi_{1,n,\varepsilon})$ as 
\[
    \cent{\Phi}_{1,n,\varepsilon} = \sum_{q\ge 1} \sum_{\substack{a\in \mathbb N^D \\ |m|=q}} s_{q,\varepsilon} \quad \text{ with } s_{q,\varepsilon}=
    d_{a,\varepsilon} \int_{W_n} \phi_{1,n}(t)H_{\otimes a} \{\check Y(t)\}  \dd t
\]
and
\begin{align*}
d_{a,\varepsilon}&= \frac{1}{a!} \E 
\left[
H_{\otimes a} \{\check Y(0)\} F_\varepsilon\{\check X(0)\}
\right] = \frac{1}{a!} \E 
\left[
H_{\otimes a} \{\Sigma(0)^{-1/2} \check X(0)\} F_\varepsilon\{\check X(0)\}
\right].
\end{align*}
Since Hermite expansion is a particular case of chaos expansion, 
 $s_{q,\varepsilon}$ is reffered to as the $q$th chaos component of~$\cent{\Phi}_{1,n,\varepsilon}$.
We apply Lemma~\ref{lem:coeffhermite} (in the particular case where
$d_2=d_4=0$) to the vector $\check X(0)$ and
$\varphi(x_1,x_3)=H_{\otimes a}\{\Sigma(0)^{-1/2}
(x_1^\top,x_3^\top)^\top\} |\det(x_3)| \iota_\L(x_3)$ (recall, $x_3$
is a dummy variable corresponding to the matrix $X''(t)$). First, $\check X(0)$ is non-degenerate by~\ref{C:nondegeneracy}. Second, $\varphi$ is continuous in a neighborhood of $(0,x_3)$ for any $x_3$ and $\varphi(0,x_3)$ is bounded by a polynomial function of $x_3$. Therefore, as $\varepsilon\to 0$, $d_{a,\varepsilon} \to d_a$ for any $a\in \mathbb N^D$ where $d_a$ is given by
\[
    d_a=\frac{1}{a!}
f_{X^\prime(0)}(0) \; \times \; 
\E 
\left(
H_{\otimes a} \{\Sigma(0)^{-1/2} \check X_0(0) \} 
\,|\det\{X^{\prime\prime}(0)\}| \,
\iota_\L \{X^{\prime\prime}(0)\}  \, \middle| \, X^\prime(0) =0
\right) 
\]
which yields~\eqref{eq:da} by independence of $X^\prime(0)$ and
$X^{\prime\prime}(0)$ and since $\Var\{X^\prime(0)\}=\lambda_2
I_d$ where $\lambda_2$ is the second spectral moment. The following argument is an adaptation of~\cite[Proof of Proposition 1.3.]{estrade2016central}. Let 
\begin{equation*}
    \tilde{\Phi}_{1,n} = \sum_{q\ge 1} \sum_{\substack{a\in \mathbb N^D \\ |a|=q}}
    d_{a} \int_{W_n} \phi_{1,n}(t) H_{\otimes a} \{\check Y(t)\}  \dd t
\end{equation*}
and for a functional of $\Phi$ admitting an $L^2$-chaos expansion,
denote by $\pi^Q(\Phi)$ and $\pi_Q(\Phi)$ the projection onto the
first $Q$ chaos components  and the remainder ones. Using, the pointwise convergence of $d_{a,\varepsilon}$ and Fatou's lemma we have for any $Q\ge 1$
\begin{align*}
\E \{ \pi^Q(\tilde \Phi_{1,n})^2\} &= \sum_{q=1}^Q 
\E   \left(\left[
d_{a}\int_{W_n} \phi_{1,n}(t) H_{\otimes a} \{\check Y(t)\}  \dd t  
\right]^2\right)\\
&\le \sum_{q= 1}^{Q} \lim_{\varepsilon \to0}  \E  \left(\left[
d_{a,\varepsilon}\int_{W_n} \phi_{1,n}(t) H_{\otimes a} \{\check Y(t)\}  \dd t  
\right]^2\right)\\
&\le \E \{(\cent{\Phi}_{1,n,\varepsilon})^2\} <\infty
\end{align*}
whereby we deduce that $\E\{(\tilde{\Phi}_{1,n})^2\}<\infty$. Consider the decomposition
\begin{equation*}
    \cent{\Phi}_{1,n} - \tilde{\Phi}_{1,n} = 
    \pi_{Q}\left( \cent{\Phi}_{1,n} - \tilde{\Phi}_{1,n} \right) +
    \pi^{Q}\left( \cent{\Phi}_{1,n} - \cent{\Phi}_{1,n,\varepsilon} \right) + 
    \pi^{Q}\left( \cent{\Phi}_{1,n,\varepsilon} - \tilde{\Phi}_{1,n} \right).
\end{equation*}
First consider limits in the $L^2$ sense as $\varepsilon \to 0$ for fixed $Q$. By Proposition~\ref{prop:representation}(i), the second term tends to 0. By the pointwise convergence of $d_{a,\varepsilon}$ to $d_a$ and dominated convergence, the third term tends to 0. Then, as $Q\to \infty$ the first term tends to 0 since $\cent{\Phi}_{1,n} - \tilde{\Phi}_{1,n}$ has finite second moment. All in all, it gives $\cent{\Phi}_{1,n} = \tilde{\Phi}_{1,n}$ in the $L^2$ sense, which proves~\eqref{eq:chaosh1}.

Finally, if $\L$ is symmetric then for all $a$ such that $|a|$ is an odd number, 
\begin{equation*}
    H_{\otimes a} \{\Sigma(0)^{-1/2} \check X_0(0) \} 
    \,|\det\{X^{\prime\prime}(0)\}| \,
    \iota_\L \{X^{\prime\prime}(0)\} 
\end{equation*}
is an odd function of $X^{\prime\prime}(0)$. Since $X^{\prime\prime}(0)\stackrel{\mathcal{D}}{=} -X^{\prime\prime}(0)$, one easily concludes that $d_a = -d_a = 0$ in that case.\\

\noindent(ii) We proceed in the same way. Let 
\begin{align*}
F_{\varepsilon}\{\check X(\mathbf{t})\}&= 
|\det\{X^{\prime\prime}(t^{(1)})\}| \, \iota_\L\{X^{\prime\prime}(t^{(1)})\} \,
\delta_{\varepsilon} \{\|X^\prime(t^{(1)})\| \} \times \\
&\qquad |\det\{X^{\prime\prime}(t^{(2)})\}| \, \iota_\L\{X^{\prime\prime}(t^{(2)})\} \,
\delta_{\varepsilon} \{\|X^\prime(t^{(2)})\| \}.
\end{align*}
The function ${\mathbf{t}} \mapsto \E [F_{\varepsilon}\{\check X(\mathbf{t})\}^2]<\infty$ is bounded and continuous, so ${\Phi}_{2,n,\varepsilon}$ has finite second moment. Then, we can consider the  Hermite expansion of $F_{\varepsilon}\{\check X(\mathbf{t})\}$ which leads to the one of $\cent{\Phi}_{2,n,\varepsilon}= \Phi_{2,n,\varepsilon} - \E(\Phi_{2,n,\varepsilon})$ as
\begin{align*}
\cent{\Phi}_{2,n,\varepsilon}  =  \sum_{q\ge 1} \sum_{\substack{\mathbf a\in \mathbb N^{2D} \\ |\mathbf a|=q}}
\int_{(W_n)^2} d_{\mathbf a,\varepsilon} (\mathbf{t}) \phi_{2,n}(\mathbf t) H_{\otimes \mathbf a} \{\check Y(\mathbf{t})\} \dd \mathbf t.
\end{align*} 
By stationarity and isotropy, $d_{\mathbf a,\varepsilon} (\mathbf{t}) = d_{\mathbf a,\varepsilon} (0,re_1)=d_{\mathbf a,\varepsilon} (r)$ with $r=\|t^{(1)} - t^{(2)}\|$ and is given by
\begin{align*}
d_{\mathbf a,\varepsilon} (r) &= \frac{1}{\mathbf a!}\E
\left[
H_{\otimes \mathbf a}\{\check Y(0,re_1)\} F_{\varepsilon}\{\check X(0,re_1)\}
\right]       \\
&=  \frac{1}{\mathbf a!}\E
\left[
H_{\otimes \mathbf a}\{\bar \Sigma(r)^{-1/2}\check X(0,re_1)\} F_{\varepsilon}\{\check X(0,re_1)\}
\right].
\end{align*}
We apply Lemma~\ref{lem:coeffhermite} to the vector $\check X(0,re_1)$ and the function 
\begin{align*}
\varphi(x_1,x_2,x_3,x_4)&= H_{\otimes \mathbf a}\{\bar \Sigma(r)^{-1/2} (x_1^\top,\dots,x_4^\top)^\top \} \times |\det(x_3)| \iota_\L(x_3) |\det(x_4)| \iota_\L(x_4).
\end{align*}
First, $\check X(0,re_1)$ is non-degenerate by~\ref{C:nondegeneracy:hessian}. Second, $\varphi$ is a continuous function in a neighborhood of $(0,0,x_3,x_4)$ for any $x_3,x_4$ and such that $\varphi(0,0,x_3,x_4)$ is bounded by a polynomial function of $x_3$ and $x_4$. Therefore, for any $r>0$, as  $\varepsilon\to 0$, $d_{\mathbf a,\varepsilon}(r) \to d_{\mathbf a}(r)$ given by~\eqref{eq:dast}. 
Now, we use the same arguments as in (i). Using Fatou's lemma (applied twice), we  prove that $\E\{(\tilde{\Phi}_{2,n})^2\}<\infty$, which implies that $\cent{\Phi}_{2,n}- \tilde{\Phi}_{2,n}$ has finite variance. Using this, the pointwise convergence of $d_{\mathbf a,\varepsilon}(r)$ as $\varepsilon\to 0$ and Proposition~\ref{prop:representation}(ii), we deduce in the same way that $\cent{\Phi}_{2,n}= \tilde{\Phi}_{2,n}$ in the $L^2$ sense, which leads to~\eqref{eq:chaosh2} after noticing that the same argument as in~(i) still applies to determine the Hermite rank: if $\L$ is symmetric, $d_{\mathbf a}=0$ for any $\mathbf a\in \mathbb N^{2D}$ such that $|\mathbf a|$ is odd.

The continuity of $d_{\mathbf a}$ on $[\eta, R]$ follows the same
lines as the continuity of $g_{\L}$ in Theorem~\ref{thm:pcf}: it suffices to check that $r\mapsto \bar \Sigma(r)$ is continuous on $[\eta, R]$. Finally, to prove~\eqref{ineq:da}, we start by applying a Cauchy-Schwarz inequality, $\{\mathbf a! d_{\mathbf a}(r)\}^2 \le \{f_{V(r)}(0,0)\}^2 \times E_1 \times E_2$ where
\begin{align}
E_1 &=\E
\Bigg(
H_{\otimes \mathbf a}^2 \left\{ \bar \Sigma(r)^{-1/2} \check X_0(0,re_1)\right\} 
\mid \; X^{\prime}(0)=X^{\prime}(re_1)=0
\Bigg)\\   
E_2 &=\E
\Bigg(
\det\{X^{\prime\prime}(0)\}^2 \det\{X^{\prime\prime}(re_1)\}^2 
\mid \; X^{\prime}(0)=X^{\prime}(re_1)=0
\Bigg).
\end{align}
The term $\{f_{V(r)}(0,0)\}^2$ is uniformly bounded on $[\eta, R]$ since the distribution of $V(r)$ is continuous in $r$ and non-degenerate on $[\eta, R]$.
The term $E_2$ is uniformly bounded on $[\eta, R]$ since in particular the conditional distribution of $\{X^{\prime\prime}(0)^\top,X^{\prime\prime}(re_1)^\top  \}^\top$ given $X^\prime(0)=X^\prime(re_1)$ is Gaussian, with conditional mean and covariance continuous in $r$. For the term, $E_1$ we can apply Lemma~\ref{lem:proj}(i) by rearranging the terms with $X_1= \{X^{\prime\prime}(0)^\top,X^{\prime\prime}(re_1)^\top  \}^\top$ and $X_2=\{X^\prime(0)^\top,X^{\prime}(re_1)^\top \}^\top$, $d_1=2D-2d$, $d_2=2d$, and $Y_0 = \bar \Sigma(r)^{-1/2} \check X_0(0,re_1)$. Indeed, the assumption of Lemma~\ref{lem:proj}(i) corresponds to~\ref{C:nondegeneracy:hessian}. Therefore, $E_1=\E\left( H_{\otimes \mathbf a}^2 \left\{ \Gamma \check Y \right\} \right)$ for some projection matrix $\Gamma$ and where $Y\sim N(0,I_{2D})$.
Then, we apply Lemma~\ref{lem:hermite}(ii) whereby we deduce, for $\check Y\sim \mathcal N(0,I_{2D})$ that $E_1\times E_2 \le  \kappa (4D)^{4|\mathbf a|} \, \mathbf a!$ for some $\kappa>0$.
\end{proof}



\section{Proof of Lemma~\ref{lem:gamma} and~Theorem~\ref{thm:variance}} \label{app:var}

\subsection{Proof of Lemma~\ref{lem:gamma}} 

\begin{proof}
(i) Using Mehler's formula \cite[Equation~(8) p.3858]{estrade2016central}, the stationarity and isotropy of $\check Y$, there exists a constant $\kappa=\kappa(a,b,q)$ such that for $r=\|t-s\|$,
\[
\left|
\gamma_{a,b}(t-s)
\right|
\le \kappa   \; \left\| 
\E \left\{ \check Y(0) \check{Y}(re_1)^\top\right\}
 \right\|_{\max}^q.
\]
Now, since by definition $\check Y(t)=\Sigma(0)^{-1/2}\check X(t)$, we obtain
\[
\left|
\gamma_{a,b}(t-s)
\right|
\le \kappa  \;  \left\| \Sigma(0)^{-1/2} \right\|_{\max}^{2q} \; 
 \left\|\Cov\{ \check X(0) , \check X(re_1)\}\right\|_{\max}^q.
\]
The latter covariance involves terms of the form $\partial^\alpha c(re_1)$ for $\alpha\in \mathbb N^D$ with $|\alpha|=1,\dots,4$. Lemma~\ref{lem:isotropic} ensures that $\left\|\Cov\{ \check X(0) , \check X(re_1)\}\right\|_{\max}\le \Xi(r)$ which yields that $\gamma_{a,b}(t-s) \le \kappa \,\Xi(r)^q$.

\noindent (ii) For simplicity of notation, we write $\gamma_{\mathbf a,\mathbf b} (\mathbf{t} \ominus \mathbf{s})$ before proving that it is indeed a function of $\mathbf{t} \ominus \mathbf{s}$ only. Using Mehler's formula, we have,
\begin{align*}
    \gamma_{\mathbf a,\mathbf b} (\mathbf{t} \ominus \mathbf{s}) &= \sum_{\substack{d_{ij}\ge 0 \\\sum_{i} d_{ij}=a_j, \sum_j d_{ij}=b_i}} \mathbf a! \mathbf b ! \prod_{i,j=1}^{2D} 
    \frac{\E\left\{ \check Y(\mathbf{t})_{i} \check Y(\mathbf{s})_{j} \right\}^{d_{ij}}}{d_{ij}!} \\
    \E\left\{ \check Y(\mathbf{t})_{i} \check Y(\mathbf{s})_{j} \right\}^{d_{ij}}&=\sum_{k_1,l_1=1}^{2D} \dots \sum_{k_{d_{ij}},l_{d_{ij}}=1}^{2D}
    \prod_{m=1}^{d_{ij}} \Sigma(\mathbf{t})^{-1/2}_{i k_m}\Sigma(\mathbf{s})^{-1/2}_{j l_m}  
    \E \left\{ \check X(\mathbf{t})_{k_m} \check X(\mathbf{s})_{l_m}\right\}.
\end{align*}
By stationarity of $\bX$, it is indeed a function of $\mathbf{t} \ominus \mathbf{s}$. Now, using~\ref{C:integrability}, we have for any $i,j,k_m,l_m \in \{1,\dots,2D\}$ with $m=1,\dots,d_{ij}$, for some generic constant $\kappa$,
\begin{align}
\prod_{m=1}^{d_{ij}} | \Sigma(\mathbf{t})^{-1/2}_{ik_m}| &\le \kappa  \|\Sigma(\mathbf{t})^{-1/2}\|^{d_{ij}}_{\mathrm{max}} \nonumber \\
\prod_{m=1}^{d_{ij}} | \Sigma(\mathbf{s})^{-1/2}_{jl_m}| &\le \kappa  \|\Sigma(\mathbf{s})^{-1/2}\|^{d_{ij}}_{\mathrm{max}} \nonumber \\
\prod_{m=1}^{d_{ij}} \left| \E \left\{ \check X(\mathbf{t})_{k_m} \check X(\mathbf{s})_{l_m}\right\} \right| & \le \kappa \, \Xi(\|t^{(1)} - s^{(1)}\|)^{\alpha_1}\Xi(\|t^{(1)} - s^{(2)}\|)^{\alpha_2} \nonumber\\
& \hspace{1cm} 
\times \Xi(\|t^{(2)} - s^{(1)}\|)^{\alpha_3}\Xi(\|t^{(2)} - s^{(2)}\|)^{\alpha_4} \label{eq:tmpcov}
\end{align}
for some $\alpha \in \mathbb N^4$ such that $|\alpha|=d_{ij} \leq q$. 
The result is easily deduced since $d_{ij}\le q$ and $\Xi$ is a decreasing function.\\
(iii) The proof combines (i) and~(ii). We omit the details.
\end{proof}

\subsection{Proof of Theorem~\ref{thm:variance}-(i)}
\begin{proof}
By Proposition~\ref{prop:chaos}-(i), the $q$th {chaos} component of $\cent{\Phi}_{1,n}$  is given by
\begin{equation}\label{eq:def:sq:phi1}
    s_q(\phi_{1,n}):=\sum_{a \in \mathbb N^D,|a|=q} d_a \int_{W_n} \phi_{1,n}(t)H_{\otimes a}\{\check Y(t)\}\dd t
\end{equation}
and so $\Var (\cent{\Phi}_{1,n}) = \sum_{q\in \mathbb N_\L^*}
\Var\{s_q(\phi_{1,n})\}$. Let $q\ge 1$ (or 2 if $\L$ is
symmetric)
\begin{align}
\Var\{s_q(\phi_{1,n})\} &= \sum_{\substack{a,b \in \mathbb N^D \\ |a|=|b|=q}} d_a d_{b} \int_{W_n}\int_{W_n}
\phi_{1,n}(t) \phi_{1,n}(s) 
\E \left[
H_{\otimes a}\{\check Y(t)\} H_{\otimes b}\{\check Y(s)\}
\right] \dd s  \dd t\nonumber\\
&= \sum_{\substack{a,b \in \mathbb N^D \\ |a|=|b|=q}} d_a d_{b} \int_{W_n}\int_{W_n}
\phi_{1,n}(t) \phi_{1,n}(s) 
\gamma_{a,b}(t-s)  \dd s \dd t \label{eq:varsqh1}
\end{align}    
by stationarity of $\check Y$. Equation~\eqref{eq:gammaab1} and
condition~\ref{C:integrability} ensure that $\gamma_{a,b} \in
L^1(\R^d)$. 
 Using the notation $\phi_{1,n,W_n}$, the definition and properties of the convolution product (denoted by $*$) and Parseval's identity, we have
\begin{align*}
\Var\{s_q(\phi_{1,n})\} &=\sum_{\substack{a,b \in \mathbb N^D \\ |a|=|b|=q}} d_a d_{b}
\int_{\R^d} \phi_{1,n,W_n}(t) \, \phi_{1,n,W_n} * \gamma_{a,b}(t) \dd t \\
&= \sum_{\substack{a,b \in \mathbb N^D \\ |a|=|b|=q}} d_a d_{b} (2\pi)^{-d} \int_{\R^d}  \overline{\widehat{\phi}_{1,n,W_n}}(\omega) \, \widehat{\phi}_{1,n,W_n}(\omega)  \widehat{\gamma}_{a,b}(\omega) \dd \omega \\
&=n^{-d} \sum_{\substack{a,b \in \mathbb N^D \\ |a|=|b|=q}} d_a d_{b} (2\pi)^{-d} \int_{\R^d}  \left| \, \widehat{\phi}_{1,n,W_n}(\omega/n) \right|^2 \, 
\widehat{\gamma}_{a,b}(\omega/n)  \dd \omega.
\end{align*}
By assumption, the dominated convergence theorem can be applied to the integral term. Combined with Parseval's identity, we finally obtain
\begin{equation*}
 n^{d} \,  \Var\{s_q(\phi_{1,n})\} \to \|\phi_1\|^2_{L^2(\R^d)} \, \sum_{\substack{a,b \in \mathbb N^D \\ |a|=|b|=q}} d_a d_{b} \, \widehat{\gamma}_{a,b}(0).  
\end{equation*}
To rephrase the previous result, the assumption made on $\phi_{1,n}$ allows us to separate the influence of the function $\phi_1$ and the dependence of the random field $\bX$ captured by the term $\widehat \gamma_{a,b}(0)$. In other words, $\Var\{s_q(\phi_{1,n})\} \sim \|\phi_1\|^2_{L^2(\R^d)} \, \Var[s_q\{\mathbf 1(\cdot \in W_n)\}]$  as $n\to \infty$. Recall also that the choice $\phi_{1,n}(t) = \mathbf 1(t\in W_n)$ corresponds to the functional ${\Phi}_{1,n}=N_\L(W_n)$ (in this situation $\phi_1(t)=\mathbf 1(t\in[-1/2,1/2]^d)$ and $\|\phi_1\|_{L^2(\R^d)}=1$). We can therefore benefit from the proof of~\cite[Proposition 2.1]{estrade2016central} (see also~\cite[Theorem 2.1]{azais2024multivariate}) which considers exactly this functional and in particular proves that $\sup_n \sum_{q>Q} n^{-d} \, \Var[s_q\{\mathbf 1(\cdot \in W_n)\}] \to 0$ as $Q\to \infty$. Henceforth, we also have $\sup_n \sum_{q>Q} n^{-d} \, \Var\{s_q(\phi_{1,n})\} \to 0$ as $Q\to \infty$ which yields the result.
\end{proof}

\subsection{Proof of Theorem~\ref{thm:variance}-(ii)}
\label{sec:proof:thm:variance:ii}

\begin{proof}
For all $x, u, v, w\in \mathbb{R}^d$, let $\mathbf{t}_x = (w+v+x,w+v-u+x)$, $\mathbf{s}_x = (v+x,x)$ and $\mathbf{z}_x = \mathbf{t}_x \ominus \mathbf{s}_x $. By construction and stationarity, we observe that
$\gamma_{\mathbf a, \mathbf b} (\mathbf{z}_x) = \gamma_{\mathbf a, \mathbf b} (\mathbf{z}_0)$. From~\eqref{eq:gammaab2}, we have
\begin{align*}
|\gamma_{\mathbf a,\mathbf b}(\mathbf{z}_0)| &\le \kappa \, 
\psi(\|u\|)^{q} \psi(\|v\|)^{q} 
\Xi(\|w\|)^{\alpha_1}    \Xi(\|w+v\|)^{\alpha_2} \Xi(\|w-u\|)^{\alpha_3}\Xi(\|w+v-u\|)^{\alpha_4}
\end{align*}
where $\psi(\| u \|) =\|\bar\Sigma(\|u\|)^{-1/2}\|_{\max}$.
By condition~\ref{C:nondegeneracy:hessian} and continuity of $r \mapsto \bar\Sigma(r)$ on the compact set $[\eta,R]$, $\psi(\|u\|)^{q} \psi(\|v\|)^{q}$ 
is uniformly bounded. In addition, since, for any $w\in \R^d$ such that $\|w\|\ge 3R$,
\begin{equation*}
\min \left( \|w\|, \|w+v\|, \|w-u\|,\|w+v-u\|\right) \ge  \|w\|/3
\end{equation*}
there exists $\kappa$ (depending on $\mathbf a,\mathbf b$, $\eta$ and $R$) such that
\begin{align*}
|\gamma_{\mathbf a,\mathbf b}(\mathbf z_0)| & \leq  \kappa \,  \Xi(\|w\|/3)^q,
\end{align*}
which in turn implies~\eqref{eq:integK2}. Now, we continue with the
same approach as in (i). By Proposition~\ref{prop:chaos}-(ii), the
$q$th chaos  component of $\cent{\Phi}_{2,n}$ is given by 
\begin{equation}\label{eq:def:sq:phi2}
s_q(\phi_{2,n}):=\sum_{\mathbf a \in \mathbb N^{2D},|\mathbf a|=q} 
\int_{(W_n)^2} \phi_{2,n}(\mathbf{t}) d_{\mathbf a}(\mathbf{t}) H_{\otimes \mathbf a}\{\check Y(\mathbf{t})\}\dd \mathbf{t}
\end{equation}
and so $\Var (\cent{\Phi}_{2,n}) = \sum_{q\in \mathbb N_{\L}^*}
\Var\{s_q(\phi_{2,n})\}$. Let $q\ge 1$. Then
\begin{equation*}
\Var\{s_q(\phi_{2,n})\} = \sum_{\substack{\mathbf a, \mathbf b \in \mathbb N^{2D} \\ |\mathbf a|=|\mathbf b|=q}} \int_{(W_n)^2} \int_{(W_n)^2}
\phi_{2,n}(\mathbf{t}) \phi_{2,n}(\mathbf{s})
 d_{\mathbf a}(\mathbf{t})d_{\mathbf b}(\mathbf{s}) 
 \gamma_{\mathbf a,\mathbf b}(\mathbf{t} \ominus \mathbf{s}) \dd \mathbf{t} \dd \mathbf{s}.
\end{equation*}
Remark that the integrand is null except when $\mathbf t,\mathbf s \in  (W_n)^2 \cap D_{\eta,R}$. Consider the change of variables  $u=t^{(1)} - t^{(2)}, v = s^{(1)} - s^{(2)}, w = t^{(1)} - s^{(1)}$ and $x=s^{(2)}$ (for which we have $\gamma_{\mathbf{a},\mathbf b}(\mathbf{t}\ominus \mathbf s)=\gamma_{\mathbf{a},\mathbf b}(\mathbf z_x)=\gamma_{\mathbf{a},\mathbf b}(\mathbf z_0)$).
By the assumption~\eqref{eq:assphi2n} made on $\phi_{2,n}$, we have
\begin{align*}
\Var\{s_q(\phi_{2,n})\} &=  n^{-d}  
\sum_{
\substack{\mathbf a, \mathbf b \in \mathbb N^{2D} \\  
|\mathbf a|=|\mathbf b|=q}
}
\int_{\R^d} \int_{B(0,\eta,R)} \int_{B(0,\eta,R)} 
\phi_2(\|u\|)\phi_2(\|v\|) d_{\mathbf a}(\|u\|) d_{\mathbf b}(\|v\|)  \times \\
&\frac{|W_n| \times |W_n \cap (W_n)_{-v} \cap (W_n)_{-v-w} \cap (W_n)_{u-v-w}|}{|W_n \cap (W_n)_{-u}| \times |W_n \cap (W_n)_{-v}|} \;
\gamma_{\mathbf a, \mathbf b} (\mathbf{z}_0 )
\dd u \dd v \dd w. 
\end{align*}
Using~\eqref{eq:integK2} and the dominated convergence theorem we obtain that as $n\to \infty$,
\begin{align*}
n^d \Var\{s_q(\phi_{2,n})\} \to &
\sum_{
\substack{\mathbf a, \mathbf b \in \mathbb N^{2D} \\  
|\mathbf a|=|\mathbf b|=q}}
\int_{\R^d} \int_{B(0,\eta,R)} \int_{B(0,\eta,R)} 
\phi_2(\|u\|)\phi_2(\|v\|) d_{\mathbf a}(\|u\|) d_{\mathbf b}(\|v\|)  \times \\
& \qquad \qquad \gamma_{\mathbf a, \mathbf b} \{\mathbf{z}_0 \}
\dd u \dd v \dd w.
\end{align*}
Hence,~\eqref{eq:varPhi2} is proved if one proves that $\mathcal S_Q  : =\sup_n \sum_{q>Q} n^d \Var\{s_q(\phi_{2,n})\} \to 0$ as $Q\to \infty$. Without loss of generality, assume that $R\in \mathbb N$ and decompose $(W_n)_{\ominus R} = \cup_{i\in \mathcal I_n} \Delta_i$ where $\mathcal I_n =\mathbb Z^d \cap (W_n)_{\ominus R}$, $\Delta_i=\Delta_0+i$ and $\Delta_0=[-1/2,1/2]^d$. Using~\eqref{eq:assphi2n}, we can check that for $Q\ge  1$, 
\begin{align*}
\mathcal S_Q & = \sup_n  \; n^{-d} \,\Var \left(
\sum_{i\in \mathcal I_n} \sum_{q>Q} s_{i,q}
\right) 
\end{align*}
where 
\begin{equation*}
s_{i,q} = 
\sum_{\substack{ \mathbf a\in \mathbb N^{2D} \\ |\mathbf a|=q }}
\int_{\mathbf{\Delta}_{i,\eta,R}} \phi_2(\|t^{(1)} - t^{(2)}\|) 
d_{\mathbf a}(\mathbf t) 
H_{\otimes \mathbf a} \{\check Y(\mathbf{t})\} \dd \mathbf{t}
\end{equation*}
and $\mathbf{\Delta}_{i,\eta,R} = \{\mathbf t\in \R^d: t^{(1)} \in \Delta_i, t^{(2)} \in B\{t^{(1)},\eta,R\}\}$. Let $\cent{\psi}_{2,i}= \psi_{2,i} - \E (\psi_{2,i})$, where 
\begin{equation*}
\psi_{2,i} = \sum^{\neq}_{ \substack
    t^{(1)} \in \bY_\L\cap \Delta_i, \\ t^{(2)} \in \bY_{\L} \cap (\Delta_{i})_{\oplus R}
} \phi_2(\|t^{(1)}-t^{(2)}\|).
\end{equation*}
Note that conditions \ref{C:general}[5]-\ref{C:nondegeneracy}[4] ensure that $\E(\psi_{2,i}^2)<\infty$ for any $i$ and that using the Hermite expansion developed in Proposition~\ref{prop:chaos}(ii) $\cent{\psi}_{2,i} = \sum_{q\ge 1} s_{i,q}$. 
Let $\pi_Q(\cent{\psi}_{2,i})=\sum_{q>Q} s_{i,q}$ denotes the
projection operator onto all chaos {components of} order $q> Q$. We have
\begin{equation*}
\mathcal S_Q = \sup_n \; n^{-d} \, \Var 
\bigg\{ 
\sum_{i\in \mathcal I_n} \pi_Q ( \cent{\psi}_{2,i})
\bigg\} = \sup_n \; ( V_{1,Q} +  V_{2,Q} )
\end{equation*}
where we decompose the variance for some large $\ell >2R+\sqrt{d}$ (to be set later) as
\begin{align*}
V_{1,Q} &= 
n^{-d} 
\sum_{\substack{i,j \in \mathcal{I}_n \\ \|j -i\|\le 2R+\sqrt d +\ell}} 
\E\left\{ \pi_Q ( \cent{\psi}_{2,i})\pi_Q ( \cent{\psi}_{2,j}) \right\}\\
V_{2,Q} &= 
n^{-d} 
\sum_{\substack{i,j \in \mathcal{I}_n \\ \|j -i\|> 2R+\sqrt{d} +\ell}} 
\E\left\{ \pi_Q ( \cent{\psi}_{2,i})\pi_Q ( \cent{\psi}_{2,j}) \right\}.
\end{align*}
Since the variables $\cent{\psi}_{2,i}$ are identically distributed, there exists $\kappa$ independent of $n$ and $Q$ such that the first term satisfies (using Cauchy-Schwarz inequality) 
\begin{equation*}
V_{1,Q} \le \sqrt d \{2R+\sqrt d + \ell\}^d \frac{|\mathcal I_n|}{|W_n|} 
\E \left\{ \pi_Q(\cent{\psi}_{2,0})^2\right\} \le \kappa \E \left\{ \pi_Q(\cent{\psi}_{2,0})^2\right\} =o(1)
\end{equation*}
as $Q\to \infty$ since  $\E \{ (\cent{\psi}_{2,0})^2\}<\infty$. Now, let $i,j \in \mathcal I_n$ such that $\|j-i\|> 2R+\sqrt d + \ell$, we have 
\begin{equation*}
\mathcal E_{i,j} :=\E\left\{ \pi_Q ( \cent{\psi}_{2,i})\pi_Q ( \cent{\psi}_{2,j}) \right\} 
= \int_{\mathbf{\Delta}_{i,\eta,R}}
\int_{\mathbf{\Delta}_{j,\eta,R}} \sum_{q>Q} \E
\left[
F_{q,\mathbf{t}} \{ \check{Y}(\mathbf{t})\}  F_{q,\mathbf{s}} \{ \check{Y}(\mathbf{s})\}
\right] \dd \mathbf{s} \dd \mathbf{t}
\end{equation*}
where for any $\mathbf{t} \in W_n$ and $\check y \in \mathbb{R}^{2D}$,
\begin{equation*}
F_{q,\mathbf{t}} \{ \check y\} = \sum_{\substack{ \mathbf a \in \mathbb N^{2D} \\ |\mathbf a|=q}} \phi_2(\|t^{(1)}-t^{(2)}\|) d_{\mathbf a} (\mathbf{t}) H_{\otimes \mathbf a} \{ \check y\}.
\end{equation*}
Let $\Gamma = \Var [ \{\check{Y}(\mathbf{t})^\top , \check{Y}(\mathbf{s})^\top  \}^\top  ]$. The components of $\Gamma$ are of the form $\gamma_{\mathbf{a},\mathbf{b}}(\mathbf{t} \ominus \mathbf{s})$ with $|\mathbf{a}| = |\mathbf{b}| = 1$.
Using Equation~\eqref{eq:gammaab2}, we have
for any $\mathbf{t} \in \mathbf{\Delta}_{i,\eta,R}$ and $\mathbf{s} \in \mathbf{\Delta}_{j,\eta,R}$
$$\rho^\star:=\|I_{4D}- \Gamma\|_{\infty} \le 
\left\{
\sup_{\eta \leq r \leq R} \|\bar\Sigma(r)^{-1/2}\|_{\max}^2 
\right\}
\,  \Xi\{\delta(\mathbf{t}, \mathbf{s})\}.$$ 
Condition~\ref{C:nondegeneracy:hessian} and the continuity of $r \mapsto \bar\Sigma(r)$ imply the finiteness of the $\sup$ above. We can check that $\delta(\mathbf{t}, \mathbf{s}) \ge \|j-i\| - 2 \sup_{t \in (\Delta_0)_{\oplus R}}\|t\|\ge \|j-i\|-2R-\sqrt{d}$. So $\rho^\star \le  \kappa \Xi \{\|j-i\|-2R -\sqrt{d} \}$. Since $\Xi$ is a decreasing function which tends to zero, one can set $\ell$ large enough such that $\rho^\star \le (1/3-\eta)/4D$ for some $\eta>0$. Thus \cite[Corollary 2.1]{soulier2001moment}, which extends \cite[Lemma~1]{arcones1994limit}, can be applied which yields  
\begin{equation*}
\E   \left[
F_{q,\mathbf{t}} \{ \check{Y}(\mathbf{t})\}  F_{q,\mathbf{s}} \{ \check{Y}(\mathbf{s})\}
\right] \le   \E\{F_{q,\mathbf{t}}(\check Y)^2\}^{1/2} \E\{F_{q,\mathbf{s}}(\check Y)^2\}^{1/2} (2D \rho^\star)^q 
\end{equation*}
where $\check Y \sim \mathcal N(0,I_{2D})$. For any $\mathbf{t} \in \mathbb{R}^{2d}$,
\begin{align*}
\E\{F_{q,\mathbf{t}}(\check Y)^2\}  &=\sum_{\substack{ \mathbf a \in \mathbb N^{2D} \\ |\mathbf a|=q}} \phi_2^2(\|t^{(1)}-t^{(2)}\|) d_{\mathbf a}^2(\mathbf{t}) \, \mathbf a!.
\end{align*}
Using~\eqref{ineq:da} we obtain that there exists a constant $\kappa$ depending only on a compact set containing $\{\|t^{(1)} - t^{(2)}\| : \mathbf{t} \in \Delta_0 \times (\Delta_0)_{\oplus R} \}$ such that for any $i$ and $\mathbf{t} \in \Delta_i \times (\Delta_i)_{\oplus R}$,
\begin{align*}
\E\{F_{q,\mathbf{t}}(\check Y)^2\}  &\le \kappa \sum_{\substack{\mathbf a \in \mathbb N^{2D} \\ |\mathbf a|=q}} 1  \le \kappa q^{2D} (4D)^{4q}.
\end{align*}
Finally, for some generic constant $\kappa$ (which may vary from line to line) independent of $i,j,q,n$ , we have by denoting $c=2D \times \sup_{\eta\leq r\leq R} \|\bar\Sigma(r)^{-1/2}\|_{\max}^2 $,
\begin{align*}
V_{2,Q} &= 
n^{-d} 
\sum_{\substack{i,j \in \mathcal{I}_n \\ \|j -i\|> 2R+\sqrt d +\ell}} 
\mathcal E_{i,j} \\
& \leq \kappa n^{-d} 
\sum_{\substack{i,j \in \mathcal{I}_n \\ \|j -i\|> 2R+\sqrt d +\ell}} \sum_{q>Q} q^{2D} (4D)^{4q}\,c^q \,\Xi(\|j-i\|-2R-\sqrt d)^q \\
& \leq (4Dc)\kappa \;  n^{-d} \sum_{\substack{i,j \in \mathcal{I}_n \\ \|j -i\|> \ell}}  \Xi(\|j-i\|) \sum_{q>Q} q^{2D} \{(4D c) \,\Xi(\ell)\}^{q-1} \\
&\le \kappa 
\left\{ n^{-d}\int_{W_n}\int_{W_n \cap B(t^{(1)},\ell-1)^c}
\Xi(\|t^{(1)} - t^{(2)}\|) \dd t^{(2)} \dd t^{(1)}  \right\}
\left[
\sum_{q>Q} q^{2D} \{(4Dc) \Xi(\ell)\}^{q-1} 
\right]\\
&\le \kappa \left\{\int_{\ell-1}^{\infty} r^{d-1} \Xi(r)\dd r\right\} 
\left[ \sum_{q>Q} q^{2D} \{(4Dc) \Xi(\ell)\}^{q-1} \right]
\end{align*}
for $n$ large enough, where we use condition~\ref{C:integrability}, a polar coordinates change and dominated convergence theorem for the last two lines. Condition~\ref{C:integrability} ensures that the first term is bounded while by choosing $\ell$ large enough we can ensure that $u=(4Dc)\,\Xi(\ell)<1$. Thus, the series with terms $q^{2D} u^q$ converges, which yields that $\sup_n V_{2,Q}=o(1)$ as $Q\to \infty$ and ends the proof. 
\end{proof}

\subsection{Proof of Theorem~\ref{thm:variance}(iii)-(iv)}

\begin{proof}
(iii) Let $u,v,w\in \R^d$ and $\mathbf z_w =
(u+v+w)\ominus(v+w,w)$. Observe that $\gamma_{a,\mathbf b}(\mathbf z_w) = \gamma_{a,\mathbf b}(\mathbf z_0)$. Now, from~\eqref{eq:gammaab3}
\begin{equation*}
|\gamma_{a,\mathbf b}(\mathbf z_0)| \le \kappa \|\Sigma(0)^{-1/2}\|^q_{\max} \; 
\|\bar \Sigma(\|v\|)^{-1/2}\|^q_{\max} \;
\Xi[ \delta\{u+v, (v,0)\}].
\end{equation*}
So for any $v \in B(0,\eta,R)$ and $u\in \R^d$ such that $\|u\|\ge R$,
there exists $\kappa$ (depending on $a,\mathbf b, \eta, R$) such that
$|\gamma_{a,\mathbf b}(\mathbf z_0)| \le \kappa \, \Xi(\|u\|/2)^q$
which in turn implies~\eqref{eq:integCov}. Now, we continue with the
same approach as in (i)-(ii) and omit a few details. First,
\begin{align*}
\Cov( \cent{\Phi}_{1,n} , \cent{\Phi}_{2,n}) &= \sum_{q\ge 1} \Cov \left\{s_q(\phi_{1,n}), s_q(\phi_{2,n})  \right\} \\
\Cov \left\{s_q(\phi_{1,n}), s_q(\phi_{2,n})  \right\} &= \sum_{\substack{a\in \mathbb N^D, \mathbf b \in \mathbb N^{2D} \\ |a|=|\mathbf b|=q}}  d_a \int_{W_n} \int_{(W_n)^2} \phi_{1,n}(t) \phi_{2,n}(\mathbf s)d_{\mathbf{b}} (\mathbf s) 
\gamma_{a,\mathbf b} (t\ominus \mathbf s)  \dd \mathbf s \dd t.
\end{align*}
Second, using the assumptions made on $\phi_{1,n}$ and $\phi_{2,n}$ and the change of variables $u=t-s^{(1)}$, $v=s^{(1)}-s^{(2)}$ and $s^{(2)}=w$ (for which we have $\gamma_{a,\mathbf b}(t\ominus \mathbf s)=\gamma_{a,\mathbf b}(\mathbf z_w)=\gamma_{a,\mathbf b}(\mathbf z_0)$) we have as $n \to \infty$,
\begin{align*}
\Cov \Big\{s_q(\phi_{1,n}), &s_q(\phi_{2,n})  \Big\} \\
&\sim n^{-d} d_a 
\int_{\R^d}  \int_{ (W_n)^2 } \phi_1(t) \phi_{2,n}(\mathbf{s}) d_{\mathbf b}(\mathbf s) \gamma_{a, \mathbf b}(nt\ominus \mathbf s) \dd \mathbf s \dd t \\
&\sim n^{-2d} d_a \phi_1(0) \int_{\R^d}  \int_{ (W_n)^2 }\phi_{2,n}(\mathbf{s}) d_{\mathbf b}(\mathbf s) \gamma_{a, \mathbf b}(t\ominus \mathbf s) \dd \mathbf s \dd t \\
&= n^{-2d} d_a \phi_1(0)  \int_{\R^d} \int_{W_n} \int_{B(0,\eta,R)} \frac{\phi_2(\|v\|)}{|W_n \cap (W_n)_{-v}|} d_{\mathbf b}(\|v\|) \gamma_{a,\mathbf b} (\mathbf z_w) \dd v \dd w \dd u \\
& \sim n^{-d} d_a \phi_1(0) \int_{\R^d} \int_{B(0,\eta,R)} \phi_2(\|v\|)d_{\mathbf b}(\|v\|) \gamma_{a,\mathbf b} (\mathbf z_0) \dd v \dd u. 
\end{align*}
Hence, from~\eqref{eq:integCov}, the result is proved is one proves that as $Q\to \infty$, 
\begin{equation*}
    \mathcal S_Q  : =\sup_n \sum_{q>Q} n^d \Cov\{s_q(\phi_{1,n}),s_q(\phi_{2,n})\} \to 0
\end{equation*}
which,  from the proofs of (i)-(ii), follows from the Cauchy-Schwarz inequality.\\
(iv) Observe that $\phi_1:=\sum_{i=1}^{m_1}\beta_i \phi_{1,n}^1$ and $\phi_2:=\sum_{i=m_1+1}^{m_1+m_2}\beta_i \phi_{2,n}^2$ satisfy respectively~\eqref{eq:assphi1hat} and~\eqref{eq:assphi2n}. Thus,~\eqref{eq:linearcombination} ensues from Theorem~\ref{thm:variance}(i)-(iii). 
\end{proof}

We end this appendix by proving~\eqref{eq:ratiovar} for any stationary
spatial point process with intensity $\rho$ and pair correlation
function $g$, such that $g-1\in L^1(\R^d)$. Using the Campbell
theorem, properties of the Fourier transform and the dominated convergence theorem, we have
\begin{align*}
n^d \, \Var\left( \cent{\Phi}_{1,n} \right) &= n^d \left[
\rho \int \phi_{1,n,W_n}(t)^2 \dd t + \rho^2\int \int \phi_{1,n}(t)\phi_{1,n}(s) \{ g(t-s)-1\}\dd t \dd s
\right] \\
&= n^d (2\pi)^{-d} 
\left\{
\rho \int  |\hat \phi_{1,n,W_n}(\omega)|^2 \dd \omega + 
\rho^2 \int |\hat \phi_{1,n,W_n}(\omega)|^2 (\widehat{g-1})(\omega) \dd \omega 
\right\}\\
&=  (2\pi)^{-d} 
\left\{
\rho \int  |\hat\phi_{1,n,W_n}(\omega/n)|^2 \dd \omega + 
\rho^2 \int |\hat\phi_{1,n,W_n}(\omega/n)|^2 (\widehat{g-1})(\omega/n) \dd \omega 
\right\}\\
&\sim  \|\phi\|^2 \left\{
\rho + \rho^2 (\widehat{g-1})(0)
\right\}  \sim \|\phi\|^2 n^d \Var\{N_\L(W_n)\}.
\end{align*}


\section{Multiple Wiener-Itô integrals and proof of Theorem~\ref{thm:clt}}
\label{app:CLT}

To prove item (i) of Theorem~\ref{thm:clt}, it suffices that for any $\beta = (\beta_1,\beta_2)^\top \in \mathbb{R}^2$, $n^{d/2} \beta^\top \cent{\Phi}_{n}\; \stackrel{\mathcal{D}}{\longrightarrow} \mathcal{N} (0, \beta^\top \Sigma(\phi_1,\phi_2) \beta)$. Hence, from here onwards, let us fix some $\beta \in \mathbb{R}^2$.

The proof follows~\cite{estrade2016central}, see also~\cite{nicolaescu2017clt,azais2024multivariate}. In particular, the proof of the case $\beta_2=0$ is merely a transcription of their proof into our context. The main ingredient of the proof is \cite[Theorem 11.8.1.]{peccati2011wiener} which is adapted in our framework as Theorem~\ref{thm:cltMWI} below. The following subsection is dedicated to the preliminaries needed to state this result.

We remind that, on one hand, $D = d + d(d+1)/2$ is the dimension of $\check X(t)$ and $\check Y(t)$, and that $s_{q}(\phi_{1,n})$, defined in~\eqref{eq:def:sq:phi1} stands for the $q$th chaotic component of $\cent{\Phi}_{1,n}$.
On the other hand, the dimension of $\check X(\mathbf{t})$ and $\check Y(\mathbf{t})$ is $2D$, and $s_{q}(\phi_{2,n})$, defined in~\eqref{eq:def:sq:phi2}, stands for the $q$th chaos component of $\cent{\Phi}_{2,n}$. All in all, 
\begin{equation*}
    s_q = \beta_1 \sum_{\substack{a\in \mathbb N^D \\ |a|=q }} 
    d_a \int_{W_n} \phi_{1,n}(t) H_{\otimes a}\{\check Y(t)\} \dd t +
    \beta_2 \sum_{\substack{\mathbf{a}\in \mathbb N^{2D} \\ |\mathbf{a}|=q }} 
    \int_{(W_n)^2} \phi_{2,n}(\mathbf{t}) d_\mathbf{a}(\mathbf{t}) H_{\otimes \mathbf{a}}\{\check Y(\mathbf{t})\} \dd \mathbf{t}
\end{equation*}
stands for the $q$th chaos component of $\cent{\Phi}_{\beta, n} := \beta_1\cent{\Phi}_{1,n} + \beta_2\cent{\Phi}_{2,n}$.

\subsection{Multiple Wiener-Itô Integrals (MWI)}

Let $\mathcal{H}$ be the set of complex-valued Hermitian square integrable functions w.r.t. Lebesgue measure $\dd \omega$ in $\mathbb{R}^d$, that is, $\mathcal{H} = \{\psi: \mathbb{R}^d\to \mathbb{C}: \psi(-\omega) =\overline{\psi(\omega)}, \|\psi\|_{\mathcal{H}}^2=\int_{\R^d}|\psi(\omega)|^2 \dd\omega<\infty\}$. Let $\hat{B}$ be a complex Hermitian Brownian measure on $\R^d$. The (real-valued) Wiener-Itô integral w.r.t. $\hat{B}$, denoted by 
\begin{equation*}
I_1(\psi) = \int_{\R^d}\psi(\omega) \dd\hat{B}(\omega),
\end{equation*}
is an isometry from $\mathcal{H}$ into $L^2(\Omega)$. That is, for $\psi_1,\psi_2\in \mathcal{H}$, 
we have
\begin{equation} \label{eq:MWI:isometry}
 \E \big\{ I_1(\psi_1)I_1(\psi_2)\big\} = \int_{\R^d}\psi_1(\omega)\overline{\psi_2(\omega)}\dd\omega = 
\int_{\R^d}\psi_1(\omega)\psi_2(-\omega)\dd\omega 
 = \left< \psi_1,\psi_2 \right>_{\mathcal{H}}.
\end{equation}

\begin{proposition}\label{prop:Y:as:MWI}
Assume \ref{C:general}$[2]$ and \ref{C:nondegeneracy:hessian}. Each coordinate of $\check{Y}$ can be represented as a MWI.\\
(i) For all $t\in \mathbb{R}^d$, $i=1,\dots,D$, there exists an explicit Hermitian orthonormal kernel $\psi_{t,i}\in \mathcal{H}$ such that $\check Y_i(t) = I_1(\psi_{t,i})$. 

In particular, $\left< \psi_{t,i},\psi_{s,j} \right>_{\mathcal{H}} = \mathbb{E}\left\{ \check Y_i(t) \check Y_j(s) \right\}$ which we denote by $\gamma_{ij}(t-s)$.\\
(ii) For all $\mathbf{t}\in \mathbb{R}^{2d}$, $i=1,\dots,2D$, there exists an explicit Hermitian orthonormal kernel $\psi_{\mathbf{t},i}\in \mathcal{H}$ such that $\check Y_i(\mathbf{t}) = I_1(\psi_{\mathbf{t},i})$. 

In particular, $\left< \psi_{\mathbf{t},i},\psi_{\mathbf{s},j} \right>_{\mathcal{H}} = \mathbb{E}\left\{ \check Y_i(\mathbf{t}) \check Y_j(\mathbf{s}) \right\}$ which we denote by $\gamma_{ij}(\mathbf{t} \ominus \mathbf{s})$.\\
(iii) For all $t\in \mathbb{R}^d$ and $\mathbf{s} \in \mathbb{R}^{2d}$, $\left< \psi_{t,i},\psi_{\mathbf{s},j} \right>_{\mathcal{H}} = \mathbb{E}\left\{ \check Y_i(t) \check Y_j(\mathbf{s}) \right\}$ which we denote by $\gamma_{ij}(t \ominus \mathbf{s})$.

Furthermore, let $\eta>0$. Under \ref{C:integrability}, for all $t,s\in \mathbb{R}^d$ and $\mathbf{t}, \mathbf s\in D_{\eta,R}$, the following upper bounds hold (where $\kappa$ depends on $\eta$ in the last two equations and the notation $\delta$ is introduced in Lemma~\ref{lem:gamma}):
\begin{align}
    |\gamma_{ij}(t-s)| &\leq \kappa \Xi(\|t-s\|), \quad i,j\in \{1,\dots,D\}, \label{eq:gammaij1} \\
    |\gamma_{ij}(\mathbf{t} \ominus \mathbf{s})| &\leq \kappa \Xi\{\delta(\mathbf{t}, \mathbf{s})\}, \quad i,j\in \{1,\dots,2D\}, \label{eq:gammaij2}\\
    |\gamma_{ij} (t,\mathbf s )| 
    &\leq \kappa \Xi\{\delta(t, \mathbf{s})\}, \quad i\in \{1,\dots,D\}, j\in \{1,\dots,2D\}. \label{eq:gammaij3}
\end{align}
\end{proposition}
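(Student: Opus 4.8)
The plan is to build the kernels $\psi_{t,i}$ and $\psi_{\mathbf t,i}$ from the spectral representation of $\bX$ and then use the isometry~\eqref{eq:MWI:isometry}. Throughout this appendix condition~\ref{C:integrability} is in force (it is a standing assumption of Theorem~\ref{thm:clt}, whose proof this appendix serves), so by Bochner's theorem $\bX$ admits a spectral density $f$ and one has the spectral representation $X(t) = \int_{\R^d} e^{\mathrm i\omega^\top t}\sqrt{f(\omega)}\,\dd\hat B(\omega)$, i.e.\ $X(t) = I_1$ of the kernel $\omega\mapsto e^{\mathrm i\omega^\top t}\sqrt{f(\omega)}$. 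Differentiating under the integral sign, each coordinate of $\check X(t)$ is a partial derivative $\partial^{\alpha_k}X(t)$ with $1\le|\alpha_k|\le 2$, so that $\check X_k(t) = I_1(g_{t,k})$ with $g_{t,k}(\omega) = (\mathrm i\omega)^{\alpha_k} e^{\mathrm i\omega^\top t}\sqrt{f(\omega)}$, where $(\mathrm i\omega)^{\alpha}=\prod_j(\mathrm i\omega_j)^{\alpha_j}$. First I would check that each $g_{t,k}$ is Hermitian, i.e.\ $g_{t,k}(-\omega)=\overline{g_{t,k}(\omega)}$: this follows from evenness of $f$ together with $(\mathrm i(-\omega))^{\alpha}=(-1)^{|\alpha|}(\mathrm i\omega)^{\alpha}=\overline{(\mathrm i\omega)^{\alpha}}$ and $\overline{e^{\mathrm i\omega^\top t}}=e^{-\mathrm i\omega^\top t}$; and that $g_{t,k}\in\mathcal H$, since $\|g_{t,k}\|_{\mathcal H}^2=\int_{\R^d}|\omega^{\alpha_k}|^2 f(\omega)\,\dd\omega<\infty$ because \ref{C:general}$[2]$ guarantees $\lambda_2,\lambda_4<\infty$.

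Next I would obtain $\psi_{t,i}$ by applying the whitening transform. Since $\check Y(t)=\Sigma(0)^{-1/2}\check X(t)$ with $\Sigma(0)^{-1/2}$ real, linearity of $I_1$ gives $\check Y_i(t)=I_1(\psi_{t,i})$ with $\psi_{t,i}=\sum_{k=1}^D (\Sigma(0)^{-1/2})_{ik}\,g_{t,k}$, a real linear combination of Hermitian elements of $\mathcal H$, hence itself Hermitian and in $\mathcal H$. The isometry~\eqref{eq:MWI:isometry} then yields $\langle\psi_{t,i},\psi_{s,j}\rangle_{\mathcal H}=\E\{\check Y_i(t)\check Y_j(s)\}$, which depends only on $t-s$ by stationarity; this is the definition of $\gamma_{ij}(t-s)$. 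Orthonormality of $\{\psi_{t,i}\}_{i=1}^D$ for fixed $t$ is the special case $s=t$: since $\check Y(t)$ is standard normal, $\langle\psi_{t,i},\psi_{t,j}\rangle_{\mathcal H}=\E\{\check Y_i(t)\check Y_j(t)\}=\mathbf 1(i=j)$. Part (ii) is the same construction applied to $\check X(\mathbf t)=[\check X(t^{(1)})^\top,\check X(t^{(2)})^\top]^\top$ and $\check Y(\mathbf t)=\bar\Sigma(r)^{-1/2}\check X(\mathbf t)$ with $r=\|t^{(1)}-t^{(2)}\|$, the kernels now carrying the phase $e^{\mathrm i\omega^\top t^{(l)}}$ of the corresponding evaluation point; non-degeneracy of $\bar\Sigma(r)$, hence existence of $\bar\Sigma(r)^{-1/2}$, is exactly~\ref{C:nondegeneracy:hessian}.

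For part (iii), the cross identity $\langle\psi_{t,i},\psi_{\mathbf s,j}\rangle_{\mathcal H}=\E\{\check Y_i(t)\check Y_j(\mathbf s)\}=:\gamma_{ij}(t\ominus\mathbf s)$ is again immediate from~\eqref{eq:MWI:isometry}. The three upper bounds~\eqref{eq:gammaij1}--\eqref{eq:gammaij3} are precisely the $q=1$ instances of Lemma~\ref{lem:gamma}: taking $a=e_i$, $b=e_j$ makes $H_{\otimes a}\{\check Y(t)\}=\check Y_i(t)$ and $H_{\otimes b}\{\check Y(s)\}=\check Y_j(s)$, so $\gamma_{ij}=\gamma_{a,b}$ (and similarly in the mixed and fully bilinear cases), and then~\eqref{eq:gammaab1}--\eqref{eq:gammaab3} give the claims once the variance factors are absorbed into $\kappa$: $\|\Sigma(0)^{-1/2}\|_{\max}$ is a constant, and $\sup_{\eta\le r\le R}\|\bar\Sigma(r)^{-1/2}\|_{\max}<\infty$ by~\ref{C:nondegeneracy:hessian} and continuity of $r\mapsto\bar\Sigma(r)$ on the compact set $[\eta,R]$ (this is where the dependence of $\kappa$ on $\eta$ enters).

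I expect no deep obstacle here: the representation is the classical spectral/MWI construction and the bounds are a direct corollary of Lemma~\ref{lem:gamma}. The only points demanding care are the bookkeeping that reconciles the paper's real Hermitian $I_1$-convention in~\eqref{eq:MWI:isometry} with the complex Gaussian spectral measure $\dd\hat B$, and the verification that the second-derivative kernels (those with an $\omega^{\alpha}$, $|\alpha|=2$, factor) lie in $\mathcal H$, which is the finite-spectral-moment computation resting on~\ref{C:general}$[2]$.
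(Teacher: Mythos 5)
Your proposal is correct and takes essentially the same route as the paper: the paper simply cites \cite{azais2024multivariate} for the spectral/whitening construction of part (i) that you spell out explicitly (including the need for condition~\ref{C:integrability} to guarantee the spectral density $f$, which the paper also notes), and then, exactly as you do, it obtains the bounds \eqref{eq:gammaij1}--\eqref{eq:gammaij3} as the $q=1$ case of Lemma~\ref{lem:gamma} with $a$ and $b$ taken to be standard basis multi-indices, absorbing the variance factors into $\kappa$. No gap to report.
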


\begin{proof}
Item (i) is proved in \cite{azais2024multivariate} (recall that \ref{C:integrability} implies that $\bX$ admits a spectral density). The upper bound~\eqref{eq:gammaij1} comes from the fact that $\gamma_{ij}$ here corresponds to $\gamma_{ab}$ of Lemma~\ref{lem:gamma} with $a$ and $b$ being vectors full of zeros except for $a_i=1$ and $b_j = 1$. Items (ii)-(iii) and their respective upper bounds follow similarly.
\end{proof}

Proposition~\ref{prop:Y:as:MWI} can be used to represent the chaotic components of $\cent{\Phi}_{\beta,n}$ (see Proposition \ref{prop:chaos:as:MWI} below) but more notation is needed. First, we use $\tilde{D}$ to denote either $D$ or $2D$. For any two functions $\psi_1,\psi_2\in \mathcal{H}$, we denote by $\psi_1\otimes \psi_2$ the tensor product function defined on $(\mathbb{R}^d)^2$ by $(\psi_1\otimes \psi_2)(\omega_1,\omega_2) = \psi_1(\omega_1)\psi_2(\omega_2)$. In turn, for any $\psi_j\in\mathcal{H},j\leq \tilde{D}$ and $\tilde a \in \{1,\dots,\tilde{D}\}^q$, we denote the following function on $(\mathbb{R}^d)^{q}$
\begin{equation}\label{eq:definition:psi:k:fold}
    \psi_{\tilde{a}} = \psi_{\tilde a_1}\otimes\dots\otimes \psi_{\tilde a_q}.
\end{equation}
Finally, the $q$-fold multiple Wiener-Itô integral (MWI) w.r.t. $\hat{B}$ is defined by
\begin{equation*}
    I_q\big(\psi_{\tilde a}\big) = I_q\big(\psi_{\tilde a_1}\otimes\dots\otimes \psi_{\tilde a_q}\big) = \prod^{\tilde{D}}_{j=1} H_{a_j}\big\{I_1(\psi_{j})\big\},
\end{equation*}
where $a_j = \#\{i=1,\dots,q: \tilde a_i = j\}$. Let $\mathcal{H}_q$ denote the domain of $I_q$, namely, the set of those $\psi:(\mathbb{R}^d)^q\to \mathbb{C}$ such that 
$\psi(-\omega_1,\dots,-\omega_q) =\overline{\psi(\omega_1,\dots,\omega_q)}$ and 
$\|\psi\|^2_{\mathcal{H}_q} =\int_{\R^{dq}}| \psi(\omega_1,\dots,\omega_q)|^2\dd\omega_1\dots \dd\omega_q$ is finite. The latter integral defines the norm and (thus, also) the inner product in $\mathcal{H}_q$. 
We introduce for technical reasons the symmetrization of  $\psi\in \mathcal{H}_q$: 
\begin{equation} \label{d:sym}
 \operatorname{Sym}(\psi)(\omega_1,\dots,\omega_q) := \frac{1}{q!}\sum_{\pi\in {\cal S}_q} \psi\{\omega_{\pi(1)},\dots,\omega_{\pi(q)}\}
\end{equation}
where ${\cal S}_q$ is the group of permutations of $\{1,\dots,q\}$. For $\psi\in\mathcal{H}_q$, we have that $I_q(\psi) = I_q\{\operatorname{Sym}(\psi)\}$. Moreover, $I_q$ is an isometry from $\mathcal{H}_q^s:=\{\psi\in \mathcal{H}_q: \psi=\operatorname{Sym}(\psi)\}$, with the modified norm 
$\sqrt{q!}\|\cdot\|_{\mathcal{H}_q}$, onto its image, which is called
the $q$th Wiener chaos ${\cal K}_q$. Finally, we introduce the
contraction operator. For $r\leq p\wedge q$, denote by $\bar{\otimes}_r$ the $r$th contraction operator:
$\psi_1\in\mathcal{H}^s_p, \psi_2\in\mathcal{H}^s_q\mapsto \psi_1\bar{\otimes}_r \psi_2 \in \mathcal{H}_{p+q-2r}$ given by 
\begin{multline*}
 \psi_1\bar{\otimes}_r \psi_2 (\omega_1,\dots,\omega_{p+q-2r}) \\
 =\int_{(\R^{d})^r} \psi_1(z_1,\dots,z_r;\omega_1,\dots,\omega_{p-r}) 
  \psi_2(-z_1,\dots,-z_r;\omega_{p-r+1},\dots,\omega_{p+q-2r})
 \dd z_1\dots \dd z_r.
\end{multline*}
The next result is the main tool and is an adaptation of Theorem 11.8.1 in \cite{peccati2011wiener}. The third item is the most useful for our purpose.
\begin{theorem} \label{thm:cltMWI}
Let $Q\geq 1$. 
For $q\leq Q$, consider the sequence of kernels $(\psi_{q,n})_{n\geq 1}$ with $\psi_{q,n}\in\mathcal{H}^s_q$. 
Then, as $n\to\infty$, the following conditions are equivalent:
\begin{enumerate}
  \item $\big\{I_1(\psi_{1,n}),\dots,I_{Q}(\psi_{Q,n})\big\}$ converges in distribution towards a centered normal random vector with variance $\operatorname{diag}(\sigma^2_1,\dots,\sigma^2_Q)$.
  \item For each $q\leq Q$, $I_q(\psi_{q,n})$ converges in distribution towards a centered normal random variable with variance $\sigma^2_q$.
  \item For each $q\leq Q$ and $r=1,\dots,q-1$, the norms $\|\psi_{q,n}\bar{\otimes}_r \psi_{q,n}\|_{\mathcal{H}_2(q-r)}$ 
tend to $0$. 
\end{enumerate}
\end{theorem}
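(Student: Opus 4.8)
The plan is to obtain the statement of Theorem~\ref{thm:cltMWI} as a direct transcription of \cite[Theorem~11.8.1]{peccati2011wiener} into the Hermitian spectral framework set up above; the mathematical content lies entirely in that cited result, and the only work is to check that our objects fit its abstract hypotheses, after which the three equivalences follow verbatim.

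First I would record the abstract structure underlying the integrals $I_q$. The space $\mathcal{H}$ of Hermitian square integrable functions is a real separable Hilbert space, and by the isometry~\eqref{eq:MWI:isometry} the family $\{I_1(\psi):\psi\in\mathcal{H}\}$ is a real isonormal Gaussian process indexed by $\mathcal{H}$. The maps $I_q$ are then the associated multiple Wiener-It{\^o} integrals: each $I_q$ is an isometry from $\mathcal{H}_q^s$, equipped with the modified norm $\sqrt{q!}\,\|\cdot\|_{\mathcal{H}_q}$, onto the $q$th Wiener chaos $\mathcal{K}_q\subset L^2(\Omega)$, and Wiener chaoses of distinct orders are orthogonal in $L^2(\Omega)$. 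This places us exactly in the abstract setting of \cite{peccati2011wiener}, so that its Theorem~11.8.1 applies once the identification of operators described below is made.

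Granting this identification, I would argue the three equivalences as follows. The implication (1)$\Rightarrow$(2) is immediate by marginalization, since the $q$th marginal of $\mathcal{N}\{0,\operatorname{diag}(\sigma_1^2,\dots,\sigma_Q^2)\}$ is $\mathcal{N}(0,\sigma_q^2)$. For (2)$\Leftrightarrow$(3) I would invoke, separately for each fixed order $q$, the univariate fourth moment theorem of Nualart--Peccati: under the convergence of the variance $\E\{I_q(\psi_{q,n})^2\}=q!\,\|\psi_{q,n}\|_{\mathcal{H}_q}^2\to\sigma_q^2$ (which is built into the cited theorem and, under (2), also follows from hypercontractivity in a fixed chaos), the one-dimensional convergence $I_q(\psi_{q,n})\xrightarrow{\mathcal{D}}\mathcal{N}(0,\sigma_q^2)$ is equivalent to the vanishing of all contraction norms $\|\psi_{q,n}\bar{\otimes}_r\psi_{q,n}\|_{\mathcal{H}_{2(q-r)}}$ for $r=1,\dots,q-1$. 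Finally, for (2)$\Rightarrow$(1) I would use the Peccati--Tudor theorem together with the orthogonality recorded above: because the components sit in Wiener chaoses of the distinct orders $1,\dots,Q$, one has $\E\{I_q(\psi_{q,n})I_{q'}(\psi_{q',n})\}=0$ for $q\neq q'$, so the covariance matrix of the vector is diagonal for every $n$ and converges to $\operatorname{diag}(\sigma_1^2,\dots,\sigma_Q^2)$; componentwise convergence to Gaussian limits, combined with convergence of the full covariance matrix, then forces joint convergence to the announced Gaussian vector.

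The main, and essentially only non-automatic, point will be the identification of the present complex Hermitian construction with the abstract real construction of \cite{peccati2011wiener}. Concretely, I would verify that the Hermitian symmetrization~\eqref{d:sym} and the $r$th contraction $\bar{\otimes}_r$, defined here through the conjugation $z_j\mapsto -z_j$, reproduce the abstract symmetrization and contraction operators, and that the modified norm $\sqrt{q!}\,\|\cdot\|_{\mathcal{H}_q}$ coincides with the $L^2(\Omega)$ norm on $\mathcal{K}_q$. This is the standard spectral (Fourier) description of Wiener chaos and amounts to bookkeeping, so I do not expect any deeper difficulty beyond keeping the two formalisms aligned.
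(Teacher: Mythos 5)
Your proposal is correct and matches the paper's treatment: the paper gives no independent proof of this result but presents it precisely as an adaptation of \cite[Theorem~11.8.1]{peccati2011wiener} to the Hermitian spectral setting, which is exactly the identification you carry out (isonormal structure, isometry of $I_q$ on $\mathcal{H}_q^s$ with the $\sqrt{q!}$-modified norm, and matching of symmetrization and contractions). Your additional remark that the variance convergence $q!\,\|\psi_{q,n}\|_{\mathcal{H}_q}^2 \to \sigma_q^2$ must be secured (via hypercontractivity under condition~2) is a sound point of care that the paper's statement leaves implicit.
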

We are now in position to state the MWI representation of the chaos components.
For each $a\in \mathbb N^{D}$ such that $|a|= q$, let us denote
$\mathcal{I}_a = \{\tilde a\in \{1,\dots,D\}^q: \sum_{j=1}^{q}
\mathbf{1}_{\{i\}}(\tilde a_j) = a_i,\, \forall i\leq D\}$ and, for
all $\tilde a\in \mathcal{I}_a$, $ \tilde d_{\tilde a} = d_a /
\#(\mathcal{I}_a)$. For each $\mathbf{a} \in \mathbb{R}^{2D}$, denote
analogously $\mathcal{I}_{\mathbf{a}}$ and
$\tilde{d}_{\tilde{\mathbf{a}}}(\mathbf{t}) =
\tilde{d}_{\tilde{\mathbf{a}}}(\|t^{(1)} - t^{(2)}\|) =
d_\mathbf{a}(\|t^{(1)} - t^{(2)}\|) / \#(\mathcal{I}_\mathbf{a})$ for
all $\tilde{\mathbf{a}} \in \mathcal{I}_{\mathbf{a}}$. The proposition
below follows exactly from \cite[Proposition
3.5.]{azais2024multivariate} when $\beta_2=0$ and can be extended to
$\beta_2 \neq 0$ using the same kind of arguments {as for $\beta_2=0$.}
\begin{proposition}\label{prop:chaos:as:MWI}
Assume \ref{C:general}[5], \ref{C:nondegeneracy}[4] and \ref{C:nondegeneracy:hessian}. For all $q\geq 1$, $n^{d/2} \, s_{q} = I_q( \, g_{q,n})$ where
\begin{multline*}
g_{q,n} = \beta_1 \sum_{\tilde a \in \{1,\dots,D\}^q} \tilde d_{\tilde a} \int_{W_n} n^{d/2}\,\phi_{1,n}(t)\, \psi_{t,\tilde a} \dd t \\
+ \beta_2 \sum_{\tilde{\mathbf{a}} \in \{1,\dots,2D\}^q} \int_{(W_n)^2} n^{d/2}\,\phi_{2,n}(\mathbf{t})\, \tilde d_{\tilde{\mathbf{a}}}(\mathbf{t})\,  \psi_{\mathbf{t},\tilde{\mathbf{a}}} \dd \mathbf{t}.
\end{multline*}
In particular, $g_{q,n}$ is a function in $\mathcal{H}^s_q$.
Notice that $\psi_{t,\tilde{a}}$ and $\psi_{\mathbf{t},\tilde{\mathbf{a}}}$ are defined according to~\eqref{eq:definition:psi:k:fold}. For instance, $\psi_{t,\tilde a} = \psi_{t,\tilde a_1} \otimes \dots \otimes \psi_{t,\tilde a_q}$.
\end{proposition}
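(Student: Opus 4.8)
The plan is to show, for each fixed $q\ge 1$, that the $q$th chaos component $s_q$ of $\cent{\Phi}_{\beta,n}=\beta_1\cent{\Phi}_{1,n}+\beta_2\cent{\Phi}_{2,n}$ collapses into a single $q$-fold multiple Wiener-It\^o integral with the announced kernel. The backbone is Proposition~\ref{prop:Y:as:MWI}: each coordinate obeys $\check Y_i(t)=I_1(\psi_{t,i})$, and for fixed $t$ the family $\{\psi_{t,i}\}_{i=1}^{D}$ is orthonormal in $\mathcal H$ since $\langle\psi_{t,i},\psi_{t,j}\rangle_{\mathcal H}=\E\{\check Y_i(t)\check Y_j(t)\}=\delta_{ij}$. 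The product formula defining $I_q$ then yields, for every ordering $\tilde a\in\mathcal I_a$,
\[
H_{\otimes a}\{\check Y(t)\}=\prod_{i=1}^{D}H_{a_i}\{I_1(\psi_{t,i})\}=I_q(\psi_{t,\tilde a}).
\]
As all such orderings share a common symmetrization, I would average over $\tilde a\in\mathcal I_a$ with the weight $\tilde d_{\tilde a}=d_a/\#(\mathcal I_a)$ and invoke linearity of $I_q$ to write $d_a\,H_{\otimes a}\{\check Y(t)\}=I_q\big(\sum_{\tilde a\in\mathcal I_a}\tilde d_{\tilde a}\,\psi_{t,\tilde a}\big)$; summing over $|a|=q$ and using the partition $\{1,\dots,D\}^q=\bigsqcup_{|a|=q}\mathcal I_a$ turns the inner $a$-sum appearing in \eqref{eq:def:sq:phi1} into $I_q$ of a kernel indexed by $\tilde a\in\{1,\dots,D\}^q$.

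The genuinely analytic step is then to pull the spatial integral $\int_{W_n}\phi_{1,n}(t)\,(\cdot)\,\dd t$ through $I_q$, a stochastic-Fubini exchange. I would handle it by $L^2(\Omega)$-approximation: partition $W_n$, form the associated Riemann sums, for which the interchange is trivial by linearity of $I_q$ over finite sums, and then pass to the limit. The map $t\mapsto\psi_{t,i}$ is continuous into $\mathcal H$ because $\|\psi_{t,i}-\psi_{s,i}\|_{\mathcal H}^2=2-2\gamma_{ii}(t-s)\to0$ (the $\gamma_{ij}$ being continuous under \ref{C:general}$[2]$), and $\phi_{1,n}$ is bounded and piecewise continuous, so the Riemann sums of the kernels converge in $\mathcal H_q$ to the linear part $\beta_1\sum_{\tilde a}\tilde d_{\tilde a}\int_{W_n}n^{d/2}\phi_{1,n}(t)\,\psi_{t,\tilde a}\,\dd t$ of $g_{q,n}$; the isometry $I_q:\mathcal H_q^s\to\mathcal K_q$ (the $\sqrt{q!}$-normalized version of \eqref{eq:MWI:isometry}) transports this $\mathcal H_q$-convergence into $L^2(\Omega)$-convergence of the corresponding integrals towards $n^{d/2}$ times the linear part of $s_q$. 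That $g_{q,n}\in\mathcal H_q^s$ is checked directly: the $\psi_{t,i}$ are Hermitian so their tensor products are Hermitian, and summing all orderings of a fixed multiset with the common weight $\tilde d_{\tilde a}$ makes the kernel invariant under permutation of its $q$ frequency arguments, hence symmetric; square-integrability is automatic since $q!\,\|g_{q,n}\|^2_{\mathcal H_q}=\E\{(n^{d/2}s_q)^2\}=n^{d}\Var(s_q)<\infty$ for each fixed $n$, the finiteness being part of Theorem~\ref{thm:variance}.

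For the bilinear part ($\beta_2\neq0$) I would repeat the three moves verbatim on the $2D$-dimensional vector $\check Y(\mathbf t)$, using the kernels $\psi_{\mathbf t,i}$ of Proposition~\ref{prop:Y:as:MWI}(ii) and writing $d_{\mathbf a}(\mathbf t)\,H_{\otimes\mathbf a}\{\check Y(\mathbf t)\}=\sum_{\tilde{\mathbf a}\in\mathcal I_{\mathbf a}}\tilde d_{\tilde{\mathbf a}}(\mathbf t)\,I_q(\psi_{\mathbf t,\tilde{\mathbf a}})$ to obtain the second term of $g_{q,n}$. The case $\beta_2=0$ is exactly \cite[Proposition 3.5]{azais2024multivariate}, so the only new feature, and the one I expect to be the main obstacle, is the dependence of the coefficient $d_{\mathbf a}(\mathbf t)=d_{\mathbf a}(\|t^{(1)}-t^{(2)}\|)$ on the integration variable. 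It complicates the Fubini/Riemann-sum step in two ways: first, convergence of the Riemann sums now requires the continuity of $\mathbf t\mapsto d_{\mathbf a}(\mathbf t)$, which holds on the compact annular support $[\eta,R]$ carrying $\phi_{2,n}$ thanks to Proposition~\ref{prop:chaos}(ii) (this is precisely where the cylinder $D_{\eta,R}$ and the constraint $\eta>0$ enter); second, keeping $\|g_{q,n}\|_{\mathcal H_q}$ finite while integrating over the whole of $(W_n)^2$ hinges on the explicit factorial bound \eqref{ineq:da} on $d_{\mathbf a}$, which together with the orthonormality of the $\psi_{\mathbf t,i}$ controls the $\mathcal H_q$-norm uniformly in $\mathbf a$. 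Summing the linear and bilinear kernels gives $n^{d/2}s_q=I_q(g_{q,n})$, completing the proof.
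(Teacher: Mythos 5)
Your proof is correct and takes essentially the same route as the paper: the paper's own proof consists of citing \cite[Proposition 3.5]{azais2024multivariate} for $\beta_2=0$ and asserting the extension to $\beta_2\neq 0$ by the same arguments, and your write-up carries out exactly that argument (orthonormality of $\{\psi_{t,i}\}_i$ for fixed $t$, the Hermite-product definition of $I_q$, averaging over the orderings in $\mathcal{I}_a$, and a Riemann-sum stochastic-Fubini exchange), correctly identifying the single new feature of the bilinear term --- the $\mathbf{t}$-dependence of $d_{\mathbf{a}}(\mathbf{t})$ --- and handling it via its continuity on $[\eta,R]$ from Proposition~\ref{prop:chaos}(ii). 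One minor remark: since $q$ is fixed there are only finitely many $\mathbf{a}$ with $|\mathbf{a}|=q$, so the uniform factorial bound \eqref{ineq:da} you invoke is not actually needed here (it matters only when summing over $q$, as in Theorem~\ref{thm:variance}); continuity, hence boundedness, of each $d_{\mathbf{a}}(\cdot)$ on its compact support already suffices.
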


\subsection{Proof of Theorem~\ref{thm:clt}(i)}

\begin{proof}
From Theorem~\ref{thm:variance}, the result is proved if for any $Q\ge 1$, $n^{d/2} \sum_{q\leq Q} s_q$ tends to a centered normal distribution as $n\to \infty$. So, according to Theorem \ref{thm:cltMWI}, we are to prove that the norms of the contractions $g_{q,n}\bar{\otimes}_{r} g_{q,n}$, $r=1,\dots,q-1$ tend to zero as $n\to\infty$. Since $q$ is fixed, by linearity, let us concentrate on the norm of the contraction of two normalized integrals as those in the two sums of $g_{q,n}$. There are three types of terms: two pure terms (corresponding to the linear and bilinear statistics) and a mixed term. Let us start with the simpler one (which already appears in \cite{estrade2016central} for instance).

\paragraph{Step 1.}
By linearity, the coefficients $\tilde{d}_{\tilde{a}}$ play no role in the convergence and so it is sufficient, for $\tilde a,\tilde b \in \{1,\dots,D\}^{q}$, to upper bound 
\begin{equation*}
    \mathcal{E} := \bigg\| \Big\{\int_{W_n} n^{d/2}\phi_{1,n}(t)\, \psi_{t,\tilde a} \dd t\Big\} \bar{\otimes}_r 
\Big\{\int_{W_n} n^{d/2}\phi_{1,n}(s)\, \psi_{s,\tilde b} \dd s\Big\}
\bigg\|_{\mathcal{H}_{2(q-r)}}^2.
\end{equation*}
Using Proposition \ref{prop:Y:as:MWI}, and the isometry \eqref{eq:MWI:isometry}, we have for any $\tilde{a},\tilde{b} \in \{1,\dots,D\}^q$,
\begin{equation*}
    \psi_{t,\tilde a} \bar{\otimes}_r 
    \psi_{s,\tilde b} = \prod^{r}_{k=1}\gamma_{\tilde a_k \tilde b_k}(t-s)\cdot \left(\bigotimes^{q}_{k=r+1}\psi_{t,\tilde a_k}\right)\otimes \left(\bigotimes^{q}_{k=r+1}\psi_{s,\tilde b_k}\right).
\end{equation*} 
Moreover, using the definition of $\|\cdot\|_{\mathcal{H}}$ and Equation \eqref{eq:MWI:isometry} once again, we have for any $i=1,\dots,D$
\begin{equation*}
    \left\| \int_{W_n} \psi_{t,i} \,\dd t \right\|_{\mathcal{H}}^2 = \int_{(W_n)^2} \left< \psi_{t,i},\psi_{t^\prime,i} \right>_{\mathcal{H}} \dd t \dd t^\prime = \int_{(W_n)^2} \gamma_{ii}(t-t^\prime) \dd t \dd t^\prime.
\end{equation*}
Following the same kind of computation, one ends up with
\begin{multline*}
        \mathcal{E} = n^{2d} \int_{W_n}\int_{W_n}\int_{W_n}\int_{W_n} \phi_{1,n}(t)\phi_{1,n}(s)\phi_{1,n}(t^\prime)\phi_{1,n}(s^\prime) \times \\
        \prod^{r}_{k=1}\gamma_{\tilde a_k \tilde b_k}(t-s)
        \prod^{r}_{k=1}\gamma_{\tilde a_k \tilde b_k}(t^\prime-s^\prime) 
        \prod^{q}_{k=r+1}\gamma_{\tilde a_k \tilde a_k }(t-t^\prime)
        \prod^{q}_{k=r+1}\gamma_{\tilde b_k \tilde b_k}(s-s^\prime)
        \ \dd t \dd s \dd t^\prime \dd s^\prime.
\end{multline*}

By~\eqref{eq:assphi1hat}, we have $\sup_{t\in W_n}|\phi_{1,n}(t)|=\|\phi_{1,n,W_n}\|_\infty=O(n^{-d})$. Hence, from Proposition~\ref{prop:Y:as:MWI}, we bound the previous squared norm, namely $\mathcal E$, following~\cite{estrade2016central,azais2024multivariate} as
\begin{align}
\mathcal E  &\leq \kappa  n^{-2d} \int_{(W_n)^{4}} \Xi(\|t-s\|)^r \Xi(\|t^\prime-s^\prime\|)^r \Xi(\|t-t^\prime\|)^{q-r}\times \Xi(\|s-s^\prime\|)^{q-r} \dd t \dd s \dd t^\prime \dd s^\prime \label{eq:bound:E:order:1}\\ 
&\leq \kappa n^{-d} \int_{(\mathbb{R}^d)^3} \Xi(\|v_1\|)^r \, \Xi(\|v_2\|)^r \, \Xi(\|v_3\|)^{q-r}  \dd v_1 \dd v_2 \dd v_3 =O(n^{-d})\nonumber
\end{align}
where the last inequality comes from: 1) the isometric change of
variables $v_1 = t-s$, $v_2 = t^\prime-s^\prime$, $v_3 = t-t^\prime$,
$v_4 = s^\prime$, 2) the fact that $\Xi$ is bounded so that the
integral with respect to $v_4$ is of order $n^{d}$, and 3) enlarging
the domain to the whole of $\mathbb{R}^d$.

\paragraph{Step 2.}
By linearity, using the fact that $\tilde{d}_{\tilde{\mathbf{a}}}$ is bounded on $(W_n)^2 \cap D_{\eta,R}$, it is sufficient, for $\tilde{\mathbf{a}},\tilde{\mathbf{b}} \in \{1,\dots,2D\}^{q}$, to upper bound 
\begin{equation*}
    \mathcal{E} := \bigg\| \Big\{\int_{(W_n)^2} n^{d/2}\,\phi_{2,n}(\mathbf{t})\, \psi_{\mathbf{t},\tilde{\mathbf{a}}} \dd \mathbf{t}\Big\} \bar{\otimes}_r 
    \Big\{\int_{(W_n)^2} n^{d/2}\,\phi_{2,n}(\mathbf{s})\,  \psi_{\mathbf{s},\tilde{\mathbf{b}}} \dd \mathbf{s}\Big\}
\bigg\|_{\mathcal{H}_{2(q-r)}}^2.
\end{equation*}
Following the same kind of computation as for the previous step, one ends up with
\begin{multline*}
    \mathcal{E} = n^{2d} \int_{((W_n)^2)^{4}} \phi_{2,n}(\mathbf{t})\phi_{2,n}(\mathbf{s})\phi_{2,n}(\mathbf{t}^\prime)\phi_{2,n}(\mathbf{s}^\prime) \times \\
    \prod^{r}_{k=1}\gamma_{\tilde{\mathbf{a}}_k \tilde{\mathbf{b}}_k}(\mathbf{t} \ominus \mathbf{s})
    \prod^{r}_{k=1}\gamma_{\tilde{\mathbf{a}}_k \tilde{\mathbf{b}}_k}(\mathbf{t}^\prime \ominus \mathbf{s}^\prime)\prod^{q}_{k=r+1} \gamma_{\tilde{\mathbf{a}}_k \tilde{\mathbf{a}}_k }(\mathbf{t} \ominus \mathbf{t}^\prime)
    \prod^{q}_{k=r+1}\gamma_{\tilde{\mathbf{b}}_k \tilde{\mathbf{b}}_k}(\mathbf{s} \ominus \mathbf{s}^\prime)
    \ \dd \mathbf{t} \dd \mathbf{s} \dd \mathbf{t}^\prime \dd \mathbf{s}^\prime.
\end{multline*}
Since the integrand is null except for $\mathbf{t}, \mathbf{s} \in (W_n)^2 \cap D_{\eta,R}$, we have $\gamma_{ij}(\mathbf{t} \ominus \mathbf{s}) \leq \kappa \Xi\{\delta(\mathbf{t}, \mathbf{s})\}$ by Proposition~\ref{prop:Y:as:MWI} and $\delta(\mathbf{t}, \mathbf{s}) \geq \| t^{(1)} - s^{(1)} \| - 2R-\sqrt d$ so that we are able to control $\gamma_{ij}(\mathbf{t} \ominus \mathbf{s})$ by a function that depends only on $t^{(1)}, s^{(1)} \in \mathbb{R}^d$ like in the previous section. Furthermore, $\phi_{2,n}(\mathbf{t}) \leq | (W_n)_{\ominus R} |^{-1} \leq 2 n^{-d}$ as soon as $n$ is large enough.
All in all, we end up with an integral similar to the one in the first two lines of~\eqref{eq:bound:E:order:1} except that $\|t - s\|$ is replaced by $\|t^{(1)}-s^{(1)}\| -2R-\sqrt d$ and so on. Since we are only interested in integrability conditions, the offset $- 2R-\sqrt d$ has no influence and so we conclude that $\mathcal{E} = O(n^{-d})$ like in the previous step.

\paragraph{Step 3.}
Once again, the coefficients play no role so it is sufficient, for $\tilde{a}\in \{1,\dots,D\}^q$ and $\tilde{\mathbf{b}} \in \{1,\dots,2D\}^{q}$, to upper bound 
\begin{equation*}
    \mathcal{E} := \bigg\| \Big\{\int_{W_n} n^{d/2}\,\phi_{1,n}(t)\, \psi_{t,\tilde{a}} \dd t\Big\} \bar{\otimes}_r 
    \Big\{\int_{(W_n)^2} n^{d/2}\,\phi_{2,n}(\mathbf{s})\,  \psi_{\mathbf{s},\tilde{\mathbf{b}}} \dd \mathbf{s}\Big\}
\bigg\|_{\mathcal{H}_{2(q-r)}}^2.
\end{equation*}
Following the same kind of computation as for the two previous steps, one ends up with
\begin{multline*}
    \mathcal{E} = n^{2d} \int_{(W_n)^2 \times ((W_n)^2)^{2}} \phi_{1,n}(t)\phi_{2,n}(\mathbf{s})\phi_{1,n}(t^\prime)\phi_{2,n}(\mathbf{s}^\prime) \times \\
    \prod^{r}_{k=1}\gamma_{\tilde a_k \tilde{\mathbf{b}}_k}(t \ominus \mathbf{s})
    \prod^{r}_{k=1}\gamma_{\tilde a_k \tilde{\mathbf{b}}_k}(t^\prime \ominus \mathbf{s}^\prime)\prod^{q}_{k=r+1} \gamma_{\tilde a_k \tilde a_k }(t - t^\prime)
    \prod^{q}_{k=r+1}\gamma_{\tilde{\mathbf{b}}_k \tilde{\mathbf{b}}_k}(\mathbf{s} \ominus \mathbf{s}^\prime)
    \ \dd t \dd t^\prime \dd \mathbf{s}  \dd \mathbf{s}^\prime.
\end{multline*}
Following the arguments of the previous step, we conclude that $\mathcal{E} = O(n^{-d})$ once again.

In conclusion, all the contributions of the contractions $g_{q,n}\bar{\otimes}_{r} g_{q,n}$ tend to zero as $n\to \infty$ and the CLT follows for $\cent{\Phi}_{\beta, n}$.
\end{proof}

\subsection{Proof of Theorem~\ref{thm:clt}(ii)} \label{app:CLTii}

\begin{proof}
Let $\check \zeta_\L:= \{ \cent{\Phi}_{1,n}(\phi_1) ,
\cent{\Phi}_{2,n}(\phi_2^1),\dots,\cent{\Phi}_{2,n}(\phi_2^m) \}^\top$
with $\phi_1(t)=\mathbf 1(t \in [-1/2,1/2]^d)$ and $\phi_2^j = \mathbf
1(t \in [\eta, r_j])$ for $j=1,\dots,m$. Note that any linear
combinations of linear (resp.\ billinear) statistics each
satisfying~\eqref{eq:assphi1hat} (resp.\ \eqref{eq:assphi2n}) still
satisfies \eqref{eq:assphi1hat} (resp.\
\eqref{eq:assphi2n}). Therefore, the asymptotic normality of $\check{\zeta}_\L$ easily ensues from Theorem~\ref{thm:clt}(i). And, using
Theorem~\ref{thm:variance}(iv), we deduce that $n^{d/2} \check{\zeta}_\L \to \mathcal N(0,\check\Sigma)$ where $\check \Sigma$ is the
symmetric $(m+1) \times (m+1)$ matrix defined for $j>i$ by
\begin{equation*}
(\check\Sigma)_{ij} :=
\left\{
\begin{array}{ll}
\mathcal C(\phi_1,\phi_2^{j-1})& \text{if }i=1,j>1\\
\mathcal C(\phi_2^{i-1},\phi_{2}^{j-1}) & \text{otherwise}
\end{array}
\right. 
\end{equation*}
with again $\mathcal C(\phi_i,\phi_i) = \mathcal V(\phi_i)$ for $i=1,2$. Observe that $\hat \rho_\L-\rho_\L = \cent{\Phi}_{1,n}(\phi_1)$ and that for any $j=1,\dots,m$ by definition of the modified Ripley's $K$-function,
\begin{equation}\label{eq:behaviorK}
\hat K_{\eta,\L}(r_j) - K_{\eta,\L}(r_j) = \alpha_n \cent{\Phi}_{2,n}(\phi_2^j) + \beta_{n,j} \cent{\Phi}_{1,n}(\phi_1)
\end{equation}
with $\alpha_n= \hat \rho_\L^2/\rho_\L^2$ and $\beta_{n,j}={K_{\eta,\L}(r_j)}(\hat \rho_\L+\rho_\L)/\rho_\L^2$.
Theorem~\ref{thm:variance} implies in particular that, in probability, $\alpha_n\to 1$ and $\beta_{n,j}\to \beta_j=2K_{\eta,\L}(r_j)$ as $n\to \infty$. Therefore, Slutsky's lemma ensures that as $n\to \infty$, the limit distribution of $n^{d/2} \zeta_\L$ is the same as the limit of $n^{d/2} h(\check \zeta_\L)$ where $h :\R^{m+1}\to \R^{m+1}$ is given by
$$
h(u)=h(u_1,u_2,\dots, u_{m+1}) =  (u_1,u_2+\beta_1 u_1,\dots,u_{m+1} + \beta_m u_1).
$$
Note that $h(0)=0 \in \R^{m+1}$, that $h$ is differentiable at $0$ and
the Jacobian matrix at 0 {is the symmetric  $(m+1) \times (m+1)$ matrix} defined for $j\ge i$ by 
$$
J: = \left\{
\begin{array}{ll}
\beta_{i} & \text{if } i=1 \text{ and } j>1 \\
1 & \text{if } i=j \\
0 & \text{otherwise}.
\end{array}    
\right.
$$
We conclude by using the multivariate delta method to show that as $n\to \infty$,
\begin{equation*}
n^{d/2} \zeta_\L   \stackrel{\mathcal D}{\to} \mathcal N\left\{ 0, \Sigma(\rho_\L,K_{\eta,\L})  \right\}
\quad \text{ with  } \quad \Sigma(\rho_\L,K_{\eta,\L}) = J \check\Sigma J^\top.
\end{equation*}
\end{proof}


\bibliographystyle{imsart-number} 
\bibliography{refs}

\end{document}